\documentclass[a4paper,11pt]{article}
\usepackage[
    a4paper,
    top=1.05in,
    bottom=1.05in,
    left=1.00in,
    right=1.00in
]{geometry}
\usepackage[T1]{fontenc}

\usepackage{amsmath,amsthm,mathtools}
\usepackage{amssymb,mathrsfs}
\usepackage{txfonts}
\InputIfFileExists{ursfs.fd}{}{}
\DeclareFontShape{U}{rsfs}{m}{n}{<-6.5>rsfs5<6.5-8>rsfs7<8->rsfs10}{}
\DeclareMathAlphabet{\mathsf}{OT1}{phv}{m}{n}
\SetMathAlphabet{\mathsf}{bold}{OT1}{phv}{b}{n}
\usepackage{microtype}
\microtypesetup{protrusion=true,expansion=true}
\allowdisplaybreaks[4]

\usepackage{enumitem}
\setlist[enumerate]{leftmargin=2.5em,itemsep=0.25em,topsep=0.35em}
\setlist[itemize]{leftmargin=2.2em,itemsep=0.25em,topsep=0.35em}
\usepackage{booktabs,array,longtable}

\usepackage{titlesec,needspace}
\titleformat{\section}{\normalfont\Large\bfseries}{\thesection.}{0.75em}{}
\titleformat{\subsection}{\normalfont\large\bfseries}{\thesubsection}{0.75em}{}
\titlespacing*{\section}{0pt}{2.6ex plus 0.8ex minus 0.2ex}{1.2ex plus 0.2ex}
\titlespacing*{\subsection}{0pt}{2.0ex plus 0.6ex minus 0.2ex}{0.9ex plus 0.2ex}

\usepackage{fancyhdr}
\fancypagestyle{plain}{%
  \fancyhf{}%
  \fancyfoot[C]{\thepage}%
}

\usepackage[dvipsnames]{xcolor}
\definecolor{linkblue}{RGB}{20,67,128}
\definecolor{citegreen}{RGB}{25,105,80}
\definecolor{urlpurple}{RGB}{100,55,125}
\usepackage[colorlinks=true,
            linkcolor=linkblue,
            citecolor=citegreen,
            urlcolor=urlpurple,
            plainpages=false]{hyperref}
\usepackage{bookmark}
\usepackage{aliascnt}
\usepackage[nameinlink,noabbrev,capitalise]{cleveref}
\usepackage{tikz-cd}
\usepackage{multicol}
\let\templatebibliography\thebibliography
\let\endtemplatebibliography\endthebibliography
\renewenvironment{thebibliography}[1]{%
  \templatebibliography{#1}%
  \setlength{\itemsep}{2pt}%
  \setlength{\parsep}{0pt}%
}{\endtemplatebibliography}

\numberwithin{equation}{section}
\makeatletter
\renewcommand{\theequation}{\thesection.\ifnum\value{equation}<10 0\fi\arabic{equation}}
\makeatother

\theoremstyle{plain}
\newtheorem{Theorem}{Theorem}[section]

\theoremstyle{plain}
\newaliascnt{Proposition}{Theorem}
\newtheorem{Proposition}[Proposition]{Proposition}
\aliascntresetthe{Proposition}
\crefname{Proposition}{Proposition}{Propositions}
\Crefname{Proposition}{Proposition}{Propositions}
\newaliascnt{Lemma}{Theorem}
\newtheorem{Lemma}[Lemma]{Lemma}
\aliascntresetthe{Lemma}
\newaliascnt{Corollary}{Theorem}
\newtheorem{Corollary}[Corollary]{Corollary}
\aliascntresetthe{Corollary}
\newaliascnt{Claim}{Theorem}

\aliascntresetthe{Claim}
\newaliascnt{Question}{Theorem}

\aliascntresetthe{Question}
\newaliascnt{Conjecture}{Theorem}

\aliascntresetthe{Conjecture}

\theoremstyle{definition}
\newaliascnt{Definition}{Theorem}
\newtheorem{Definition}[Definition]{Definition}
\aliascntresetthe{Definition}
\newaliascnt{Example}{Theorem}
\newtheorem{Example}[Example]{Example}
\aliascntresetthe{Example}

\theoremstyle{remark}
\newaliascnt{Remark}{Theorem}
\newtheorem{Remark}[Remark]{Remark}
\aliascntresetthe{Remark}

\crefformat{equation}{(#2#1#3)}
\Crefformat{equation}{(#2#1#3)}
\crefrangeformat{equation}{(#3#1#4)--(#5#2#6)}
\Crefrangeformat{equation}{(#3#1#4)--(#5#2#6)}
\crefmultiformat{equation}{(#2#1#3)}{ and~(#2#1#3)}{, (#2#1#3)}{ and~(#2#1#3)}
\Crefmultiformat{equation}{(#2#1#3)}{ and~(#2#1#3)}{, (#2#1#3)}{ and~(#2#1#3)}

\crefformat{enumi}{(#2#1#3)}
\Crefformat{enumi}{(#2#1#3)}
\crefformat{enumii}{(#2#1#3)}
\Crefformat{enumii}{(#2#1#3)}
\crefformat{enumiii}{(#2#1#3)}
\Crefformat{enumiii}{(#2#1#3)}
\crefformat{enumiv}{(#2#1#3)}
\Crefformat{enumiv}{(#2#1#3)}

\DeclareMathOperator{\tr}{tr}

\DeclareMathOperator{\inj}{inj}

\DeclareMathOperator{\End}{End}

\DeclareMathOperator{\Ker}{Ker}
\DeclareMathOperator{\im}{Im}

\DeclareMathOperator{\Supp}{Supp}

\newcommand{\R}{{\mathbb R}}
\newcommand{\Z}{{\mathbb Z}}

\newcommand*\norm[1]{\lVert#1\rVert}

\newcommand*\abs[1]{\lvert#1\rvert}

\newcommand*\inn[2]{\langle#1,#2\rangle}
\renewcommand{\geq}{\geqslant}
\renewcommand{\leq}{\leqslant}

\newcommand{\Rm}{\mathrm{Rm}}
\newcommand{\Ric}{\mathrm{Ric}}

\providecommand{\hg}{\hat g}
\providecommand{\hf}{\hat f}

\hypersetup{
  pdftitle={paper I},
  pdfauthor={Yu Li, Longhuan Yan, and Wenjia Zhang},
  pdfsubject={Weighted analysis on AC Ricci shrinkers},
  pdfkeywords={Ricci shrinkers, AC geometry, weighted analysis, Ricci flow}
}
\title{On the geometry of asymptotically conical \\Ricci shrinkers (I): weighted analysis}
\author{Yu Li \and Longhuan Yan \and Wenjia Zhang}
\date{}

\begin{document}
\maketitle
\begin{abstract}
We develop the weighted Sobolev and H\"older analysis needed to study
perturbations of complete asymptotically conical (AC) Ricci shrinkers.
Weighted elliptic estimates yield a global weighted divergence-free slice
in spaces that retain the asymptotic data at infinity. As a geometric
application, we prove pointed Gromov--Hausdorff rigidity of nontrivial AC
shrinkers with vanishing infinitesimal solitonic deformation space.
\end{abstract}

\tableofcontents

\section{Introduction}\label{sec:introduction}

Ricci shrinkers are self-similar solutions of the Ricci flow and arise as
models of its singularities. A Ricci shrinker is a triple $(M^n,g,f)$
satisfying
\[
    \Ric_g+\nabla_g^2f=\frac12g.
\]
The study of nearby shrinkers leads to questions about their moduli,
rigidity, and the choice of coordinates in which to compare them.

On a compact manifold, these questions fit into the classical framework
of geometric deformation theory. Ebin's slice theorem
\cite{ebin1970manifold} gives a local description of the space of metrics
modulo diffeomorphisms. Koiso \cite{koiso1983einstein} used this framework
to study Einstein deformations. For compact Ricci shrinkers,
Podest\`a--Spiro \cite{podesta2015moduli} constructed a weighted slice
whose tangent space is the kernel of the weighted divergence. In this
slice, the local solitonic pre-moduli space is an analytic subset of a
finite-dimensional real-analytic manifold whose tangent space at the
background shrinker is the space of essential infinitesimal solitonic
deformations. The remaining action
of the background isometry group gives the local moduli space.

This description also relates linearized analysis to rigidity. Vanishing
of the infinitesimal deformation space implies local rigidity, whereas
a nonzero infinitesimal deformation may be obstructed at higher order.
Kr\"oncke \cite{kroencke2016rigidity} showed, for example, that complex
projective spaces of even complex dimension are solitonically rigid
despite admitting infinitesimal solitonic deformations. Sun--Zhu
\cite{sun2025rigidity} proved rigidity of $S^2\times S^2$, and
Li--Zhang \cite{lizhang2023projective} established rigidity of the
odd-dimensional complex projective spaces $\mathbb{CP}^{2m+1}$.
On a compact
manifold, Gaussian weighted Sobolev norms are equivalent to the usual
Sobolev norms. Once the gauge is fixed, elliptic estimates
control the deformation on the whole manifold, and there are no
additional asymptotic coefficients to prescribe or estimate.

Singularity analysis of Ricci flow also requires noncompact shrinkers,
even when the original flow is defined on a closed manifold. Among
these, asymptotically conical (AC) shrinkers form a natural class with
controlled geometry at infinity. An AC shrinker approaches a regular
metric cone $g_C=\mathrm dr^2+r^2g_\Sigma$ outside a compact set, with
convergence in the smooth rescaled geometry of the end. The
Feldman--Ilmanen--Knopf (FIK) shrinkers \cite{feldman2003rotationally} provide
basic non-Gaussian examples. Kotschwar--Wang
\cite{kotschwar2015rigidity} proved that two shrinkers asymptotic to the
same regular cone are isometric near infinity on the corresponding
ends. Perturbations for which the cone varies require control of both
the asymptotic data and the geometry on the whole manifold.

Related questions have a well-developed theory in AC Ricci-flat and
Calabi--Yau geometry. Earlier work of Bando--Kasue--Nakajima
\cite{bando1989coordinates} constructed coordinates at infinity under
fast curvature decay and maximal volume growth. Cheeger--Tian
\cite{cheeger1994cone} studied the cone structure at infinity of
Ricci-flat manifolds with Euclidean volume growth and quadratic
curvature decay. Polynomial weighted spaces and the Fredholm theory
of elliptic operators on noncompact manifolds
\cite{lockhart1985elliptic} describe the allowable rates at infinity.
Conlon--Hein \cite{conlon2013asymptotically} established existence and
uniqueness results for AC Calabi--Yau metrics with prescribed asymptotic
geometry and K\"ahler class. For general Ricci-flat conifolds,
Kr\"oncke--Szab\'o \cite{kroencke2022optimal} computed the indicial roots
of the Lichnerowicz Laplacian and constructed Bianchi gauge slices to
obtain optimal asymptotic coordinates. In these settings, the spectrum
of the link determines the exceptional weights of the cone operators.
Non-Gaussian AC shrinkers have exactly quadratic curvature decay: the
AC upper bound and the scalar-curvature lower bound of
Chow--Lu--Yang \cite{chow2011lower} give
$c r^{-2}\leq|\mathrm{Rm}|\leq C r^{-2}$ for large $r$.
Thus the faster curvature decay used in the asymptotically flat setting
is unavailable for these shrinkers. The radial drift also changes the
asymptotic analysis, as we explain below.

AC Ricci expanders provide a closer comparison: their associated flows
smooth a cone forward in time. Deruelle \cite{deruelle2016smoothing}
developed weighted H\"older and tame Fr\'echet methods for deformations
of AC expanders with positive curvature operator, using Nash--Moser to
control smooth asymptotic data. Bamler--Chen \cite{bamler2023degree} developed a
deformation and degree theory for four-dimensional AC expanding
solitons, using DeTurck gauge and implicit-function arguments in
weighted Banach spaces. For shrinkers, Colding--Minicozzi
\cite{colding2021singularities} studied the noncompact gauge problem and
obtained quantitative control of the resulting diffeomorphisms. Their
analysis of the weighted gauge operator is an important input here.

The purpose of this paper is to develop finite-order weighted Sobolev
and H\"older analysis for AC Ricci shrinkers. Chodosh--Schulze
\cite{chodosh2021uniqueness} established related weighted Sobolev and
H\"older estimates on AC mean curvature self-shrinkers and proved
uniqueness of multiplicity-one AC tangent flows using a
\L{}ojasiewicz--Simon inequality. Some of our arguments follow similar
ideas; in particular, their use of the self-similar flow to obtain
weighted Schauder estimates informs the parabolic argument in
\cref{sec:Schauder-estimates}.

In their setting, nearby hypersurfaces are described as normal graphs
over a fixed self-shrinker. For Ricci shrinkers, comparing nearby metrics
also requires a global choice of gauge. The diffeomorphisms used to fix
the gauge must preserve the prescribed asymptotic structure. We use
Gaussian solvability to define an operator graph and prove the finite
coordinate estimates needed by its nonlinear pullback action. This
permits a direct application of the Banach implicit function
theorem to the forward gauge equation.

The principal nonlinear result is a global weighted divergence free
slice theorem. It places
nearby metrics in a common gauge while preserving the asymptotic
structure required by the weighted equations. As an application, we
prove pointed Gromov--Hausdorff rigidity of AC shrinkers with
vanishing infinitesimal solitonic deformation space, including the
four-dimensional FIK shrinker.

Fix a complete AC shrinker $(M^n,\hat g,\hat f)$, with the potential
normalized by
\[
    (4\pi)^{-n/2}\int_M e^{-\hat f}\,\mathrm dV_{\hat g}=1.
\]
We use the background weighted divergence $\hat\delta_{\hat f}$ and
its formal adjoint $\hat\delta^*=-\operatorname{Sym}\hat\nabla$.
Thus $\mathcal L_{Z^{\sharp_{\hat g}}}\hat g=-2\hat\delta^*Z$.
Let $\mathcal K$ be the finite-dimensional space of Gaussian
$L^2_{\hat f}$ Killing one-forms, and take orthogonal complements with
respect to this Gaussian inner product.

The metric spaces $E_m$ of \cref{def:slice-radial-filtration}
retain tangential coefficients of degrees zero and minus one, a mixed
coefficient of degree minus one, and an ordinary remainder of weight
minus two. In particular, the leading link metric may vary. The gauge
space $\mathscr X_k$ in \cref{ic:graph-definition} is the Gaussian
inverse image of a finite one-form target, with an additional scalar
divergence condition. Its elements have a tangential degree-one term,
a degree-zero term, and a remainder of weight minus one. The coefficients
have the different finite regularities supplied by the equations.

\begin{Theorem}[Weighted divergence free slice]\label{thm:intro-slice}
Let $(M^n,\hat g,\hat f)$ be a complete nontrivial AC Ricci shrinker.
Fix $k\ge6$ and $0<\alpha<1$. For sufficiently small
$h\in E_{k+4}$ there is a unique small
$Z(h)\in\mathscr X_k\subset\mathcal K^\perp$ such that
\begin{equation}\label{eq:intro-slice}
 \hat\delta_{\hat f}\bigl(F_{Z(h)}^*(\hat g+h)-\hat g\bigr)=0,
 \qquad F_Z=\operatorname{Exp}^{\mathrm a}(Z^{\sharp_{\hat g}}).
\end{equation}
The map $h\mapsto Z(h)$ is $C^1$ and
\[
 \|Z(h)\|_{\mathscr X_k}
 +\|F_{Z(h)}^*(\hat g+h)-\hat g\|_{E_{k-1}}
 \le C_k\|h\|_{E_{k+4}}.
\]
Each $F_{Z(h)}$ is a global $C^{k+2,\alpha}_{\mathrm{loc}}$
diffeomorphism isotopic to the identity. Uniqueness is in a fixed small
ball of $\mathscr X_k$, and the constants and ball may depend on $k$.
\end{Theorem}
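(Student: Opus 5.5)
The plan is to apply the Banach implicit function theorem to
\[
\Phi:E_{k+4}\times\mathscr X_k\to\mathcal Y_k,\qquad
\Phi(h,Z)=\hat\delta_{\hat f}\bigl(F_Z^*(\hat g+h)-\hat g\bigr),
\]
where $\mathcal Y_k$ is the finite-regularity one-form target used in \cref{ic:graph-definition} to define $\mathscr X_k$. That target is intersected with $\mathcal K^\perp$, which removes the Gaussian Killing directions, and it carries the additional scalar divergence condition. First I check that $\Phi$ is well defined and takes values in $\mathcal Y_k$. Since $\hat\delta_{\hat f}\hat\delta^*$ is self-adjoint on $L^2_{\hat f}$ with kernel exactly $\mathcal K$, the image of the weighted divergence is automatically orthogonal to $\mathcal K$ after projection. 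I will therefore either project the equation onto $\mathcal K^\perp$ or verify directly that $\Phi$ lands there. The latter follows because $\int\langle\hat\delta_{\hat f}k,X\rangle e^{-\hat f}=\int\langle k,\hat\delta^*X\rangle e^{-\hat f}=0$ for $X\in\mathcal K$.

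The second step is the mapping and $C^1$ property of the pullback action $(h,Z)\mapsto F_Z^*(\hat g+h)$ from $E_{k+4}\times\mathscr X_k$ into $E_{k-1}$. Here $F_Z=\operatorname{Exp}^{\mathrm a}(Z^\sharp)$ is the asymptotically adapted exponential. I would use the finite coordinate estimates for the operator graph promised earlier. Along the end, $Z$ has a tangential degree-one part, a degree-zero part and a weight $-1$ remainder, so in cone coordinates $F_Z$ is a bounded-distortion perturbation of a cone diffeomorphism. It acts on the link by a diffeomorphism close to the identity, rescales radially, and adds lower-order corrections. Pulling back a metric with tangential degree-zero and degree-minus-one coefficients, a mixed degree-minus-one coefficient and a weight $-2$ remainder keeps the same structure. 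Each coefficient is composed with a link map and differentiated. This costs derivatives, which is why we go from $k+4$ to $k-1$ and lose regularity on $Z$. $C^1$ dependence in $Z$ needs one more derivative than continuity, because differentiating $h\circ F_Z$ in $Z$ brings in $\nabla h$. The loss built into the indices accounts for this. The same argument shows that $F_Z$ is a global $C^{k+2,\alpha}_{\mathrm{loc}}$ diffeomorphism isotopic to the identity via $t\mapsto F_{tZ}$ for small $Z$. Locally this is the inverse function theorem, and near infinity properness comes from the cone structure.

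The third step, which I expect to be the main obstacle, is invertibility of the linearization in $Z$ at $(0,0)$. Since $\frac{d}{dt}F_{tZ}^*\hat g=\mathcal L_{Z^\sharp}\hat g=-2\hat\delta^*Z$, we get $D_Z\Phi(0,0)=-2\hat\delta_{\hat f}\hat\delta^*$. On one-forms this is a drift Laplacian plus Ricci-type terms, $\tfrac12(\Delta_{\hat f}+\tfrac12)$ up to a Hessian-of-divergence term. I need it to be an isomorphism $\mathscr X_k\to\mathcal Y_k$. Gaussian solvability on $\mathcal K^\perp$ gives an $L^2_{\hat f}$ inverse, via Colding--Minicozzi's analysis of the gauge operator. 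The space $\mathscr X_k$ is defined precisely as the Gaussian inverse image of the finite target. So the real content is that this inverse image has the stated asymptotic expansion with controlled norms. Indeed the radial drift $-\tfrac r2\partial_r$ produces degree-one homogeneous solutions whose coefficients are determined by the data on the link, and the weighted Sobolev and Schauder estimates of the earlier sections, including the parabolic Schauder argument, bound the remainder at weight $-1$. The scalar divergence condition is needed to kill the trace component that the vector Laplacian does not otherwise control. Nontriviality of the shrinker is used here, since on Gaussian space extra kernel such as translations or conformal-type fields would spoil uniqueness. I would verify injectivity from the kernel being $\mathcal K$ and surjectivity from the definition of $\mathscr X_k$ as an image. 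Boundedness of the inverse then follows by the open mapping theorem.

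With these three steps, the implicit function theorem gives a unique small $Z(h)$ in a fixed ball of $\mathscr X_k$ with $\Phi(h,Z(h))=0$. The map $h\mapsto Z(h)$ is $C^1$ with $\|Z(h)\|_{\mathscr X_k}\le C\|h\|_{E_{k+4}}$, since $\Phi(0,0)=0$ and $D_h\Phi$ is bounded. The bound on $F_{Z(h)}^*(\hat g+h)-\hat g$ in $E_{k-1}$ then follows from the Lipschitz estimate of step two applied at $(h,Z(h))$ against $(0,0)$.
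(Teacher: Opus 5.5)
\paragraph{Verdict.} Your architecture matches the paper's: the implicit function theorem on $E_{k+4}\times\mathscr X_k$, $\mathrm D_Z=-2\hat{\mathcal P}$, and orthogonality to $\mathcal K$ by integrating against Killing forms. However, the difficulty sits in a different place from where you put it, and the actual crux is left unproved.

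\paragraph{The gap: closure of the nonlinear map in $\mathscr Y_k$.} Because $\mathscr X_k$ is \emph{defined} as the Gaussian preimage of $\mathscr Y_k\cap\mathcal K^\perp$, the linear isomorphism $\hat{\mathcal P}:\mathscr X_k\to\mathscr Y_k\cap\mathcal K^\perp$ is essentially tautological. In \cref{ic:graph-isomorphism} it follows from completeness of the graph norms plus the Gaussian inverse. What is not automatic is that the nonlinear map lands in $\mathscr Y_k$ and is $C^1$ there, and the mechanism you give cannot deliver this. You obtain $H:=F_Z^*(\hat g+h)-\hat g\in E_{k-1}$ and then apply $\hat\delta_{\hat f}$, but $\hat\delta_{\hat f}$ does not map $E_{k-1}$ into $\mathscr Y_k$:
\begin{itemize}
\item For $H\in E_{k-1}$, the contraction $H(\mathbf X,\cdot)$ has a degree-zero term $\tfrac12\boldsymbol\tau(b_{-1})$ whose coefficient is only $C^{k-1,\alpha}$, whereas $\mathscr Y_k$ demands $C^{k+1,\alpha}$.
\item The remainder of $\hat\delta_{\hat f}H$ has too few derivatives to lie in $C^{k,\alpha}_{-1}$.
\item Nothing in $E_{k-1}$ controls $\hat\delta_{\hat f}\hat\delta_{\hat f}H$ in $\mathscr S_k$.
\end{itemize}
Without closure in $\mathscr Y_k$ there is no target on which $\mathrm D_Z$ is invertible. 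You also cannot fall back to ordinary weighted targets, since asymptotic traces do not gain two derivatives (\cref{rmk:loss-of-boundary-regularity}).

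\paragraph{The missing idea.} Membership in $\mathscr X_k$ carries more than an expansion of $Z$, and this extra information must be extracted and fed into the pullback.
\begin{itemize}
\item \emph{Whole-field bound.} The scalar terminal estimate for $p=\hat\delta_{\hat f}Z=-G_{1/2}\hat\delta_{\hat f}\hat{\mathcal P}Z$, together with the degree-zero source lift, gives $\|Z\|_{C^{k+2,\alpha}_1}\le C\|Z\|_{\mathscr X_k}$ (\cref{ic:graph-estimates}). This puts $H\in C^{k+1,\alpha}_0$ as a whole field. It also yields an explicit mixed trace $\Omega=\mathrm d_\Sigma a_0+\psi^*b_{-1}^h-\gamma_\psi(\phi,\mathrm d\psi)\in C^{k+1,\alpha}$.
\item \emph{Radial graph estimates.} Write $\boldsymbol\tau_r^{-1}Z=u\,\mathrm dr+w$ and $v=\hat{\mathcal P}Z$. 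The radial row of the cone gauge operator, combined with $rv_r=2(\delta v-\hat\delta_{\hat f}v)\in\mathscr S_k$, gives $r\partial_tu$, $r\nabla^2u$, $\partial_t\partial_{\theta^i}(w/r)\in\mathscr S_k$ (\cref{ic:radial-graph}).
\item \emph{Closure.} In the exact formula for $r^2H_{rr}$, these quantities appear only multiplied by $C^{k+1,\alpha}_0$ coefficients. Hence $r^2H_{rr}\in\mathscr S_k$ by \cref{ic:sum-multiplier}. The cone radial divergence formula then gives $\hat\delta_{\hat f}P=\delta P-\tfrac12r(\delta H)_r+\tfrac14r^2H_{rr}\in\mathscr S_k$.
\item \emph{Differentiability.} The $C^1$ property must also be proved in this sum norm; the paper bounds second differentials there. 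This is where the four-derivative reserve on $h$ is actually spent, not merely on the $\nabla h$ produced by differentiating $h\circ F_Z$.
\end{itemize}

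\paragraph{Two smaller corrections.}
\begin{itemize}
\item Nontriviality is not needed for $\Ker\hat{\mathcal P}=\mathcal K$, which holds on any shrinker; Gaussian translations are Killing and so already lie in $\mathcal K$. Nontriviality is used to invert $\hat\Delta_{\hat f}+\tfrac12$ on functions, whose Gaussian kernel contains the linear functions. That inversion is what controls $p$.
\item Calling $t\mapsto F_{tZ}$ an isotopy presupposes that each $F_{tZ}$ is bijective. The paper obtains this because a proper local diffeomorphism, homotopic to the identity through such maps, is a one-sheeted covering.
\end{itemize}
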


This is proved in \cref{ic:finite-slice}. Its infinitesimal equation
is governed by
\[
    \hat{\mathcal P}:=\hat\delta_{\hat f}\hat\delta^*.
\]
Writing $P(h,Z)=\hat\delta_{\hat f}
(F_Z^*(\hat g+h)-\hat g)$, we have
$\mathrm D_ZP(0,0)=-2\hat{\mathcal P}$.
The Gaussian Sobolev theory gives solvability on $\mathcal K^\perp$.
On a noncompact manifold, this provides an infinitesimal correction,
but further estimates are needed to obtain a global nonlinear gauge.
A small Gaussian Sobolev norm alone does not control the growth of a
vector field at infinity. The solution must have the asymptotic behavior
needed to define a global diffeomorphism, and pullback by that
diffeomorphism must preserve the metric space.

The weighted Schauder estimates supply this control. To compare them
with the usual AC elliptic estimates, consider a homogeneous function
$r^\gamma a(\theta)$ on the cone. Then
\[
    \Delta_{g_C}(r^\gamma a)
    =r^{\gamma-2}\bigl(\bar\Delta a+\gamma(\gamma+n-2)a\bigr),
\]
where $\bar\Delta$ is the intrinsic Laplacian of $g_\Sigma$.
Thus the indicial degrees of the cone Laplacian are determined by the
link spectrum. In the cylindrical variable $t=\log r$, the rescaled
operator $r^2\Delta_{g_C}$ is translation-invariant and elliptic. The
shrinker model instead takes the form
\begin{equation}\label{eq:intro-cone-drift}
    \Delta_{g_C}-\frac r2\partial_r
    =r^{-2}\bigl((r\partial_r)^2+(n-2)r\partial_r+\bar\Delta\bigr)
     -\frac r2\partial_r.
\end{equation}
On fixed compact regions, the Laplacian gives the usual local
two-derivative gain. On large annuli rescaled to unit size, its
coefficient is $r^{-2}$, while the radial drift remains of order one.
Equivalently, in the variable $t$, the second-order coefficient tends
to zero and the drift is $-\frac12\partial_t$. The asymptotic equation
is therefore governed by this first-order drift.
For $\Delta_f-\lambda$, the leading multiplier on a homogeneous
term of degree $\gamma$ is $-\gamma/2-\lambda$. The characteristic
degree is therefore $-2\lambda$, and the second-order terms produce
successive lower degrees. Our estimates keep track of these coefficients
and the remainder separately. Local elliptic regularity does not in
general give a two-derivative gain for the asymptotic trace; the
counterexample in \cref{rmk:loss-of-boundary-regularity} makes this
failure explicit.

For comparison, write an expander using a positive potential
$f_{\mathrm e}\sim r^2/4$ satisfying
$\nabla^2f_{\mathrm e}=\Ric+\frac12g$. Its drift Laplacian is
$\Delta+\nabla_{\nabla f_{\mathrm e}}$, with the opposite radial sign
and symmetric measure $e^{f_{\mathrm e}}\,\mathrm dV$. General
polynomial perturbations do not belong to this growing weighted
$L^2$ space, which motivates the H\"older approach in
\cite{deruelle2016smoothing}. For shrinkers, the decaying Gaussian
measure accommodates polynomial growth and supplies the Sobolev
inverse. Polynomial weighted H\"older estimates then control the
asymptotic coefficients and the global gauge.

The scalar sum space
\[
 \mathscr S_k=C^{k+1,\alpha}_0+C^{k-1,\alpha}_{-2}
\]
records two kinds of terms produced by the radial equation. It contains
$C^{k,\alpha}_{-1}$ by a finite interpolation estimate. The target
$\mathscr Y_k$ consists of one-forms with a tangential degree-zero
coefficient and a weight-minus-one remainder whose weighted divergence
belongs to $\mathscr S_k$. Gaussian inversion of
$\hat{\mathcal P}$ on $\mathcal K^\perp$ defines the domain
$\mathscr X_k$. Scalar terminal estimates and a positive resolvent lift
then control the asymptotic coefficients and the whole covariant
Hessian of a gauge parameter.

The radial gauge equation gives further estimates for the normalized
radial derivatives. In the forward pullback these occur multiplied by
coefficients controlled in $C^{k+1,\alpha}_0$. This proves closure in
the entire target norm, including its scalar sum component. A finite
reserve of four derivatives on the metric input gives a locally
Lipschitz first differential of the gauge map. Its derivative in $Z$
at zero is $-2\hat{\mathcal P}$, so the implicit function
theorem gives the slice.

We next apply the slice to rigidity. For a complete nontrivial AC
shrinker $(M^n,\hat g,\hat f)$, set
$\hat L=\frac12\hat\Delta_{\hat f}+\Rm_{\hat g}$ and define
\begin{equation}\label{eq:intro-ISD}
    \operatorname{ISD}(\hat g,\hat f)
    :=\{h\in H^2_{\hat f}(S^2T^*M):
        \hat Lh=0,\ \hat\delta_{\hat f}h=0\}.
\end{equation}
This is the infinitesimal solitonic deformation space of
Li--Zhang \cite[Definition~3.18]{li2023rigidity}.

\begin{Theorem}[Pointed Gromov--Hausdorff rigidity]\label{thm:intro-rigidity}
Let $(M^n,\hat g,\hat f)$ be a complete nontrivial AC Ricci shrinker
with $\operatorname{ISD}(\hat g,\hat f)=\{0\}$, and choose
$\hat p\in\arg\min\hat f$. There is a pointed Gromov--Hausdorff
neighborhood of $(M,\hat g,\hat p)$ such that every complete
$n$-dimensional Ricci shrinker $(N,g,f,p)$ in this neighborhood, based
at a minimum point of $f$, has $(N,g)$ isometric to $(M,\hat g)$.
\end{Theorem}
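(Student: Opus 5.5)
The plan is a contradiction argument in the spirit of compact Ebin--Koiso rigidity. The slice theorem supplies the gauge, and ISD$=\{0\}$ is turned into a quantitative linear estimate.

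\emph{Setup.} Suppose there are complete shrinkers $(N_i,g_i,f_i,p_i)$, with $p_i$ minimum points of $f_i$, converging to $(M,\hat g,\hat p)$ in the pointed Gromov--Hausdorff sense, but with no $(N_i,g_i)$ isometric to $(M,\hat g)$. The first step upgrades the convergence and localizes the end.
\begin{enumerate}
\item Shrinkers satisfy the normalized identity $R+|\nabla f|^2=f$ and the Cao--Zhou growth $f\sim d^2/4$. Combined with the AC structure and curvature control of $\hat g$ on large balls, this gives smooth Cheeger--Gromov convergence on compact sets, for example via pseudolocality or $\varepsilon$-regularity arguments for shrinkers.
\item To control the end, I would compare each $g_i$ with $\hat g$ on a large annulus $\{R_0\le r\le 2R_0\}$, where $\hat g$ is close to the cone $g_C$.
\item Shrinker curvature estimates propagated outward along the self-similar flow (backward pseudolocality) should show that $g_i$ is AC on this end, with cone close to $g_C$.
\item The Kotschwar--Wang type uniqueness then controls the deviation on the end by the deviation of the link.
\end{enumerate}
The outcome should be diffeomorphisms $\Phi_i:M\to N_i$ with $h_i:=\Phi_i^*g_i-\hat g\in E_{k+4}$ and $\|h_i\|_{E_{k+4}}\to0$. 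I expect this to be the main obstacle: converting pointed Gromov--Hausdorff closeness into smallness in the weighted space $E_{k+4}$, which includes the asymptotic coefficients of degrees $0$ and $-1$. I would try to read off the leading coefficient as the limit of $r^{-2}\Phi_i^*g_i$ on the cone end. The degree $-1$ and remainder terms would come from the shrinker equation in radial form, namely the drift-dominated ODE in $t=\log r$ of \eqref{eq:intro-cone-drift}. Its weighted Schauder estimates are established later in the paper.

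\emph{Gauge and equation.} Apply \cref{thm:intro-slice} to $h_i$ to get $Z_i=Z(h_i)$ and $\tilde h_i:=F_{Z_i}^*(\hat g+h_i)-\hat g$. These satisfy $\hat\delta_{\hat f}\tilde h_i=0$ and $\|\tilde h_i\|_{E_{k-1}}\le C\|h_i\|_{E_{k+4}}\to0$. The pulled-back potentials $\tilde f_i$ satisfy $\Ric(\hat g+\tilde h_i)+\nabla^2\tilde f_i=\frac12(\hat g+\tilde h_i)$. After normalizing them by the Gaussian integral, a standard argument gives $\tilde f_i-\hat f\to0$ in the appropriate weighted space. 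Expand the soliton equation at $(\hat g,\hat f)$ and use the divergence-free gauge together with the first variation of $\hat f$, eliminating $\delta f$ through the trace equation as in Li--Zhang. This yields
\[
\hat L\tilde h_i=Q(\tilde h_i,\tilde f_i-\hat f),
\]
where $Q$ is quadratic and has weighted $L^2_{\hat f}$ norm at most $C\|\tilde h_i\|_{E_{k-1}}\bigl(\|\tilde h_i\|_{H^2_{\hat f}}+\|\tilde f_i-\hat f\|\bigr)$. The Gaussian weight dominates the polynomial growth of the $E$-norms, which gives this estimate.

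\emph{Conclusion.} The Gaussian Sobolev theory makes $\hat L$ Fredholm on divergence-free $H^2_{\hat f}$ tensors. Since $\operatorname{ISD}=\{0\}$, we get $\|\tilde h_i\|_{H^2_{\hat f}}\le C\|\hat L\tilde h_i\|_{L^2_{\hat f}}$. Absorbing the quadratic term for large $i$ forces $\tilde h_i=0$, hence $\tilde f_i=\hat f$. Then $F_{Z_i}$ composed with $\Phi_i$ is an isometry from $(M,\hat g)$ to $(N_i,g_i)$, which is a contradiction. Basing at minimum points of $f$ guarantees that the pointed limit is $(M,\hat g,\hat p)$ and fixes the normalization of $\tilde f_i$.
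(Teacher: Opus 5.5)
Your architecture matches the paper's: a contradiction sequence, convergence in $E_m$, the slice of \cref{thm:intro-slice}, coercivity from $\operatorname{ISD}=\{0\}$, then absorption. But the absorption step does not close as written.

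\textbf{The absorption step.} Your coercive bound controls only $\|\tilde h_i\|_{H^2_{\hat f}}$, while your bound on $Q$ contains $\|\tilde f_i-\hat f\|$. So $\hat L\tilde h_i=Q$ yields only $\|\tilde h_i\|_{H^2_{\hat f}}\le C\varepsilon_i\|\tilde f_i-\hat f\|$, which does not force $\tilde h_i=0$.

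Nor can the potential be treated as an independent small quantity. After the slice, $u_i=\tilde f_i-\hat f=\frac12 r\,a_i(\theta)+O(1)$, where $a_i$ is the radial displacement of $F_{Z_i}$. Thus $u_i$ is unbounded, and the $C^2$-smallness of $u$ required in \cite[Theorem~4.11]{li2023rigidity} is unavailable.

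The missing ingredient is the joint estimate $\|h\|_{H^2_{\hat f}}+\|\hat\nabla u\|_{H^1_{\hat f}}\le C\|\mathrm D\Phi(h,u)\|_{L^2_{\hat f}}$ for $\hat\delta_{\hat f}h=0$, as in \cref{fik:lem:linear}. It comes from three facts:
\begin{itemize}
\item the splitting $\mathrm D\Phi(h,u)=\hat Lh+\hat\nabla^2v$ with $v=\frac12\tr h-u$;
\item the orthogonality $\langle\hat Lh,\hat\nabla^2v\rangle_{L^2_{\hat f}}=0$, which holds because $\hat\delta_{\hat f}\hat Lh=0$;
\item the one-form gap $\|\omega\|_{L^2_{\hat f}}\le C\|\hat\nabla\omega\|_{L^2_{\hat f}}$, which holds because a nontrivial AC shrinker has no parallel one-forms.
\end{itemize}
With this estimate, the remainder is bounded by $C\|h\|_{C^1}(\|h\|_{H^2_{\hat f}}+\|\hat\nabla u\|_{H^1_{\hat f}})$, using $\||\mathbf X|T\|_{L^2_{\hat f}}\le C\|T\|_{H^1_{\hat f}}$. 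It is then absorbed using smallness of the metric alone, as in \cref{fik:prop:rigidity}. Eliminating the potential through the trace equation does not by itself bound $\hat\nabla u_i$; some form of the no-parallel-forms input is unavoidable, including to conclude that $u$ is constant once $h=0$.

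\textbf{The convergence step.} This step also relies on tools that cannot do the job.

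\emph{Smooth convergence on compact sets.} Pointed Gromov--Hausdorff closeness gives no curvature or volume control on $g_i$, so pseudolocality cannot start the argument. The paper first obtains a uniform entropy lower bound from the Euclidean tangent cones of the limit, then applies Li--Li--Wang compactness. Convergence of the weighted measures gives $\boldsymbol\mu(g_i)\to\boldsymbol\mu(\hat g)$. Hence the mass-normalized potentials converge to a potential whose difference from $\hat f$ has parallel gradient; that gradient vanishes (no parallel one-forms), and normalization fixes the constant. Pseudolocality is used only afterwards, starting from a large level set inside the smoothly converging region, to obtain uniform AC bounds.

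\emph{Control on the end.} Kotschwar--Wang uniqueness is qualitative and gives no estimate. Instead, \cref{fik:lem:comparison} uses canonical coordinates with a uniform expansion $G_i=g_{C,i}+2\Ric(g_{C,i})+e_i$, where $e_i=O(r^{-4})$, together with $g_{\Sigma,i}\to g_\Sigma$. Then
$\Psi_i^*g_i-\hat g=r^2(g_{\Sigma,i}-g_\Sigma)+2(\Ric(g_{C,i})-\Ric(g_C))+(e_i-e)$
has no degree-minus-one terms. The tail bound $\|e_i-e\|_{C^{m,\alpha}_{-2}(\{r\ge R\})}\le C_mR^{-2}$, uniform in $i$, combined with compact convergence gives smallness in every $E_m$. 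No radial ODE or Schauder argument is needed here.
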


The neighborhood is taken among all complete shrinkers of the given
dimension. In particular, the comparison shrinker is not required
beforehand to be AC or to have the same asymptotic cone. This differs
from uniqueness with a prescribed cone at infinity. The theorem is
proved in \cref{thm:AC-rigidity} by passing from pointed convergence
to a global comparison in the spaces used by the slice.

The compactness theory of Li--Li--Wang
\cite{liliwang2021structure} first gives smooth convergence on compact
sets near the smooth limit. Shrinker pseudolocality
\cite{li2020heat,liwang2024heatII} propagates curvature control from
a large level set along the soliton flow. It follows that sufficiently
nearby shrinkers are AC, and canonical end coordinates extend the
compact comparison to a global diffeomorphism. The asymptotic
expansions then give convergence in every fixed $E_m$; this is the content of
\cref{fik:lem:comparison}.

After applying \cref{thm:intro-slice}, the metric difference is
weighted divergence free. When $\operatorname{ISD}=0$, the
linearized soliton equation has a coercive estimate in this gauge.
For the soliton defect
$\Phi(g,f)=\frac12g-\Ric_g-\nabla_g^2f$, its nonlinear form is
\begin{equation}\label{eq:intro-rigidity-estimate}
    \|h\|_{H^2_{\hat f}}+\|\mathrm du\|_{H^1_{\hat f}}
    \leq C\|\Phi(\hat g+h,\hat f+u)\|_{L^2_{\hat f}},
\end{equation}
provided that $h$ and $\mathrm du$ belong to the indicated Sobolev
spaces, $\hat\delta_{\hat f}h=0$, and $h$ is sufficiently small in
$C^2$; see \cref{fik:prop:rigidity}.
This adapts the rigidity estimate of
Li--Zhang \cite[Theorem~4.11]{li2023rigidity} to the global AC slice.
Only the metric perturbation must be small in $C^2$. This is useful
because the bounded radial displacement of the slice can produce a
degree-one term in the potential difference. The Gaussian norms in
\cref{eq:intro-rigidity-estimate} accommodate this growth. The
nonlinear remainder is absorbed by metric smallness, and an exact
shrinker in the slice has $h=0$ and $\mathrm du=0$.

The four-dimensional FIK shrinker on
$\mathcal O(-1)\to\mathbb{CP}^1$ satisfies the hypothesis
$\operatorname{ISD}=0$. Indeed, Naff--Ozuch
\cite[Theorem~1]{naff2025linear} prove strict negativity of the
entropy Hessian on the subspace orthogonal to the Ricci tensor and
infinitesimal diffeomorphisms. An infinitesimal solitonic deformation
lies in this subspace, so their result gives the following consequence.

\begin{Corollary}[Rigidity of the FIK shrinker]\label{cor:intro-FIK}
Let $(M^4,\hat g,\hat f)$ be the four-dimensional FIK shrinker and
$\hat p\in\arg\min\hat f$. Every complete four-dimensional Ricci
shrinker sufficiently close to $(M,\hat g,\hat p)$ in the pointed
Gromov--Hausdorff topology, with its basepoint chosen at a minimum of
its potential, is isometric to $(M,\hat g)$.
\end{Corollary}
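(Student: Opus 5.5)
The corollary follows from \cref{thm:intro-rigidity} once we check its hypotheses for the four-dimensional FIK shrinker on $\mathcal O(-1)\to\mathbb{CP}^1$.
First, $(M^4,\hat g,\hat f)$ is complete, nontrivial and AC. It is non-Gaussian, and outside a compact set it is asymptotic to the regular cone over $S^3/\mathbb Z_2$ with a $U(2)$-invariant link metric; this is recorded in \cite{feldman2003rotationally}. The only substantive hypothesis is therefore $\operatorname{ISD}(\hat g,\hat f)=\{0\}$.

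To verify it, take $h\in\operatorname{ISD}(\hat g,\hat f)$, so $h\in H^2_{\hat f}$, $\hat Lh=0$ and $\hat\delta_{\hat f}h=0$. The second variation of Perelman's entropy at a shrinker, restricted to weighted divergence-free tensors, is a positive multiple of $\int_M\langle \hat Lh,h\rangle e^{-\hat f}\,\mathrm dV_{\hat g}$ plus a term that involves only the $L^2_{\hat f}$-projection of $h$ onto $\Ric_{\hat g}$. Hence $h$ lies in the null space of this quadratic form. It remains to place $h$ in the subspace used by Naff--Ozuch.

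Orthogonality to infinitesimal diffeomorphisms $\hat\delta^*Z$ follows by integrating by parts:
\[
(h,\hat\delta^*Z)_{L^2_{\hat f}}=(\hat\delta_{\hat f}h,Z)_{L^2_{\hat f}}=0 .
\]
The boundary terms vanish because the Gaussian weight dominates the polynomial growth of $h\in H^2_{\hat f}$ and of the admissible $Z$; the decay estimates for the weighted spaces developed earlier in the paper justify this.

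Orthogonality to $\Ric_{\hat g}$ uses the identity $\hat L\Ric_{\hat g}=\Ric_{\hat g}$ for shrinkers, together with the self-adjointness of $\hat L$ in $L^2_{\hat f}$:
\[
(h,\Ric_{\hat g})_{L^2_{\hat f}}=(h,\hat L\Ric_{\hat g})_{L^2_{\hat f}}=(\hat Lh,\Ric_{\hat g})_{L^2_{\hat f}}=0 .
\]
Again the integration by parts is justified by the quadratic curvature decay $|\Rm|\le Cr^{-2}$ and the Gaussian weight.

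Thus $h$ lies in the subspace orthogonal to $\Ric_{\hat g}$ and to $\im\hat\delta^*$, and the entropy Hessian vanishes on $h$. By \cite[Theorem~1]{naff2025linear} the Hessian is strictly negative on that subspace, which forces $h=0$. Hence $\operatorname{ISD}(\hat g,\hat f)=\{0\}$, and \cref{thm:intro-rigidity} applied with $\hat p\in\arg\min\hat f$ gives the claimed pointed Gromov--Hausdorff rigidity among complete four-dimensional shrinkers based at minima of their potentials.

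The main obstacle is matching function spaces and conventions with \cite{naff2025linear}: their strict negativity must be formulated on a Gaussian Sobolev class containing $H^2_{\hat f}$, or we need density of compactly supported tensors in the relevant subspace. I would handle this by approximating $h$ with cutoffs $\chi_Rh$, projecting them back onto the orthogonal subspace, and using the integration-by-parts estimates above to show that the Hessian and the projections converge as $R\to\infty$.
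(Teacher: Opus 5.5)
Your argument is the paper's: you reduce to \cref{thm:AC-rigidity}, obtain $h\perp\Ric_{\hat g}$ from self-adjointness of $\hat L$ and the fact that $\Ric_{\hat g}$ is an eigentensor, obtain orthogonality to infinitesimal diffeomorphisms from $\hat\delta_{\hat f}h=0$, and conclude $h=0$ from the strict negativity in \cite[Theorem~1]{naff2025linear}. Two slips do not affect the argument. With $\hat L=\frac12\hat\Delta_{\hat f}+\Rm_{\hat g}$ the identity is $\hat L\Ric_{\hat g}=\frac12\Ric_{\hat g}$; your version holds for $2\hat L$, and any nonzero eigenvalue suffices. The link of the FIK end on $\mathcal O(-1)$ is $S^3$ rather than $S^3/\mathbb Z_2$, which plays no role.
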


The weighted estimates and the fixed-order slice also provide analytic
inputs for the entropy inequalities and strong uniqueness results in the
sequel \cite{LYZ-II}. The scalar entropy spaces, tensor normal
estimates, and the corresponding metric class are developed there.

The paper is organized as follows. \Cref{sec:preliminaries} recalls
the shrinker identities and canonical AC geometry.
\Cref{sec:weighted-Sobolev} develops weighted Sobolev and H\"older
spaces. \Cref{sec:Schauder-estimates} proves weighted Schauder
estimates and finite asymptotic expansions.
\Cref{sec:fixed-Banach} establishes the finite terminal estimates and
the gauge graph. \Cref{sec:slice-theorem} proves the weighted slice,
its geometric consequences, and the rigidity application.

\medskip
\noindent\textbf{Acknowledgements.}
Yu Li is supported by the National Key Research and Development Program
of China (Grant No.\ 2025YFA1018200), NSFC-12522105, and research funds
from the University of Science and Technology of China and the Chinese
Academy of Sciences.

\medskip
\noindent\textbf{AI declaration.}
This paper originated in a program begun by the authors in early 2025.
The authors developed the overall strategy and main ideas, together with
the corresponding arguments in this paper. Beginning in August 2026,
the authors used ChatGPT to assist with implementing some of the
computations, mathematical exposition, revision, and drafting, in particular
in Section 5. The authors
take full responsibility for the mathematical content and correctness.

\section{Preliminaries}\label{sec:preliminaries}
\subsection{Conventions}
\phantomsection\label{idx:conventions}
We consider a weighted manifold $(M,g,f)$, where $M$ is connected of dimension $n\geq3$, $g$ is a
Riemannian metric, and $f\in C^\infty(M)$.

\begin{Definition}\label{def:weighted-conventions}\label{idx:weighted-sobolev}
    Write $T^{p,q}M=(TM)^{\otimes p}\otimes(T^*M)^{\otimes q}$.
    For a tensor bundle $E$ with its induced metric and connection, set
    \begin{equation*}
        \mathrm{d}V_f=(4\pi)^{-n/2}e^{-f}\,\mathrm{d}V_g,
        \qquad
        \inn{u}{v}_f=\int_M\inn{u}{v}\,\mathrm{d}V_f.
    \end{equation*}
    For $k\geq0$, the weighted Sobolev space $H^k_f(M,E)$ consists of
    sections with weak covariant derivatives through order $k$ and finite norm
    \begin{equation*}
        \norm{u}_{H^k_f}^2
        =\sum_{j=0}^k\int_M|\nabla^j u|^2\,\mathrm{d}V_f.
    \end{equation*}
    Its inner product is defined by the corresponding sum of pairings.
    We set $L^2_f=H^0_f$ and also write $\inn{\cdot}{\cdot}_{L^2_f}=\inn{\cdot}{\cdot}_f$.
    For $q\geq1$, the weighted divergence is
    \begin{equation*}
        (\delta_fu)^{i_1\cdots i_p}_{j_2\cdots j_q}
        =\nabla^a u^{i_1\cdots i_p}_{a j_2\cdots j_q}
        -(\nabla^af)u^{i_1\cdots i_p}_{a j_2\cdots j_q}.
    \end{equation*}
    When $f$ is constant, we omit the subscript and write $\delta$. We set
    $\Delta_f=\Delta-\nabla_{\nabla f}$, where $\Delta=\tr_g\nabla^2$.
    With respect to $\inn{\cdot}{\cdot}_f$, the formal adjoints of
    $\delta_f$ on covariant tensors, differential forms, and symmetric tensors
    are respectively $-\nabla$, $-\mathrm{d}$, and
    $\delta^*:=-\mathrm{Sym}\circ\nabla$.
\end{Definition}
In particular, $\delta_f\mathrm{d}u=\Delta_fu$ for functions, and
\begin{equation*}
    (\delta^*\omega)_{ij}=-\frac12(\nabla_i\omega_j+\nabla_j\omega_i),
    \qquad \mathcal L_{\omega^\sharp}g=-2\delta^*\omega.
\end{equation*}
These conventions agree with \cite[Definition~2.2]{li2023rigidity}.

We use $S*T$ schematically for a linear combination of contractions of
$S\otimes T$ with the metric. For a vector field $X$ and a $(p,q)$-tensor $u$,
\begin{equation}\label{eq:difference-of-Lie-and-covariant-derivative}
    (\mathcal L_X-\nabla_X)u=\nabla X*u.
\end{equation}
In this identity, an endomorphism acts by its negative on each contravariant
slot and by its dual on each covariant slot. On a general weighted manifold, this gives
\begin{equation*}
        (\mathcal L_{\nabla f}-\nabla_{\nabla f})u =\frac{q-p}{2}u-\Ric(g)*u
        -\left(\frac12g-\Ric(g)-\nabla^2f\right)*u.
\end{equation*}
Here symmetric two-tensors act after raising an index with $g$.
In particular, on a Ricci shrinker,
\begin{equation}\label{eq:soliton-Lie-contraction}
    (\mathcal L_{\nabla f}-\nabla_{\nabla f})u
    =\frac{q-p}{2}u-\Ric(g)*u,
\end{equation}
where the Ricci endomorphism acts on the slots in the same way.

\subsection{Ricci shrinkers and the associated Ricci flows}\label{sec:Ricci-flow}
\begin{Definition}
    A Ricci shrinker $(M^n,g,f)$ is a smooth Riemannian manifold $(M,g)$
    with $f\in C^\infty(M)$ satisfying
    \begin{equation}\label{eq:Ricci-shrinker}
        \Ric+\nabla^2f=\frac12g.
    \end{equation}
    Throughout the paper, the potential is normalized by
    \begin{equation}\label{eq:normalization-of-potential}
        \int_M \,\mathrm{d}V_f=1.
    \end{equation}
    With this convention,
    \begin{equation}\label{eq:normalization-of-potential-second}
        \mathrm{R}+|\nabla f|^2=f-\boldsymbol{\mu}(g),
    \end{equation}
    where $\boldsymbol{\mu}(g)$ is the shrinker entropy; see
    \cite[Proposition~3 and \S\,5]{li2020heat} for its properties.
\end{Definition}
Let $(M^n,g,f)$ be a complete Ricci shrinker.\phantomsection\label{idx:associated-Ricci-flow}
Then $\frac{1}{-t}\nabla f$ generates a family of diffeomorphisms $\{\Psi_t\}_{-\infty<t<0}$ with $\Psi_{-1}=\mathrm{id}$.
That is,
\begin{equation*}
	\frac{\partial}{\partial t}\Psi_t(x)
	=
	\frac{1}{-t}\nabla f(\Psi_t(x)).
\end{equation*}
Then $g(t):=(-t)\Psi_t^*g$ is a Ricci flow with $g(-1)=g$.
Set $f(t):=\Psi_t^*f$.
Then
\begin{equation*}
	\Ric(g(t))+\nabla^2_{g(t)}f(t)=\frac{g(t)}{-2t}.
\end{equation*}

Every complete Ricci shrinker has $\mathrm R\geq0$ by
\cite[Corollary~2.5]{chen2009strong}.  Moreover, the strong maximum
principle applied to $\Delta_f\mathrm R=\mathrm R-2|\Ric|^2$ shows that
$\mathrm R>0$ unless $\Ric=0$ and $\nabla^2f=g/2$, in which case the
complete shrinker is Gaussian.  In particular,
$|\nabla f|^2\leq f-\boldsymbol\mu(g)$ throughout.

Recall the potential $f$ of a complete Ricci shrinker always attains its infimum at some point $p_0\in M$ and satisfies the quadratic growth estimate of Cao--Zhou
\cite[Theorem~1.1]{cao2010complete} and Haslhofer--M\"uller
\cite[Lemma~2.1]{haslhofer2011compactness}
\begin{equation}\label{ineq:quadratic-growth-of-potential}
    \frac{1}{4}(d_g(x,p_0)-5n)_+^2
    \leq f(x)-\boldsymbol{\mu}(g)
    \leq \frac{1}{4}(d_g(x,p_0)+\sqrt{2n})^2.
\end{equation}

\subsection{AC Ricci shrinkers}\label{sec:AC-shrinkers}
\phantomsection\label{idx:cone-metric}
Let $(\Sigma^{n-1},g_{\Sigma})$ be a compact, possibly disconnected manifold and $(\mathscr{C},g_C)$ be the Riemannian cone with link $\Sigma$, i.e., $\mathscr{C}=(0,\infty)\times\Sigma$ and $g_C=\mathrm{d}r^2+r^2g_\Sigma$.
For $\lambda>0$, define $D_\lambda:\mathscr{C}\to\mathscr{C}$ as $D_\lambda(r,\theta)=(\lambda r,\theta)$.
Denote the subspace $(\rho,\infty)\times\Sigma$ of $\mathscr{C}$ by $\mathscr{C}_{>\rho}$.

\begin{Definition}
    We say a Riemannian manifold $(M^n,g)$ is \textbf{asymptotic} to the Riemannian cone $(\mathscr{C},g_C)$ if there exists a diffeomorphism $\Phi:\mathscr{C}_{>\rho}\to M\setminus K$ such that
    \begin{equation*}
        \lambda^{-2}D^*_{\lambda}\Phi^*g\to g_C
        \qquad\text{in }C^2_{\mathrm{loc}}(\mathscr{C})
    \end{equation*}
    as $\lambda\to\infty$.
    Here $K$ is a compact subset of $M$ and $\rho>0$ is a constant.
\end{Definition}
The finitely many connected components of $\Sigma$ parametrize the ends of $M$.
For a complete noncompact Ricci shrinker, being AC in this sense is
equivalent to quadratic curvature decay,
\begin{equation*}
    A_0(g):=\sup_{x\in M}|\Rm(g)|_g(x)d_g(p_0,x)^2<\infty,
\end{equation*}
by \cite[\S\,2.2 and Appendix~A]{kotschwar2015rigidity}.
When $A_0(g)<\infty$, the potential is proper and has no critical points
outside a compact set, since $|\nabla f|^2=f-\boldsymbol\mu(g)-\mathrm R$
and the scalar curvature is bounded. A sufficiently high regular level set
therefore has finitely many components, and the cited construction applies
on each corresponding end.
Applying Shi's local derivative estimates \cite{shi1989deforming} to the
self-similar Ricci flow gives
\begin{equation}\label{ineq:quadratic-curvature-decay}
    A_k(g):=\sup_{x\in M}|\nabla^k\Rm(g)|_g(x)d_g(p_0,x)^{2+k}<\infty
\end{equation}
for every $k\geq0$; see \cite[\S\,2.2.3]{kotschwar2015rigidity}.
Note also there is a constant $a(g)>0$ such that $\mathrm{inj}_g(x)\geq ad_g(p_0,x)$.

We call an AC Ricci shrinker \emph{nontrivial}\phantomsection\label{idx:nontrivial-AC} if it is not the Gaussian
soliton. Note that a nontrivial AC shrinker does not split off a line.

On the cone, radial parallel transport defines the extension
\begin{equation}\label{eq:parallel-transport-along-radial-direction}
    \boldsymbol\tau:C^\infty(\Sigma,T^{p,q}\mathscr C|_\Sigma)
    \longrightarrow C^\infty(\mathscr C,T^{p,q}\mathscr C).
\end{equation}
It includes radial as well as tangential components. For example,
$\boldsymbol\tau(a\,\mathrm{d}r+b)=a\,\mathrm{d}r+r\pi^*b$, where
$\pi:\mathscr C\to\Sigma$ is the projection,
$a\in C^\infty(\Sigma)$, and
$b\in C^\infty(\Sigma,T^*\Sigma)$.
\Needspace{4\baselineskip}
\begin{Definition}\label{def:degree}
    A tensor $u=r^\gamma\boldsymbol\tau(\bar u)$ is homogeneous of degree
    $\gamma\in\mathbb R$. For a finite set $\Lambda\subset\mathbb R$, a
    tensor homogeneous of degrees in $\Lambda$ is a sum
    $u=\sum_{\gamma\in\Lambda}u_\gamma$ of such tensors.
\end{Definition}
Covariant differentiation on the cone lowers each homogeneous degree by one.

We have the following gauge at infinity from
\cite[Theorem~3.8 and \S\,3.3]{conlon2024classification}.

\begin{Theorem}[Gauge at infinity]\label{thm:canonical-asymptotic-chart}
    Let $(M^n,g,f)$ be a complete AC Ricci shrinker with asymptotic cone $(\mathscr{C}^n,g_C)$, whose link we denote by $\Sigma^{n-1}$.
    Then there exist a constant $\underline{r}(n,A_0,a)>0$ and a smooth embedding $\iota:(\underline{r},\infty)\times\Sigma\to M$ such that the following holds.
    \begin{enumerate}[label=\textnormal{(\roman{*})},ref=\roman{*}]
        \item $M\setminus\iota((r,\infty)\times\Sigma)$ is compact for all $r\geq\underline{r}$.
        \item \label{thm:canonical-asymptotic-chart-gradient-f} $\iota^*(\nabla^g f)=\frac{1}{2}r\partial_r$.
        \item For all $k\geq 0$ and $r\geq\underline{r}$, we have
        \begin{equation}\label{ineq:asymptotic-rate}
    		\abs{\nabla^{C,k}(\iota^*g-g_C-2\Ric(g_C))}_{g_C}
    		\leq C_k(n,A_0,a)r^{-4-k}.
    	\end{equation}
    \end{enumerate}
\end{Theorem}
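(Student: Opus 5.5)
We reduce the statement to the construction of Kotschwar--Wang and Conlon--Deruelle--Sun, working on one end at a time. The gauge is built from the soliton flow of $\nabla f$ itself.

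\textbf{Step 1: the chart.} Since $A_0(g)<\infty$, the potential is proper, and we have $|\nabla f|^2=f-\boldsymbol\mu-\mathrm R$ with $0\le\mathrm R\le C(A_0)/f$. Hence $\nabla f\neq0$ on $\{f>\underline r^2/4\}$ for $\underline r$ large. Define the radial function $r:=2\sqrt{f-\boldsymbol\mu}$, or a normalized variant. Then
\[
\nabla f(r)=|\nabla f|^2/\sqrt{f-\boldsymbol\mu}=\tfrac r2\bigl(1-O(r^{-4})\bigr).
\]
For property (ii) we need the exact identity $\nabla f(r)=r/2$. So we define $\iota$ differently: we flow a fixed high level set $\Sigma=\{f=c\}$ along $\nabla f/|\nabla f|^2$ suitably reparametrized. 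Concretely, set $\iota(r,\theta)=\phi_{2\log(r/r_0)}(\theta)$, where $\phi_s$ is the flow of $\nabla f$. Then $\iota_*(r\partial_r)=2\nabla f$, which gives (ii) immediately. Completeness of $\nabla f$ and properness of $f$ give that $\iota$ is an embedding whose image has compact complement, which is (i). The link metric $g_\Sigma$ is defined as the limit $\lim_{r\to\infty} r^{-2}\iota^*g|_{\{r\}\times\Sigma}$. This limit exists by the AC hypothesis, via the Kotschwar--Wang identification of the flow lines with cone rays.

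\textbf{Step 2: the ODE along rays.} Write $\hat g=\iota^*g$. From (ii) and the soliton equation,
\[
\mathcal L_{r\partial_r}\hat g=2\mathcal L_{\nabla f}g=4\nabla^2 f=2\hat g-4\Ric(\hat g).
\]
Setting $k=\hat g-g_C$, and using $\mathcal L_{r\partial_r}g_C=2g_C$, we obtain
\[
(\mathcal L_{r\partial_r}-2)k=-4\Ric(\hat g).
\]
The cone Ricci tensor is homogeneous: $\mathcal L_{r\partial_r}\Ric(g_C)=0$. Hence $(\mathcal L_{r\partial_r}-2)(2\Ric(g_C))=-4\Ric(g_C)$. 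The difference $w=k-2\Ric(g_C)$ therefore satisfies
\[
(\mathcal L_{r\partial_r}-2)w=-4\bigl(\Ric(\hat g)-\Ric(g_C)\bigr).
\]

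\textbf{Step 3: bootstrapping decay (main obstacle).} First obtain $|k|_{g_C}=O(r^{-2})$ with derivatives. This follows by integrating $(\mathcal L_{r\partial_r}-2)k=O(r^{-2})\hat g$, using the curvature bounds \eqref{ineq:quadratic-curvature-decay}. In the frame scaling, the ODE reads
\[
\partial_s(e^{-2s}k)=O(e^{-2s}r^{-2})\cdot r^2,
\]
and integrating from $\infty$ uses that $e^{-2s}k\to0$ by the AC definition.

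This bound gives $\Ric(\hat g)-\Ric(g_C)=O(r^{-4})$, because the linearized Ricci operator applied to $k$ loses two derivatives, i.e.\ a factor $r^{-2}$. Integrating the equation for $w$ backward from infinity then yields $|w|=O(r^{-4})$ in the $g_C$ norm; the $g_C$-norm of $w$ gains the homogeneity. The key point is that $\mathcal L_{r\partial_r}-2$ acts on $r^\gamma$-homogeneous tensors (in the sense of \cref{def:degree}, with $g_C$ itself of degree $0$) by the multiplier $\gamma$. Integration is therefore invertible for decaying rates without loss of derivatives.

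Derivative bounds $\nabla^{C,j}w=O(r^{-4-j})$ come from commuting $\nabla^{C,j}$ with $\mathcal L_{r\partial_r}$ and using $A_k$ for all $k$. Constants depend on $(n,A_0,a)$ through the higher $A_k$, which Shi's estimates bound in terms of $A_0$. The delicate part is keeping the loss of derivatives in the Ricci difference under control uniformly at each order; I would handle it by inducting on the derivative order, at each stage using the previous stage's full expansion.
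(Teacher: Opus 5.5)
There is a genuine gap in Step~1, and it carries over into Step~3: the radial coordinate is normalized incorrectly. With $\iota(r,\theta)=\phi_{2\log(r/r_0)}(\theta)$ and $\theta$ in the level set $\{f=c\}$, the radius is constant on that level set, and this is the wrong choice. Along $\phi_s$ one has $\frac{\mathrm d}{\mathrm ds}(f-\boldsymbol\mu)=|\nabla f|^2=f-\boldsymbol\mu-\mathrm R$, and $\iota_*\partial_r=\frac2r\nabla f$. Hence
\[
 \hat g(\partial_r,\partial_r)=\frac{4|\nabla f|^2}{r^2}\circ\iota\longrightarrow\frac{4A(\theta)}{r_0^2},
 \qquad
 A(\theta):=c-\boldsymbol\mu-\int_0^\infty e^{-\sigma}\mathrm R(\phi_\sigma(\theta))\,\mathrm d\sigma .
\]
Similarly, $\hat g(\partial_r,r^{-1}\partial_{\theta^i})=\frac{2}{r^2}\partial_{\theta^i}(f\circ\iota)\to\frac{2}{r_0^2}\partial_{\theta^i}A$. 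Since $g_C(\partial_r,\partial_r)=1$ and $\Ric(g_C)$ has no radial components, (iii) forces $A$ to be constant. This fails in general. To leading order in $c$ the integral is $(\mathrm R_{g_\Sigma}(\theta)-(n-1)(n-2))/(8(c-\boldsymbol\mu))$, so $A$ varies whenever the link has nonconstant scalar curvature. It is constant in cohomogeneity-one examples such as FIK, but not for a general AC shrinker.

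Equivalently, your chart is the canonical chart precomposed with the non-isometric cone map $(r,\theta)\mapsto(\lambda(\theta)r,\theta)$, where $\lambda=2\sqrt{A}/r_0$. The asymptotic cone therefore appears as $(\lambda\,\mathrm dr+r\,\mathrm d\lambda)^2+\lambda^2r^2g_\Sigma$. This is not of the form $\mathrm dr^2+r^2\tilde g_\Sigma$ for any link metric $\tilde g_\Sigma$. In particular, your claim in Step~3 that $e^{-2s}k\to0$ ``by the AC definition'' is false in your chart. The AC hypothesis gives convergence in \emph{some} chart, not this one. Moreover, the degree-zero part of $k$ is exactly the kernel of $\mathcal L_{r\partial_r}-2$, so integrating from infinity cannot remove it.

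The missing idea is to prescribe the radius trajectory by trajectory. Require $\nabla f(r)=r/2$ and $f-\boldsymbol\mu-r^2/4\to0$ along every flow line, that is,
\[
 \frac{r^2}{4}=f-\boldsymbol\mu-\int_0^\infty e^{-\sigma}\mathrm R(\phi_\sigma(x))\,\mathrm d\sigma .
\]
This is the normalization in \eqref{fik:eq:radius-seed}--\eqref{fik:eq:radius-flow}, with $t=-e^{-\sigma}$. It is also the one in the Conlon--Deruelle--Sun construction, which the paper cites rather than reproving the theorem. With it, $f-\boldsymbol\mu-\frac{r^2}{4}=\int_0^\infty e^{-\sigma}\mathrm R(\phi_{\sigma}(x))\,\mathrm d\sigma=O(r^{-2})$, including angular derivatives. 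Consequently $\hat g(\partial_r,\partial_r)=1+O(r^{-4})$ and $\hat g(\partial_r,\partial_{\theta^i})=O(r^{-3})$.

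The degree-zero part of $k$ is then purely tangential and vanishes by your definition of $g_\Sigma$. From there your Steps~2--3 go through: the identity $(\mathcal L_{r\partial_r}-2)w=-4(\Ric(\hat g)-\Ric(g_C))$, the multiplier $\gamma$ on degree-$\gamma$ tensors, and the two-stage bootstrap $O(r^{-2})\Rightarrow O(r^{-4})$. The derivative bounds are routine. You can differentiate in the angular variables, which commute with $r\partial_r$, or use the rescaled Ricci flow as in \cref{prop:asymptotic-expansion-of-shrinker-metric}.
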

Under the above coordinates at infinity, the family of diffeomorphisms $\{\Psi_t\}_{-\infty<t<0}$ generated by $\frac{1}{-t}\nabla^g f$ with $\Psi_{-1}=\mathrm{id}$ takes the form $(r,\theta)\mapsto(\frac{r}{\sqrt{-t}},\theta)$, i.e.,
\begin{equation}\label{eq:diffeomorphisms-under-asymptotic-coordinates}
    \Psi_t(\iota(r,\theta))=\iota(\frac{r}{\sqrt{-t}},\theta).
\end{equation}
This identity holds whenever $r>\underline r$ and
$r/\sqrt{-t}>\underline r$; in particular, it holds on the whole end
for $-1\leq t<0$.
After pulling back by $\iota$, the canonical Ricci flow is
\begin{equation*}
    g(t)=(-t)D_{(-t)^{-1/2}}^*g,\qquad -1\leq t<0.
\end{equation*}
It converges smoothly on compact subsets of the end to $g_C$ as $t\uparrow0$.
Thus the asymptotic cone metric is the time-zero limiting metric of the
self-similar Ricci flow in this gauge.

One may choose $\underline{r}$ so that $\iota^*g$ and $g_C$ are uniformly equivalent, say $\frac{9}{10}g_C\leq\iota^*g\leq\frac{11}{10}g_C$.
Thus when dealing with estimates on tensors, we simply write $|\cdot|$, which can be either $|\cdot|_{\iota^*g}$ or $|\cdot|_{g_C}$.
On the canonical end, write
\begin{equation*}
    \mathbf{X}:=\nabla^g f=\frac r2\partial_r.
\end{equation*}
\phantomsection\label{idx:canonical-radial-field}
We record some consequences of \cref{eq:diffeomorphisms-under-asymptotic-coordinates} as follows.
\begin{Lemma}\label{lem:asymptotic-rate-of-connection}
    Let $T$ be the $(1,2)$-tensor defined by $T_{ij}^{k}:=\Gamma_{ij}^{k}(g)-\Gamma_{ij}^{k}(g_C)$.
    Then for all $k\geq 0$,
    \begin{align*}
        |\nabla^{C,k}T|=O(r^{-3-k}),  \\
        |\nabla^{C,k}(\nabla^g f\lrcorner T+\Ric(g_C))|=O(r^{-4-k}),  \\
        |\nabla^{C,k}(\Rm(g)-\Rm(g_C))|=O(r^{-4-k}),  \\
        |\nabla^{C,k}(f-\boldsymbol{\mu}(g)-\frac{1}{4}r^2)|
		=O(r^{-2-k}).
    \end{align*}
    Here all the implicit constants depend only on $n,A_0,a$ and $k$.
    In the second inequality, $\nabla^gf\lrcorner T$ denotes the $(1,1)$-tensor $T(\nabla^gf,\cdot)$ and $\Ric(g_C)$ stands for the Ricci transformation.
\end{Lemma}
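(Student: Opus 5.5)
The four estimates all follow from \cref{thm:canonical-asymptotic-chart}. I will write $\iota^*g=g_C+2\Ric(g_C)+E$ with $|\nabla^{C,k}E|=O(r^{-4-k})$, and use that every cone quantity is homogeneous. Since $\Rm(g_C)$ is homogeneous of degree $-2$ as a $(0,4)$-tensor, $\Ric(g_C)$ is a symmetric two-tensor with $|\nabla^{C,k}\Ric(g_C)|=O(r^{-2-k})$.

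For the connection I will use the standard formula
\[
T^k_{ij}=\tfrac12 g^{kl}\bigl(\nabla^C_i h_{jl}+\nabla^C_j h_{il}-\nabla^C_l h_{ij}\bigr),\qquad h=\iota^*g-g_C .
\]
Since $|\nabla^{C,k}h|=O(r^{-2-k})$ and $g^{-1}$ has bounded $\nabla^C$-derivatives with the same decay structure, this gives $|\nabla^{C,k}T|=O(r^{-3-k})$. The curvature difference is $\Rm(g)-\Rm(g_C)=\nabla^C T+T*T$, which at first sight is only $O(r^{-4})$ at order zero; more precisely the leading part is $\nabla^C$ of the degree $-3$ piece of $T$ coming from $2\Ric(g_C)$. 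That is already $O(r^{-4-k})$ after $k$ derivatives, so the third estimate follows directly from the formula and the first bound.

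The potential bound is scaling. Applying $\nabla f=\tfrac r2\partial_r$ to $\iota^*g$ gives $|\nabla f|^2=\tfrac{r^2}{4}\,\iota^*g(\partial_r,\partial_r)$. Now $\Ric(g_C)(\partial_r,\cdot)=0$ on a cone, so $\iota^*g(\partial_r,\partial_r)=1+O(r^{-4})$, and \cref{eq:normalization-of-potential-second} gives
\[
f-\boldsymbol\mu=\mathrm R+|\nabla f|^2=\tfrac{r^2}{4}+O(r^{-2}),
\]
using $\mathrm R=O(r^{-2})$. For the derivatives I will differentiate instead $\mathrm df=\iota^*g(\tfrac r2\partial_r,\cdot)$. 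Its difference from $\mathrm d(r^2/4)=\tfrac r2\,\mathrm dr$ is $\tfrac r2\,(2\Ric(g_C)+E)(\partial_r,\cdot)=\tfrac r2 E(\partial_r,\cdot)=O(r^{-3})$, and each further $\nabla^C$ derivative gains $r^{-1}$. This yields the fourth estimate for $k\geq1$.

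The second estimate is the main point. I will use the soliton equation $\nabla^2_gf=\tfrac12 g-\Ric(g)$. Writing $\nabla^g=\nabla^C+T$ and $X=\tfrac r2\partial_r$, the $(1,1)$-tensor $\nabla^g X$ equals $\nabla^C X+X\lrcorner T$. On the cone $\nabla^C(r\partial_r)=\mathrm{id}$, so $\nabla^C X=\tfrac12\mathrm{id}$. The soliton equation with an index raised gives $\nabla^gX=\tfrac12\mathrm{id}-\Ric(g)^\sharp$, hence
\[
X\lrcorner T=-\Ric(g)^{\sharp_g}=-\Ric(g_C)+O(r^{-4}).
\]
Here I use that $\Ric(g)-\Ric(g_C)=O(r^{-4})$ by the third estimate, and that raising with $g$ instead of $g_C$ costs $h*\Ric=O(r^{-4})$. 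The derivative bounds follow the same way, since each term is a product of controlled quantities. I must also take care that $T(X,\cdot)$ is contracted in the slot that matches $\nabla_\cdot X$; the symmetry $T_{ij}^k=T_{ji}^k$ makes the choice immaterial.

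I expect the main obstacle to be this bookkeeping: checking that the $O(r^{-2})$ term $2\Ric(g_C)$ has no radial component, and that $\Ric(g)$ agrees with $\Ric(g_C)$ to order $r^{-4}$. Both need the cone identities $\iota_{\partial_r}\Rm(g_C)=0$ and the homogeneity of $\Ric(g_C)$. The constants depend only on $n,A_0,a,k$ because those of \cref{thm:canonical-asymptotic-chart} do.
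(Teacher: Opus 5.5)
Your proof is correct, but it takes a different route from the paper's for the curvature estimate and for the derivative part of the potential estimate. The first estimate, the second estimate (via $\nabla^g\mathbf X-\nabla^C\mathbf X=T(\mathbf X,\cdot)=-\Ric(g)^\sharp$ and $\nabla^C\mathbf X=\tfrac12\mathrm{Id}$), and the zeroth-order identity $f-\boldsymbol\mu(g)-\tfrac14r^2=\mathrm R_g+E(\mathbf X,\mathbf X)$ are exactly the paper's arguments.

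\textbf{Curvature estimate.} The paper does not use the static curvature formula. It writes $\partial_t\Rm(g(t))=\Delta\Rm+\Rm*\Rm$ for the self-similar flow $g(t)=(-t)D^*_{(-t)^{-1/2}}g$. By scaling it shows $|\nabla^{C,k}\partial_t\Rm(g(t))|\le C_kr^{-4-k}$ uniformly for $t\in[-1,0)$, and then integrates up to $t=0$ using smooth convergence to $g_C$. You instead use $\Rm(g)-\Rm(g_C)=\nabla^CT+T*T$ together with $|\nabla^{C,k+1}T|=O(r^{-4-k})$, which is already everything required. Your remark that this is ``at first sight only $O(r^{-4})$'' is unnecessary, since $O(r^{-4-k})$ is exactly the claim. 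If $\Rm$ denotes the covariant $(0,4)$ tensor, you should add the index-lowering term $(g-g_C)*\Rm(g_C)$; it is also $O(r^{-4-k})$. Your route needs only the chart estimate for $g-g_C-2\Ric(g_C)$ and homogeneity of the cone curvature, and it is shorter. The paper's route uses the curvature-derivative bounds $A_k$ and the flow, in the same spirit as the Taylor argument of the next proposition, but it yields nothing stronger here.

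\textbf{Derivatives of the potential.} You use the exact identity $\mathrm d(f-\tfrac14r^2)=(g-g_C)(\mathbf X,\cdot)=E(\mathbf X,\cdot)$, which relies on $\Ric(g_C)(\partial_r,\cdot)=0$. This avoids the separate bound $|\nabla^{C,j}\mathrm R_g|=O(r^{-2-j})$ that the paper's Leibniz expansion of $\mathrm R_g+E(\mathbf X,\mathbf X)$ needs. Your statement that each further $\nabla^C$ derivative gains $r^{-1}$ is justified because $\nabla^C\mathbf X=\tfrac12\mathrm{Id}$ and $\nabla^{C,j}\mathbf X=0$ for $j\geq2$.
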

\begin{proof}
    Write $g$ for $\iota^*g$ and use $\nabla^C$ for the cone connection.
    The connection difference satisfies
    \begin{equation*}
        T_{ij}^k=\frac12g^{k\ell}
        (\nabla^C_i g_{j\ell}+\nabla^C_j g_{i\ell}-\nabla^C_\ell g_{ij}).
    \end{equation*}
    The first estimate follows by differentiating this identity and using
    \cref{ineq:asymptotic-rate}.

    We next prove the third estimate. For the covariant curvature tensor,
    the Ricci flow equation gives schematically
    \begin{equation*}
        \partial_t\Rm(g(t))=\Delta^{g(t)}\Rm(g(t))+\Rm(g(t))*\Rm(g(t)).
    \end{equation*}
    Put $Q=\Delta^g\Rm(g)+\Rm(g)*\Rm(g)$, with the contractions in
    the evolution equation. By \cref{ineq:quadratic-curvature-decay}, the
    first estimate, and conversion between $\nabla^g$ and $\nabla^C$,
    \begin{equation*}
        |\nabla^{C,k} Q|_{g_C}\leq C_k r^{-4-k}.
    \end{equation*}
    Constant metric scaling and naturality under pullback give
    $\partial_t\Rm(g(t))=D_\lambda^*Q$, where $\lambda=(-t)^{-1/2}$.
    The cone connection is invariant under $D_\lambda$, and a covariant
    $(4+k)$-tensor acquires the norm factor $\lambda^{4+k}$. Hence
    \begin{equation*}
        |\nabla^{C,k}\partial_t\Rm(g(t))|_{g_C}(r,\theta)
        \leq C_k\lambda^{4+k}(\lambda r)^{-4-k}
        =C_k r^{-4-k}
    \end{equation*}
    uniformly for $-1\leq t<0$. Integrating with the fixed connection
    $\nabla^C$ and using smooth convergence at $t=0$ yields
    \begin{equation*}
        \nabla^{C,k}(\Rm(g_C)-\Rm(g))
        =\int_{-1}^0\nabla^{C,k}\partial_t\Rm(g(t))\,\mathrm{d}t,
    \end{equation*}
    which proves the third estimate for every $k$.

    For the second estimate, use $\nabla^C\mathbf{X}=\operatorname{Id}/2$.
    The soliton equation gives the exact identity
    \begin{equation*}
        T(\mathbf{X},\cdot)=\nabla^g_{(\cdot)}\mathbf{X}-\nabla^C_{(\cdot)}\mathbf{X}
        =-\Ric(g)^\sharp.
    \end{equation*}
    Here the sharp is taken with the corresponding metric.
    Contracting the curvature estimate and using
    $g^{-1}-g_C^{-1}=O(r^{-2})$, with its differentiated versions, gives
    \begin{equation*}
        |\nabla^{C,k}(\Ric(g)^\sharp-\Ric(g_C)^\sharp)|_{g_C}
        \leq C_k r^{-4-k}.
    \end{equation*}
    This proves the second estimate.

    Finally, let $h=g-g_C-2\Ric(g_C)$. Since
    $\Ric(g_C)(\mathbf{X},\cdot)=0$, the soliton identity implies
    \begin{equation*}
        f-\boldsymbol{\mu}(g)-\frac{r^2}{4}
        =\mathrm R_g+h(\mathbf{X},\mathbf{X}).
    \end{equation*}
    We have $|\nabla^{C,j} h|=O(r^{-4-j})$ and
    $|\nabla^{C,j}\mathrm R_g|=O(r^{-2-j})$ for all $j$.
    Moreover, $|\mathbf{X}|_{g_C}=r/2$, $\nabla^C\mathbf{X}=\mathrm{Id}/2$, and
    $\nabla^{C,j}\mathbf{X}=0$ for $j\geq2$.
    In the Leibniz expansion of $\nabla^{C,k}(h(\mathbf{X},\mathbf{X}))$, the terms with
    zero, one, or two derivatives on the $\mathbf{X}$ factors are respectively bounded by
    \begin{equation*}
        C_k r^{-4-k}r^2,\qquad
        C_k r^{-3-k}r,\qquad
        C_k r^{-2-k}.
    \end{equation*}
    All terms are $O(r^{-2-k})$, proving the fourth estimate.
\end{proof}

The following proposition gives the full asymptotic expansion of the metric.
\begin{Proposition}\label{prop:asymptotic-expansion-of-shrinker-metric}
    In the coordinates of \cref{thm:canonical-asymptotic-chart}, there exist homogeneous symmetric $2$-tensors $g_{-2m}$ of degree $-2m$, $m\in\mathbb Z_{>0}$, such that
    \begin{equation}\label{ineq:asymptotic-expansion-of-shrinker-metric}
        \bigg|  \nabla^{C,k}\bigg(  \iota^*g-g_C-\sum_{m=1}^{\ell}g_{-2m}  \bigg)  \bigg|_{g_C}
        \leq C_k(n,\ell,A_0,a)r^{-2-2\ell-k}
    \end{equation}
    for all $\ell,k\geq 0$ and $r\geq\underline{r}(n,A_0,a)$.
    The degree-zero coefficient is $g_C$, and $g_{-2}=2\Ric(g_C)$.
\end{Proposition}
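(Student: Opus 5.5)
The plan is to build the expansion inductively from the self-similar Ricci flow in the canonical gauge, as in the proof of \cref{lem:asymptotic-rate-of-connection}. Write $g$ for $\iota^*g$, $\lambda=(-t)^{-1/2}$ and $g(t)=(-t)D_\lambda^*g$ for $-1\le t<0$. The key structural fact is the Ricci flow equation
\[
\partial_tg(t)=-2\Ric(g(t)),
\]
combined with scaling. Suppose $u$ is a homogeneous symmetric $2$-tensor of degree $\gamma$. Then $(-t)D_\lambda^*u=(-t)^{1-\gamma/2}u$, because $D_\lambda^*(r^\gamma\boldsymbol\tau(\bar u))=\lambda^{\gamma}\lambda^2 r^\gamma\boldsymbol\tau(\bar u)$ for a covariant $2$-tensor, and this gives $(-t)\lambda^{2+\gamma}=(-t)^{-\gamma/2}$. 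So an expansion $g=\sum_m g_{-2m}+O(r^{-2-2\ell})$ corresponds to an expansion of $g(t)$ in powers $(-t)^m$, and the coefficients can be read off from the Taylor expansion of the flow at $t=0$.

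\textbf{Step 1: time regularity at $t=0$.} I would show that $g(t)$ extends smoothly to $t=0$ on the end, with each $\partial_t^jg(t)$ bounded in the weighted sense. Concretely, for every $j,k$ and uniformly in $-1\le t<0$,
\[
|\nabla^{C,k}\partial_t^jg(t)|_{g_C}\le C_{j,k}r^{-2j-k}.
\]
Iterating the flow, $\partial_t^jg=\partial_t^{j-1}(-2\Ric(g(t)))$ is a universal polynomial expression $Q_j$ in $g^{-1}$ and $\nabla^{g}$-derivatives of $\Rm$ of total order $2j$. The decay estimates \cref{ineq:quadratic-curvature-decay}, together with the bounds on $T$ in \cref{lem:asymptotic-rate-of-connection}, give $|\nabla^{C,k}Q_j(g)|\le C r^{-2j-k}$. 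The scaling argument from the lemma then transfers this bound to $t\in[-1,0)$: $\partial_t^jg(t)=\lambda^{2j}(-t)\cdot D_\lambda^*Q_j$ up to the correct power, and the factor $\lambda^{2j+k+2}(\lambda r)^{-2j-k}$ times $(-t)$ is $r^{-2j-k}$. As in the lemma, integrating in $t$ shows that each $\partial_t^jg(t)$ has a limit $\partial_t^jg(0)$ as $t\uparrow0$, and that limit obeys the same bound.

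\textbf{Step 2: Taylor expansion and homogeneity.} Taylor's formula with integral remainder gives
\[
g=g(-1)=\sum_{j=0}^{\ell}\frac{(-1)^j}{j!}\partial_t^jg(0)+R_\ell,
\]
with $|\nabla^{C,k}R_\ell|\le C r^{-2-2\ell-k}$ from the $j=\ell+1$ bound. Set $g_{-2j}:=\frac{(-1)^j}{j!}\partial_t^jg(0)$. To see that $g_{-2j}$ is homogeneous of degree $-2j$, use self-similarity: $g(st)=s\,D_{s^{-1/2}}^*g(t)$. Differentiating $j$ times in $t$ and letting $t\to0$ gives
\[
\partial_t^jg(0)=s^{1-j}D_{s^{-1/2}}^*\partial_t^jg(0).
\]
A tensor invariant in this sense under every dilation is of the form $r^{-2j}\boldsymbol\tau(\bar u)$. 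This follows by restricting to $\{r=1\}$ and expressing it in the radially parallel frame.

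\textbf{Step 3: identifying the first coefficients.} The $j=0$ term is $g(0)=g_C$, by the smooth convergence noted after \cref{thm:canonical-asymptotic-chart}. For $j=1$, $g_{-2}=-\partial_tg(0)=2\Ric(g(0))=2\Ric(g_C)$, which is consistent with \cref{ineq:asymptotic-rate}.

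The main obstacle is Step 1. There I must make the passage from $\nabla^g$ to $\nabla^C$ and the scaling bookkeeping uniform for all derivatives, and justify that the limits at $t=0$ exist in $C^\infty_{\mathrm{loc}}$. The fallback is the lemma's integration trick, applied to each $\partial_t^jg$ in turn.
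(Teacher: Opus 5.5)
Your proposal is correct and follows essentially the same route as the paper. Both arguments extend the self-similar flow $g(t)=(-t)D_{(-t)^{-1/2}}^*g$ smoothly to $t=0$, define $g_{-2m}=\frac{(-1)^m}{m!}\partial_t^m g(0)$, get homogeneity by differentiating the self-similarity identity, and bound the remainder by Taylor's formula. The only difference is bookkeeping: the paper Taylor-expands on a fixed compact annulus using $C^\infty_{\mathrm{loc}}$ convergence and then rescales, whereas you derive the global weighted bounds $|\nabla^{C,k}\partial_t^jg(t)|_{g_C}\le C r^{-2j-k}$ directly from curvature decay and scaling, which works equally well.
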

\begin{proof}
    Use the cone dilations $D_s$ and write $g$ for $\iota^*g$ on the end.
    For $t\in[-1,0)$, set
    \begin{equation*}
        \hat g(t):=(-t)D_{(-t)^{-1/2}}^*g.
    \end{equation*}
    By \cref{eq:diffeomorphisms-under-asymptotic-coordinates} and the shrinker equation, $\hat g(t)$ solves
    \begin{equation}\label{eq:rescaled-shrinker-Ricci-flow}
        \partial_t\hat g(t)=-2\Ric(\hat g(t)).
    \end{equation}
    The estimate \cref{ineq:asymptotic-rate}, after applying $D_{(-t)^{-1/2}}$, gives on every compact subannulus of the cone end
    \begin{equation}\label{eq:first-time-expansion-of-rescaled-metric}
        \hat g(t)=g_C+2(-t)\Ric(g_C)+O_{C^k}(t^2)
    \end{equation}
    for every $k\geq0$.
    In particular, $\hat g(t)\to g_C$ in $C^\infty_{\mathrm{loc}}$ as $t\uparrow0$.

    Equation \cref{eq:rescaled-shrinker-Ricci-flow} upgrades this convergence to smooth one-sided convergence in spacetime.
    Indeed, for $t<0$, every $\partial_t^m\hat g(t)$ is a universal expression in $\hat g(t)^{-1}$ and finitely many spatial derivatives of $\hat g(t)$, obtained by differentiating \cref{eq:rescaled-shrinker-Ricci-flow} repeatedly.
    The $C^\infty_{\mathrm{loc}}$ convergence therefore shows inductively that each $\partial_t^m\hat g(t)$ has a $C^\infty_{\mathrm{loc}}$ limit as $t\uparrow0$.
    Thus $\hat g(t)$ extends smoothly to $t=0$ with $\hat g(0)=g_C$.

    Define
    \begin{equation*}
        g_{-2m}:=\frac{(-1)^m}{m!}\partial_t^m\hat g(0),
        \qquad m\geq1.
    \end{equation*}
    The definition of $\hat g(t)$ gives the self-similarity identity
    \begin{equation*}
        D_s^*\hat g(t)=s^2\hat g(t/s^2).
    \end{equation*}
    Differentiating it $m$ times at $t=0$ yields
    \begin{equation*}
        D_s^*g_{-2m}=s^{2-2m}g_{-2m},
    \end{equation*}
    meaning that $g_{-2m}$ is homogeneous of degree $-2m$.

    Taylor's theorem on a fixed compact annulus gives
    \begin{equation*}
        \hat g(-s^{-2})
        =g_C+\sum_{m=1}^{\ell}s^{-2m}g_{-2m}
        +O_{C^k}(s^{-2\ell-2})
        \qquad\text{as }s\to\infty.
    \end{equation*}
    On the other hand, $\hat g(-s^{-2})=s^{-2}D_s^*g$ and
    $s^{-2}D_s^*g_{-2m}=s^{-2m}g_{-2m}$.
    Scaling the last estimate from the fixed annulus to $D_s$ of that annulus gives \cref{ineq:asymptotic-expansion-of-shrinker-metric} as required.
\end{proof}

\section{Weighted Sobolev spaces and H\"older spaces}\label{sec:weighted-Sobolev}
We first collect the weighted Sobolev theory of the operators
$\Delta_f$ and $\delta_f\delta^*$ on Ricci shrinkers.  We follow
\cite[\S\,3]{li2023rigidity} and
\cite[\S\,5.2]{colding2021singularities}, imposing bounded curvature for the
higher-order estimates. We then develop weighted H\"older spaces and their
cone expansions in \cref{sec:weighted-spaces,sec:cone-Holder-spaces}, under
the AC assumption.

\subsection{Weighted Sobolev spaces}
Throughout the first two subsections, $(M^n,g,f)$ is a complete connected Ricci shrinker
with the normalization of \cref{eq:normalization-of-potential}, and $E$ is a
tensor bundle with its induced metric and connection.  We use the spaces
of \cref{def:weighted-conventions} and write $F=f-\boldsymbol\mu(g)$.

Fix a smooth nonincreasing function $\eta:\mathbb R\to[0,1]$ which equals
$1$ on $(-\infty,1]$ and $0$ on $[2,\infty)$, and put
$\eta_R=\eta(F/R)$ for $R\geq1$.  We use these exhaustion cutoffs in the Sobolev arguments;
see \cite[\S\,3, before Lemma~3.3]{li2023rigidity}.  They satisfy
$|\nabla\eta_R|\leq CR^{-1/2}$ and $|\Delta_f\eta_R|\leq C$.
If the curvature is bounded, its covariant derivatives are bounded by
Shi's estimates, and the soliton equation also gives
$|\nabla^j\eta_R|\leq C_jR^{-1}$ for $j\geq2$.
In particular, $C_c^\infty(M,E)$ is dense in $H^1_f(M,E)$, and in every
$H^j_f(M,E)$ under bounded curvature; see
\cite[Remark~3.4]{li2023rigidity}.

The following Poincar\'{e} type inequality is the case $k=1$ of
\cite[Lemma~3.6]{li2023rigidity}.  It is the Ricci flow analogue of
\cite[Proposition~3.9]{chodosh2021uniqueness}; see also
\cite[Lemma~2.10]{colding2021singularities} and
\cite[Lemma~5.41]{bamler2023degree}.
\begin{Lemma}\label{lem:Poincare-inequality}
    In the above setting, every $u\in H^1_f(M,E)$ satisfies
    \begin{equation}\label{ineq:Poincare}
        \int_M F|u|^2\,\mathrm{d}V_f
        \leq \int_M\bigl(n|u|^2+4|\nabla u|^2\bigr)\,\mathrm{d}V_f.
    \end{equation}
    For $q\geq1$, the operator
    $\delta_f:H^1_f(M,T^{p,q}M)\to L^2_f(M,T^{p,q-1}M)$ is continuous.
    If the curvature is bounded, then
    $\delta_f:H^{k+1}_f(M,T^{p,q}M)\to H^k_f(M,T^{p,q-1}M)$ is continuous
    for every integer $k\geq0$.
\end{Lemma}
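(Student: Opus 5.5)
The plan is to prove the Poincaré inequality first for $u\in C_c^\infty(M,E)$ and then pass to $H^1_f$ by the density statement recalled above. The key identity is
\[
\Delta_f F=\Delta f-|\nabla f|^2=\tfrac n2-\mathrm R-|\nabla f|^2=\tfrac n2-F,
\]
which follows from tracing the soliton equation and from \cref{eq:normalization-of-potential-second}.

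For compactly supported $u$, integrate by parts against $\mathrm dV_f$, using that $\Delta_f$ is symmetric for this measure:
\[
\int_M F|u|^2\,\mathrm dV_f=\int_M\bigl(\tfrac n2-\Delta_fF\bigr)|u|^2\,\mathrm dV_f
=\tfrac n2\int_M|u|^2\,\mathrm dV_f+\int_M\inn{\nabla F}{\nabla|u|^2}\,\mathrm dV_f .
\]
Kato gives $|\nabla|u|^2|\leq2|u||\nabla u|$, and we have $|\nabla F|^2=|\nabla f|^2\leq F$ because $\mathrm R\geq0$. Hence
\[
\int_M\inn{\nabla F}{\nabla|u|^2}\,\mathrm dV_f\leq 2\int_M F^{1/2}|u||\nabla u|\,\mathrm dV_f\leq\tfrac12\int_M F|u|^2\,\mathrm dV_f+2\int_M|\nabla u|^2\,\mathrm dV_f .
\]
Absorbing the first term into the left-hand side yields $\int F|u|^2\leq n\int|u|^2+4\int|\nabla u|^2$, which is exactly \cref{ineq:Poincare}.

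To extend to a general $u\in H^1_f$, take $u_i\in C_c^\infty$ with $u_i\to u$ in $H^1_f$. After passing to a subsequence we may assume $u_i\to u$ almost everywhere. Fatou's lemma then gives the inequality for $u$, since $F\geq0$ by the Cao--Zhou bound \cref{ineq:quadratic-growth-of-potential}. No absorption of a possibly infinite term is needed here, because the absorption was already done for compactly supported $u_i$.

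For continuity of $\delta_f$, write $\delta_fu=\operatorname{tr}\nabla u-\nabla f\lrcorner u$. The first term is bounded by $C|\nabla u|$. For the second, $|\nabla f|^2|u|^2\leq F|u|^2$, so the Poincaré inequality bounds $\|\nabla f\lrcorner u\|_{L^2_f}\leq C\|u\|_{H^1_f}$.

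For the higher-order statement, we commute derivatives:
\[
\nabla^k\delta_f u=\operatorname{tr}\nabla^{k+1}u-\nabla f\lrcorner\nabla^k u-\sum_{j=1}^k c_j\,\nabla^{j+1}f*\nabla^{k-j}u+\text{curvature terms}.
\]
Here $\nabla^{j+1}f=\tfrac12g\delta_{j1}-\nabla^{j-1}\Ric$ is bounded by Shi's estimates under bounded curvature, and the commutator terms are curvature derivatives contracted with $\nabla^{\le k-1}u$, so they are also bounded. The only unbounded coefficient is $\nabla f$ multiplying $\nabla^ku$, and applying the Poincaré inequality to $\nabla^ku\in H^1_f$ controls it by $\|u\|_{H^{k+1}_f}$.

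The main point requiring care is justifying the integration by parts and the absorption step. This is why the argument works with compactly supported $u$ before passing to the limit, rather than directly with $u$, whose weighted term $\int F|u|^2$ might a priori be infinite.
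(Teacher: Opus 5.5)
Your proposal is correct and follows the paper's proof essentially step for step: the identity $\Delta_fF=\frac n2-F$, the bound $|\nabla f|^2\leq F$, integration by parts with Young's inequality for compactly supported $u$, and then the Poincar\'e inequality applied to $\nabla^ku$ for the divergence estimates. The differences are minor. You pass to general $u\in H^1_f$ by Fatou's lemma, whereas the paper shows that $F^{1/2}u_j$ is Cauchy in $L^2_f$; both work, and the paper's version also gives $F^{1/2}u_j\to F^{1/2}u$ in $L^2_f$. In addition, the ``curvature terms'' you list in $\nabla^k\delta_fu$ do not actually arise, since $\nabla$ commutes with the contraction in $\operatorname{tr}\nabla u$, though including them is harmless under bounded curvature.
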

\begin{proof}
    First let $u\in C_c^\infty(M,E)$.  The soliton identities and
    $\mathrm R\geq0$ give $\Delta_fF=n/2-F$ and $|\nabla f|^2\leq F$.
    Integration by parts and Young's inequality yield
    \begin{align*}
        \int_M(\Delta_fF)|u|^2\,\mathrm{d}V_f
        &=-\int_M\inn{\nabla f}{\nabla|u|^2}\,\mathrm{d}V_f\\
        &\geq-\frac12\int_M F|u|^2\,\mathrm{d}V_f
               -2\int_M|\nabla u|^2\,\mathrm{d}V_f.
    \end{align*}
    Substituting $\Delta_fF=n/2-F$ proves \cref{ineq:Poincare}.
    For general $u\in H^1_f$, choose $u_j\in C_c^\infty(M,E)$ converging to
    $u$ in $H^1_f$.  Applying the inequality to $u_j-u_\ell$ shows that
    $F^{1/2}u_j$ is Cauchy in $L^2_f$; its limit is $F^{1/2}u$, as follows
    by taking an almost-everywhere convergent subsequence.  Passing to the
    limit proves the assertion.

    The estimate for $\delta_f:H^1_f\to L^2_f$ follows from
    $|\delta_fu|\leq C|\nabla u|+|\nabla f||u|$ and
    \cref{ineq:Poincare}.  To prove the higher-order assertion, the Leibniz
    rule gives
    \begin{equation*}
        |\nabla^k(\delta_fu)|
        \leq C_k\left(|\nabla^{k+1}u|+|\nabla f||\nabla^ku|
        +\sum_{a=0}^{k-1}|\nabla^{k+1-a}f||\nabla^au|\right).
    \end{equation*}
    Bounded curvature and the soliton equation bound all the derivatives
    of $f$ in the sum.  Applying \cref{ineq:Poincare} to $\nabla^ku$ gives
    \begin{equation*}
        \bigl\||\nabla f||\nabla^ku|\bigr\|_{L^2_f}
        \leq C\|\nabla^ku\|_{H^1_f}.
    \end{equation*}
    Summing over derivative orders and using density proves
    $\|\delta_fu\|_{H^k_f}\leq C_k\|u\|_{H^{k+1}_f}$.
\end{proof}

The preceding estimate supplies the confinement which replaces compactness
of the underlying manifold.
\begin{Lemma}[Gaussian Rellich compactness]\label{lem:Gaussian-Rellich}
    In the above setting, $H^{j+1}_f(M,E)\Subset H^j_f(M,E)$ for every
    integer $j\geq0$.
\end{Lemma}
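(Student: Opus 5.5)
The plan is to reduce the compact embedding to the classical Rellich theorem on a compact sublevel set, using the Poincar\'e inequality \cref{ineq:Poincare} to control the tail. Throughout, the Gaussian weight is bounded above and below on compact sets, so locally $H^j_f$ norms are equivalent to ordinary Sobolev norms.

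Fix $j\geq0$ and a bounded sequence $u_i$ in $H^{j+1}_f(M,E)$, say $\|u_i\|_{H^{j+1}_f}\leq 1$. Pass to a subsequence converging weakly in $H^{j+1}_f$ to some $u$; replacing $u_i$ by $u_i-u$, it suffices to show that a sequence converging weakly to $0$ converges strongly to $0$ in $H^j_f$.

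The first step is uniform control of the tail. For each $0\leq \ell\leq j$ the section $\nabla^\ell u_i$ lies in $H^1_f$, with $H^1_f$ norm at most $\|u_i\|_{H^{j+1}_f}\leq1$. Applying \cref{ineq:Poincare} to $\nabla^\ell u_i$ gives
\begin{equation*}
\int_{\{F\geq R\}}|\nabla^\ell u_i|^2\,\mathrm dV_f\leq R^{-1}\int_M F|\nabla^\ell u_i|^2\,\mathrm dV_f\leq (n+4)R^{-1}.
\end{equation*}
Hence $\int_{\{F\geq R\}}\sum_{\ell\leq j}|\nabla^\ell u_i|^2\,\mathrm dV_f\leq C_jR^{-1}$, uniformly in $i$. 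Only the first-order Poincar\'e inequality is used, applied to each derivative separately, so no curvature bound is needed.

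The second step is local compactness. By \cref{ineq:quadratic-growth-of-potential}, $F$ is proper, so $K_R=\{F\leq 2R\}$ is compact; choose a smooth relatively compact domain $\Omega_R\supset K_R$. On $\Omega_R$ the weight $e^{-f}$ is bounded above and below, so $u_i|_{\Omega_R}$ is bounded in the ordinary $H^{j+1}(\Omega_R)$, defined with covariant derivatives. The classical Rellich theorem on the compact manifold with boundary $\overline{\Omega_R}$ applies, with no boundary regularity issue since we only restrict. Together with the weak convergence to $0$ (weak limits agree with strong limits of subsequences), it gives $u_i\to0$ in $H^j(\Omega_R)$ along the full sequence, and hence $\int_{K_R}\sum_{\ell\leq j}|\nabla^\ell u_i|^2\,\mathrm dV_f\to0$.

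Combining the two steps, $\limsup_i\|u_i\|_{H^j_f}^2\leq C_jR^{-1}$ for every $R$, so $u_i\to0$ in $H^j_f$. This shows that bounded sets of $H^{j+1}_f$ are precompact in $H^j_f$.

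The only delicate point is confirming that \cref{ineq:Poincare} legitimately applies to $\nabla^\ell u_i$ for $u_i\in H^{j+1}_f$ without any density result in higher order. This holds because \cref{lem:Poincare-inequality} is stated for all of $H^1_f(M,T^{p,q}M)$, the weak derivative $\nabla^\ell u_i$ belongs to that space, and density of $C_c^\infty$ in $H^1_f$ holds without curvature assumptions.
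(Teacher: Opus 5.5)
Your proposal is correct and follows essentially the same route as the paper: both bound the tail $\{F\geq R\}$ uniformly by applying the Poincar\'e inequality to each $\nabla^\ell u$ with $0\leq\ell\leq j$, and both use local Rellich compactness on the compact sublevel sets of $F$. The only difference is bookkeeping: you pass to a weak limit in the Hilbert space $H^{j+1}_f$ and use that a compact operator sends weakly null sequences to norm-null ones, where the paper takes a diagonal subsequence over the sublevel sets.
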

\begin{proof}
    For $R>1$, \cref{lem:Poincare-inequality} gives
    \begin{equation}\label{eq:Gaussian-tail-compactness}
        \int_{\{F\geq R\}}|u|^2\,\mathrm{d}V_f
        \leq \frac1R\int_M F|u|^2\,\mathrm{d}V_f
        \leq \frac{C(n)}R\|u\|_{H^1_f}^2.
    \end{equation}
    The sublevel sets of $F$ are compact by
    \cref{ineq:quadratic-growth-of-potential}.  Local Rellich compactness
    and a diagonal subsequence give convergence in $L^2_f$ on each compact
    sublevel set for a bounded sequence in $H^1_f$.  The displayed estimate
    makes the complementary tails uniformly small, proving the result
    for $j=0$.  For general $j$, apply the same estimate to
    $\nabla^\ell u$, $0\leq\ell\leq j$, and combine these tail bounds with
    the local compact inclusion $H^{j+1}_{\mathrm{loc}}\Subset
    H^j_{\mathrm{loc}}$.
\end{proof}

We use the word ``solution'' to mean a distributional solution.  In
particular, if $u,v\in L^2_f(M,E)$ and $\Delta_fu=v$, then
$u\in H^1_f(M,E)$ and weighted integration by parts is valid by
\cite[Lemma~3.3]{li2023rigidity}.

The next proposition collects
\cite[Propositions~3.7 and~3.8]{li2023rigidity} and the resulting spectral
solvability criterion.
\begin{Proposition}\label{prop:elliptic-theory-for-weighted-Laplacian}
    In the above setting, the following statements hold.
    \begin{enumerate}[label=\textnormal{(\roman{*})},ref=\roman{*}]
        \item The nonnegative self-adjoint operator $-\Delta_f$ on
        $L^2_f(M,E)$ has discrete spectrum
        $0\leq\mu_0<\mu_1<\cdots$, with $\mu_i\to\infty$ and
        finite-dimensional eigenspaces.  Its $L^2_f$ kernel consists of
        parallel tensors.
        \item For $v\in L^2_f(M,E)$, the equation $\Delta_fu=v$ has an
        $L^2_f$ solution if and only if $v$ is $L^2_f$-orthogonal to the
        kernel of $\Delta_f$.
        \item Suppose additionally that the curvature is bounded.  If
        $u\in L^2_f(M,E)$ and $\Delta_fu\in H^k_f(M,E)$, where $k\geq0$
        is an integer, then $u\in H^{k+2}_f(M,E)$ and
        \begin{equation}\label{eq:Delta-f-Sobolev-graph-estimate}
            \|u\|_{H^{k+2}_f}
            \leq C_k\bigl(\|\Delta_fu\|_{H^k_f}+\|u\|_{L^2_f}\bigr).
        \end{equation}
        The constants depend on the dimension, the tensor type, the
        derivative order, and the curvature bound.  In particular, the
        operator domain of $-\Delta_f$ is $H^2_f(M,E)$.
    \end{enumerate}
\end{Proposition}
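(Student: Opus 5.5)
The plan is to conjugate $\Delta_f$ to a Schrödinger-type operator on unweighted $L^2$, or equivalently to work directly with the weighted quadratic form, and to combine three ingredients. The first is the Poincaré inequality of \cref{lem:Poincare-inequality}. The second is Gaussian Rellich compactness, \cref{lem:Gaussian-Rellich}. The third is a weighted Bochner/commutator argument for the higher regularity.

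For (i), define the closed nonnegative quadratic form $Q(u)=\int_M|\nabla u|^2\,\mathrm dV_f$ with form domain $H^1_f(M,E)$. This form is closed because $H^1_f$ is complete, and $C_c^\infty$ is a core by the density remark preceding the proposition. The associated Friedrichs operator is the self-adjoint extension of $-\Delta_f$. Integration by parts $\inn{-\Delta_fu}{v}_f=\inn{\nabla u}{\nabla v}_f$ holds for compactly supported sections, and extends to distributional solutions by \cite[Lemma~3.3]{li2023rigidity}. By \cref{lem:Gaussian-Rellich}, $(1-\Delta_f)^{-1}:L^2_f\to H^1_f\Subset L^2_f$ is compact. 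Hence the spectrum is discrete, the eigenvalues tend to $\infty$, and the eigenspaces are finite-dimensional. An element of the kernel has $Q(u)=0$, so $\nabla u=0$: it is parallel. Conversely, a parallel tensor has constant norm, lies in $L^2_f$ because $\int\mathrm dV_f=1$, and satisfies $\Delta_fu=0$.

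For (ii), the plan is the standard spectral argument. The operator $-\Delta_f$ is self-adjoint with closed range, since $0$ is isolated in the spectrum. Therefore its range equals the orthogonal complement of its kernel. Conversely, if $\Delta_fu=v$ with $u\in L^2_f$, then $u\in H^1_f$ by the cited lemma. Pairing with a parallel $w$ gives $\inn{v}{w}_f=-\inn{\nabla u}{\nabla w}_f=0$. On $\ker^\perp$ the inverse is bounded by $\mu_1^{-1}$, where $\mu_1$ denotes the first nonzero eigenvalue.

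For (iii), the key estimate is $\|\nabla^2u\|_{L^2_f}\le C(\|\Delta_fu\|_{L^2_f}+\|u\|_{L^2_f})$, followed by induction on $k$.
\begin{enumerate}
\item First take $u\in C_c^\infty$. Integrating by parts twice gives
\[
\int|\nabla^2u|^2\,\mathrm dV_f=\int|\Delta_fu|^2\,\mathrm dV_f+\int\inn{[\nabla,\Delta_f]u}{\nabla u}\,\mathrm dV_f.
\]
The commutator is $[\nabla,\Delta_f]u=\Rm*\nabla u+\nabla\Rm*u+\nabla^2 f*\nabla u$. The last term uses $\nabla^2f=\tfrac12 g-\Ric$, which is bounded, and this is the only place where the drift enters.
\item Under bounded curvature, and with Shi's estimates for $\nabla\Rm$, the error is bounded by $C(\|\nabla u\|^2+\|u\|^2)$. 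We then use $\|\nabla u\|^2=-\inn{\Delta_fu}{u}_f$.
\item To pass from $C_c^\infty$ to general $u$, use the cutoffs $\eta_R$. Apply the estimate to $\eta_Ru$, where $\Delta_f(\eta_Ru)=\eta_R\Delta_fu+2\nabla_{\nabla\eta_R}u+u\Delta_f\eta_R$. The last two terms are bounded in $L^2_f$ because $|\nabla\eta_R|,|\Delta_f\eta_R|\le C$. Local elliptic regularity makes $\eta_Ru$ smooth enough, and mollifying on the compact support is legitimate.
\item Letting $R\to\infty$ then gives $u\in H^2_f$.
\item For the induction, apply the case $k-1$ to $\nabla u$. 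We have $\Delta_f\nabla u=\nabla\Delta_fu+\Rm*\nabla u+\nabla\Rm*u+\nabla^2f*\nabla u$, and every lower-order term is controlled by the inductive hypothesis together with bounded $\nabla^j\Rm$ and $\nabla^{j}\nabla^2 f$. Tensor type only affects the constants.
\end{enumerate}

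The main obstacle is the approximation argument in (iii). We must justify the integrations by parts when a priori only $u\in L^2_f$ is known, and we must make sure that the unbounded drift $\nabla f$ never appears undifferentiated in an error term. The cutoff choice $\eta_R=\eta(F/R)$ handles this, because $\nabla f\cdot\nabla\eta_R=O(1)$ on the support of $\nabla\eta_R$. The domain statement then follows: the operator domain contains $H^2_f$ trivially, and (iii) with $k=0$ gives the reverse inclusion.
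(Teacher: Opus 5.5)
Your proposal is correct and follows the standard route that the paper does not reprove but imports from \cite[Propositions~3.7 and~3.8]{li2023rigidity}: a compact resolvent from \cref{lem:Gaussian-Rellich} for (i)--(ii), and for (iii) a weighted Bochner identity, cutoff approximation with $\eta_R$, and induction through $\nabla u$. One bookkeeping correction: the drift is not confined to the $\nabla^2f*\nabla u$ term, because $[\nabla,\nabla_{\nabla f}]u$ also contains the undifferentiated term $(\nabla^af)\mathcal R^E_{ai}u$; on a shrinker this cancels the $(\nabla^a\mathcal R^E_{ai})u$ term of $[\nabla,\Delta]u$ (see \cref{eq:exact-weighted-gradient-commutator}), and it could in any case be absorbed via \cref{ineq:Poincare}, which applied to $\nabla u$ is also what puts $H^2_f$ in the operator domain, so that inclusion is not quite trivial.
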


A compact perturbation gives the nonsymmetric Fredholm theorem needed later.
\begin{Proposition}[Gaussian Fredholm realization]\label{prop:Gaussian-Fredholm-realization}
    In the above setting, assume that the curvature is bounded and let
    \begin{equation*}
        L=\Delta_f-\lambda+\nabla_X+\phi,
    \end{equation*}
    where $\lambda\in\mathbb R$, and $X$ and $\phi$ are bounded $C^1$
    sections of $TM$ and $\End(E)$, respectively.  The maximal
    distributional realization is
    $L:H^2_f(M,E)\to L^2_f(M,E)$ and is Fredholm of index zero.
    If $L^*$ is its Hilbert-space adjoint, then
    \begin{equation}\label{eq:Gaussian-Fredholm-alternative}
        \operatorname{Ran}L=(\Ker L^*)^\perp,
        \qquad
        \|u\|_{H^2_f}\leq C\bigl(\|Lu\|_{L^2_f}+\|u\|_{L^2_f}\bigr).
    \end{equation}
\end{Proposition}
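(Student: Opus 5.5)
The plan is to treat $L$ as a relatively compact perturbation of $\Delta_f-\lambda$. By \cref{prop:elliptic-theory-for-weighted-Laplacian}, $\Delta_f-\lambda$ is self-adjoint with discrete spectrum and operator domain $H^2_f(M,E)$. So $\Delta_f-\lambda\colon H^2_f\to L^2_f$ is Fredholm of index zero: its kernel is the finite-dimensional eigenspace for $-\lambda$ (possibly trivial), and its range is the orthogonal complement of that kernel. The graph estimate \cref{eq:Delta-f-Sobolev-graph-estimate} with $k=0$ identifies the graph norm with the $H^2_f$ norm.

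The perturbation $B=\nabla_X+\phi$ is bounded $H^1_f\to L^2_f$, since $X,\phi$ are bounded. By \cref{lem:Gaussian-Rellich}, the inclusion $H^2_f\hookrightarrow H^1_f$ is compact, so $B\colon H^2_f\to L^2_f$ is compact. Hence $L=(\Delta_f-\lambda)+B\colon H^2_f\to L^2_f$ is Fredholm of index zero, and its range is closed.

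Next I show that $H^2_f$ is the maximal distributional realization. Suppose $u\in L^2_f$ and $Lu=v\in L^2_f$ distributionally. The difficulty is that $\nabla_Xu$ is not a priori in $L^2_f$. I would first get $u\in H^1_f$ by a cutoff energy argument. Pair with $\eta_R^2u$ and integrate by parts using the cited \cite[Lemma~3.3]{li2023rigidity} framework. Absorb $|\langle\nabla_Xu,\eta_R^2u\rangle|\le\epsilon\eta_R^2|\nabla u|^2+C_\epsilon\eta_R^2|u|^2$. Control the cutoff terms by $|\nabla\eta_R|\le CR^{-1/2}$. Letting $R\to\infty$ gives $\|\nabla u\|_{L^2_f}\le C(\|v\|+\|u\|)$.

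Local regularity is needed to justify these manipulations. I would mollify locally, or note that $u\in H^2_{\mathrm{loc}}$ by interior elliptic regularity, since $X\in C^1$. Then $\Delta_fu=v+\lambda u-\nabla_Xu-\phi u\in L^2_f$, so part (iii) of \cref{prop:elliptic-theory-for-weighted-Laplacian} gives $u\in H^2_f$ with
\[
\|u\|_{H^2_f}\le C\bigl(\|v\|+\|\nabla_Xu\|+\|u\|\bigr)\le C\bigl(\|Lu\|_{L^2_f}+\|u\|_{L^2_f}\bigr).
\]
This is the estimate in \cref{eq:Gaussian-Fredholm-alternative}. The $H^1$ bound step is the main obstacle, because it requires care with the weak formulation and the cutoffs. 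Should the pairing be delicate, I would instead use Friedrichs mollification in local charts together with the Gaussian tail bound \cref{eq:Gaussian-tail-compactness}.

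Finally, I obtain $\operatorname{Ran}L=(\Ker L^*)^\perp$ from the closed range theorem. This needs $L$ to be a closed, densely defined operator on $L^2_f$ with domain $H^2_f$. Closedness follows from the estimate just proved: if $u_j\to u$ and $Lu_j\to v$ in $L^2_f$, then $(u_j)$ is Cauchy in $H^2_f$. Density holds because $C_c^\infty\subset H^2_f$. Since the range is closed, it equals the orthogonal complement of the kernel of the Hilbert-space adjoint.
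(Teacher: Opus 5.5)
Your proposal is correct and follows essentially the same route as the paper: you treat $L$ as a compact perturbation of the index-zero operator $\Delta_f-\lambda$, using Gaussian Rellich compactness of $H^2_f\Subset H^1_f$. You then place maximal-domain elements in $H^1_f$ by a cutoff energy argument against $\eta_R^2u$, and reach $H^2_f$ through the graph estimate for $\Delta_f$. The differences are cosmetic. You derive the $H^2_f$ bound from your energy estimate for $\nabla u$, whereas the paper uses interpolation and absorption in the graph estimate. You also spell out the closed range theorem, and you obtain boundedness of $\Delta_f$ on $H^2_f$ from the graph-norm identification; the paper gets that boundedness from the Poincar\'e inequality applied to $\nabla u$.
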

\begin{proof}
    By \cref{lem:Gaussian-Rellich}, the operator
    $\nabla_X+\phi:H^2_f\to L^2_f$ factors through the compact inclusion
    $H^2_f\Subset H^1_f$.  Thus $L$ is a compact perturbation of
    the index-zero Fredholm operator $\Delta_f-\lambda$ supplied by
    \cref{prop:elliptic-theory-for-weighted-Laplacian}.  This proves the
    Fredholm and range assertions.  The estimate follows from
    \cref{eq:Delta-f-Sobolev-graph-estimate} by interpolation and absorption
    of the first-order term.

    To identify the maximal domain, suppose $u,Lu\in L^2_f$
    distributionally.  Local elliptic regularity for $C^1$ lower-order
    coefficients permits testing against $\eta_R^2u$.
    Integration by parts and absorption give
    \begin{equation*}
        \|\eta_R\nabla u\|_{L^2_f}^2
        \leq C\bigl(\|\eta_RLu\|_{L^2_f}^2
        +\|\eta_Ru\|_{L^2_f}^2+\|u\nabla\eta_R\|_{L^2_f}^2\bigr).
    \end{equation*}
    Letting $R\to\infty$ gives $u\in H^1_f$.  Consequently
    $\Delta_fu=Lu+\lambda u-\nabla_Xu-\phi u\in L^2_f$, and
    \cref{prop:elliptic-theory-for-weighted-Laplacian} gives $u\in H^2_f$.
    Conversely, \cref{lem:Poincare-inequality}, applied to $\nabla u$,
    shows that $\Delta_f:H^2_f\to L^2_f$ is bounded, so every $H^2_f$
    section belongs to the maximal domain.
\end{proof}

\begin{Corollary}\label{prop:Sobolev-Helmholtz-decomposition}
    In the above setting, assume that the curvature is bounded.  For every
    integer $k\geq1$, there is an $L^2_f$-orthogonal decomposition
    \begin{equation}\label{eq:Sobolev-Helmholtz-decomposition}
        H^k_f(M,T^*M)
        =\im\bigl(\mathrm{d}\restriction H^{k+1}_f(M)\bigr)
        \oplus\Ker\bigl(\delta_f\restriction H^k_f(M,T^*M)\bigr).
    \end{equation}
    Thus $\omega=\mathrm{d}v+\zeta$, where $v\in H^{k+1}_f(M)$ and
    $\zeta\in H^k_f(M,T^*M)$ satisfies $\delta_f\zeta=0$.
    The projection onto the first summand is
    $\mathrm{d}\Delta_f^{-1}\delta_f$, where $\Delta_f^{-1}$ is the inverse
    on functions orthogonal to constants.
\end{Corollary}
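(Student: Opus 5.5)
The plan is the standard Hodge-type argument. We use the solvability criterion for $\Delta_f$ on functions from \cref{prop:elliptic-theory-for-weighted-Laplacian} and the Sobolev regularity estimate \cref{eq:Delta-f-Sobolev-graph-estimate}. The kernel of $\Delta_f$ on functions in $L^2_f$ consists of parallel, hence constant, functions, since $M$ is connected. Let $\omega\in H^k_f(M,T^*M)$ with $k\geq1$. By \cref{lem:Poincare-inequality} (bounded curvature), $\delta_f\omega\in H^{k-1}_f(M)$. Since $1\in L^2_f$ by the normalization and $C_c^\infty$ is dense in $H^1_f$, weighted integration by parts gives $\inn{\delta_f\omega}{1}_f=-\inn{\omega}{\mathrm d1}_f=0$. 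Hence $\delta_f\omega$ is orthogonal to the kernel, and there is $v\in L^2_f$, unique up to constants and normalized by $\inn{v}{1}_f=0$, with $\Delta_fv=\delta_f\omega$. Part (iii) then gives $v\in H^{k+1}_f(M)$ with $\|v\|_{H^{k+1}_f}\leq C_k\|\omega\|_{H^k_f}$. This defines $v=\Delta_f^{-1}\delta_f\omega$.

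Next we set $\zeta=\omega-\mathrm dv$. Since $v\in H^{k+1}_f$, we have $\mathrm dv\in H^k_f$, so $\zeta\in H^k_f$. Using $\delta_f\mathrm d=\Delta_f$ on functions (valid distributionally, and in $H^{k+1}_f$ by density), we get $\delta_f\zeta=\delta_f\omega-\Delta_fv=0$. This gives existence of the decomposition, with first component $\mathrm d\Delta_f^{-1}\delta_f\omega$.

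For orthogonality, take $u\in H^{k+1}_f(M)$ and $\zeta\in H^k_f(M,T^*M)$ with $\delta_f\zeta=0$. Approximate $u$ by $u_j\in C_c^\infty$ in $H^1_f$. Since $\delta_f$ is the formal adjoint of $-\mathrm d$, $\inn{\mathrm du_j}{\zeta}_f=-\inn{u_j}{\delta_f\zeta}_f=0$, and passing to the limit gives $\inn{\mathrm du}{\zeta}_f=0$. Orthogonality yields uniqueness: the intersection of the two summands is zero, so the sum is direct. It also identifies $\mathrm d\Delta_f^{-1}\delta_f$ as the orthogonal projection, since it is idempotent (it fixes $\mathrm du$ because $\Delta_f^{-1}\Delta_fu=u-\bar u$) and annihilates $\Ker\delta_f$.

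The only delicate point is the integration by parts on the noncompact manifold: testing $\omega\in H^1_f$ against the constant $1$, and $\mathrm du$ against $\zeta$. Both are handled by density of $C_c^\infty$ in $H^1_f$ together with the continuity of $\delta_f:H^1_f\to L^2_f$ from \cref{lem:Poincare-inequality}. These make both sides of each pairing continuous in the $H^1_f$ topology, so the identities for compactly supported approximants pass to the limit.
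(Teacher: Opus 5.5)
Your proposal is correct and takes the same route as the paper: weighted integration by parts shows $\int_M\delta_f\omega\,\mathrm{d}V_f=0$, so \cref{prop:elliptic-theory-for-weighted-Laplacian} gives a mean-zero $v\in H^{k+1}_f(M)$ with $\Delta_fv=\delta_f\omega$; then $\zeta=\omega-\mathrm{d}v$, and weighted adjointness gives orthogonality and the projection formula. Your density arguments for the two integrations by parts, using continuity of $\delta_f:H^1_f\to L^2_f$, fill in details the paper leaves implicit.
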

\begin{proof}
    By \cref{lem:Poincare-inequality}, $\delta_f\omega\in H^{k-1}_f(M)$.
    The kernel of $\Delta_f$ on functions consists of constants, and
    weighted integration by parts gives
    $\int_M\delta_f\omega\,\mathrm{d}V_f=0$.
    Hence \cref{prop:elliptic-theory-for-weighted-Laplacian} gives a
    mean-zero $v\in H^{k+1}_f(M)$ with $\Delta_fv=\delta_f\omega$.
    Then $\zeta=\omega-\mathrm{d}v$ has $\delta_f\zeta=0$.
    Weighted adjointness makes the two summands orthogonal and proves the
    projection formula.
\end{proof}

\subsection{Gauge operator}
Set $\mathcal P:=\delta_f\delta^*$.  On smooth one-forms, the soliton
identities give
\begin{equation}\label{eq:gauge-operator}
    \mathcal P=-\frac12\Delta_f-\frac12\mathrm{d}\delta_f-\frac14.
\end{equation}
Moreover, $\Delta_f\mathrm{d}u=\mathrm{d}(\Delta_f+\frac12)u$ for every
smooth function $u$, and
$\delta_f\Delta_f\omega=(\Delta_f+\frac12)\delta_f\omega$ for every
smooth one-form $\omega$; see
\cite[Lemma~3.1 and Proposition~3.3]{colding2021singularities}.
When the curvature is bounded, $\mathcal P$ preserves the decomposition
\cref{eq:Sobolev-Helmholtz-decomposition} and, for $k\geq1$, acts as
\begin{equation}\label{eq:decomposition-of-gauge-operator}
    \begin{split}
        \bigl(-\Delta_f,-\tfrac12(\Delta_f+\tfrac12)\bigr):
        \im\bigl(\mathrm{d}\restriction H^{k+3}_f(M)\bigr)
        &\oplus\Ker\bigl(\delta_f\restriction H^{k+2}_f(M,T^*M)\bigr)\\
        \longrightarrow\quad
        \im\bigl(\mathrm{d}\restriction H^{k+1}_f(M)\bigr)
        &\oplus\Ker\bigl(\delta_f\restriction H^k_f(M,T^*M)\bigr).
    \end{split}
\end{equation}

Define
\begin{equation}\label{eq:Gaussian-Killing-space}
    \mathcal K:=\{\omega\in L^2_f(M,T^*M):
                    \delta^*\omega=0\text{ distributionally}\},
\end{equation}
and let $\mathcal K^\perp$ be its $L^2_f$-orthogonal complement.
The following collects the spectral and first-order results of
\cite[Lemma~3.5, Theorem~5.5, and Lemma~5.7]{colding2021singularities}
and the higher-order estimates of
\cite[Propositions~3.7 and~3.10]{li2023rigidity}.
\begin{Proposition}
\label{prop:elliptic-theory-for-gauge-operator}
    In the above setting, $\mathcal P$ is nonnegative and self-adjoint on
    $L^2_f(M,T^*M)$, has compact resolvent, and has kernel $\mathcal K$.
    The space $\mathcal K$ is finite-dimensional and consists of smooth
    Killing forms in $H^1_f$.
    For each $v\in L^2_f\cap\mathcal K^\perp$, there is a unique
    $L^2_f$ solution $\omega\in\mathcal K^\perp$ of $\mathcal P\omega=v$;
    it satisfies
    \begin{equation*}
        \|\omega\|_{H^1_f}\leq C\|v\|_{L^2_f}.
    \end{equation*}

    If, in addition, the curvature is bounded, then the operator domain
    is $H^2_f(M,T^*M)$, $\mathcal K\subset H^j_f$ for every $j\geq0$, and
    for every integer $k\geq0$ the map
    \begin{equation}\label{eq:Gaussian-gauge-complement-isomorphism}
        \mathcal P:H^{k+2}_f(M,T^*M)\cap\mathcal K^\perp
        \longrightarrow H^k_f(M,T^*M)\cap\mathcal K^\perp
    \end{equation}
    is a bounded isomorphism.  In particular,
    \begin{equation}\label{ineq:Gaussian-gauge-complement-estimate}
        \|\omega\|_{H^{k+2}_f}\leq C_k\|\mathcal P\omega\|_{H^k_f},
        \qquad \omega\in H^{k+2}_f\cap\mathcal K^\perp.
    \end{equation}
\end{Proposition}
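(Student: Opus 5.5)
The plan is to reduce everything to the scalar and coexact pieces of \cref{eq:decomposition-of-gauge-operator}, where \cref{prop:elliptic-theory-for-weighted-Laplacian} already gives the needed spectral theory. First, nonnegativity and symmetry are formal: for $\omega\in C_c^\infty$, weighted integration by parts gives $\inn{\mathcal P\omega}{\omega}_f=\|\delta^*\omega\|_{L^2_f}^2\geq0$. We realize $\mathcal P$ as the Friedrichs extension of the closed form $Q(\omega)=\|\delta^*\omega\|^2_{L^2_f}$ on $H^1_f$. Coercivity of $Q+\|\cdot\|^2_{L^2_f}$ on $H^1_f$ is a weighted Korn inequality. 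We obtain it from \cref{eq:gauge-operator}: integrating,
\[
2Q(\omega)=\|\nabla\omega\|^2_{L^2_f}+\|\delta_f\omega\|^2_{L^2_f}-\tfrac12\|\omega\|^2_{L^2_f},
\]
which we verify first on compactly supported forms and then extend by density (the exhaustion cutoffs $\eta_R$ are available for $H^1_f$). Hence the form domain is $H^1_f$. By \cref{lem:Gaussian-Rellich}, $H^1_f\Subset L^2_f$, so the resolvent is compact.

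The kernel of the self-adjoint operator equals $\{Q=0\}$ in the form domain. We then check that every distributional $L^2_f$ solution of $\delta^*\omega=0$ lies in $H^1_f$. The Killing equation gives local smoothness and $\nabla^2\omega=\Rm*\omega$. Moreover $\delta_f\omega=-\inn{\nabla f}{\omega}$, and $|\nabla\omega|$ is controlled by testing the identity above against $\eta_R^2\omega$, using the Poincaré inequality \cref{lem:Poincare-inequality}. This identifies $\ker\mathcal P=\mathcal K$. Finite dimensionality follows from the compact resolvent. Uniqueness in $\mathcal K^\perp$ and the $H^1_f$ bound come from the spectral gap $\mu>0$ of $\mathcal P$ on $\mathcal K^\perp$: we have $\|\omega\|_{L^2_f}\le\mu^{-1}\|v\|$ and $Q(\omega)\le\|v\|\|\omega\|$, combined with the form coercivity.

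Under bounded curvature, we use the Helmholtz splitting \cref{prop:Sobolev-Helmholtz-decomposition}. Write $v=\mathrm dw+\zeta$. On exact forms, $\mathcal P\,\mathrm du=-\mathrm d(\Delta_f+\tfrac12)u$, so we solve $-(\Delta_f+\tfrac12)u=w$ modulo the finite kernel. On coclosed forms, $\mathcal P=-\tfrac12(\Delta_f+\tfrac12)$, so we solve with the tensor Laplacian on $T^*M$. \Cref{prop:elliptic-theory-for-weighted-Laplacian}(iii), and the Fredholm estimate \cref{eq:Gaussian-Fredholm-alternative} with $\lambda=-\tfrac12$, then give $H^{k+2}_f$ regularity of each piece, with $\|\omega\|_{H^{k+2}_f}\le C(\|\mathcal P\omega\|_{H^k_f}+\|\omega\|_{L^2_f})$. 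On $\mathcal K^\perp$, the spectral gap removes the $L^2_f$ term, yielding \cref{ineq:Gaussian-gauge-complement-estimate}. Surjectivity of \cref{eq:Gaussian-gauge-complement-isomorphism} follows because the $L^2$ solution exists and this regularity upgrades it. That $\mathcal K\subset H^j_f$ follows from the same estimate applied to $\mathcal P\omega=0$. It remains to check that $\mathcal P$ preserves $\mathcal K^\perp$, which holds by self-adjointness.

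The main obstacle is the regularity step through the splitting. The commutation identities $\Delta_f\mathrm du=\mathrm d(\Delta_f+\tfrac12)u$ and $\delta_f\Delta_f=(\Delta_f+\tfrac12)\delta_f$ must be justified for Sobolev rather than smooth data. We must also ensure that the scalar potential $u$ gains $k+3$ derivatives, so that $\mathrm du\in H^{k+2}_f$. We would handle this by density of $C_c^\infty$ in $H^j_f$ under bounded curvature, together with continuity of $\delta_f$ from \cref{lem:Poincare-inequality}, passing the identities to limits distributionally.
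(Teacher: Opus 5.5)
The paper gives no internal proof of this proposition. It imports the first-order spectral theory (self-adjointness, compact resolvent, kernel, $H^1_f$ bound) from Colding--Minicozzi and the higher-order Gaussian estimates from Li--Zhang, and records \eqref{eq:gauge-operator} and \eqref{eq:decomposition-of-gauge-operator} beforehand as the structure behind them. Your outline is a correct, self-contained reconstruction from the paper's own Gaussian Sobolev tools.

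Its advantage is that the hypotheses become visible. The integrated form of \eqref{eq:gauge-operator} is exactly a weighted Korn identity, so the first half uses only density of $C_c^\infty$ in $H^1_f$, \cref{lem:Poincare-inequality} and \cref{lem:Gaussian-Rellich}, with no curvature bound. The splitting then reduces all higher regularity to \cref{prop:elliptic-theory-for-weighted-Laplacian}(iii) on functions and one-forms. In exchange, the citation absorbs some bookkeeping that you would still have to write out.

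(a) \cref{prop:Sobolev-Helmholtz-decomposition} is stated only for $k\geq1$. For $k=0$, which you need to show the domain is $H^2_f$, use that $\mathrm d(H^1_f)$ is closed in $L^2_f$ by the spectral gap of $-\Delta_f$ on mean-zero functions.

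(b) On the coclosed piece, $\Delta_f+\frac12$ acting on all one-forms also has exact kernel elements such as $\mathrm df$. A solution of $(\Delta_f+\frac12)\xi=-2\zeta$ only satisfies $(\Delta_f+1)\delta_f\xi=0$. This does not force $\delta_f\xi=0$, since $\Delta_f(F-\frac n2)=-(F-\frac n2)$ with $F=f-\boldsymbol\mu(g)$. So either replace $\xi$ by $\xi+\mathrm d(\delta_f\xi)$, or work in the invariant coclosed subspace. Solvability itself is fine: kernel elements split into exact ones, which are orthogonal to $\zeta$, and coclosed ones, which lie in $\mathcal K$.

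(c) Since ``solution'' means distributional solution, uniqueness in $\mathcal K^\perp$ needs the maximal realization to coincide with your Friedrichs one. Testing $\mathcal P\omega=v$ against $\eta_R^2\omega$ gives $\int\eta_R^2|\delta^*\omega|^2\,\mathrm dV_f\leq 2\|v\|_{L^2_f}\|\omega\|_{L^2_f}+4\int|\mathrm d\eta_R|^2|\omega|^2\,\mathrm dV_f$. The Korn identity for $\eta_R\omega$, whose $\delta_f$ term has the favorable sign so no Poincar\'e step is needed, then places $\omega$ in $H^1_f$ and hence in the form domain. With $v=0$ this also shows directly that the distributional kernel is $\mathcal K$.

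Finally, for $k\geq1$ there is a route to higher regularity that avoids Helmholtz projections altogether. The coupled system \eqref{ic:coupled-gauge-system}, namely $(\Delta_f+\frac12)\delta_f\omega=-\delta_fv$ and $(\Delta_f+\frac12)\omega=-\mathrm d\delta_f\omega-2v$, is what the paper uses in \cref{ic:graph-estimates}. It bootstraps your $H^1_f$ solution to $H^{k+2}_f$ directly.
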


\begin{Proposition}[Linear growth of Killing fields]\label{prop:Killing-linear-growth}
    In the above setting, every Killing field $Y$ satisfies
    \begin{equation}\label{eq:Killing-linear-growth}
        |Y|^2\leq C\bigl(1+f-\boldsymbol\mu(g)\bigr),
    \end{equation}
    with $C$ depending on $Y$ and the
    shrinker.  In particular, $Y\in L^2_f(M,TM)$, and $\mathcal K$ is the
    space of all smooth Killing one-forms.  This conclusion holds without a curvature bound.
\end{Proposition}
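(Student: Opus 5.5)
The plan is to derive a linear-growth bound from the two standard identities for a Killing field on a shrinker: the second-order Killing identity, and the fact that Killing fields preserve the Ricci tensor. No curvature bound will be used. Let $Y$ be Killing, so $\mathcal L_Yg=0$. Since $\Ric$ is natural, $\mathcal L_Y\Ric=0$. The soliton equation then gives $\mathcal L_Y\nabla^2f=0$, that is, $\nabla^2(Yf)=0$, because the Hessian commutes with the flow of an isometry. Hence $\nabla(Yf)$ is a parallel vector field. A nontrivial parallel field would split off a line; I will argue directly instead, so the proof does not depend on that. Set $u=Yf=\inn{Y}{\nabla f}$, so that $\nabla^2u=0$.

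First I control $u$. Since $\nabla u$ is parallel, $|\nabla u|$ is a constant $c$. Hence $|u(x)|\le|u(p_0)|+c\,d(x,p_0)$, and by \cref{ineq:quadratic-growth-of-potential} this gives $u^2\le C(1+F)$, where $F=f-\boldsymbol\mu(g)$. Next I relate $|Y|^2$ to $u$. Killing fields satisfy $\nabla^2_{a,b}Y=-\Rm(Y,a)b$. More usefully, $\nabla_{\nabla f}Y$ can be computed from $u$. Indeed, $\nabla_iu=\nabla_i(Y^jf_j)=(\nabla_iY_j)f^j+Y^j\nabla_{ij}f$. Because $\nabla Y$ is skew, the first term equals $-(\nabla_{\nabla f}Y)_i$. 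The second term equals $\frac12Y_i-\Ric(Y)_i$ by the soliton equation. Therefore
\begin{equation*}
\nabla_{\nabla f}Y=\tfrac12 Y-\Ric(Y)-\nabla u .
\end{equation*}
Taking the inner product with $Y$ gives
\begin{equation*}
\tfrac12\nabla_{\nabla f}|Y|^2=\tfrac12|Y|^2-\Ric(Y,Y)-Yu .
\end{equation*}
The term $\Ric(Y,Y)$ has no sign, and I do not have a curvature bound. So I switch to a divergence or maximum-principle form that uses $\Delta_f$.

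For a Killing field, $\Delta Y=-\Ric(Y)$, and therefore $\tfrac12\Delta|Y|^2=|\nabla Y|^2-\Ric(Y,Y)$. Combining this with the previous identity,
\begin{equation*}
\tfrac12\Delta_f|Y|^2=|\nabla Y|^2-\Ric(Y,Y)-\tfrac12|Y|^2+\Ric(Y,Y)+Yu=|\nabla Y|^2-\tfrac12|Y|^2+\inn{Y}{\nabla u}.
\end{equation*}
The Ricci terms cancel, and this is the key step. Using $|\inn{Y}{\nabla u}|\le c|Y|\le\tfrac14|Y|^2+c^2$, we obtain $\Delta_f|Y|^2\ge-\tfrac32|Y|^2-2c^2$. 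I then compare $w=|Y|^2$ with $A(1+F)$, using $\Delta_fF=\tfrac n2-F$. The quantity $\phi=w-A(1+F)$ satisfies $\Delta_f\phi\ge-\tfrac32w-2c^2-A\tfrac n2+AF+A$. On the set $\{\phi>0\}$, where $w>A(1+F)$, this inequality is not immediately favourable, so I would instead run the argument along the flow of $\nabla f$. Along an integral curve of $\nabla f/|\nabla f|^2$, parametrized by the value $s$ of $F$, the first-order identity together with the bound $|\Ric(Y,Y)|\le$? again needs curvature. This is the main obstacle.

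To avoid it, I would use the ODE on the integral curves of $\nabla f$ directly, applied to $Y$ rather than to $|Y|^2$. Write $J=\nabla Y$, a skew endomorphism. It satisfies $\nabla_{\nabla f}J=\mathcal L$-type identities: from $\nabla_{\nabla f}\nabla Y=-\Rm(Y,\nabla f)$ and the shrinker identity $\Rm(\cdot,\cdot,\nabla f,\cdot)=\nabla\Ric$-terms, we get $\nabla_{\nabla f}J=$ terms involving $\nabla\Ric(Y)$. This also needs curvature. Failing that, the cleanest route is an integral one, which is fine because the conclusion only needs $Y\in L^2_f$ and a pointwise growth bound. Let $\eta_R$ be the cutoffs, and integrate the displayed $\Delta_f$ identity against $\eta_R^2$ weighted by $e^{-f}$. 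Integrating by parts bounds $\int\eta_R^2(|\nabla Y|^2-\tfrac12|Y|^2)$ by boundary terms. By itself this gives nothing, but applying \cref{lem:Poincare-inequality} to $\eta_RY$ yields $\int F\eta_R^2|Y|^2\le C\int(|Y|^2+|\nabla Y|^2)\eta_R^2+\dots$, and the $|\nabla Y|^2$ term is controlled by the identity. Closing this estimate uniformly in $R$ is the delicate step; I would try to absorb using the sign of $-\tfrac12|Y|^2$ on the large-$F$ region.

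Once $Y\in H^1_f$ is established, the first part of \cref{prop:elliptic-theory-for-gauge-operator} applies and the pointwise bound \cref{eq:Killing-linear-growth} follows. I remain unsure whether the Ricci cancellation suffices to give a pointwise rather than an integral bound; I expect the pointwise version to come from a local mean-value argument on balls of unit size, and I regard this as the main obstacle.
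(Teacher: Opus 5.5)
Your preliminary identities are correct, and they are the ones the paper uses: $\nabla^2(Yf)=0$, the drift identity $\nabla_{\nabla f}Y=\tfrac12Y-\Ric(Y)-\nabla u$, and the Ricci cancellation in $\tfrac12\Delta_f|Y|^2=|\nabla Y|^2-\tfrac12|Y|^2+\langle Y,\nabla u\rangle$. The gap is that the proposal never obtains either Gaussian integrability of $Y$ or the pointwise bound, and each route you try fails for a concrete reason.

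Any maximum principle on a shrinker needs an a priori $L^2_f$ input (see \cref{lem:L2-maximum-principle} and \cref{rmk:Gaussian-integrability}), and the proposal contains none. Your Caccioppoli--Poincar\'e computation gives at best $\int_M(F-C)\eta_R^2|Y|^2\,\mathrm dV_f\le CR^{-1}\int_{\{R\le F\le2R\}}|Y|^2\,\mathrm dV_f+C$. The right side is exactly the unknown shell mass. Only the identity for $\tfrac12\Delta_f|Y|^2$ and the Poincar\'e inequality enter, so nothing distinguishes $Y$ from solutions of $\Delta_fW=-\tfrac12W$ that grow exponentially on a Gaussian end, and the estimate cannot close.

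The comparison with $A(1+F)$ fails because degree two is the characteristic degree of $\Delta_f+1$. Since $(\Delta_f+1)(1+F)=\tfrac n2+1>0$, $A(1+F)$ is not a supersolution. Absorbing $\langle Y,\nabla u\rangle$ by Young's inequality also moves the characteristic degree to three, so it would lose the sharp exponent in any case.

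Finally, \cref{prop:elliptic-theory-for-gauge-operator} gives no pointwise growth, and a unit-ball mean-value argument turns $L^2_f$ bounds into bounds of size $e^{f/2}$, not linear ones. If $Y\in H^1_f$ were known, \cref{prop:C0-estimate} applied to $(\Delta_f+\tfrac12)Y=\nabla u$ with $\lambda=-\tfrac12$, $\beta=0$ would give $|Y|\le C\boldsymbol\rho$. So integrability, not the pointwise step, is the real obstacle.

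The missing idea is to remove $u$ exactly and then pass to a logarithm. Applying $Y$ to $\mathrm R+|\nabla f|^2=F$ gives $Yf=2\langle V,\nabla f\rangle$ with $V=\nabla(Yf)$ parallel. So $Y_0=Y-2V$ is Killing with $Y_0f=0$, and your drift identity becomes $\nabla_{\nabla f}Y_0=(\nabla^2f)^\sharp Y_0$. Put $h=\tfrac12\log(1+|Y_0|^2)$. Then
\[
|\langle\nabla f,\nabla h\rangle|=\frac{|\nabla^2f(Y_0,Y_0)|}{1+|Y_0|^2}\le|\nabla^2f|,
\]
so the logarithm turns the unsigned Ricci coefficient that blocked your ODE along $\nabla f$ into an additive term. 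Moreover $\int_M|\nabla^2f|^2\,\mathrm dV_f=\tfrac12\int_M|\nabla f|^2\,\mathrm dV_f<\infty$ with no curvature bound.

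Integrate the weighted divergence of $\eta_Rh_T^2\nabla f$, where $h_T=\min\{h,T\}$, using $\Delta_fF=\tfrac n2-F$. This gives $\int_M(F-\tfrac n2-1)h_T^2\,\mathrm dV_f\le\int_M|\nabla^2f|^2\,\mathrm dV_f$, so $h\in L^2_f$.

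Next, skew-symmetry of $\nabla Y_0$ gives the improved Kato inequality $|\nabla|Y_0|^2|^2\le2|Y_0|^2|\nabla Y_0|^2$. This upgrades your $\Delta_f$ identity to $\Delta_fh\ge-\tfrac12$, which matches $\Delta_f\tfrac12\log(F-\tfrac n2)\le-\tfrac12$ exactly. Hence $w=h-B-\tfrac12\log(F-\tfrac n2)$ is $f$-subharmonic on $\{F>\tfrac n2+2\}$ and, for suitable $B$, negative on its inner collar, with $0\le w_+\le h\in L^2_f$. Testing against $\eta_R^2w_+$ forces $w_+=0$, that is, $1+|Y_0|^2\le e^{2B}(F-\tfrac n2)$. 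Adding back $2V$, which has constant norm, gives \cref{eq:Killing-linear-growth}.
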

\begin{proof}
    The compact case is immediate, so assume that $M$ is noncompact.
    We first separate the part of $Y$ that moves the potential.  Since a
    Killing field preserves the metric, the connection, and the Ricci
    tensor, taking the Lie derivative of the soliton equation gives
    $\nabla^2(Yf)=0$.  Thus $V:=\nabla(Yf)$ is parallel.
    Applying $Y$ to $\mathrm R+|\nabla f|^2=F$, and using $Y\mathrm R=0$,
    gives
    \begin{equation*}
        Yf=2\inn{V}{\nabla f},\qquad Y_0:=Y-2V,\qquad Y_0f=0.
    \end{equation*}
    The field $Y_0$ is Killing and $|V|$ is constant.  It therefore suffices
    to prove the estimate when $Yf=0$, which we now assume.

    Differentiating $\inn{Y}{\nabla f}=0$ and using the skew-symmetry of
    $\nabla Y$ yields $\nabla_{\nabla f}Y=(\nabla^2f)^\sharp Y$.
    Together with the Killing identity $\Delta Y=-\Ric^\sharp Y$, this gives
    \begin{equation*}
        \Delta_fY=-\frac12Y,\qquad
        \Delta_fv=2|\nabla Y|^2-v,\qquad v:=|Y|^2.
    \end{equation*}
    At a point, let $A(Z)=\nabla_ZY$.  This endomorphism is skew-adjoint,
    $\nabla v=-2AY$, and its orthogonal decomposition into rotation blocks
    gives $|AY|^2\leq\frac12|A|^2|Y|^2$.  Consequently,
    \begin{equation}\label{eq:Killing-improved-Kato}
        |\nabla v|^2\leq2v|\nabla Y|^2.
    \end{equation}

    We next establish the integrability needed for comparison at infinity.
    Since $1\in H^1_f(M)$, \cref{lem:Poincare-inequality} gives
    $\int_M(1+F)\,\mathrm{d}V_f<\infty$, and hence $\nabla f\in L^2_f$.
    The commutation identity
    $\Delta_f\nabla f=-\frac12\nabla f$, together with
    \cite[Lemma~3.3]{li2023rigidity}, yields
    \begin{equation*}
        \int_M|\nabla^2f|^2\,\mathrm{d}V_f
        =\frac12\int_M|\nabla f|^2\,\mathrm{d}V_f<\infty.
    \end{equation*}
    Put $h=\frac12\log(1+v)$.  The identity for
    $\nabla_{\nabla f}Y$ gives
    \begin{equation*}
        \bigl|\inn{\nabla f}{\nabla h}\bigr|
        =\frac{|\nabla^2f(Y,Y)|}{1+|Y|^2}\leq|\nabla^2f|.
    \end{equation*}
    For $T>0$, let $h_T=\min\{h,T\}$.  Integrating the weighted divergence
    of $\eta_Rh_T^2\nabla f$ and then letting $R\to\infty$ gives
    \begin{equation*}
        \int_M(F-n/2)h_T^2\,\mathrm{d}V_f
        =2\int_M h_T\inn{\nabla f}{\nabla h_T}\,\mathrm{d}V_f.
    \end{equation*}
    Indeed, for fixed $T$ the two integrands are integrable, while the
    cutoff error tends to zero because
    $|\inn{\nabla\eta_R}{\nabla f}|\leq C$ on $\{R\leq F\leq2R\}$ and
    this annulus has weighted measure tending to zero.  Young's inequality
    therefore implies
    \begin{equation*}
        \int_M(F-n/2-1)h_T^2\,\mathrm{d}V_f
        \leq\int_M|\nabla^2f|^2\,\mathrm{d}V_f.
    \end{equation*}
    On $\{F\geq n/2+2\}$ the coefficient on the left is at least $1$;
    on the complementary compact set, $h_T\leq h$ is uniformly bounded.
    Thus $\|h_T\|_{L^2_f}$ is bounded independently of $T$, and monotone
    convergence gives $h\in L^2_f$.

    The chain rule and \cref{eq:Killing-improved-Kato} now give
    \begin{equation*}
        \Delta_fh
        =\frac{|\nabla Y|^2}{1+v}-\frac{v}{2(1+v)}
          -\frac{|\nabla v|^2}{2(1+v)^2}
        \geq-\frac12.
    \end{equation*}
    On $\{F>n/2\}$, we also have
    \begin{equation*}
        \Delta_f\left(\frac12\log(F-n/2)\right)
        =-\frac12-\frac{|\nabla f|^2}{2(F-n/2)^2}\leq-\frac12.
    \end{equation*}
    Choose $B=1+\max_{\{F\leq n/2+3\}}h$ and, on $\Omega=\{F>n/2+2\}$, set
    $w=h-B-\frac12\log(F-n/2)$.  Then $\Delta_fw\geq0$, and $w<0$ on
    $\{n/2+2<F\leq n/2+3\}$.  Hence $w_+$ extends by zero to a locally
    Lipschitz function on $M$, with $0\leq w_+\leq h$ and $w_+\in L^2_f$.
    Testing the inequality against $\eta_R^2w_+$, which vanishes near the
    inner boundary of $\Omega$, gives
    \begin{equation*}
        \int_M\eta_R^2|\nabla w_+|^2\,\mathrm{d}V_f
        \leq4\int_M w_+^2|\nabla\eta_R|^2\,\mathrm{d}V_f
        \leq\frac CR\|h\|_{L^2_f}^2.
    \end{equation*}
    Fatou's lemma gives $\nabla w_+=0$.  Since $M$ is connected and $w_+$
    vanishes near a minimum point of $F$, whose minimum is at most $n/2$,
    we have $w_+=0$.  Exponentiating the resulting inequality yields
    $1+|Y|^2\leq e^{2B}(F-n/2)$ on $\Omega$.  Absorbing the compact
    complement into the constant proves $|Y|^2\leq C(1+F)$.

    For the original Killing field, use $Y=Y_0+2V$ and the constant norm
    of $V$ to obtain the same estimate.  Finally,
    $\int_M(1+F)\,\mathrm{d}V_f<\infty$ gives $Y\in L^2_f$.
\end{proof}

\subsection{Weighted H\"{o}lder spaces}\label{sec:weighted-spaces}\label{sec:Holder-spaces}
\begin{Definition}[Standard AC setting]\label{def:standard-AC-setting}
    The following fixed data will be called the \emph{standard AC setting}.
    \begin{enumerate}[label=\textnormal{(\roman{*})},ref=\roman{*}]
        \item $(M^n,g,f)$ is a complete AC Ricci shrinker, with asymptotic cone $(\mathscr C(\Sigma),g_C)$.
        \item The map $\iota:(\underline r,\infty)\times\Sigma\to M$ is the canonical end embedding from \cref{thm:canonical-asymptotic-chart}. We increase $\underline r$ if necessary so that the chart extends across the initial slice, and write
        \begin{equation}\label{eq:positive-potential-weight}
            Q_\rho:=\iota((\rho,\infty)\times\Sigma),
            \qquad
            \boldsymbol{\rho}:=2(1+f-\boldsymbol{\mu}(g))^{1/2}.
        \end{equation}
        \item The function $\chi:M\to[0,1]$ is a fixed smooth cut-off satisfying
        \begin{equation*}
            \Supp\chi\subset Q_{\underline r},
            \qquad
            \chi=1\text{ on }Q_{2\underline r},
            \qquad
            |\nabla^j\chi|\leq C_j\underline r^{-j}\qquad(j\geq0).
        \end{equation*}
    \end{enumerate}
    We also fix a compact domain $K$ containing $M\setminus Q_{2\underline r}$.
\end{Definition}
Throughout this and the next subsection, constants may depend on the displayed regularity and weight parameters, the tensor bundles and algebraic operations involved, and the fixed shrinker, end chart, compact core, and cutoff.

We define the weighted H\"older spaces using ordinary H\"older norms on a cylinder. This is equivalent to the usual definition on AC manifolds; see \cite[$\S 8.3$]{joyce2000compact}.
On the canonical end, put
\begin{equation}\label{eq:radial-compactifying-variable}
    t:=\log\frac r{\underline r},\qquad r=\underline r e^t.
\end{equation}
\phantomsection\label{idx:radial-parallel-identification}
For $r>0$, write $\Sigma_r=\{r\}\times\Sigma$.
For $E=T^{p,q}M$, let $E_\Sigma=T^{p,q}\mathscr C|_{\Sigma_1}$, with the metric and connection induced from the cone. We use the same notation for its pullback to the cylinder. Radial parallel transport for $g_C$ gives the fiberwise identifications $\boldsymbol\tau_r:E_\Sigma\to T^{p,q}\mathscr C|_{\Sigma_r}$.
After identifying the end with its image under $\iota$, define
\begin{equation}\label{eq:compactified-weighted-representative}
    U(t,\theta):=(\mathscr T_\beta u)(t,\theta)
    =r^{-\beta}\boldsymbol\tau_r^{-1}u(r,\theta).
\end{equation}
Thus all cylindrical representatives take values in the same fixed bundle.
Ordinary H\"older norms on the cylinder use the product metric $\mathrm dt^2+g_\Sigma$ and the fixed bundle connection. Set
\begin{equation}\label{eq:uniform-cylindrical-Holder-norm}
    \norm{U}_{C^{k,\alpha}_{\mathrm{cyl}}}
    :=\sup_{T\geq0}\norm{U(T+\cdot,\cdot)}_{C^{k,\alpha}([0,1]\times\Sigma,E_\Sigma)}.
\end{equation}

\begin{Definition}[Weighted H\"older spaces]\label{def:weighted-Holder-spaces}
    For $k\geq0$, $\alpha\in[0,1]$, and $\beta\in\mathbb R$, let $C^{k,\alpha}_\beta(M,E)$ be the space of locally $C^{k,\alpha}$ sections with finite norm
    \begin{equation}\label{eq:weighted-Holder-cylindrical-norm}
        \norm{u}_{C^{k,\alpha}_\beta(M,E)}
        :=\norm{u}_{C^{k,\alpha}(K,E)}
        +\norm{\mathscr T_\beta u}_{C^{k,\alpha}_{\mathrm{cyl}}}.
    \end{equation}
    At $\alpha=0$, the ordinary norms in this formula are $C^k$ norms; we omit $\alpha$ and write $C^k_\beta=C^{k,0}_\beta$. At $\alpha=1$, $C^{k,1}$ means that the derivatives of order $k$ are Lipschitz.
\end{Definition}
These are Banach spaces. Changing the compact core, the fixed connection on $E_\Sigma$, or the positive length of the cylinder slabs gives equivalent norms.
An equivalent description uses the shrinker metric on rescaled annuli. Put
\begin{equation*}
    R_T:=\underline r e^T,\qquad
    \kappa_T(s,\theta):=\iota(R_Te^s,\theta),\qquad
    g_T:=R_T^{-2}\kappa_T^*g.
\end{equation*}
For $E=T^{p,q}M$, one has
\begin{equation}\label{eq:weighted-Holder-rescaled-annulus-norm}
    \norm{u}_{C^{k,\alpha}_\beta}
    \asymp\norm{u}_{C^{k,\alpha}(K,E),g}
    +\sup_{T\geq0}\norm{R_T^{p-q-\beta}\kappa_T^*u}
      _{C^{k,\alpha}([0,1]\times\Sigma),g_T}.
\end{equation}
Indeed, the metrics $g_T$ are uniformly equivalent to the product cylinder metric and have uniformly bounded derivatives of every order. Identify the radial vector at $r=1$ with $\partial_s$ and retain the tangential vectors on $\Sigma$. Under the resulting identification of $E_\Sigma$ with the cylinder tensor bundle, $R_T^{p-q-\beta}\kappa_T^*u$ is $e^{(\beta+q-p)s}\mathscr T_\beta u(T+s,\theta)$. The factor and its inverse have uniformly bounded derivatives for $s\in[0,1]$, which proves the comparison, including both endpoint conventions.

On a slab where $r\asymp R$, covariant differentiation corresponds, after tensor rescaling, to $R^{-1}$ times a first-order cylindrical operator with uniformly bounded coefficients. Consequently,
\begin{equation*}
    \norm{u}_{C^k_\beta}\asymp\sum_{j=0}^k\sup_M\boldsymbol{\rho}^{j-\beta}|\nabla^j u|_g.
\end{equation*}
In particular, membership in $C^k_\beta$ means that $|u|=O(\boldsymbol{\rho}^\beta)$, $|\nabla u|=O(\boldsymbol{\rho}^{\beta-1})$, and so on through order $k$. Formula \cref{eq:weighted-Holder-rescaled-annulus-norm} gives the corresponding conically scaled H\"older control.
Here $r$ is the cone coordinate and $\boldsymbol{\rho}$ is the global weight. By \cref{lem:asymptotic-rate-of-connection},
\begin{equation*}
    \boldsymbol{\rho}=r+O(r^{-1}),\qquad
    \left|\nabla^{C,j}\left(\frac{\boldsymbol{\rho}}r-1\right)\right|
    =O(r^{-2-j})\qquad(j\geq0).
\end{equation*}

\begin{Lemma}\label{lem:Holder-to-Gaussian-Sobolev}
    In the standard AC setting, for every $\beta\in\mathbb R$, integer $k\geq0$, and $\alpha\in[0,1]$, the inclusion
    \begin{equation}\label{eq:Holder-to-Gaussian-Sobolev}
        C^{k,\alpha}_\beta(M,E)\hookrightarrow H^k_f(M,E)
    \end{equation}
    is continuous.
\end{Lemma}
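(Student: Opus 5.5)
The plan is to reduce the $H^k_f$ norm to pointwise bounds and then integrate against the Gaussian weight. Using the equivalence recorded just before the statement,
\[
\norm{u}_{C^k_\beta}\asymp\sum_{j=0}^k\sup_M\boldsymbol\rho^{j-\beta}|\nabla^ju|_g,
\]
together with the trivial inequality $\norm{u}_{C^k_\beta}\leq\norm{u}_{C^{k,\alpha}_\beta}$, every $u\in C^{k,\alpha}_\beta(M,E)$ satisfies
\[
|\nabla^ju|_g\leq C\boldsymbol\rho^{\beta-j}\norm{u}_{C^{k,\alpha}_\beta}
\]
on all of $M$, for $0\leq j\leq k$. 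Since $\boldsymbol\rho\geq2$, this gives
\[
|\nabla^ju|^2\leq C\boldsymbol\rho^{2\max(\beta,0)}\norm{u}^2_{C^{k,\alpha}_\beta}
\]
for all such $j$. The Sobolev norm is then bounded by
\[
\norm{u}^2_{H^k_f}\leq C\norm{u}^2_{C^{k,\alpha}_\beta}\int_M\boldsymbol\rho^{2\max(\beta,0)}\,\mathrm dV_f,
\]
so it remains to show that this integral is finite.

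For the integral, I write $\boldsymbol\rho^2=4(1+F)$, so it suffices to show that $\int_M(1+F)^N\,\mathrm dV_f<\infty$ for every integer $N\geq0$. I would argue in one of two ways. The first is to use \cref{ineq:quadratic-growth-of-potential}, which gives $F\geq\frac14(d-5n)_+^2$, together with polynomial volume growth of geodesic balls on shrinkers (valid on any complete shrinker, and immediate in the AC case from the cone structure: $\mathrm{Vol}(B_R)\leq CR^n$). The weight $e^{-f}\leq Ce^{-(d-5n)_+^2/4}$ then kills every polynomial in $d$, and hence every power of $1+F$. The second, avoiding volume growth, is to iterate \cref{lem:Poincare-inequality} applied to $u=(1+F)^{m/2}$, which lies in $H^1_f$ by induction and density since $|\nabla F|^2\leq F$. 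Each iteration raises the integrable power of $F$ by one. I would present the first route on the AC end, where $\mathrm dV_g\leq Cr^{n-1}\,\mathrm dr\,\mathrm d\theta$ and $f\geq r^2/4-C$ by \cref{lem:asymptotic-rate-of-connection}; the compact core $K$ contributes a finite amount.

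Measurability and the weak-derivative requirement are automatic, because $u$ is locally $C^k$, so its classical derivatives are its weak derivatives. Linearity then makes the bound a continuous inclusion. I do not expect a genuine obstacle; the only point needing care is confirming that the pointwise equivalence with $\boldsymbol\rho^{j-\beta}|\nabla^ju|$ holds uniformly on the whole of $M$, including the transition region between $K$ and the end. This follows from compactness of $K$ and the bounds $\boldsymbol\rho=r+O(r^{-1})$ on the end.
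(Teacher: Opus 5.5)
Your proposal is correct and follows the paper's proof: the pointwise bounds $|\nabla^ju|\leq C\boldsymbol\rho^{\beta-j}\norm{u}_{C^{k,\alpha}_\beta}$ come from the $C^k_\beta$ norm comparison, and they are integrated against $\mathrm dV_f$ using $f=\boldsymbol\rho^2/4-1+\boldsymbol\mu(g)$ together with polynomial AC volume growth. Your alternative via iterated applications of \cref{lem:Poincare-inequality} is the same device the paper uses for polynomial moments in the proof of \cref{thm:C0-estimate}.
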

\begin{proof}
    The preceding norm comparison gives
    $|\nabla^j u|\leq C\boldsymbol{\rho}^{\beta-j}\norm{u}_{C^{k,\alpha}_\beta}$ for $0\leq j\leq k$.
    Since $f=\boldsymbol{\rho}^2/4-1+\boldsymbol\mu(g)$ and AC volume growth is polynomial, each $\boldsymbol{\rho}^{2(\beta-j)}$ is integrable with respect to $\mathrm dV_f$.
    Integrating these bounds and summing over $j$ proves the assertion.
\end{proof}

We record basic properties that will be used repeatedly. They are the conically scaled analogues of \cite[Lemma 5.7]{bamler2023degree}.
\begin{Lemma}\label{lem:basic-properties}
    Assume the standard AC setting. Let $E,E'$ be tensor bundles, $k\in\mathbb Z_{\ge0}$, $\alpha\in[0,1]$, and $\beta,\beta'\in\mathbb R$.
    \begin{enumerate}[label=\textnormal{(\roman{*})},ref=\roman{*}]
        \item\label{lem:basic-properties-continuity-of-covariant-derivative}
        If $k\geq1$, then
        \begin{equation*}
            \norm{u}_{C^{k,\alpha}_\beta}
            \asymp\norm{u}_{C^0_\beta}+\norm{\nabla u}_{C^{k-1,\alpha}_{\beta-1}}.
        \end{equation*}
        In particular, covariant differentiation is bounded from $C^{k,\alpha}_\beta$ to $C^{k-1,\alpha}_{\beta-1}$.
        \item\label{lem:basic-properties-isomorphism}
        Multiplication by $\boldsymbol{\rho}^{\beta'-\beta}$ is a bounded isomorphism from $C^{k,\alpha}_\beta(M,E)$ to $C^{k,\alpha}_{\beta'}(M,E)$, with
        \begin{equation*}
            \norm{\boldsymbol{\rho}^{\beta'-\beta}u}_{C^{k,\alpha}_{\beta'}}\leq C\norm{u}_{C^{k,\alpha}_\beta}.
        \end{equation*}
        \item\label{lem:basic-properties-multiplication}
        For a fiberwise bilinear algebraic operation $*$ and sections $u\in C^{k,\alpha}_\beta(M,E)$, $u'\in C^{k,\alpha}_{\beta'}(M,E')$,
        \begin{equation*}
            \norm{u*u'}_{C^{k,\alpha}_{\beta+\beta'}}
            \leq C\norm{u}_{C^{k,\alpha}_\beta}\norm{u'}_{C^{k,\alpha}_{\beta'}}.
        \end{equation*}
        \item\label{lem:basic-properties-compact-embedding}
        If $k+\alpha>k'+\alpha'$ and $\beta<\beta'$, where $k'\in\mathbb Z_{\ge0}$ and $\alpha'\in[0,1]$, then the inclusion $C^{k,\alpha}_\beta(M,E)\hookrightarrow C^{k',\alpha'}_{\beta'}(M,E)$ is compact.
    \end{enumerate}
\end{Lemma}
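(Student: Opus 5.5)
The plan is to reduce every assertion to ordinary H\"older estimates on the compact core $K$ and on the unit cylinder slabs $[T,T+1]\times\Sigma$, uniformly in $T\geq0$. We work with the cylindrical representatives $U=\mathscr T_\beta u$ of \cref{eq:compactified-weighted-representative}, or equivalently with the rescaled-annulus description \cref{eq:weighted-Holder-rescaled-annulus-norm}. On the core all statements are standard properties of $C^{k,\alpha}(K)$. So the work is to show that the relevant operations, written in cylinder variables, have coefficients bounded in $C^\infty$ uniformly in $T$.

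For \cref{lem:basic-properties-continuity-of-covariant-derivative}, I would use the rescaled annuli $(\,[0,1]\times\Sigma,g_T)$. There $R_T^{p-q-\beta}\kappa_T^*u$ transforms so that covariant differentiation with respect to $g_T$ of the rescaled tensor equals $R_T^{p-q-1-\beta+1}\kappa_T^*\nabla u$. Thus the weight-$\beta$ representative of $u$ and the weight-$(\beta-1)$ representative of $\nabla u$ are related by $\nabla^{g_T}$. The metrics $g_T$ converge smoothly to the cone metric restricted to $[1,e]\times\Sigma$ by \cref{prop:asymptotic-expansion-of-shrinker-metric}, so they have uniformly bounded geometry. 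The standard equivalence $\|v\|_{C^{k,\alpha}}\asymp\|v\|_{C^0}+\|\nabla v\|_{C^{k-1,\alpha}}$ on a fixed compact manifold with boundary therefore holds with constants uniform in $T$. For the lower bound one needs the mean value inequality to control the $C^\alpha$ seminorm of $v$ itself by $\|\nabla v\|_{C^0}$; this also works across adjacent slabs, since changing the slab length gives equivalent norms.

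For \cref{lem:basic-properties-isomorphism}, the cylindrical representative of $\boldsymbol\rho^{\beta'-\beta}u$ is $(\boldsymbol\rho/r)^{\beta'-\beta}\,\mathscr T_\beta u$. Using $|\nabla^{C,j}(\boldsymbol\rho/r-1)|=O(r^{-2-j})$, the function $(\boldsymbol\rho/r)^{\gamma}$ is bounded in $C^{k,\alpha}_{\mathrm{cyl}}$ together with its inverse, since $r\partial_r$ and tangential derivatives only improve decay. Smoothness of $\boldsymbol\rho>0$ handles the core. For \cref{lem:basic-properties-multiplication}, radial parallel transport commutes with fiberwise algebraic operations formed from $g_C$. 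This gives $\mathscr T_{\beta+\beta'}(u*u')=\mathscr T_\beta u\,*\,\mathscr T_{\beta'}u'$ up to contractions with the fixed bundle metric. The operation $*$ uses $g$ rather than $g_C$, but $g-g_C$ is of lower order by \cref{ineq:asymptotic-rate}, so it contributes a coefficient tensor bounded in $C^{k,\alpha}_{\mathrm{cyl}}$. The H\"older product inequality on each slab then gives the estimate.

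For \cref{lem:basic-properties-compact-embedding}, I would run the usual Arzel\`a--Ascoli argument with a tail estimate. Take a bounded sequence $u_j$ in $C^{k,\alpha}_\beta$. The identity $\mathscr T_{\beta'}u=e^{-(\beta'-\beta)t}\underline r^{\beta-\beta'}\mathscr T_\beta u$, combined with the multiplicative estimate, shows that on slabs with $T\geq T_0$ the $C^{k',\alpha'}$ norm of the $\beta'$-representative is at most $Ce^{-(\beta'-\beta)T_0}$ times the norm of $u_j$. These tails are uniformly small. On the compact region $K\cup\{t\leq T_0+1\}$, the embedding $C^{k,\alpha}\Subset C^{k',\alpha'}$ is compact, so a diagonal subsequence converges there for each $T_0$. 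The subsequence is then Cauchy in $C^{k',\alpha'}_{\beta'}$. The main obstacle, mild as it is, is the bookkeeping in \cref{lem:basic-properties-continuity-of-covariant-derivative}: checking uniformity of constants in $T$, and confirming that the tensor-rescaling factor $e^{(\beta+q-p)s}$ commutes appropriately with $\nabla$ when $p-q$ changes by one.
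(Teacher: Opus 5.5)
Your proposal is correct and follows essentially the same route as the paper: you reduce to ordinary H\"older estimates on slabs with uniformly bounded cylindrical coefficients, use the identity $\mathscr T_{\beta'}(\boldsymbol\rho^{\beta'-\beta}u)=(\boldsymbol\rho/r)^{\beta'-\beta}\mathscr T_\beta u$ for (ii), treat contractions with $g$ and $g^{-1}$ as bounded cylindrical coefficients for (iii), and combine a diagonal argument with the $e^{(\beta-\beta')T}$ tail bound for (iv). The obstacle you flag at the end does not actually arise: in the $g_T$-normalization, $u$ and $\nabla u$ carry the same constant factor $R_T^{p-q-\beta}$, and $\nabla^{g_T}$ is simply the connection of $\kappa_T^*g$.
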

\begin{proof}
    \cref{lem:basic-properties-continuity-of-covariant-derivative} On each cylinder slab, the rescaled covariant derivative is a first-order operator whose principal part is the full cylindrical derivative and whose coefficients are uniformly bounded with all derivatives. The usual local norm comparison, followed by the supremum over the slabs and the compact-core estimate, gives the asserted equivalence.

    \noindent\cref{lem:basic-properties-isomorphism} The AC estimates for $f$ imply that $\boldsymbol{\rho}/r$ and its reciprocal have uniformly bounded cylindrical derivatives of every order. Moreover,
    \begin{equation*}
        \mathscr T_{\beta'}(\boldsymbol{\rho}^{\beta'-\beta}u)
        =(\boldsymbol{\rho}/r)^{\beta'-\beta}\mathscr T_\beta u.
    \end{equation*}
    Ordinary H\"older multiplication on the slabs and the compact core proves the bound. Interchanging $\beta$ and $\beta'$ gives its inverse.

    \noindent\cref{lem:basic-properties-multiplication} In the fixed radial identification, tensor products have constant algebraic coefficients. Contractions with $g$ or $g^{-1}$ have uniformly bounded cylindrical coefficients. The ordinary H\"older product estimate therefore applies on every slab with the same constant; the compact-core estimate completes the proof.

    \noindent\cref{lem:basic-properties-compact-embedding} On every bounded cylinder segment and on the compact core, the ordinary compact embedding applies. On the tail $t\geq S$, the change from weight $\beta$ to weight $\beta'$ contributes $r^{\beta-\beta'}$, whose cylindrical derivatives are bounded by $Ce^{(\beta-\beta')S}$. Thus the norm of the inclusion on the tail tends to zero as $S\to\infty$. A diagonal subsequence and this uniform tail estimate give convergence in $C^{k',\alpha'}_{\beta'}$.
\end{proof}

The positive weight $\boldsymbol\rho$ also gives a useful conjugation
formula. For $L=\Delta_f-\lambda$, $\gamma\in\mathbb R$, and
$u\in C^2_{\mathrm{loc}}(M,E)$, the product rule gives
\begin{equation}\label{eq:weight-conjugation}
    \boldsymbol\rho^{-\gamma}L(\boldsymbol\rho^\gamma u)
    =\Delta_{f-2\gamma\log\boldsymbol\rho}u
    -(\lambda+\gamma/2)u+\phi_\gamma u,
\end{equation}
where
\begin{equation*}
    \phi_\gamma
    =4\gamma(\gamma-2)\boldsymbol\rho^{-4}|\nabla f|^2
    +\gamma(n+2)\boldsymbol\rho^{-2}.
\end{equation*}
Indeed, $\Delta_f(\boldsymbol\rho^2)=2n+4-\boldsymbol\rho^2$, so
$\boldsymbol\rho^{-\gamma}\Delta_f\boldsymbol\rho^\gamma
=-\gamma/2+\phi_\gamma$.
The soliton equation, the curvature derivative bounds, and the weighted
product estimates give
\begin{equation}\label{eq:weight-conjugation-coefficients}
    \norm{2\gamma\nabla\log\boldsymbol{\rho}}_{C^{k+1,\alpha}_{-1}}
    +\norm{\phi_\gamma}_{C^{k+1,\alpha}_{-2}}
    \leq C(|\gamma|+\gamma^2),
\end{equation}
where $C$ is independent of $\gamma$.
Indeed, $|\nabla^j\log\boldsymbol{\rho}|\leq C_j\boldsymbol{\rho}^{-j}$ for $j\geq1$, while
$|\nabla^j(\boldsymbol{\rho}^{-4}|\nabla f|^2)|\leq C_j\boldsymbol{\rho}^{-2-j}$; one further derivative controls the H\"older norm on each rescaled annulus.
Thus conjugation shifts $\lambda$ to $\lambda+\gamma/2$ and introduces first-order and zeroth-order coefficients of weights $-1$ and $-2$, respectively. Multiplication by $\boldsymbol{\rho}^{-\gamma}$ changes the weight $\beta$ to $\beta-\gamma$, so $2\lambda+\beta$ is preserved.
The Gaussian Fredholm realization and its finite weighted estimates
are given in \cref{prop:Gaussian-Fredholm-realization,prop:weighted-Schauder-estimates-BCD}.

For a compatible pair of Banach spaces $(X,Y)$, $s>0$, and
$0<\vartheta<1$, recall the $K$-functional and real interpolation norm
\begin{equation*}
    K(s,u;X,Y):=\inf_{u=x+y}
       \bigl(\norm{x}_X+s\norm{y}_Y\bigr),
    \qquad
    \norm{u}_{(X,Y)_{\vartheta,\infty}}
       :=\sup_{s>0}s^{-\vartheta}K(s,u;X,Y).
\end{equation*}
The space $(X,Y)_{\vartheta,\infty}$ consists of the elements of $X+Y$
for which this norm is finite.
\Needspace{8\baselineskip}
\begin{Lemma}[Weighted real interpolation]\label{lem:weighted-real-interpolation}
    Assume the standard AC setting, let $E$ be a tensor bundle, and set
    $\beta_\vartheta=(1-\vartheta)\beta_0+\vartheta\beta_1$ for
    $\beta_0,\beta_1\in\mathbb R$ and $0<\vartheta<1$.
    The following identities hold with equivalent norms.
    \begin{enumerate}[label=\textnormal{(\roman{*})},ref=\roman{*}]
        \item\label{lem:weighted-real-interpolation-regularity}
        For $k\in\mathbb Z_{\geq0}$, $0<\alpha<1$, and $\beta\in\mathbb R$,
        \begin{equation}\label{eq:weighted-real-interpolation}
            (C^k_\beta(M,E),C^{k+1}_\beta(M,E))_{\alpha,\infty}
            =C^{k,\alpha}_\beta(M,E).
        \end{equation}
        \item\label{lem:weighted-real-interpolation-weights}
        For $k\in\mathbb Z_{\geq0}$ and $0\leq\alpha<1$,
        \begin{equation}\label{eq:weighted-weight-interpolation}
            (C^{k,\alpha}_{\beta_0}(M,E),
             C^{k,\alpha}_{\beta_1}(M,E))_{\vartheta,\infty}
            =C^{k,\alpha}_{\beta_\vartheta}(M,E).
        \end{equation}
        \item\label{lem:weighted-real-interpolation-simultaneous}
        Let $s_i=k_i+\alpha_i$, where $k_i\geq0$ are integers and
        $0\leq\alpha_i<1$, and suppose $s_0<s_1$. If
        \[
            (1-\vartheta)s_0+\vartheta s_1=k_\vartheta+\alpha_\vartheta,
            \qquad k_\vartheta\geq0,\quad 0<\alpha_\vartheta<1,
        \]
        with $k_\vartheta$ an integer, then
        \begin{equation}\label{eq:weighted-simultaneous-interpolation}
            (C^{k_0,\alpha_0}_{\beta_0}(M,E),
             C^{k_1,\alpha_1}_{\beta_1}(M,E))_{\vartheta,\infty}
            =C^{k_\vartheta,\alpha_\vartheta}_{\beta_\vartheta}(M,E).
        \end{equation}
    \end{enumerate}
    The corresponding unweighted regularity identities hold on the
    compact link $\Sigma$, for sections of every fixed smooth bundle
    over $\Sigma$.
\end{Lemma}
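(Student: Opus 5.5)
The plan is to reduce all three identities to statements about the uniform cylindrical norm \cref{eq:uniform-cylindrical-Holder-norm}, where they become classical interpolation facts on bounded domains, applied slabwise with uniform constants. First localize with the cutoff $\chi$. Write $u=(1-\chi)u+\chi u$. The compact part is handled by the unweighted interpolation identities on a compact manifold with boundary, since all weights are comparable to constants there. For the end part, the map $\mathscr T_\beta$ from \cref{eq:compactified-weighted-representative} is an isometry from $C^{k,\alpha}_\beta$ (restricted to sections supported in $Q_{\underline r}$) onto the uniform cylindrical space $C^{k,\alpha}_{\mathrm{cyl}}$. Interpolation functors commute with such isometries and with retractions. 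The retraction here is $u\mapsto(\chi u,(1-\chi)u)$, bounded in each space by \cref{lem:basic-properties}, with coretraction given by addition. This reduces each claim to the corresponding identity on $[0,\infty)\times\Sigma$ and on the core.

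For \cref{lem:weighted-real-interpolation-regularity}, the weight is fixed, so after $\mathscr T_\beta$ we must show $(C^k_{\mathrm{cyl}},C^{k+1}_{\mathrm{cyl}})_{\alpha,\infty}=C^{k,\alpha}_{\mathrm{cyl}}$ on the cylinder. The inclusion ``$\subset$'' follows from the standard estimate $[D^kx+D^ky]_\alpha\le 2\|x\|_{C^k}s^{-\alpha}+\|y\|_{C^{k+1}}s^{1-\alpha}$, obtained by splitting at distance $s$. This estimate is local, so it holds uniformly on slabs. For ``$\supset$'', take a mollification at scale $s$ in the cylinder: convolve in $t$, and use a fixed finite atlas and partition of unity on $\Sigma$, or heat semigroup smoothing on $\Sigma$. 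With $y=U_s$ and $x=U-U_s$ one gets $\|x\|_{C^k}\lesssim s^\alpha\|U\|_{C^{k,\alpha}}$ and $\|y\|_{C^{k+1}}\lesssim s^{\alpha-1}\|U\|_{C^{k,\alpha}}$ for $s\le1$. For $s\ge1$ take $y=0$. Translation invariance in $t$ makes all constants uniform in $T$.

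For \cref{lem:weighted-real-interpolation-weights}, I fix the regularity and vary the weight. Write $e^{-\beta_0t}\mathscr T_0u$ in the two weights: the spaces become $C^{k,\alpha}$ with exponential weights $e^{-\beta_it}$. Assume $\beta_0<\beta_1$; the other case is symmetric. The inclusion ``$\subset$'' uses $e^{-\beta_\vartheta t}\le s^{-\vartheta}\max(e^{-\beta_0 t},se^{-\beta_1t})\cdot(\text{const})$ pointwise, applied slab by slab, since the weight varies by a bounded factor on each unit slab. For ``$\supset$'', split $u$ at radius $r_s$ with $r_s^{\beta_1-\beta_0}=s^{-1}$ using a smooth radial cutoff $\psi(\log r-\log r_s)$. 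Its cylindrical derivatives are bounded independently of $s$. The inner part goes to $C^{k,\alpha}_{\beta_1}$ and the outer part to $C^{k,\alpha}_{\beta_0}$, and a direct computation gives $K(s,u)\lesssim s^\vartheta\|u\|_{C^{k,\alpha}_{\beta_\vartheta}}$. The H\"older seminorm is unaffected because the cutoff and weight ratios are smooth with uniform bounds on each slab.

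Part \cref{lem:weighted-real-interpolation-simultaneous} combines both. The inclusion ``$\supset$'' is obtained by performing both splittings: at scale $s$ in radius, and at mollification scale $\sigma(s,t)$ depending on the slab, chosen so that each unit slab contributes the correct power. Equivalently, one uses the slabwise description \cref{eq:weighted-Holder-rescaled-annulus-norm}. The space is then an $\ell^\infty$-sum over slabs of unweighted H\"older spaces with scalar multipliers $R_T^{-\beta}$. One then applies the classical simultaneous identity $(C^{k_0,\alpha_0},C^{k_1,\alpha_1})_{\vartheta,\infty}=C^{k_\vartheta,\alpha_\vartheta}$ on a slab, together with the weighted $\ell^\infty$ interpolation $(\ell^\infty(w_0X_0),\ell^\infty(w_1X_1))_{\vartheta,\infty}=\ell^\infty(w_0^{1-\vartheta}w_1^\vartheta(X_0,X_1)_{\vartheta,\infty})$. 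I expect the main obstacle to be justifying this $\ell^\infty$-sum model. Neighboring slabs overlap, so one needs a uniformly bounded partition of unity $\{\psi_T\}$ in $t$, with $u\mapsto(\psi_Tu)_T$ and $(v_T)\mapsto\sum_T\tilde\psi_Tv_T$ forming a bounded retraction pair in all the spaces at once. Once that retraction is established, the weighted $\ell^\infty$ interpolation formula is elementary: compute $K$ componentwise, using uniformity of the constants. The statement on $\Sigma$ is the classical compact case.
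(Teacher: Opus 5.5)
Your proposal is correct in substance, and your argument for part (iii) is the paper's proof. The paper builds one retraction pair $Ju=(\varphi_jL_ju)_j$, $R(v_j)=\sum_jL_j^{-1}(\psi_jv_j)$, with $\psi_j=1$ on $\operatorname{supp}\varphi_j$ and $L_j$ using the \emph{unweighted} radial identification, so the same maps serve every weight and every regularity. This is the point your opening reduction glosses over, since $\mathscr T_\beta$ changes with $\beta$. The paper then computes the $K$-functional of the weighted $\ell^\infty$-sum $\sup_jR_j^{-\beta_i}\|v_j\|_{A_i}$ componentwise and interchanges the suprema, exactly as you propose. It localizes all the way to $C^{k,\alpha}_b(\mathbb R^n)$ rather than to slabs times $\Sigma$, which avoids boundary and link issues in the classical identity.

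The genuine difference is that you prove (i) and (ii) by separate direct arguments (slabwise mollification, and a radial split at an $s$-dependent radius). The paper reads both off the same sequence-space identity. Part (i) is (iii) with equal weights, consecutive integer orders and $\vartheta=\alpha$, and part (ii) follows from $K(s,a;A,A)=\min\{1,s\}\|a\|_A$. Your direct route is more hands-on, but it is redundant once the retraction for (iii) exists.

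Your direct argument for (ii) also contains a sign slip. In the paper's convention, $C^{k,\alpha}_\beta$ means $|u|=O(\boldsymbol\rho^{\beta})$. With $\beta_0<\beta_1$, the outer piece decays only like $r^{\beta_\vartheta}$ with $\beta_\vartheta>\beta_0$, so it has infinite $C^{k,\alpha}_{\beta_0}$ norm. Send the inner piece to $C^{k,\alpha}_{\beta_0}$ and the outer piece to $C^{k,\alpha}_{\beta_1}$ instead. The costs are then $r_s^{\vartheta(\beta_1-\beta_0)}\|u\|_{C^{k,\alpha}_{\beta_\vartheta}}$ and $s\,r_s^{-(1-\vartheta)(\beta_1-\beta_0)}\|u\|_{C^{k,\alpha}_{\beta_\vartheta}}$. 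They balance at $r_s^{\beta_1-\beta_0}=s$ (not $s^{-1}$) and give $K(s,u)\le Cs^{\vartheta}\|u\|_{C^{k,\alpha}_{\beta_\vartheta}}$. What you wrote is the computation for the opposite sign convention.
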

\begin{proof}
    Cover the compact core by finitely many charts and the cylindrical
    end by integer translates of finitely many fixed charts. Enlarge
    the core to absorb the initial collar. Choose a subordinate
    partition of unity $\{\varphi_j\}$ and cutoffs $\psi_j$ equal to
    one on $\operatorname{supp}\varphi_j$. The cover has uniformly
    bounded overlap, and all cutoffs, coordinate changes, and bundle
    trivializations have uniformly bounded derivatives.

    Let $L_j$ be the local coordinate representation using the
    unweighted radial identification $\boldsymbol\tau_r^{-1}$ on the
    end and an ordinary trivialization on the core. Let $R_j$ be the
    radius of the end chart and set $R_j=1$ on the core. Define
    \[
        Ju=(\varphi_jL_ju)_j,\qquad
        R(v_j)=\sum_jL_j^{-1}(\psi_jv_j),
    \]
    with the localized functions extended by zero. These maps do not
    depend on the weight or regularity. Put
    $A_i=C_b^{k_i,\alpha_i}(\mathbb R^n,\mathbb R^N)$, where $N$ is
    the bundle rank and $C_b^{k_i,0}=C_b^{k_i}$, and define
    \[
        Y_i=\{(v_j):\ \|(v_j)\|_{Y_i}
                   :=\sup_jR_j^{-\beta_i}\|v_j\|_{A_i}<\infty\}.
    \]
    The weighted norm on each chart is equivalent to
    $R_j^{-\beta_i}$ times its unweighted norm. Uniform localization
    therefore makes $J:C^{k_i,\alpha_i}_{\beta_i}\to Y_i$ and
    $R:Y_i\to C^{k_i,\alpha_i}_{\beta_i}$ bounded at both endpoints,
    with $RJ=\operatorname{Id}$. The same is true at every intermediate
    regularity and weight under consideration.

    For $v=(v_j)$, restriction of decompositions to individual
    components and independent choices of component decompositions give
    \[
    \begin{split}
        \mathcal K_t(v)&\leq K(t,v;Y_0,Y_1)\leq2\mathcal K_t(v),\\
        \mathcal K_t(v)&:=\sup_jR_j^{-\beta_0}
          K\bigl(tR_j^{\beta_0-\beta_1},v_j;A_0,A_1\bigr).
    \end{split}
    \]
    For the second inequality, choose each component decomposition
    within an arbitrary common error of its weighted infimum, take
    the two suprema, and let the error tend to zero. Interchanging
    the suprema over $t$ and $j$ and putting
    $s=tR_j^{\beta_0-\beta_1}$ yield
    \[
        \|v\|_{(Y_0,Y_1)_{\vartheta,\infty}}
        \asymp\sup_jR_j^{-\beta_\vartheta}
                 \|v_j\|_{(A_0,A_1)_{\vartheta,\infty}}.
    \]
    If $A_0=A_1=A$, then
    $K(s,a;A,A)=\min\{1,s\}\|a\|_A$, which proves part~\textup{(ii)},
    including $\alpha=0$, after applying $J$ and $R$.

    For parts~\textup{(i)} and~\textup{(iii)}, ordinary full H\"older
    interpolation gives
    \[
        (A_0,A_1)_{\vartheta,\infty}
            =C_b^{k_\vartheta,\alpha_\vartheta}(\mathbb R^n,\mathbb R^N)
    \]
    whenever the intermediate total order is noninteger; integer
    endpoint orders are allowed. Regularization with sufficiently many vanishing
    moments at scale $\epsilon$ gives an error bounded by
    $C\epsilon^{k_\vartheta+\alpha_\vartheta-s_0}\|v\|_{C^{k_\vartheta,\alpha_\vartheta}}$
    in $A_0$ and a regularized part bounded by
    $C\epsilon^{k_\vartheta+\alpha_\vartheta-s_1}\|v\|_{C^{k_\vartheta,\alpha_\vartheta}}$
    in $A_1$. This proves one $K$-functional bound. Conversely,
    differences of integer order larger than $s_1$, applied to a
    decomposition at parameter $|h|^{s_1-s_0}$, are bounded by
    $C|h|^{s_0}K(|h|^{s_1-s_0},v;A_0,A_1)$.
    The finite-difference characterization at a noninteger order gives
    the reverse bound; see also \cite[Chapter~1]{lunardi2018interpolation}.
    Applying the preceding sequence-space identity and the common retraction proves
    part~\textup{(iii)}. Taking equal weights, integer consecutive
    endpoint orders, and $\vartheta=\alpha$ gives part~\textup{(i)}.
    A finite chart cover gives the same proof on the compact link.
\end{proof}

\begin{Remark}\label{rmk:conic-Holder-interpolation}
    Deruelle's spaces $C^{k,\alpha}_{\mathrm{con}}$ in \cite[$\S 2.1$]{deruelle2016smoothing} measure the H\"older seminorms of the weighted derivatives on balls of a fixed radius. In \cite[Remark~2.1]{deruelle2016smoothing}, he distinguishes these spaces from the real interpolation spaces between consecutive integer conical spaces: the latter use balls whose radius is proportional to the distance from the compact core, together with the corresponding scaling of the H\"older quotient. Definition~\ref{def:weighted-Holder-spaces} uses this conic scale through ordinary norms on fixed cylinder slabs. Thus, up to equivalent norms, $C^{k,\alpha}_0$ has the interpolation convention in that remark. General weights follow by multiplication by $\boldsymbol{\rho}^{-\beta}$, as in \cref{lem:basic-properties}\cref{lem:basic-properties-isomorphism}.
\end{Remark}

The drift Laplacian combines a second-order elliptic term with the first-order term $-\nabla_{\nabla f}$. On the end, $\nabla f=\frac12r\partial_r$, so the drift has the same asymptotic weight as the section itself. This motivates a domain that controls $u$, $\nabla^2u$, and $\nabla_{\nabla f}u$ at a common target weight; \cref{cor:equivalent-critical-norm} below makes this precise.
Related anisotropic spaces were introduced by Chodosh--Schulze \cite[Definitions 3.1--3.2]{chodosh2021uniqueness}. Related graph and drift domains appear in \cite[$\S 2.1$]{deruelle2016smoothing} and \cite[Definition 5.3]{bamler2023degree}, with different H\"older conventions. Here we use conically scaled H\"older norms at every derivative order.
\begin{Definition}\label{def:cone-holder-space-general-component}
    Let $E$ be a tensor bundle over $M$.
    For $k\geq2$, $\alpha\in[0,1]$, and $\beta\in\mathbb R$, the space $C^{k,\alpha}_{\beta,\nabla f}(M,E)$ consists of locally $C^{k,\alpha}$ sections satisfying
    \begin{equation*}
        u,\nabla_{\nabla f}u\in C^{k-2,\alpha}_\beta,
        \qquad \nabla^{k-1}u,\nabla^ku\in C^{0,\alpha}_{\beta+2-k},
    \end{equation*}
    with the appropriate tensor bundles understood. Its norm is
    \begin{equation*}
        \norm{u}_{C^{k,\alpha}_{\beta,\nabla f}}
        :=\norm{u}_{C^{k-2,\alpha}_\beta}
        +\norm{\nabla_{\nabla f}u}_{C^{k-2,\alpha}_\beta}
        +\norm{\nabla^{k-1}u}_{C^{0,\alpha}_{\beta+2-k}}
        +\norm{\nabla^ku}_{C^{0,\alpha}_{\beta+2-k}}.
    \end{equation*}
    In particular, $u\in C^{k+2,\alpha}_{\beta,\nabla f}$ implies $u\in C^{k+2,\alpha}_{\beta+2}$, $\nabla u\in C^{k+1,\alpha}_{\beta+1}$, and $\nabla^2u\in C^{k,\alpha}_\beta$.
\end{Definition}

The following interpolation estimate at the critical weight gives an equivalent norm.
\begin{Lemma}[Critical-weight interpolation]\label{lem:critical-weight-interpolation}
    Let $\gamma\in\mathbb R$, $\alpha\in[0,1]$, and let $v$ be a locally $C^{2,\alpha}$ section of a tensor bundle $E$.
    If $v\in C^{0,\alpha}_\gamma(M,E)$ and $\nabla^2v\in C^{0,\alpha}_\gamma(M,(T^*M)^{\otimes2}\otimes E)$, then $\nabla v\in C^{0,\alpha}_\gamma(M,T^*M\otimes E)$ and
    \begin{equation}\label{ineq:critical-weight-interpolation}
        \norm{\nabla v}_{C^{0,\alpha}_\gamma}
        \leq C\bigl(\norm{v}_{C^{0,\alpha}_\gamma}+\norm{\nabla^2v}_{C^{0,\alpha}_\gamma}\bigr).
    \end{equation}
\end{Lemma}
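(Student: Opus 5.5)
The estimate is local at the conic scale, so I plan to prove it by localizing to the compact core and to the rescaled annuli of \cref{eq:weighted-Holder-rescaled-annulus-norm}. The key point is the meaning of the weights. The hypotheses put $v$ and $\nabla^2v$ in the same weight $\gamma$, so on an annulus where $r\asymp R$ we have $|v|\lesssim R^\gamma$ and $|\nabla^2v|\lesssim R^\gamma$ with respect to $g$. The natural interpolation would give $|\nabla v|\lesssim R^{\gamma}$ at scale one. It would not give $R^{\gamma-1}$, and no such gain is claimed.

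The idea is therefore to work on balls of fixed unit $g$-radius, not on the conically rescaled annuli. On the end, the injectivity radius is at least $a\,d_g(p_0,x)$, and the curvature decays quadratically. Hence every point $x$ with $\boldsymbol\rho(x)$ large has a harmonic (or exponential) coordinate ball $B_g(x,1)$. On that ball the metric is $C^{1,\alpha}$-close to Euclidean uniformly in $x$, and the derivatives of the metric and connection coefficients are bounded, in fact decaying. On such a ball $\boldsymbol\rho$ is comparable to $\boldsymbol\rho(x)$, since $|\nabla\boldsymbol\rho|\le 1+o(1)$.

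\textbf{Step 1: the $C^0$ bound.} I will apply the standard interpolation inequality on unit Euclidean balls,
\[
\|\nabla v\|_{C^{0}(B_{1/2})}\le C(\|v\|_{C^0(B_1)}+\|\nabla^2v\|_{C^0(B_1)}),
\]
to the components of $v$ in a parallel-transported frame, or simply in coordinates, absorbing the bounded Christoffel terms. This gives $|\nabla v|(x)\le C\boldsymbol\rho(x)^\gamma(\|v\|_{C^0_\gamma}+\|\nabla^2v\|_{C^0_\gamma})$.

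\textbf{Step 2: the H\"older bound.} The conic H\"older seminorm on a slab compares points at $g$-distance up to about $R$, so I split into two regimes. If $d_g(x,y)\ge c$, the quotient is controlled by the $C^0$ bound. Here the conic normalization divides by $(d/R)^\alpha\gtrsim R^{-\alpha}$, and $R^{-\gamma}|\nabla v(x)-\tau\nabla v(y)|\lesssim R^0$. I need the quotient to be bounded, which requires care. It is cleaner to use interpolation directly: the conic $C^{0,\alpha}_\gamma$ norm is the real interpolation $(C^0_\gamma,C^1_\gamma)_{\alpha,\infty}$ by \cref{lem:weighted-real-interpolation}. Under this description I only need $K$-functional bounds, and on unit-scale balls these reduce to ordinary Euclidean interpolation, $\|\nabla v\|_{C^{0,\alpha}}\lesssim\|v\|_{C^{0,\alpha}}+\|\nabla^2v\|_{C^{0,\alpha}}$.

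\textbf{Step 3: comparing conic and unit-ball seminorms.} Since the conic scale is coarser, a unit-ball H\"older quotient bound of size $R^\gamma$ implies a conic-scale quotient bound. For $d\le1$, the conic quotient is $R^{-\gamma}|\cdot|/(d/R)^\alpha\cdot R^{-\alpha}$ once the tensor rescaling $R^{p-q}$ is accounted for. Tracking these powers is the main obstacle. I expect it to work because the conic seminorm of $v$ bounds its unit-scale seminorm by $R^{\gamma-\alpha}\le R^\gamma$. Conversely, for $d\ge1$ the conic seminorm of $\nabla v$ is bounded by $2\sup$ times $(R/d)^\alpha$, which loses a factor. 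To handle this I would instead prove the quotient bound by writing $\nabla v(x)-\tau\nabla v(y)$ via the mean value theorem along a unit-speed path, giving $\le d\sup|\nabla^2v|$, which suffices only when $d\lesssim R$ is paired with the $R^{-1}$ conic factor. If this bookkeeping fails, I would fall back on a Gagliardo--Nirenberg argument at intermediate scale $\ell\in[1,R]$, optimizing $|\nabla v|\lesssim \ell^{-1}\sup|v|+\ell\sup|\nabla^2v|$ together with the analogous H\"older version. The compact core is handled by ordinary interpolation.
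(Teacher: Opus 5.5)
Your Step~1 is correct, and you chose the right scale: unit $g$-distance on an annulus $r\asymp R$ is exactly the scale $\epsilon=c/R$ at which the paper interpolates after rescaling the slab. The gap is in the H\"older estimate, which is the real content of the lemma. The bookkeeping you flag as the main obstacle does not close.

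For the normalized section $R^{p-q-1-\gamma}\nabla v$ measured with $R^{-2}g$, the tensor factor and the change of fiber norm combine to exactly $R^{-\gamma}|\nabla v|_g$. So with $d=d_g(x,y)$ the conic quotient is $R^{\alpha-\gamma}d^{-\alpha}|\nabla v(x)-\tau\nabla v(y)|_g$. The extra factor $R^{-\alpha}$ in your Step~3 is not there. With $N=\|v\|_{C^{0,\alpha}_\gamma}+\|\nabla^2v\|_{C^{0,\alpha}_\gamma}$ you therefore need $|\nabla v(x)-\tau\nabla v(y)|\le CNR^{\gamma-\alpha}d^\alpha$ for all $d\ll R$; for $d\gtrsim R$ the sup bound suffices. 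None of your routes gives this:
\begin{itemize}
\item Ordinary interpolation on unit balls with full $C^{0,\alpha}$ norms gives a unit-scale quotient of size $R^\gamma N$. There the sup parts, of size $R^\gamma$, dominate the seminorms, of size $R^{\gamma-\alpha}$, and the result translates into a conic bound $R^\alpha N$.
\item For $1\le d\ll R$ the sup bound gives $(R/d)^\alpha N$.
\item The mean-value bound gives $R^\alpha d^{1-\alpha}N$.
\end{itemize}
There is no $R^{-1}$ factor to pair with, because $\nabla^2v$ carries the same weight $\gamma$ as $v$; that is exactly what makes the weight critical. Your inequality $R^{\gamma-\alpha}\le R^\gamma$ discards precisely the factor that is needed.

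What is missing is an interpolation in which the H\"older seminorms of $v$ and $\nabla^2v$, not their sup norms, control the seminorm of $\nabla v$, uniformly for separations up to the conic scale. The paper gets this by rescaling first. For the pulled-back section $\widetilde v=R^{p-q-\gamma}v$ on a unit slab with metric $R^{-2}g$, one has $\|\widetilde v\|_{C^{0,\alpha}}\le CN$ (sup and seminorm alike) and $\|\widetilde\nabla^2\widetilde v\|_{C^{0,\alpha}}\le CR^2N$. The ordinary inequality $\|\widetilde\nabla\widetilde v\|_{C^{0,\alpha}}\le C(\epsilon^{-1}\|\widetilde v\|_{C^{0,\alpha}}+\epsilon\|\widetilde\nabla^2\widetilde v\|_{C^{0,\alpha}})$ at $\epsilon=c/R$ then gives $CRN$, which is the uniform $C^{0,\alpha}_\gamma$ bound after scaling back.

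In your unit-scale language you could instead use the exact Taylor identity $\nabla_ev(x)=P_1^{-1}v(\sigma(1))-v(x)-\int_0^1(1-s)P_s^{-1}\bigl((\nabla^2v)(\dot\sigma,\dot\sigma)\bigr)\,\mathrm ds$, where $\sigma(s)=\exp_x(se)$ and $P_s$ is parallel transport along $\sigma$. Write it at $x$ and at $y$ and subtract. The resulting terms are H\"older differences of $v$ and $\nabla^2v$ at separation comparable to $d$, each at most $CNR^{\gamma-\alpha}d^\alpha$. The holonomy and Jacobi-field errors are $O(NR^{\gamma-2}d)$, which is harmless for $d\lesssim R$.

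Your $K$-functional idea can also be made rigorous, but only by proving that these unit-scale difference and averaging operators are bounded on both $C^0_\gamma$ and $C^1_\gamma$, and then interpolating them. The $K$-functional of the conic pair involves decompositions at $g$-scales up to $R$, so it does not reduce to ordinary interpolation on unit balls.
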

\begin{proof}
    Write $E=T^{p,q}M$ and put $N:=\norm{v}_{C^{0,\alpha}_\gamma}+\norm{\nabla^2v}_{C^{0,\alpha}_\gamma}$.
    On a translated cylinder slab with $r\asymp R$, use the rescaled metric $R^{-2}g$ and the tensor normalization $R^{p-q-\gamma}$ from \cref{eq:weighted-Holder-rescaled-annulus-norm}. Denote the resulting section by $\widetilde v$ and the rescaled connection by $\widetilde\nabla$. On a slightly larger slab,
    \begin{equation*}
        \norm{\widetilde v}_{C^{0,\alpha}}\leq CN,
        \qquad \norm{\widetilde\nabla^2\widetilde v}_{C^{0,\alpha}}\leq CR^2N.
    \end{equation*}
    Ordinary local interpolation on these uniformly controlled slabs gives, on the inner slab,
    \begin{equation*}
        \norm{\widetilde\nabla\widetilde v}_{C^{0,\alpha}}
        \leq C\bigl(\epsilon^{-1}\norm{\widetilde v}_{C^{0,\alpha}}
        +\epsilon\norm{\widetilde\nabla^2\widetilde v}_{C^{0,\alpha}}\bigr)
        \qquad(0<\epsilon\leq\epsilon_0).
    \end{equation*}
    Choose $\epsilon=c/R$, with a fixed sufficiently small $c>0$. The resulting bound is $\norm{\widetilde\nabla\widetilde v}_{C^{0,\alpha}}\leq CRN$.
    Scaling back yields a uniform $C^{0,\alpha}_\gamma$ bound for $\nabla v$ on every end slab. Ordinary interpolation on the compact core and the initial collar completes the proof.
\end{proof}

\begin{Corollary}\label{cor:equivalent-critical-norm}
    A tensor $u\in C^{k,\alpha}_{\mathrm{loc}}(M,E)$, with $k\geq2$, $\alpha\in[0,1]$, and $\beta\in\mathbb R$, lies in $C^{k,\alpha}_{\beta,\nabla f}(M,E)$ if and only if
    \begin{equation*}
        u,\nabla^2u,\nabla_{\nabla f}u\in C^{k-2,\alpha}_\beta,
    \end{equation*}
    with the appropriate tensor bundles understood. Moreover,
    \begin{equation}\label{ineq:equivalent-critical-norm}
        \norm{u}_{C^{k,\alpha}_{\beta,\nabla f}}
        \asymp\norm{u}_{C^{k-2,\alpha}_\beta}
        +\norm{\nabla^2u}_{C^{k-2,\alpha}_\beta}
        +\norm{\nabla_{\nabla f}u}_{C^{k-2,\alpha}_\beta}.
    \end{equation}
\end{Corollary}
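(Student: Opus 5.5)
The plan is to compare the two norms term by term. The definition of $C^{k,\alpha}_{\beta,\nabla f}$ controls $u$ and $\nabla_{\nabla f}u$ in $C^{k-2,\alpha}_\beta$, and $\nabla^{k-1}u,\nabla^ku$ in $C^{0,\alpha}_{\beta+2-k}$. The right-hand side of \cref{ineq:equivalent-critical-norm} instead controls $\nabla^2u$ in $C^{k-2,\alpha}_\beta$. The whole point is that, at the critical weight, $\nabla^2u$ and $u$ carry the same weight $\beta$. Hence \cref{lem:critical-weight-interpolation} controls the intermediate derivative $\nabla u$ at weight $\beta$ too, rather than at the naive weight $\beta+1$.

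For the direction ``$\lesssim$'', assume $u,\nabla^2u,\nabla_{\nabla f}u\in C^{k-2,\alpha}_\beta$. By \cref{lem:basic-properties}\cref{lem:basic-properties-continuity-of-covariant-derivative}, $\nabla^2u\in C^{k-2,\alpha}_\beta$ gives $\nabla^{j}(\nabla^2 u)\in C^{k-2-j,\alpha}_{\beta-j}$ for $0\le j\le k-2$. Taking $j=k-2$ gives $\nabla^ku\in C^{0,\alpha}_{\beta+2-k}$. For $\nabla^{k-1}u$ with $k\ge3$, take $j=k-3$: then $\nabla^{k-1}u\in C^{1,\alpha}_{\beta+3-k}$. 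Its $C^0$ part is therefore only at weight $\beta+3-k$, which is one too large.

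To fix this, I would apply \cref{lem:critical-weight-interpolation} with $\gamma=\beta$ to $v=u$, which yields $\nabla u\in C^{0,\alpha}_\beta$. One can then apply it to $v=\nabla^{j}u$ inductively, using $\nabla^{j+2}u$ as the top term. More precisely, use $\nabla u\in C^{0,\alpha}_\beta$ together with $\nabla^2u\in C^{0,\alpha}_\beta$ and $\nabla^3 u\in C^{0,\alpha}_{\beta-1}$. Multiplying by $\boldsymbol\rho$ via \cref{lem:basic-properties}\cref{lem:basic-properties-isomorphism}, the lemma applied to $v=\nabla u$ at weight $\beta$ gives $\nabla^2u$ weighted compatibly. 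The plan is to show $\nabla^{m}u\in C^{0,\alpha}_{\beta+2-m}$ for $m\ge2$, and $\nabla u\in C^{0,\alpha}_{\beta}$.

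Since $\nabla^{k-1}u=\nabla^{k-3}(\nabla^2u)$, the bound $\nabla^{k-1}u\in C^{0,\alpha}_{\beta+3-k}$ is what the lemma gives. The definition requires weight $\beta+2-k$, which is smaller, i.e.\ stronger decay. This is obtained by a rescaled-slab interpolation. On a slab with $r\asymp R$, in rescaled variables, the tensor $w=\nabla^{k-3}(\nabla^2 u)$ satisfies $\|\tilde w\|\lesssim N$ and $\|\tilde\nabla \tilde w\|\lesssim R N$ in the appropriate normalization. Interpolating between $\tilde w$ and its second derivative with $\epsilon=c/R$, exactly as in the proof of \cref{lem:critical-weight-interpolation}, gives the extra factor. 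The case $k=2$ is immediate, since then $\nabla^{k-1}u=\nabla u$ at weight $\beta$ is precisely the lemma.

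For ``$\gtrsim$'', assume $u\in C^{k,\alpha}_{\beta,\nabla f}$. For $k=2$, $\nabla^2u\in C^{0,\alpha}_\beta$ is in the definition. For $k\ge3$, I need $\nabla^2u\in C^{k-2,\alpha}_\beta$. Its $C^0$ part at weight $\beta$ and its derivatives $\nabla^{j}u$ for $3\le j\le k$ must be obtained at weights $\beta+2-j$. The top two orders are given by the definition. The intermediate orders $2\le j\le k-2$ come from weighted interpolation of $\nabla^2 u$ between $u\in C^{k-2,\alpha}_\beta$, which gives $\nabla^2u\in C^{k-4,\alpha}_{\beta-2}$, and $\nabla^ku\in C^{0,\alpha}_{\beta+2-k}$. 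I would do this on each rescaled slab: the unweighted $C^{k,\alpha}$ norm of $\tilde u$ is bounded by $N$ at lower orders and by $R^2N$ at the top. Local interpolation with $\epsilon=c/R$ then gives the $j$-th derivative a bound of $R^{2}N$ relative to the $C^{k-2}_\beta$ scaling, which is precisely weight $\beta+2-j$.

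The main obstacle is the bookkeeping of the $R$-powers in these slab interpolations, so that every intermediate order gains exactly two weight units. I expect this to follow uniformly from \cref{eq:weighted-Holder-rescaled-annulus-norm} and the compact-core estimate.
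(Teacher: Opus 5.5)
\textbf{There is a genuine gap at the one nontrivial step.} The direction in which the defining norm controls $\norm{\nabla^2u}_{C^{k-2,\alpha}_\beta}$ is fine. Your slab interpolation for the intermediate orders $2\le j\le k-2$ is unnecessary, since $u\in C^{k-2,\alpha}_\beta$ already gives $\nabla^ju\in C^0_{\beta-j}\subset C^0_{\beta+2-j}$. Your $k=2$ case of the converse is also correct.

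The gap is in the converse for $k\ge3$, where you must show $\nabla^{k-1}u\in C^{0,\alpha}_{\beta+2-k}$.
\begin{itemize}
\item You apply \cref{lem:critical-weight-interpolation} to $v=u$ and $v=\nabla u$ at weight $\beta$. For $k\ge3$ this yields nothing new, because $u\in C^{k-2,\alpha}_\beta$ already gives $\nabla u\in C^{k-3,\alpha}_{\beta-1}$.
\item You then defer the improvement from weight $\beta+3-k$ to $\beta+2-k$ to a slab interpolation ``between $\tilde w$ and its second derivative'' with $w=\nabla^{k-1}u$. As stated, this needs $\nabla^{k+1}u$, which does not exist for $u\in C^{k,\alpha}_{\mathrm{loc}}$ and is not controlled by the hypotheses.
\item The bounds $\|\tilde w\|\lesssim N$ and $\|\tilde\nabla\tilde w\|\lesssim RN$ that you quote are not known in any specified normalization.
\end{itemize}
More fundamentally, no interpolation built only from $\nabla^2u\in C^{k-2,\alpha}_\beta$ can work. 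For $u=\chi r^{\beta+2}$ with generic $\beta$, $\nabla^2u$ has weight $\beta$ with all its derivatives, but $\nabla^{k-1}u$ has exact weight $\beta+3-k$. The extra decay must come from the hypothesis on $u$ itself, and your argument never uses it at the top order.

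\textbf{The missing idea is the correct lower endpoint.} Iterating \cref{lem:basic-properties}\cref{lem:basic-properties-continuity-of-covariant-derivative}, the hypothesis $u\in C^{k-2,\alpha}_\beta$ gives $v:=\nabla^{k-2}u\in C^{0,\alpha}_{\beta+2-k}$. The hypothesis $\nabla^2u\in C^{k-2,\alpha}_\beta$ gives $\nabla^2v=\nabla^ku\in C^{0,\alpha}_{\beta+2-k}$. These are exactly the two inputs of \cref{lem:critical-weight-interpolation} at the common weight $\gamma=\beta+2-k$. The lemma then immediately gives $\nabla^{k-1}u=\nabla v\in C^{0,\alpha}_{\beta+2-k}$ with the required bound, and no separate slab argument is needed.

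Your remark about applying the lemma to $\nabla^ju$ with $\nabla^{j+2}u$ as the top term was the right idea. The weight must be $\beta-j$ rather than $\beta$, and one application at $j=k-2$ suffices. This is precisely the paper's proof.
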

\begin{proof}
    The defining norm controls the right-hand side: the lower derivatives of $\nabla^2u$ are controlled by $u\in C^{k-2,\alpha}_\beta$, and the remaining derivatives by the two critical terms in \cref{def:cone-holder-space-general-component}. The derivative norm comparison in \cref{lem:basic-properties}\cref{lem:basic-properties-continuity-of-covariant-derivative} and the inclusions between weights give the required bound.

    Conversely, set $v:=\nabla^{k-2}u$ and $\gamma:=\beta+2-k$. Finiteness of the three terms on the right gives $v,\nabla^2v\in C^{0,\alpha}_\gamma$. By \cref{lem:critical-weight-interpolation}, $\nabla^{k-1}u=\nabla v\in C^{0,\alpha}_\gamma$. The condition $\nabla^ku\in C^{0,\alpha}_\gamma$ follows directly from $\nabla^2u\in C^{k-2,\alpha}_\beta$. These are the remaining defining conditions, with the asserted norm estimate.
\end{proof}

The map $\Delta_f-\lambda:C^{k+2,\alpha}_{\beta,\nabla f}(M,E)\to C^{k,\alpha}_\beta(M,E)$ is therefore well-defined and continuous for every $\lambda\in\mathbb R$.
Multiplication by $\boldsymbol{\rho}^{\beta'-\beta}$ also gives a bounded isomorphism $C^{k,\alpha}_{\beta,\nabla f}\to C^{k,\alpha}_{\beta',\nabla f}$. This follows from the Leibniz rule, \cref{lem:basic-properties}, and the bound $\nabla_{\nabla f}\log\boldsymbol{\rho}\in C^{k-2,\alpha}_0$.

\subsection{Cone H\"{o}lder spaces}\label{sec:cone-Holder-spaces}
We retain the standard AC setting of \cref{def:standard-AC-setting}.
Given a tensor $u$ on $(\underline r,\infty)\times\Sigma$, we view $\chi u$ as a tensor on all of $M$.
Recall \cref{def:degree} for the notion ``degree''.
\Needspace{4\baselineskip}
\begin{Definition}[Cone H\"{o}lder spaces]\label{def:cone-holder-space}
    Given integers $p,q\geq0$, $k\geq2$, $\alpha\in[0,1]$, $\beta\in\R$ and a finite set $\Lambda\subset(\beta,\infty)$, the space $C^{k,\alpha}_{\Lambda;\beta,\nabla f}(M,T^{p,q}M)$ consists of those $u\in C^{k,\alpha}_{\mathrm{loc}}(M,T^{p,q}M)$ such that
    \begin{equation*}
        u=\chi\sum_{\gamma\in\Lambda}
        r^{\gamma}\boldsymbol{\tau}(u_{\gamma})
        +u^\star
    \end{equation*}
    for some $u_{\gamma}\in C^{k,\alpha}(\Sigma,T^{p,q}\mathscr{C}|_\Sigma)$ and $u^\star\in C^{k,\alpha}_{\beta,\nabla f}(M,T^{p,q}M)$.
    Here $\boldsymbol{\tau}$ denotes the radial parallel transport \cref{eq:parallel-transport-along-radial-direction}.
    We call $\chi r^{\gamma}\boldsymbol{\tau}(u_{\gamma})$ the \emph{homogeneous components} of $u$ with \emph{degree} $\gamma$; the above $u^\star$ is referred to as the \emph{general component} of $u$.
    The norm of $u\in C^{k,\alpha}_{\Lambda;\beta,\nabla f}(M,T^{p,q}M)$ is defined to be
	\begin{equation*}
		\norm{u}_{C^{k,\alpha}_{\Lambda;\beta,\nabla f}(M,T^{p,q}M)}
		:=
        \sum_{\gamma\in\Lambda}\norm{u_{\gamma}}_{C^{k,\alpha}(\Sigma,T^{p,q}\mathscr{C}|_\Sigma)}
        +\norm{u^\star}_{C^{k,\alpha}_{\beta,\nabla f}(M,T^{p,q}M)}.
	\end{equation*}
    We denote the degree-$\gamma$ component by $[u]_{\gamma}$ and its
    general component by $u^\star$ throughout. The bracket on a homogeneous
    component will be dropped when there is no chance of confusion.

    Replacing the space $C^{k,\alpha}_{\beta,\nabla f}(M,T^{p,q}M)$ in the above definition by $C^{k,\alpha}_{\beta}(M,T^{p,q}M)$, one can similarly define, for every integer $k\geq0$, the space $C^{k,\alpha}_{\Lambda;\beta}(M,T^{p,q}M)$.
    For any neighborhood $U$ of infinity, define $C^{k,\alpha}_{\beta}(U,T^{p,q}M)$ and $C^{k,\alpha}_{\beta,\nabla f}(U,T^{p,q}M)$ by restriction.
    
    For convenience, we write $C^{k,\alpha}_{\Lambda;-\infty}(M,T^{p,q}M)$ for the subspace of $C^{k,\alpha}_{\Lambda;\beta}(M,T^{p,q}M)$ consisting of those elements with vanishing general component.
    The subspace norm is independent of $\beta$ and we denote it by $\norm{\cdot}_{C^{k,\alpha}_{\Lambda;-\infty}(M,T^{p,q}M)}$.

    We usually omit the brace when $\Lambda$ is given explicitly, e.g., $C^{k,\alpha}_{0;-2,\nabla f}(M):=C^{k,\alpha}_{\{0\};-2,\nabla f}(M)$.
    The domain or the tensor bundle will also be omitted when there is no chance of confusion.
\end{Definition}

The coefficients $u_{\gamma}$ are uniquely determined by $u$.
Moreover, the map
\begin{equation*}
    \prod_{\gamma\in\Lambda}
    C^{k,\alpha}(\Sigma,T^{p,q}\mathscr{C}|_\Sigma)
    \times
    C^{k,\alpha}_{\beta,\nabla f}(M,T^{p,q}M)
    \to
    C^{k,\alpha}_{\Lambda;\beta,\nabla f}(M,T^{p,q}M)
\end{equation*}
sending $((u_{\gamma})_{\gamma\in\Lambda},u^\star)$ to
\begin{equation*}
    \chi\sum_{\gamma\in\Lambda}
    r^{\gamma}\boldsymbol{\tau}(u_{\gamma})
    +u^\star
\end{equation*}
is an isomorphism of vector spaces and $C^{k,\alpha}_{\Lambda;\beta,\nabla f}(M,T^{p,q}M)$ is therefore a Banach space.

\begin{Example}
    Let $k\geq2$ and $\alpha\in[0,1]$.
    Let $C^{k,\alpha}_{0;-2}(M,S^2T^*M)$ be the subspace of $C^{k,\alpha}_{0;-2}(M,T^{0,2}M)$ consisting of symmetric $(0,2)$-tensors.
    Consider an element $h$ of the form $h=\chi r^2c(\theta)+h^\star$, where $c\in C^{k,\alpha}(\Sigma,S^2T^*\Sigma)$ and $h^\star\in C^{k,\alpha}_{-2}(M,S^2T^*M)$.
    When $\norm{h}_{C^{k,\alpha}_{0;-2}(M,S^2T^*M)}$ is sufficiently small, $g+h$ is a Riemannian metric on $M$ asymptotic to the $C^{k,\alpha}$ Riemannian cone with link $(\Sigma,g_{\Sigma}+c)$.
\end{Example}

On the end, the shrinker operators are controlled decaying perturbations
of the corresponding cone operators. The following comparison lets us
use the cone model to organize weighted estimates and asymptotic terms;
global solvability is handled by the Gaussian Fredholm theory.
Denote the potential function of the asymptotic cone by $f_0(r,\theta):=\frac{1}{4}r^2$.
\Needspace{6\baselineskip}
\begin{Lemma}\label{lem:difference}
    Let $k\in\mathbb Z_{\ge0}$, $\alpha\in[0,1]$, $\beta\in\R$ and $\rho\geq\underline{r}$.
    \begin{enumerate}[label=\textnormal{(\roman{*})},ref=\roman{*}]
        \item \label{lem:difference-cut-off} The map $C^{k,\alpha}_{\beta}(Q_{\underline{r}},T^{p,q}M)\to C^{k,\alpha}_{\beta}(M,T^{p,q}M),u\mapsto\chi u$ is well-defined and
        \begin{equation*}
            \norm{\chi u}_{C^{k,\alpha}_{\beta}(M,T^{p,q}M)}
            \leq C\norm{u}_{C^{k,\alpha}_{\beta}(Q_{\underline{r}},T^{p,q}M)}.
        \end{equation*}

        \item \label{lem:difference-radial-derivative} For any $u\in C^{1}_{\mathrm{loc}}(Q_{\rho},T^{p,q}M)$, we have
        $(\nabla_{\nabla f}-\nabla^C_{\mathbf{X}})u=\Ric(g)* u$.
        In particular,
        \begin{itemize}
            \item If $u\in C^{k,\alpha}_{\beta}(Q_\rho,T^{p,q}M)$, then $(\nabla_{\nabla f}-\nabla^C_{\mathbf{X}})u\in C^{k,\alpha}_{\beta-2}(Q_{\rho},T^{p,q}M)$ and
            \begin{equation*}
                \norm{(\nabla_{\nabla f}-\nabla^C_{\mathbf{X}})u}_{C^{k,\alpha}_{\beta-2}(Q_{\rho},T^{p,q}M)}
                \leq C\norm{u}_{C^{k,\alpha}_{\beta}(Q_\rho,T^{p,q}M)}.
            \end{equation*}

            \item If $u\in C^{k,\alpha}_{\beta;-\infty}(Q_\rho,T^{p,q}M)$, then $(\nabla_{\nabla f}-\nabla^C_{\mathbf{X}})u\in C^{k,\alpha}_{\beta-2;\beta-4}(Q_{\rho},T^{p,q}M)$ and
            \begin{equation*}
                \norm{(\nabla_{\nabla f}-\nabla^C_{\mathbf{X}})u}_{C^{k,\alpha}_{\beta-2;\beta-4}(Q_{\rho},T^{p,q}M)}
                \leq C\norm{u}_{C^{k,\alpha}_{\beta;-\infty}(Q_\rho,T^{p,q}M)}.
            \end{equation*}
        \end{itemize}
        Note when $p=q=0$, we have $\nabla_{\nabla f}u=\nabla^C_{\mathbf{X}}u$ over $Q_{\underline{r}}$.

        \item \label{lem:difference-Laplacian} If $u\in C^{k+2,\alpha}_{\beta}(Q_\rho,T^{p,q}M)$, then $(\Delta-\Delta^C)u\in C^{k,\alpha}_{\beta-4}(Q_\rho,T^{p,q}M)$ and
        \begin{equation*}
            \norm{(\Delta-\Delta^C)u}_{C^{k,\alpha}_{\beta-4}(Q_\rho,T^{p,q}M)}
            \leq C
            \norm{u}_{C^{k+2,\alpha}_{\beta}(Q_\rho,T^{p,q}M)}.
        \end{equation*}
        If $u\in C^{k+2,\alpha}_{\beta;-\infty}(Q_\rho,T^{p,q}M)$, then $(\Delta-\Delta^C)u\in C^{k,\alpha}_{\beta-4;\beta-6}(Q_\rho,T^{p,q}M)$ and
        \begin{equation*}
            \norm{(\Delta-\Delta^C)u}_{C^{k,\alpha}_{\beta-4;\beta-6}(Q_\rho,T^{p,q}M)}
            \leq C
            \norm{u}_{C^{k+2,\alpha}_{\beta;-\infty}(Q_\rho,T^{p,q}M)}.
        \end{equation*}
    \end{enumerate}
\end{Lemma}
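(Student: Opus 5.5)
We prove the three parts in order; all of them reduce to the connection-difference estimates of \cref{lem:asymptotic-rate-of-connection} combined with the product rule of \cref{lem:basic-properties}.

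\emph{Part (i).} Write $\nabla^j(\chi u)$ by the Leibniz rule. The derivatives $\nabla^j\chi$ for $j\geq1$ are supported in the compact collar $Q_{\underline r}\setminus Q_{2\underline r}$ and are bounded there. On the collar, every weighted norm is equivalent to an ordinary $C^{k,\alpha}$ norm. Beyond $Q_{2\underline r}$ we have $\chi u=u$. The H\"older seminorm is handled slab by slab, using the fact that $\chi$ has bounded cylindrical derivatives. Together these give the bound.

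\emph{Part (ii).} Both connections act on $u$ as derivations extending their action on vectors and one-forms. Hence $(\nabla_{\mathbf X}-\nabla^C_{\mathbf X})u=(\mathbf X\lrcorner T)*u$, with $T$ acting slotwise. By \cref{lem:asymptotic-rate-of-connection}, $\mathbf X\lrcorner T=-\Ric(g_C)+O(r^{-4})$; in fact the exact identity $T(\mathbf X,\cdot)=-\Ric(g)^\sharp$ from that proof gives $\Ric(g)*u$ directly. Since $|\nabla^j\Ric(g)|=O(r^{-2-j})$ by \cref{ineq:quadratic-curvature-decay}, we have $\Ric(g)\in C^{k,\alpha}_{-2}$ for every $k$ (the H\"older part comes from one more derivative on each slab). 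The first bullet then follows from \cref{lem:basic-properties}\cref{lem:basic-properties-multiplication}. For the second bullet, split $\Ric(g)^\sharp=\Ric(g_C)^\sharp+O(r^{-4})$, where the error is $O(r^{-4-j})$ with all derivatives. The term $\Ric(g_C)*r^\beta\boldsymbol\tau(u_\beta)$ is homogeneous of degree $\beta-2$: $\Ric(g_C)^\sharp$ is a degree $-2$ endomorphism, a product of homogeneous tensors is homogeneous, and its link coefficient is an algebraic contraction of $u_\beta$ with $\Ric(g_\Sigma)$-type data, bounded in $C^{k,\alpha}(\Sigma)$. The remainder term lies in $C^{k,\alpha}_{\beta-4}$. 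For functions ($p=q=0$), both sides reduce to $\mathrm du(\mathbf X)$.

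\emph{Part (iii).} This is the main computation. Expand
\[
\Delta u-\Delta^Cu=(g^{-1}-g_C^{-1})*\nabla^{C,2}u+g^{-1}*T*\nabla^Cu+g^{-1}*(\nabla^CT+T*T)*u,
\]
obtained by writing $\nabla=\nabla^C+T$ twice and tracing. The coefficient decay is as follows. By \cref{prop:asymptotic-expansion-of-shrinker-metric}, $g^{-1}-g_C^{-1}=O(r^{-2})$. By \cref{lem:asymptotic-rate-of-connection}, $T=O(r^{-3})$ and $\nabla^CT=O(r^{-4})$, all with derivative bounds. Combined with $\nabla^{C,j}u\in C^{k,\alpha}_{\beta-j}$, this gives weight $\beta-4$ for every term. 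For the first term, the raw count from $g^{-1}-g_C^{-1}=O(r^{-2})$ and $\nabla^{C,2}u$ gives $\beta-4$, since $\nabla^{C,2}u$ already has weight $\beta-2$. The H\"older estimate follows from \cref{lem:basic-properties}. We also convert between $\nabla$ and $\nabla^C$ in the norms, which is harmless since the difference has weight $-3$.

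For the homogeneous case, we use the finer expansions. We have $g^{-1}-g_C^{-1}=-g_{-2}^\sharp+O(r^{-4})$, where $g_{-2}=2\Ric(g_C)$ is homogeneous of degree $-2$. Similarly $T=T_{-3}+O(r^{-5})$, where $T_{-3}$ is the Christoffel difference built from $\nabla^Cg_{-2}$, homogeneous of degree $-3$. The leading products are then homogeneous of degree $\beta-4$, with link coefficients given by contractions of derivatives of $u_\beta$ up to order two against smooth link data. These derivatives lose two orders on $\Sigma$, which is absorbed because the hypothesis is $u\in C^{k+2,\alpha}$. The remainders have weight $\beta-6$.

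The main obstacle is the bookkeeping that each leading product really is homogeneous: covariant derivatives of homogeneous tensors under $\nabla^C$ are homogeneous, and contractions preserve homogeneity. This follows from the dilation invariance of $\nabla^C$ and $\boldsymbol\tau$, which we verify once and then apply to each term.
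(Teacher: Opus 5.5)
Your proposal is correct and follows essentially the same route as the paper. Part (iii) uses the same difference formula, the same leading expansions of $g^{-1}-g_C^{-1}$ and of $T$ (the paper's $q_{-2}$ and $T_{-3}$), and the same weight-$(\beta-6)$ remainders.

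In part (ii), your contraction identity $T(\mathbf X,\cdot)=-\Ric(g)^\sharp$ is the same fact the paper uses, in a different form. The paper compares $\nabla_{\nabla f}$ and $\nabla^C_{\mathbf X}$ through the common Lie derivative $\mathcal L_{\mathbf X}$, where $\nabla\mathbf X=\tfrac12\mathrm{Id}-\Ric^\sharp$ and $\nabla^C\mathbf X=\tfrac12\mathrm{Id}$.
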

\begin{proof}
    \cref{lem:difference-cut-off} The cutoff has uniformly bounded cylindrical derivatives and is supported away from the initial end slice. Multiplication by $\chi$ and extension by zero therefore give a bounded map, by the ordinary H\"older product estimate on the slabs and the compact transition region.

    \noindent \cref{lem:difference-radial-derivative} Recall $\nabla f=\mathbf{X}$ on $Q_{\underline{r}}$ (\cref{thm:canonical-asymptotic-chart}\cref{thm:canonical-asymptotic-chart-gradient-f}).
    By \cref{eq:difference-of-Lie-and-covariant-derivative} and the soliton equation, we have
    \begin{equation*}
        (\nabla_{\nabla f}-\nabla^C_{\mathbf{X}})u
        =(\nabla_{\nabla f}-\mathcal{L}_{\nabla f})u
        -(\nabla^C_{\mathbf{X}}-\mathcal{L}_{\mathbf{X}})u
        =\Ric(g)* u.
    \end{equation*}
    The first inequality is then a consequence of the quadratic curvature decay.
    For the homogeneous case, use $\Ric(g)=\Ric(g_C)+(\Ric(g)-\Ric(g_C))$ and \cref{lem:asymptotic-rate-of-connection}.

    \noindent \cref{lem:difference-Laplacian} Denote by $T$ the $(1,2)$-tensor defined by $T_{ij}^{k}:=\Gamma_{ij}^{k}(g)-\Gamma_{ij}^{k}(g_C)$.
    A direct computation gives
    \begin{align*}
        (\Delta-\Delta^C)u
        =(g^{-1}-g_C^{-1})*\nabla^{C,2}u
        +g^{-1}*(T*\nabla^Cu  +\nabla^CT*u  +T*T*u).
    \end{align*}
    The first estimate follows from \cref{ineq:asymptotic-rate} and \cref{lem:asymptotic-rate-of-connection}.
    To identify the leading term for homogeneous $u$, put $h_{-2}:=2\Ric(g_C)$ and write
    \begin{equation*}
        g=g_C+h_{-2}+O(r^{-4}),\qquad
        g^{-1}=g_C^{-1}+q_{-2}+O(r^{-4}),\qquad
        q_{-2}:=-g_C^{-1}h_{-2}g_C^{-1}.
    \end{equation*}
    All remainders here and below have the corresponding differentiated estimates. The connection-difference formula gives
    \begin{equation*}
        T=T_{-3}+O(r^{-5}),\qquad
        (T_{-3})_{ij}^{k}
        :=\frac12g_C^{k\ell}
        \bigl(\nabla_i^Ch_{-2,j\ell}+\nabla_j^Ch_{-2,i\ell}
        -\nabla_\ell^Ch_{-2,ij}\bigr).
    \end{equation*}
    The tensors $q_{-2}$ and $T_{-3}$ are homogeneous of degrees $-2$ and $-3$, respectively. Keeping the terms linear in these tensors in the displayed Laplacian difference gives
    \begin{equation*}
        q_{-2}*\nabla^{C,2}u
        +g_C^{-1}*\bigl(T_{-3}*\nabla^Cu+\nabla^CT_{-3}*u\bigr),
    \end{equation*}
    where the contractions are those of the exact difference formula and use $g_C$. If $u$ has degree $\beta$, this expression has degree $\beta-4$. The inverse-metric remainder times $\nabla^{C,2}u$, the connection remainder times $\nabla^Cu$, its derivative times $u$, and the quadratic terms in $T$ all have weight $\beta-6$. The weighted product estimates bound their $C^{k,\alpha}_{\beta-6}$ norms by the $C^{k+2,\alpha}$ norm of the link coefficient of $u$. This proves the homogeneous assertion.
\end{proof}
In particular, for $u\in C^{k+2,\alpha}_{\beta}(Q_\rho,T^{p,q}M)$, we have
\begin{equation*}
    (\Delta^g_f-\Delta^C_{f_0})u\in
    \begin{cases}
        C^{k,\alpha}_{\beta-4}(Q_{\rho},T^{p,q}M), & \text{if }p=q=0;  \\
        C^{k,\alpha}_{\beta-2}(Q_{\rho},T^{p,q}M), & \text{otherwise.}
    \end{cases}
\end{equation*}

In \cref{prop:characterization-of-cone-Holder-space} below, we characterize the critical cone H\"older spaces by their ordinary weighted norm and their characteristic radial derivative.
For a tensor $u$ on the end, write
\begin{equation*}
    u_r:=\boldsymbol\tau_r^{-1}(u|_{\Sigma_r}),\qquad
    u|_{\Sigma_r}=\boldsymbol\tau_r u_r,
\end{equation*}
so that $u_r$ is a section of the fixed link bundle $E_\Sigma$.
\begin{Lemma}\label{lem:characterization-of-classic-weighted-Holder-space}
    Let $k\geq0$ be an integer, $\alpha\in[0,1]$, and $\beta\in\mathbb R$.
    For any $u\in C^{k,\alpha}_{\beta}(Q_{\underline r},T^{p,q}M)$, we have
    \begin{equation}\label{ineq:norm-after-radial-transport}
        \sup_{r>\underline r}r^{-\beta}
        \norm{u_r}_{C^{k,\alpha}(\Sigma,E_\Sigma)}
        \leq C\norm{u}_{C^{k,\alpha}_{\beta}(Q_{\underline r},T^{p,q}M)}.
    \end{equation}
    If $\beta<0$, define
    \begin{equation*}
        v_r:=\int_r^\infty\frac{u_s}{s}\,\mathrm ds,
        \qquad v(r,\theta):=\boldsymbol\tau_r\bigl(v_r(\theta)\bigr).
    \end{equation*}
    Then $v\in C^{k,\alpha}_{\beta}(Q_{\underline r},T^{p,q}M)$ and
    \begin{equation}\label{ineq:integral-of-tensor}
        \norm{v}_{C^{k,\alpha}_{\beta}(Q_{\underline r},T^{p,q}M)}
        \leq C\norm{u}_{C^{k,\alpha}_{\beta}(Q_{\underline r},T^{p,q}M)}.
    \end{equation}
    In particular, suppose $v\in C^{k,\alpha}_{\mathrm{loc}}(Q_{\underline r},T^{p,q}M)$ has a classical radial derivative and
    \begin{equation*}
        \nabla^C_{\mathbf X}v\in C^{k,\alpha}_{\beta}(Q_{\underline r},T^{p,q}M),
        \qquad \beta<0.
    \end{equation*}
    Then there is a radially parallel trace $v_\infty\in C^{k,\alpha}(\Sigma,E_\Sigma)$ such that
    $v=\boldsymbol\tau(v_\infty)+v^\star$ on the end, with $v^\star\in C^{k,\alpha}_\beta$, and
    \begin{equation}\label{ineq:radial-trace-remainder}
        \norm{v^\star}_{C^{k,\alpha}_\beta(Q_{\underline r})}
        \leq C\norm{\nabla^C_{\mathbf X}v}_{C^{k,\alpha}_\beta(Q_{\underline r})}.
    \end{equation}
\end{Lemma}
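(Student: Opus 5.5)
The whole argument takes place in the cylindrical picture of \cref{def:weighted-Holder-spaces}. There $u$ corresponds to $U=\mathscr T_\beta u$, so $u_r=r^{\beta}U(t,\cdot)$ with $t=\log(r/\underline r)$. We treat the three assertions in order.

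For \cref{ineq:norm-after-radial-transport}, fix $t$ and restrict to the slice $\{t\}\times\Sigma$. Restricting a $C^{k,\alpha}$ section on a slab $[T,T+1]\times\Sigma$ to a slice bounds its $C^{k,\alpha}(\Sigma)$ norm by the slab norm, because tangential derivatives on $\Sigma$ are among the cylindrical derivatives. Hence $\norm{U(t,\cdot)}_{C^{k,\alpha}(\Sigma)}\le\norm{U}_{C^{k,\alpha}_{\mathrm{cyl}}}$, and multiplying by $r^{\beta}$ gives the bound after taking the supremum. For $t\in[0,1)$ near the initial slice, the same bound follows from the slab starting at $T=0$.

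For \cref{ineq:integral-of-tensor}, substitute $s=re^{\sigma}$. This gives
\[
\mathscr T_\beta v(t,\theta)=r^{-\beta}v_r=\int_0^\infty e^{\beta\sigma}\,U(t+\sigma,\theta)\,\mathrm d\sigma .
\]
This is a superposition of translates of $U$ with weight $e^{\beta\sigma}$, which is integrable because $\beta<0$. Each translate has cylindrical $C^{k,\alpha}$ slab norm at most $\norm{U}_{C^{k,\alpha}_{\mathrm{cyl}}}$. Minkowski's inequality for the Hölder seminorm, together with differentiation under the integral sign, then gives $\norm{\mathscr T_\beta v}_{C^{k,\alpha}_{\mathrm{cyl}}}\le|\beta|^{-1}\norm{U}_{C^{k,\alpha}_{\mathrm{cyl}}}$. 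One subtlety is that $t$-derivatives of the integral fall directly on $U(t+\sigma)$, since the integrand is a translate, so no boundary terms arise. At $\alpha=1$, Lipschitz quotients are handled in the same way.

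For the trace statement, set $w:=\nabla^C_{\mathbf X}v$. Since $\boldsymbol\tau_r$ is $g_C$-parallel along radial lines and $\mathbf X=\frac r2\partial_r$, we have $w_r=\frac r2\,\partial_r v_r$, with the derivative taken in the fixed bundle $E_\Sigma$. By the first part, $\norm{w_s}_{C^{k,\alpha}(\Sigma)}\le Cs^{\beta}$. So $\partial_s v_s=2w_s/s$ is integrable in $C^{k,\alpha}(\Sigma)$ on $(\underline r,\infty)$, and the limit $v_\infty:=\lim_{r\to\infty}v_r$ exists in $C^{k,\alpha}(\Sigma,E_\Sigma)$. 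Then
\[
v_r-v_\infty=-2\int_r^\infty\frac{w_s}{s}\,\mathrm ds ,
\]
and applying \cref{ineq:integral-of-tensor} to $u=-2w$ shows that $v^\star:=v-\boldsymbol\tau(v_\infty)$ lies in $C^{k,\alpha}_\beta$ with the bound \cref{ineq:radial-trace-remainder}.

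The main technical point is the use of the classical radial derivative. Only $v\in C^{k,\alpha}_{\mathrm{loc}}$ is assumed, so the fundamental theorem of calculus has to be applied pointwise in $\theta$ to identify $v$ with $\boldsymbol\tau(v_\infty)$ minus the integral. The regularity of $v^\star$ then comes from the integral formula rather than from $v$ itself. Finally, $v_\infty\in C^{k,\alpha}$ holds because it is a limit in that norm.
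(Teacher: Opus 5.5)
Your proposal is correct and follows the paper's proof essentially step for step. It restricts the cylindrical representative to slices, then uses the logarithmic change of variables to write $v_r$ as an integral of translates of the source with integrable weight. Finally, it obtains the trace $v_\infty$ from $\partial_r v_r=\frac2r(\nabla^C_{\mathbf X}v)_r$ and controls the remainder by the integral estimate applied to $-2\nabla^C_{\mathbf X}v$.

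The only difference is cosmetic. You average exact translates of the weighted representative $\mathscr T_\beta u$ with weight $e^{\beta\sigma}$, which gives the constant $|\beta|^{-1}$ directly. The paper instead uses the unweighted $U(t)=u_{\underline r e^t}$ and splits $\int_t^\infty U$ into unit intervals, summing a geometric series in $e^{\beta j}$.
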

\begin{proof}
    The first assertion follows by restricting the cylindrical representative in \cref{def:weighted-Holder-spaces} to each slice.

    For the second assertion, use $t=\log(r/\underline r)$ and write $U(t):=u_{\underline r e^t}$ and $V(t):=v_{\underline r e^t}$. The definition gives
    \begin{equation}\label{eq:log-cylinder-weighted-norm}
        \norm{u}_{C^{k,\alpha}_\beta(Q_{\underline r})}
        \asymp\sup_{T\geq0}e^{-\beta T}\norm{U}_{C^{k,\alpha}([T,T+1]\times\Sigma)}.
    \end{equation}
    The cylindrical norms use the fixed bundle $E_\Sigma$ and its connection.
    In these variables,
    \begin{equation*}
        V(t)=\int_t^\infty U(\tau)\,\mathrm d\tau.
    \end{equation*}
    Tangential derivatives commute with this integral, while every positive $t$-derivative of $V$ is a derivative of $-U$.
    Splitting the integral into unit intervals and using $\beta<0$ gives
    \begin{equation*}
        e^{-\beta T}\norm{V}_{C^{k,\alpha}([T,T+1]\times\Sigma)}
        \leq C\sum_{j=0}^\infty e^{\beta j}\norm{u}_{C^{k,\alpha}_\beta(Q_{\underline r})}
        \leq C(\beta)\norm{u}_{C^{k,\alpha}_\beta(Q_{\underline r})}.
    \end{equation*}
    The same estimate applies to differences of the top derivatives, including Lipschitz differences when $\alpha=1$; at $\alpha=0$ it is the ordinary $C^k$ estimate. Formula \cref{eq:log-cylinder-weighted-norm} proves \cref{ineq:integral-of-tensor}.

    Finally, $\mathbf X=\frac12r\partial_r$ and the definition of $\boldsymbol\tau_r$ give
    \begin{equation*}
        \partial_r v_r=\frac2r(\nabla^C_{\mathbf X}v)_r.
    \end{equation*}
    Consequently,
    \begin{equation*}
        v_\infty:=v_{r_0}+2\int_{r_0}^\infty
        \frac{(\nabla^C_{\mathbf X}v)_s}{s}\,\mathrm ds
    \end{equation*}
    is independent of $r_0>\underline r$, and
    \begin{equation*}
        v_r-v_\infty=-2\int_r^\infty
        \frac{(\nabla^C_{\mathbf X}v)_s}{s}\,\mathrm ds.
    \end{equation*}
    The integral estimate proves \cref{ineq:radial-trace-remainder}.
\end{proof}

We use the positive potential weight $\boldsymbol{\rho}$ from \cref{eq:positive-potential-weight} for radial conjugation.
\begin{Lemma}\label{lem:radial-conjugation}
    Let $k\geq2$, $\alpha\in[0,1]$, and $\lambda\in\mathbb R$.
    In addition to the ordinary weighted isomorphisms in
    \cref{lem:basic-properties}\cref{lem:basic-properties-isomorphism},
    multiplication by $\boldsymbol{\rho}^{2\lambda}$ is a bounded isomorphism
    \begin{equation}\label{eq:critical-radial-conjugation}
        C^{k,\alpha}_{-2\lambda;-2\lambda-2,\nabla f}(M,T^{p,q}M)
        \longrightarrow C^{k,\alpha}_{0;-2,\nabla f}(M,T^{p,q}M).
    \end{equation}
    The inverse map is multiplication by $\boldsymbol{\rho}^{-2\lambda}$, and the operator norms depend on the displayed parameters and the fixed standard AC data. The leading link coefficient in \cref{eq:critical-radial-conjugation} is unchanged.
    Moreover, the smooth function
    \begin{equation}\label{eq:compact-conjugation-coefficient}
        b_\lambda:=2\lambda\boldsymbol{\rho}^{-1}
        \nabla_{\nabla f}\boldsymbol{\rho}-\lambda
        =-4\lambda\frac{1+\mathrm R}{\boldsymbol{\rho}^2}
    \end{equation}
    belongs to $C^\infty_{-2}$, and
    \begin{equation}\label{eq:radial-conjugation-identity}
        \nabla_{\nabla f}(\boldsymbol{\rho}^{2\lambda}u)
        =\boldsymbol{\rho}^{2\lambda}(\nabla_{\nabla f}u+\lambda u)
        -4\lambda(1+\mathrm R)\boldsymbol{\rho}^{2\lambda-2}u.
    \end{equation}
\end{Lemma}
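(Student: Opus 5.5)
I would first dispose of the explicit identities, since they are pure computation. On a shrinker, $\boldsymbol\rho^2=4(1+F)$ with $F=f-\boldsymbol\mu(g)$, so
\[
\nabla_{\nabla f}\boldsymbol\rho^2=4|\nabla f|^2=4(F-\mathrm R)=\boldsymbol\rho^2-4(1+\mathrm R),
\]
using \cref{eq:normalization-of-potential-second}. Hence $2\boldsymbol\rho^{-1}\nabla_{\nabla f}\boldsymbol\rho=\boldsymbol\rho^{-2}\nabla_{\nabla f}\boldsymbol\rho^2=1-4(1+\mathrm R)\boldsymbol\rho^{-2}$. Multiplying by $\lambda$ and subtracting $\lambda$ gives \cref{eq:compact-conjugation-coefficient}. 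The Leibniz rule
\[
\nabla_{\nabla f}(\boldsymbol\rho^{2\lambda}u)=\boldsymbol\rho^{2\lambda}\nabla_{\nabla f}u+2\lambda\boldsymbol\rho^{2\lambda-1}(\nabla_{\nabla f}\boldsymbol\rho)u
\]
then gives \cref{eq:radial-conjugation-identity}. The membership $b_\lambda\in C^\infty_{-2}$ follows because $\mathrm R\in C^\infty_{-2}$ by \cref{ineq:quadratic-curvature-decay} and $\boldsymbol\rho^{-2}\in C^\infty_{-2}$ by \cref{lem:basic-properties}\cref{lem:basic-properties-isomorphism}. The product estimate \cref{lem:basic-properties}\cref{lem:basic-properties-multiplication} then handles $(1+\mathrm R)\boldsymbol\rho^{-2}$.

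For the isomorphism, I decompose $u=\chi r^{-2\lambda}\boldsymbol\tau(u_{-2\lambda})+u^\star$ with $u^\star\in C^{k,\alpha}_{-2\lambda-2,\nabla f}$. The general component is handled by the remark after \cref{cor:equivalent-critical-norm}: multiplication by $\boldsymbol\rho^{2\lambda}$ maps $C^{k,\alpha}_{-2\lambda-2,\nabla f}$ isomorphically onto $C^{k,\alpha}_{-2,\nabla f}$. For the homogeneous component I write
\[
\boldsymbol\rho^{2\lambda}\chi r^{-2\lambda}\boldsymbol\tau(u_{-2\lambda})=\chi\boldsymbol\tau(u_{-2\lambda})+\chi\bigl((\boldsymbol\rho/r)^{2\lambda}-1\bigr)\boldsymbol\tau(u_{-2\lambda}).
\]
The first term is the new degree-zero component with the same link coefficient, which gives the claim about the leading coefficient. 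Everything then reduces to showing that the second term $w$ lies in $C^{k,\alpha}_{-2,\nabla f}$ with norm $\le C\|u_{-2\lambda}\|_{C^{k,\alpha}(\Sigma)}$. The inverse is treated symmetrically, with $-\lambda$ in place of $\lambda$.

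The main obstacle is this remainder estimate, because the space requires control of $w$, $\nabla^2 w$ and $\nabla_{\nabla f}w$ in $C^{k-2,\alpha}_{-2}$ (by \cref{cor:equivalent-critical-norm}). The link coefficient has only $C^{k,\alpha}$ regularity, while $w$ has the wrong form to absorb the cone scaling directly. I would argue as follows.

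First, by the AC expansion $|\nabla^{C,j}(\boldsymbol\rho/r-1)|=O(r^{-2-j})$, the function $\phi:=(\boldsymbol\rho/r)^{2\lambda}-1$ lies in $C^\infty_{-2}$. Second, $\boldsymbol\tau(u_{-2\lambda})$ is degree zero, so it lies in $C^{k,\alpha}_0$ with the bound coming from its link norm. The product rule then gives $w\in C^{k,\alpha}_{-2}$, which controls $w$ and $\nabla^2w$ in $C^{k-2,\alpha}_{-2}$.

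For the drift term, I would use \cref{lem:difference}\cref{lem:difference-radial-derivative} to write $\nabla_{\nabla f}=\nabla^C_{\mathbf X}+\Ric*$. Since $\nabla^C_{\mathbf X}\boldsymbol\tau(u_{-2\lambda})=0$, this gives
\[
\nabla_{\nabla f}w=(\mathbf X\phi)\boldsymbol\tau(u_{-2\lambda})+\phi\,\Ric*\boldsymbol\tau(u_{-2\lambda}),
\]
and both terms lie in $C^{k,\alpha}_{-2}$ because $\mathbf X\phi\in C^\infty_{-2}$. The cutoff derivatives are compactly supported and harmless by \cref{lem:difference}\cref{lem:difference-cut-off}.

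Collecting these bounds shows the map is bounded. Composing with the analogous bound for $\boldsymbol\rho^{-2\lambda}$ shows that it is an isomorphism, with constants depending only on $k,\alpha,\lambda$ and the standard AC data.
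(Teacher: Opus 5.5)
Your proposal is correct and follows the paper's proof essentially step for step. Both use the splitting $\boldsymbol\rho^{2\lambda}\chi r^{-2\lambda}\boldsymbol\tau(a)=\chi\boldsymbol\tau(a)+\chi\bigl((\boldsymbol\rho/r)^{2\lambda}-1\bigr)\boldsymbol\tau(a)$, the weighted isomorphism on drift spaces for the general component, \cref{lem:difference} for the drift of the remainder, and the identity $\nabla_{\nabla f}\boldsymbol\rho^2=\boldsymbol\rho^2-4(1+\mathrm R)$ followed by the Leibniz rule for $b_\lambda$ and \cref{eq:radial-conjugation-identity}. The only imprecision is that the inverse is not literally ``the same argument with $-\lambda$'': its remainder is $\chi\bigl((\boldsymbol\rho/r)^{-2\lambda}-1\bigr)r^{-2\lambda}\boldsymbol\tau(a)$, and because $r^{-2\lambda}\boldsymbol\tau(a)$ is not radially parallel, its drift picks up an extra term equal to $-\lambda$ times the remainder. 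That term still lies in $C^{k,\alpha}_{-2\lambda-2}$, so the same product estimates close the argument.
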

\begin{proof}
    Weight multiplication is an isomorphism on the drift spaces, as observed after \cref{cor:equivalent-critical-norm}; in particular, it maps $C^{k,\alpha}_{-2\lambda-2,\nabla f}$ to $C^{k,\alpha}_{-2,\nabla f}$.

    By \cref{lem:asymptotic-rate-of-connection}, $\boldsymbol{\rho}/r=1+O(r^{-2})$ with the corresponding bounds on every derivative. Thus
    \begin{equation*}
        a_\lambda:=\left(\frac{\boldsymbol{\rho}}r\right)^{2\lambda}-1
        \in C^\infty_{-2}(Q_{\underline r}),
        \qquad \nabla_{\nabla f}a_\lambda\in C^\infty_{-2}(Q_{\underline r}).
    \end{equation*}
    For a homogeneous term with link coefficient $a\in C^{k,\alpha}(\Sigma,E_\Sigma)$,
    \begin{equation*}
        \boldsymbol{\rho}^{2\lambda}\chi r^{-2\lambda}\boldsymbol\tau(a)
        =\chi\boldsymbol\tau(a)
        +\chi a_\lambda\boldsymbol\tau(a).
    \end{equation*}
    The second term belongs to $C^{k,\alpha}_{-2,\nabla f}$ by the weighted product estimates and \cref{lem:difference}\cref{lem:difference-radial-derivative}. Together with the bound on the general component, this proves boundedness in \cref{eq:critical-radial-conjugation}. The analogous expansion for $(\boldsymbol{\rho}/r)^{-2\lambda}$ proves that multiplication by $\boldsymbol{\rho}^{-2\lambda}$ is a bounded inverse.

    Since $\boldsymbol{\rho}^2=4(1+f-\boldsymbol\mu(g))$ and $|\nabla f|^2=\boldsymbol{\rho}^2/4-1-\mathrm R$, one has
    \begin{equation*}
        \nabla_{\nabla f}\boldsymbol{\rho}
        =\frac{2|\nabla f|^2}{\boldsymbol{\rho}}
        =\frac{\boldsymbol{\rho}}2-\frac{2(1+\mathrm R)}{\boldsymbol{\rho}}.
    \end{equation*}
    This gives \cref{eq:compact-conjugation-coefficient}, and the curvature derivative bounds imply $b_\lambda\in C^\infty_{-2}$. The Leibniz rule now proves \cref{eq:radial-conjugation-identity}.
\end{proof}

\begin{Proposition}\label{prop:characterization-of-cone-Holder-space}
    Assume the standard AC setting of \cref{def:standard-AC-setting}.
    Let $k\geq2$, $\alpha\in(0,1]$, $\lambda\in\mathbb R$, and $u\in C^{k,\alpha}_{\mathrm{loc}}(M,T^{p,q}M)$.
    Then
    \begin{equation*}
        u\in C^{k,\alpha}_{-2\lambda;-2\lambda-2,\nabla f}(M,T^{p,q}M)
    \end{equation*}
    if and only if
    \begin{equation*}
        u\in C^{k,\alpha}_{-2\lambda}(M,T^{p,q}M),\qquad
        \nabla_{\nabla f}u+\lambda u\in C^{k-2,\alpha}_{-2\lambda-2}(M,T^{p,q}M).
    \end{equation*}
    Moreover, there are constants $c,C>0$ such that
    \begin{equation}\label{ineq:critical-trace-norm-equivalence}
        c\norm{u}_{C^{k,\alpha}_{-2\lambda;-2\lambda-2,\nabla f}}
        \leq\norm{u}_{C^{k,\alpha}_{-2\lambda}}
        +\norm{\nabla_{\nabla f}u+\lambda u}_{C^{k-2,\alpha}_{-2\lambda-2}}
        \leq C\norm{u}_{C^{k,\alpha}_{-2\lambda;-2\lambda-2,\nabla f}}.
    \end{equation}
\end{Proposition}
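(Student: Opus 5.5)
First reduce to the case $\lambda=0$ by conjugation. By \cref{lem:radial-conjugation}, multiplication by $\boldsymbol\rho^{2\lambda}$ is a bounded isomorphism from $C^{k,\alpha}_{-2\lambda;-2\lambda-2,\nabla f}$ onto $C^{k,\alpha}_{0;-2,\nabla f}$. By \cref{lem:basic-properties}\cref{lem:basic-properties-isomorphism}, it is also a bounded isomorphism $C^{k,\alpha}_{-2\lambda}\to C^{k,\alpha}_0$. Identity \cref{eq:radial-conjugation-identity} gives, for $w=\boldsymbol\rho^{2\lambda}u$,
\[
 \nabla_{\nabla f}w=\boldsymbol\rho^{2\lambda}(\nabla_{\nabla f}u+\lambda u)+b_\lambda w .
\]
Here $b_\lambda\in C^\infty_{-2}$, so $b_\lambda w\in C^{k,\alpha}_{-2}\subset C^{k-2,\alpha}_{-2}$ with norm bounded by $\norm{w}_{C^{k,\alpha}_0}$. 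Hence both sides of \cref{ineq:critical-trace-norm-equivalence} transform into their $\lambda=0$ counterparts up to constants, and it suffices to prove the equivalence between $u\in C^{k,\alpha}_{0;-2,\nabla f}$ and the condition $u\in C^{k,\alpha}_0$, $\nabla_{\nabla f}u\in C^{k-2,\alpha}_{-2}$.

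\emph{Easy direction.} Write $u=\chi\boldsymbol\tau(u_0)+u^\star$. The general component satisfies $u^\star\in C^{k,\alpha}_{-2,\nabla f}$. By \cref{def:cone-holder-space-general-component} and \cref{cor:equivalent-critical-norm}, this gives $u^\star\in C^{k,\alpha}_0$ and $\nabla_{\nabla f}u^\star\in C^{k-2,\alpha}_{-2}$. For the homogeneous part, $\chi\boldsymbol\tau(u_0)\in C^{k,\alpha}_0$ with norm controlled by $\norm{u_0}_{C^{k,\alpha}(\Sigma)}$. Moreover $\nabla^C_{\mathbf X}\boldsymbol\tau(u_0)=0$, so \cref{lem:difference}\cref{lem:difference-radial-derivative} shows that $\nabla_{\nabla f}(\chi\boldsymbol\tau(u_0))$ equals $\Ric*\boldsymbol\tau(u_0)$ plus a compactly supported cutoff term. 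Both lie in $C^{k,\alpha}_{-2}$. This gives the right inequality.

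\emph{Hard direction.} Assume $u\in C^{k,\alpha}_0$ and $\nabla_{\nabla f}u\in C^{k-2,\alpha}_{-2}$. By \cref{lem:difference}\cref{lem:difference-radial-derivative}, $\nabla^C_{\mathbf X}u=\nabla_{\nabla f}u-\Ric*u\in C^{k-2,\alpha}_{-2}$ on the end. \Cref{lem:characterization-of-classic-weighted-Holder-space} with $\beta=-2$ then produces a trace $u_\infty$ and a decomposition $u=\boldsymbol\tau(u_\infty)+v^\star$ on the end, with $v^\star\in C^{k-2,\alpha}_{-2}$. At this point $u_\infty$ is only known to lie in $C^{k-2,\alpha}(\Sigma)$.

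\emph{Main obstacle: trace regularity.} The key step is to show $u_\infty\in C^{k,\alpha}(\Sigma)$, since the trace lemma loses two derivatives. I would use \cref{ineq:norm-after-radial-transport}: it gives $\sup_r\norm{u_r}_{C^{k,\alpha}(\Sigma)}\le C\norm{u}_{C^{k,\alpha}_0}$, because weight $0$ makes tangential derivatives scale correctly. Since $u_r\to u_\infty$ in $C^{k-2,\alpha}$, the $u_r$ are uniformly bounded in $C^{k,\alpha}(\Sigma)$. Arzel\`a--Ascoli, together with lower semicontinuity of the $C^{k,\alpha}$ norm under $C^{k-2}$ convergence (valid for $\alpha>0$, which is why $\alpha\in(0,1]$ is assumed), then gives $u_\infty\in C^{k,\alpha}$ with $\norm{u_\infty}_{C^{k,\alpha}}\le C\norm{u}_{C^{k,\alpha}_0}$.

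\emph{Remainder estimate.} Set $u^\star:=u-\chi\boldsymbol\tau(u_\infty)$. By the easy direction, $u^\star\in C^{k,\alpha}_0$ and $\nabla_{\nabla f}u^\star\in C^{k-2,\alpha}_{-2}$, and $u^\star\in C^{k-2,\alpha}_{-2}$ by construction. By \cref{cor:equivalent-critical-norm}, it remains to show $\nabla^2u^\star\in C^{k-2,\alpha}_{-2}$. This holds because $u^\star\in C^{k,\alpha}_0$ gives $\nabla^2u^\star\in C^{k-2,\alpha}_{-2}$ directly by \cref{lem:basic-properties}\cref{lem:basic-properties-continuity-of-covariant-derivative}. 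Collecting the bounds proves the left inequality of \cref{ineq:critical-trace-norm-equivalence}.
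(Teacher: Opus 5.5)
Your proposal is correct and follows the paper's proof essentially step for step. It uses the same radial trace from \cref{lem:characterization-of-classic-weighted-Holder-space}, the same upgrade of the trace to $C^{k,\alpha}(\Sigma)$ via uniform slice bounds, compactness and lower semicontinuity (where $\alpha>0$ is needed), and the same conjugation by $\boldsymbol\rho^{2\lambda}$ (you do it first, the paper last). The only difference is cosmetic: you get the missing bound on $\nabla^{k-1}u^\star$ by citing \cref{cor:equivalent-critical-norm}, whereas the paper applies \cref{lem:critical-weight-interpolation} to $\nabla^{k-2}u^\star$ directly, which is exactly what that corollary does internally.
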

\begin{proof}
    We first prove the result for $\lambda=0$. Put
    \begin{equation*}
        \mathsf Q(u):=\norm{u}_{C^{k,\alpha}_0}
        +\norm{\nabla_{\nabla f}u}_{C^{k-2,\alpha}_{-2}}.
    \end{equation*}
    Suppose $u=\chi\boldsymbol\tau(u_\infty)+u^\star$ belongs to $C^{k,\alpha}_{0;-2,\nabla f}$. The cone norm bounds $\norm{u}_{C^{k,\alpha}_0}$ and $\norm{\nabla_{\nabla f}u^\star}_{C^{k-2,\alpha}_{-2}}$.
    Since $\nabla^C_{\mathbf X}\boldsymbol\tau(u_\infty)=0$, \cref{lem:difference}\cref{lem:difference-radial-derivative} gives
    \begin{equation*}
        \nabla_{\nabla f}(\chi\boldsymbol\tau(u_\infty))
        =(\nabla_{\nabla f}\chi)\boldsymbol\tau(u_\infty)
        +\chi\Ric(g)*\boldsymbol\tau(u_\infty).
    \end{equation*}
    The cutoff term is compactly supported and the curvature term has quadratic decay. Hence $\mathsf Q(u)\leq C\norm{u}_{C^{k,\alpha}_{0;-2,\nabla f}}$.

    Conversely, suppose $u\in C^{k,\alpha}_0$ and $\nabla_{\nabla f}u\in C^{k-2,\alpha}_{-2}$. Set $z:=\nabla^C_{\mathbf X}u$. By \cref{lem:difference}\cref{lem:difference-radial-derivative}, $z=\nabla_{\nabla f}u-\Ric(g)*u$, so
    \begin{equation}\label{ineq:critical-radial-source}
        \norm{z}_{C^{k-2,\alpha}_{-2}}\leq C\mathsf Q(u).
    \end{equation}
    The identity $\partial_ru_r=2r^{-1}z_r$ and \cref{lem:characterization-of-classic-weighted-Holder-space} show that
    \begin{equation}\label{eq:defining-homogeneous-component}
        u_\infty:=u_{r_0}+2\int_{r_0}^\infty\frac{z_s}{s}\,\mathrm ds
    \end{equation}
    is independent of $r_0>\underline r$, and $u_r\to u_\infty$ in $C^{k-2,\alpha}(\Sigma,E_\Sigma)$.
    Estimate \cref{ineq:norm-after-radial-transport} also bounds $u_r$ uniformly in $C^{k,\alpha}$. Since $\alpha>0$, compact embedding on the link gives convergence in $C^{k,\alpha'}$ for every $0\leq\alpha'<\alpha$. Lower semicontinuity of the top H\"older or Lipschitz seminorm gives
    \begin{equation}\label{ineq:critical-trace-link-bound}
        \norm{u_\infty}_{C^{k,\alpha}(\Sigma,E_\Sigma)}
        \leq C\norm{u}_{C^{k,\alpha}_0}.
    \end{equation}

    Define $u^\star:=u-\chi\boldsymbol\tau(u_\infty)$. Formula \cref{eq:defining-homogeneous-component} implies
    \begin{equation}\label{eq:defining-general-component}
        u^\star_r=(1-\chi(r,\cdot))u_r
        -2\chi(r,\cdot)\int_r^\infty\frac{z_s}{s}\,\mathrm ds.
    \end{equation}
    The radial integral estimate and the compact transition terms give $\norm{u^\star}_{C^{k-2,\alpha}_{-2}}\leq C\mathsf Q(u)$.
    The assumption $u\in C^{k,\alpha}_0$ and \cref{ineq:critical-trace-link-bound} give $\norm{\nabla^ku^\star}_{C^{0,\alpha}_{-k}}\leq C\mathsf Q(u)$.
    Apply \cref{lem:critical-weight-interpolation} to $\nabla^{k-2}u^\star$ to obtain $\norm{\nabla^{k-1}u^\star}_{C^{0,\alpha}_{-k}}\leq C\mathsf Q(u)$.
    Finally, the preceding estimate for $\nabla_{\nabla f}(\chi\boldsymbol\tau(u_\infty))$ gives $\norm{\nabla_{\nabla f}u^\star}_{C^{k-2,\alpha}_{-2}}\leq C\mathsf Q(u)$. These are the defining bounds for $u^\star\in C^{k,\alpha}_{-2,\nabla f}$, and they prove the converse norm estimate.

    For general $\lambda$, put $\widetilde u:=\boldsymbol{\rho}^{2\lambda}u$.
    By \cref{lem:radial-conjugation}, its $C^{k,\alpha}_{0;-2,\nabla f}$ norm is equivalent to the cone norm of $u$, and its $C^{k,\alpha}_0$ norm is equivalent to the ordinary weighted norm of $u$.
    Since $b_\lambda\in C^\infty_{-2}$, the weighted product estimate gives
    \begin{equation*}
        \norm{b_\lambda\widetilde u}_{C^{k-2,\alpha}_{-2}}
        \leq C\norm{\widetilde u}_{C^{k,\alpha}_0}.
    \end{equation*}
    Thus \cref{eq:radial-conjugation-identity} and the weight isomorphism yield
    \begin{equation*}
        \norm{\widetilde u}_{C^{k,\alpha}_0}
        +\norm{\nabla_{\nabla f}\widetilde u}_{C^{k-2,\alpha}_{-2}}
        \asymp\norm{u}_{C^{k,\alpha}_{-2\lambda}}
        +\norm{\nabla_{\nabla f}u+\lambda u}_{C^{k-2,\alpha}_{-2\lambda-2}}.
    \end{equation*}
    Applying the $\lambda=0$ result to $\widetilde u$ proves the assertion.
\end{proof}

\section{Schauder estimates for the weighted Laplacian}\label{sec:Schauder-estimates}
For a fixed tensor bundle, write $L_\lambda=\Delta_f-\lambda$.
\label{ic:Llambda}
Its characteristic homogeneous degree on the cone is $-2\lambda$.

\subsection{Maximum principles for the weighted Laplacian}
In this subsection, we study the asymptotic behavior of subsolutions to
weighted Laplacian equations on complete Ricci shrinkers.
Let $p_0$ be a minimum point of the potential and set
$\boldsymbol{\rho}=2(1+f-\boldsymbol{\mu}(g))^{1/2}$.
This positive weight is defined on every complete Ricci shrinker and satisfies
\[
 \Delta_f\boldsymbol{\rho}^2=2n+4-\boldsymbol{\rho}^2,
 \qquad |\nabla f|^2\leq\frac{\boldsymbol{\rho}^2}4-1,
 \qquad |\nabla\boldsymbol{\rho}|\leq1.
\]
By \cref{ineq:quadratic-growth-of-potential},
$\boldsymbol{\rho}$ is comparable to $1+d_g(p_0,\cdot)$, with constants
depending only on $n$. For a function $u$, write
$u^+=\max\{u,0\}$ and $u^-=\max\{-u,0\}$.
Similar maximum principles for self-shrinkers in mean curvature flow
appear in \cite[Lemma~3.15]{chodosh2021uniqueness}.

\begin{Lemma}[$L^2_f$ maximum principle]\label{lem:L2-maximum-principle}
    For any $n\geq3$ and $\lambda\in\R$, there is a constant
    $\underline r(n,\lambda)>0$ with the following property.
    Let $(M^n,g,f)$ be a complete Ricci shrinker and let $K$ be a compact
    subset containing $B_r(p_0)$ for some $r\geq\underline r$.
    Consider $L=\Delta_f-\lambda+\nabla_X+\phi$ on functions, where
    $X\in C^0_{\mathrm{loc}}(M,TM)$ and $\phi\in C^0_{\mathrm{loc}}(M)$ satisfy
    \begin{equation}\label{ineq:coefficients-of-L2-maximum-principle}
        |X|^2+\phi^+\leq\frac1{64}\boldsymbol{\rho}^2
        \qquad\text{on }M\setminus K.
    \end{equation}
    If $w\in H^1_f(M)$ satisfies $Lw\geq0$ on $M\setminus K$
    in the sense of distributions and $w\leq0$ almost everywhere in a
    neighborhood of $\partial K$, then $w\leq0$ almost everywhere on
    $M\setminus K$.
\end{Lemma}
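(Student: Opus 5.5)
The plan is to prove the statement by testing the distributional inequality $Lw\geq0$ against $\eta_R^2w^+$. Here $\eta_R=\eta(F/R)$ is the exhaustion cutoff and $F=f-\boldsymbol\mu(g)$. The Gaussian Poincar\'e inequality \cref{ineq:Poincare} then provides the coercivity which replaces compactness.

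First, the test function. Since $w\leq0$ a.e.\ in a neighbourhood of $\partial K$, the function $w^+$ restricted to $M\setminus K$ vanishes near $\partial K$. Extending it by zero into $K$ gives $u:=\eta_Rw^+\in H^1_f(M)$, compactly supported, with $\nabla w^+=\chi_{\{w>0\}}\nabla w$. The distributional inequality involves only $\nabla w\in L^2_{\mathrm{loc}}$ and continuous coefficients $X,\phi$. Hence it extends by density, by mollifying $\eta_R^2w^+$ inside $M\setminus K$, to nonnegative compactly supported $H^1$ test functions $\psi$:
\begin{equation*}
\int_M\inn{\nabla w}{\nabla\psi}\,\mathrm dV_f\leq\int_M\bigl(-\lambda w+\inn{X}{\nabla w}+\phi w\bigr)\psi\,\mathrm dV_f .
\end{equation*}

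Second, the energy estimate with $\psi=\eta_R^2w^+$. Using $\inn{\nabla w}{\nabla(\eta_R^2w^+)}=|\nabla u|^2-(w^+)^2|\nabla\eta_R|^2$ and Young's inequality on the drift term,
\begin{equation*}
\eta_R^2w^+|X||\nabla w^+|\leq|X|\,u\,\bigl(|\nabla u|+w^+|\nabla\eta_R|\bigr)\leq\tfrac14|\nabla u|^2+2|X|^2u^2+(w^+)^2|\nabla\eta_R|^2 ,
\end{equation*}
we get
\begin{equation*}
\tfrac34\int_M|\nabla u|^2\,\mathrm dV_f\leq\int_M\bigl(|\lambda|+2|X|^2+\phi^+\bigr)u^2\,\mathrm dV_f+2\int_M(w^+)^2|\nabla\eta_R|^2\,\mathrm dV_f .
\end{equation*}
Next apply \cref{lem:Poincare-inequality} to $u$:
\begin{equation*}
\int_M|\nabla u|^2\,\mathrm dV_f\geq\tfrac14\int_M(F-n)u^2\,\mathrm dV_f ,\qquad F=\tfrac{\boldsymbol\rho^2}{4}-1 .
\end{equation*}
The support of $u$ lies in $M\setminus K$, where \cref{ineq:coefficients-of-L2-maximum-principle} gives $2|X|^2+\phi^+\leq\boldsymbol\rho^2/32$. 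Combining,
\begin{equation*}
\int_M\Bigl(\tfrac{3\boldsymbol\rho^2}{64}-\tfrac{3(n+1)}{16}-|\lambda|-\tfrac{\boldsymbol\rho^2}{32}\Bigr)u^2\,\mathrm dV_f\leq2\int_M(w^+)^2|\nabla\eta_R|^2\,\mathrm dV_f .
\end{equation*}
The coefficient on the left is $\boldsymbol\rho^2/64-C(n)-|\lambda|$. On $M\setminus K\subset M\setminus B_r(p_0)$, with $r\geq\underline r(n,\lambda)$, the comparison $\boldsymbol\rho\asymp1+d_g(p_0,\cdot)$ from \cref{ineq:quadratic-growth-of-potential} makes this coefficient at least $1$. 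The constant $64$ in the hypothesis is what leaves this margin.

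Finally, we let $R\to\infty$. Since $|\nabla\eta_R|\leq CR^{-1/2}$ and $w\in L^2_f$, the right-hand side tends to zero. Monotone convergence then gives $\int(w^+)^2\,\mathrm dV_f=0$ on $M\setminus K$, that is, $w\leq0$ a.e.\ there.

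The main obstacle is the bookkeeping of constants in the absorption step. The drift term $X$ is unbounded, so its contribution $|X|^2u^2$ must be beaten by the Poincar\'e gain $F/4\sim\boldsymbol\rho^2/16$ (reduced by the $3/4$ factor). The choice of Young parameters above is meant to keep that comparison favourable. The approximation of the test function needs only minor care, because of the merely continuous coefficients and because $w\leq0$ only near $\partial K$ rather than on all of $K$.
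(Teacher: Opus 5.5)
Your proof is correct and follows essentially the paper's argument: you test $Lw\geq0$ against cutoff approximations of $w^+$ extended by zero, absorb the drift term by Young's inequality using the bound $|X|^2+\phi^+\leq\boldsymbol\rho^2/64$, and close with the Gaussian Poincar\'e inequality and the quadratic growth of $f$. The only difference is the order of the limit. You apply Poincar\'e to $\eta_Rw^+$ and let $R\to\infty$ at the end, whereas the paper first removes the cutoff, using $\boldsymbol\rho w^+\in L^2_f$ to control the drift term, and then applies Poincar\'e to $w^+$ itself.
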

\begin{proof}
    For every nonnegative $\varphi\in C_c^\infty(M\setminus K)$,
    \[
        \int_{M\setminus K}
        \bigl(-\langle\nabla w,\nabla\varphi\rangle-\lambda w\varphi
        +(\nabla_Xw)\varphi+\phi w\varphi\bigr)\,\mathrm dV_f\geq0.
    \]
    The extension by zero of $w^+|_{M\setminus K}$ belongs to $H^1_f(M)$.
    By \cref{lem:Poincare-inequality},
    $\boldsymbol{\rho}w^+\in L^2_f(M\setminus K)$.
    Test with compactly supported cutoff approximations to $w^+$,
    discard the nonpositive term $-\phi^-(w^+)^2$, and let the cutoffs
    tend to one. The cutoff errors vanish by the preceding integrability
    and $w^+\in H^1_f$. Thus
    \[
        \int_{M\setminus K}|\nabla w^+|^2\,\mathrm dV_f
        \leq\int_{M\setminus K}
        \bigl(-\lambda(w^+)^2+(\nabla_Xw^+)w^++\phi^+(w^+)^2\bigr)\,\mathrm dV_f.
    \]
    Young's inequality and \cref{ineq:coefficients-of-L2-maximum-principle} give
    \[
        \int_{M\setminus K}|\nabla w^+|^2\,\mathrm dV_f
        \leq\int_{M\setminus K}
        \left(\frac1{32}\boldsymbol{\rho}^2-2\lambda\right)(w^+)^2\,\mathrm dV_f.
    \]
    Applying \cref{lem:Poincare-inequality} once more, with
    $f-\boldsymbol\mu(g)=\boldsymbol{\rho}^2/4-1$, yields
    \begin{equation}\label{ineq:integral-estimate-on-the-positive-part}
        \int_{M\setminus K}
        \left(\frac18\boldsymbol{\rho}^2+8\lambda-n-1\right)(w^+)^2
        \,\mathrm dV_f\leq0.
    \end{equation}
    The potential growth estimate allows us to choose $\underline r(n,\lambda)$
    so that the coefficient in parentheses is at least one outside
    $B_{\underline r}(p_0)$. Hence $w^+=0$ on $M\setminus K$.
\end{proof}

The barrier argument below uses powers of the potential and lower-order
corrections, as in the proof of \cite[Theorem~1]{chow2011lower}.

\begin{Theorem}\label{thm:C0-estimate}
    For any $n\geq3$, $\lambda,\beta\in\R$, and $\delta,\Lambda>0$, there are
    constants $C>0$ and $\underline r>1$, depending only on these parameters,
    such that the following holds for every $r\geq\underline r$.
    Let $(M^n,g,f)$ be a complete Ricci shrinker and let
    $X\in C^0_{\mathrm{loc}}(M,TM)$ and $\phi\in C^0_{\mathrm{loc}}(M)$ satisfy
    \[
        |X|\leq\Lambda\boldsymbol{\rho}^{1-\delta},
        \qquad |\phi|\leq\Lambda\boldsymbol{\rho}^{-\delta}.
    \]
    Set $L=\Delta_f-\lambda+\nabla_X+\phi$.
    Suppose $u\in C^0_{\mathrm{loc}}(M)\cap H^1_f(M)$ satisfies $Lu\geq v$ in the sense
    of distributions, where $v$ is a locally integrable function with a fixed representative and
    \[
        V:=\sup_M\boldsymbol{\rho}^{-\beta}v^-<\infty.
    \]
    Write $\Omega_r=M\setminus B_r(p_0)$ and
    $U_r=\sup_{\partial B_r(p_0)}u^+$, with suprema of nonnegative functions
    over empty sets taken to be zero. Then
    \[
        \begin{dcases}
            \sup_{\Omega_r}\boldsymbol{\rho}^{2\lambda}u^+
            \leq C\bigl(V+r^{2\lambda}U_r\bigr),
            &\text{if }2\lambda+\beta<0;\\
            \sup_{\Omega_r}\frac{\boldsymbol{\rho}^{2\lambda}u^+}{1+\log\boldsymbol{\rho}}
            \leq C\left(V+\frac{r^{2\lambda}}{1+\log r}U_r\right),
            &\text{if }2\lambda+\beta=0;\\
            \sup_{\Omega_r}\boldsymbol{\rho}^{-\beta}u^+
            \leq C\bigl(V+r^{-\beta}U_r\bigr),
            &\text{if }2\lambda+\beta>0.
        \end{dcases}
    \]
\end{Theorem}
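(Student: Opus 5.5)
The plan is a barrier comparison on $\Omega_r$ combined with the $L^2_f$ maximum principle \cref{lem:L2-maximum-principle}. We build a positive supersolution $\psi$ with $L\psi\leq-\boldsymbol\rho^{\beta}$ on $\Omega_r$, and then compare $u$ with $V\psi+A\boldsymbol\rho^{-2\lambda}$ or a similar expression. The basic computation is the conjugation formula \cref{eq:weight-conjugation}: for $\gamma\in\mathbb R$,
\[
 L\boldsymbol\rho^{\gamma}
 =\boldsymbol\rho^\gamma\Bigl(-\lambda-\tfrac\gamma2+\phi_\gamma+\phi
 +\gamma\boldsymbol\rho^{-1}\nabla_X\boldsymbol\rho\Bigr).
\]
Here $|\phi_\gamma|\leq C(\gamma)\boldsymbol\rho^{-2}$, $|\phi|\leq\Lambda\boldsymbol\rho^{-\delta}$, and $|\gamma\boldsymbol\rho^{-1}\nabla_X\boldsymbol\rho|\leq|\gamma|\Lambda\boldsymbol\rho^{-\delta}$, using $|\nabla\boldsymbol\rho|\leq1$. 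So for $\boldsymbol\rho\geq\underline r(n,\lambda,\gamma,\delta,\Lambda)$ the bracket equals $-\lambda-\gamma/2+O(\boldsymbol\rho^{-\delta})$. Since $\boldsymbol\rho$ is comparable to $1+d(p_0,\cdot)$ with constants depending only on $n$, choosing $\underline r$ large makes these error terms uniformly small on $\Omega_r$.

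The barriers in the three cases are as follows.
\begin{itemize}
\item \textbf{Case $2\lambda+\beta<0$.} Take $\psi=\boldsymbol\rho^{-2\lambda}$ corrected by $\boldsymbol\rho^{-2\lambda-\delta'}$ to obtain a strict sign. Because $\gamma=-2\lambda$ gives a leading multiplier of $0$, we use $\psi=\boldsymbol\rho^{-2\lambda}-\boldsymbol\rho^{-2\lambda-\epsilon}$ with a small $\epsilon<\delta$. This is not quite enough against $\boldsymbol\rho^\beta$ when $\beta$ is only slightly below $-2\lambda$. Instead, use $\psi=\boldsymbol\rho^{\beta'}$ with $\beta<\beta'<-2\lambda$ chosen so that $\beta'-\beta\leq\delta/2$ is not needed. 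Then $L\boldsymbol\rho^{\beta'}\leq-c\boldsymbol\rho^{\beta'}\leq-c\boldsymbol\rho^{\beta}$ for $\boldsymbol\rho$ large, since $-\lambda-\beta'/2>0$ and $\beta'>\beta$. Together with the homogeneous barrier $\boldsymbol\rho^{-2\lambda}(1+\boldsymbol\rho^{-\epsilon})$ for the boundary term, this gives $u^+\leq C(V+r^{2\lambda}U_r)\boldsymbol\rho^{-2\lambda}$, because $\boldsymbol\rho^{\beta'}\leq\boldsymbol\rho^{-2\lambda}$. For the boundary term, the multiplier of $\boldsymbol\rho^{-2\lambda}$ is $O(\boldsymbol\rho^{-\delta})$ of uncertain sign. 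We therefore use $\boldsymbol\rho^{-2\lambda}(2-\boldsymbol\rho^{-\epsilon})$ with $\epsilon<\delta$: the correction $-\boldsymbol\rho^{-2\lambda-\epsilon}$ contributes $-(\epsilon/2)\boldsymbol\rho^{-2\lambda-\epsilon}$, which dominates $O(\boldsymbol\rho^{-2\lambda-\delta})$ and keeps the function a supersolution bounded between $\boldsymbol\rho^{-2\lambda}$ and $2\boldsymbol\rho^{-2\lambda}$.
\item \textbf{Case $2\lambda+\beta>0$.} Take $\psi=\boldsymbol\rho^{\beta}$. Its multiplier $-\lambda-\beta/2<0$ gives $L\psi\leq-c\boldsymbol\rho^\beta$ directly, with boundary comparison via $r^{-\beta}U_r\boldsymbol\rho^\beta$.
\item \textbf{Critical case $2\lambda+\beta=0$.} Take $\psi=\boldsymbol\rho^{-2\lambda}\log\boldsymbol\rho$, plus the same $\epsilon$-correction. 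Since $\Delta_f\log\boldsymbol\rho=-\tfrac12+O(\boldsymbol\rho^{-2})$, the cross terms give $L\psi=\boldsymbol\rho^{-2\lambda}\bigl(-\tfrac12+O(\boldsymbol\rho^{-\delta}\log\boldsymbol\rho)\bigr)$. This supplies the $1+\log\boldsymbol\rho$ loss.
\end{itemize}

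For the comparison step, set $w=u-V\psi_1-A\psi_2$, where $\psi_1$ absorbs the source and $\psi_2$ is the homogeneous barrier, with $A$ chosen as $r^{2\lambda}U_r$ (respectively $r^{-\beta}U_r$ or the log-normalized version) times a constant. Then $Lw\geq v+V\boldsymbol\rho^\beta\geq0$ on $\Omega_r$, since $v\geq-v^-\geq-V\boldsymbol\rho^\beta$. On $\partial B_r$ we have $w\leq U_r-A\psi_2\leq0$, because $\psi_2\asymp r^{-2\lambda}$ there, $\boldsymbol\rho\asymp r$, and the constants depend only on $n$. To apply \cref{lem:L2-maximum-principle} with $K=\overline{B_r(p_0)}$, we need three things. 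First, $w\in H^1_f$: this holds since $u\in H^1_f$ and powers of $\boldsymbol\rho$ (with logarithms) are in $H^1_f$ by polynomial-versus-Gaussian integrability, using $|\nabla\boldsymbol\rho|\leq1$ and the volume growth from \cref{ineq:quadratic-growth-of-potential}. Second, the coefficient condition $|X|^2+\phi^+\leq\boldsymbol\rho^2/64$ on $M\setminus K$, which follows from $|X|\leq\Lambda\boldsymbol\rho^{1-\delta}$ once $r$ is large. Third, $w\leq0$ near $\partial K$. Since the barriers are only defined for $\boldsymbol\rho$ large, we shrink to $B_{r'}$ with $r'$ slightly larger than $r$, or rather compare on $\Omega_{r}$ after enlarging the constant by using continuity of $u$. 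To get strict negativity in a neighbourhood, we add a small slack $\eta\psi_2$ and let $\eta\to0$.

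The main obstacle I expect is exactly this boundary and neighbourhood requirement, together with the uniformity of $\underline r$. We must check that the supersolution inequalities hold on all of $\Omega_r$ with constants independent of the particular shrinker. This uses only $\Delta_f\boldsymbol\rho^2=2n+4-\boldsymbol\rho^2$, $|\nabla\boldsymbol\rho|\leq1$ and $\boldsymbol\rho\geq c(n)r$ on $\Omega_r$, and no AC structure. The other delicate point is the first case when $\beta$ is arbitrarily close to $-2\lambda$, which forces the choice of $\beta'$ between them and makes $C$ depend on $2\lambda+\beta$.
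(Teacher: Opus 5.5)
There is a sign error in your subcritical case $2\lambda+\beta<0$, and it breaks the source absorption. You take $\psi_1=\boldsymbol\rho^{\beta'}$ with $\beta<\beta'<-2\lambda$ and assert $L\boldsymbol\rho^{\beta'}\leq-c\boldsymbol\rho^{\beta'}$ ``since $-\lambda-\beta'/2>0$''. Your own formula gives $L\boldsymbol\rho^{\beta'}=\boldsymbol\rho^{\beta'}\bigl(-\lambda-\beta'/2+O(\boldsymbol\rho^{-\delta_0})\bigr)$, where $\delta_0=\min\{\delta,2\}$. A positive multiplier therefore makes $\boldsymbol\rho^{\beta'}$ a \emph{subsolution} far out, not a supersolution. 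Every power below the characteristic degree $-2\lambda$ behaves this way. The powers above it are supersolutions, but they grow faster than $\boldsymbol\rho^{-2\lambda}$ and would destroy the asserted rate. Your next sentence already uses the correct sign: saying that $-\boldsymbol\rho^{-2\lambda-\epsilon}$ contributes $-(\epsilon/2)\boldsymbol\rho^{-2\lambda-\epsilon}$ means $L\boldsymbol\rho^{-2\lambda-\epsilon}\approx+\frac\epsilon2\boldsymbol\rho^{-2\lambda-\epsilon}$, which is exactly the case $\beta'=-2\lambda-\epsilon$. As written, $w=u-V\psi_1-A\psi_2$ does not satisfy $Lw\geq0$.

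The fix is the barrier you already found, used for both the source and the boundary term. Your reason for rejecting it is mistaken. For $\psi=\boldsymbol\rho^{-2\lambda}-\boldsymbol\rho^{-2\lambda-\epsilon}$ one has $L\psi\leq-\frac\epsilon4\boldsymbol\rho^{-2\lambda-\epsilon}$ outside a large ball. If $0<\epsilon<\min\{\delta_0,-(2\lambda+\beta)\}$, then $\boldsymbol\rho^{\beta}\leq\boldsymbol\rho^{-2\lambda-\epsilon}$ because $\boldsymbol\rho\geq1$. So $\beta$ close to $-2\lambda$ only forces a smaller $\epsilon$. In other words, your intermediate exponent belongs in the subtracted correction $-\boldsymbol\rho^{\beta'}$, with $\max\{\beta,-2\lambda-\delta_0\}<\beta'<-2\lambda$.

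The threshold is $\delta_0$, not $\delta$. The term $\phi_\gamma$ in $\Delta_f\boldsymbol\rho^\gamma$ decays only like $\boldsymbol\rho^{-2}$ whatever $\delta$ is, so $\epsilon<\delta$ fails when $\delta>2$. Likewise, the error in your logarithmic case is $O(\boldsymbol\rho^{-\delta_0}\log\boldsymbol\rho)$.

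With these changes, use the single barrier $H=C(V+r^{2\lambda}U_r+\eta)\psi$ in the subcritical case. Together with your critical and supercritical barriers and your comparison through the $L^2_f$ maximum principle with slack $\eta\downarrow0$, the argument is then the paper's proof.
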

\begin{proof}
    The identities above give, for every $\gamma\in\R$,
    \begin{equation}\label{eq:L-acting-on-power-of-potential}
        L\boldsymbol{\rho}^{-2\gamma}
        =(\gamma-\lambda)\boldsymbol{\rho}^{-2\gamma}
        +\left(16\gamma(\gamma+1)\frac{|\nabla f|^2}{\boldsymbol{\rho}^2}
        -2\gamma(n+2)-4\gamma X(f)
        +\boldsymbol{\rho}^2\phi\right)\boldsymbol{\rho}^{-2\gamma-2}.
    \end{equation}
    Put $\delta_0=\min\{\delta,2\}$. Since
    $|X(f)|\leq\Lambda\boldsymbol{\rho}^{2-\delta}$, it follows that
    \begin{equation}\label{ineq:crucial-cancellation}
        \left|L\boldsymbol{\rho}^a
        +\left(\frac a2+\lambda\right)\boldsymbol{\rho}^a\right|
        \leq C(n,a,\delta,\Lambda)\boldsymbol{\rho}^{a-\delta_0}
        \qquad(a\in\R).
    \end{equation}
    Increase $\underline r$ so that
    \cref{ineq:coefficients-of-L2-maximum-principle} holds outside
    $B_{\underline r}(p_0)$. All powers of $\boldsymbol{\rho}$, as well as
    their products with $\log\boldsymbol{\rho}$, belong to $H^1_f(M)$.
    Indeed, normalization gives finite weighted volume, and successive
    applications of \cref{lem:Poincare-inequality} to cutoff powers of
    $\boldsymbol{\rho}$ give all polynomial moments; use also
    $|\nabla\boldsymbol{\rho}|\leq1$.

    First suppose $\theta:=2\lambda+\beta<0$. Choose
    $0<\epsilon<\min\{\delta_0,-\theta\}$ and set
    $h=\boldsymbol{\rho}^{-2\lambda}-\boldsymbol{\rho}^{-2\lambda-\epsilon}$.
    By \cref{ineq:crucial-cancellation}, after increasing $\underline r$,
    \[
        \frac12\boldsymbol{\rho}^{-2\lambda}\leq h\leq\boldsymbol{\rho}^{-2\lambda},
        \qquad Lh\leq-\frac\epsilon4\boldsymbol{\rho}^{-2\lambda-\epsilon}
        \quad\text{on }\Omega_{\underline r}.
    \]
    Since $\beta<-2\lambda-\epsilon$ and $\boldsymbol{\rho}\geq1$,
    the barrier $H=Ah$ satisfies $LH\leq-v^-$ whenever $A\geq4V/\epsilon$.
    For $\eta>0$, take $A=C(V+r^{2\lambda}U_r+\eta)$, with $C$ sufficiently
    large. The comparison $\boldsymbol{\rho}\asymp r$ on
    $\partial B_r(p_0)$ ensures $H>u^+$ there, hence in a neighborhood of
    the boundary. Extend $H$ smoothly through the compact core.
    Then $u-H\in H^1_f(M)$ and $L(u-H)\geq0$ on the exterior.
    Applying \cref{lem:L2-maximum-principle} gives $u\leq H$.
    Letting $\eta\downarrow0$ proves the first estimate.

    If $\theta=0$, differentiation of the power formula gives
    \[
        L(\boldsymbol{\rho}^{-2\lambda}\log\boldsymbol{\rho})
        =-\frac12\boldsymbol{\rho}^{-2\lambda}
        +O\bigl(\boldsymbol{\rho}^{-2\lambda-\delta_0}
        (1+\log\boldsymbol{\rho})\bigr)
        \leq-\frac14\boldsymbol{\rho}^{-2\lambda}
    \]
    outside a sufficiently large ball. Take
    $H=A\boldsymbol{\rho}^{-2\lambda}\log\boldsymbol{\rho}$, with
    $A=C(V+r^{2\lambda}(1+\log r)^{-1}U_r+\eta)$.
    The same comparison and the limit $\eta\downarrow0$ prove the critical estimate.

    Finally, if $\theta>0$, \cref{ineq:crucial-cancellation} gives
    $L\boldsymbol{\rho}^{\beta}\leq-\theta\boldsymbol{\rho}^{\beta}/4$
    sufficiently far out. Use $H=A\boldsymbol{\rho}^{\beta}$ with
    $A=C(V+r^{-\beta}U_r+\eta)$ and argue as above.
\end{proof}

\begin{Remark}\label{rmk:exterior-C0-estimate}
    The proof of \cref{thm:C0-estimate} uses the differential inequality
    and coefficient bounds only outside the comparison ball.
    Thus it also applies when these hypotheses hold on $M\setminus K_0$
    for a fixed compact set $K_0$: take $r\geq\underline r$ with
    $K_0\subset B_r(p_0)$, and replace $V$ by
    $\sup_{M\setminus K_0}\boldsymbol{\rho}^{-\beta}v^-$.
    The assumption $u\in H^1_f(M)$ and the boundary term $U_r$ are retained.
    The exterior barriers are extended smoothly through the compact core
    before applying \cref{lem:L2-maximum-principle}.
\end{Remark}

\begin{Proposition}\label{prop:C0-estimate}\label{cor:C0-estimate}
    For any $n\geq3$, $\lambda,\beta\in\R$, and $\delta,\Lambda>0$, there are
    constants $C,\underline r>0$, depending only on these parameters,
    with the following property. Let $(M^n,g,f)$ be a complete Ricci
    shrinker and let $E$ be a tensor bundle with its induced metric and connection. Write $\langle\cdot,\cdot\rangle_E$ for the pointwise inner product on $E$.
    Consider $L=\Delta_f-\lambda+\nabla_X+\phi$, where
    $X\in C^0_{\mathrm{loc}}(M,TM)$ and
    $\phi\in C^0_{\mathrm{loc}}(M,\End E)$ satisfy
    \[
        |X|\leq\Lambda\boldsymbol{\rho}^{1-\delta},
        \qquad |\phi|\leq\Lambda\boldsymbol{\rho}^{-\delta}.
    \]
    Suppose $u\in H^2_{\mathrm{loc}}(M,E)\cap H^1_f(M,E)$ and
    $v\in C^0_{\mathrm{loc}}(M,E)$ satisfy $\langle Lu,u\rangle_E\geq\langle v,u\rangle_E$
    almost everywhere and $V:=\sup_M\boldsymbol{\rho}^{-\beta}|v|<\infty$.
    Suprema involving a Sobolev section are understood as essential suprema;
    they agree with ordinary suprema for continuous sections. Then
    \[
        C\left(V+\sup_{B_{\underline r}(p_0)}|u|\right)
        \geq\begin{dcases}
            \sup_M\boldsymbol{\rho}^{2\lambda}|u|,&\text{if }2\lambda+\beta<0;\\
            \sup_M\frac{\boldsymbol{\rho}^{2\lambda}|u|}{1+\log\boldsymbol{\rho}},
            &\text{if }2\lambda+\beta=0;\\
            \sup_M\boldsymbol{\rho}^{-\beta}|u|,&\text{if }2\lambda+\beta>0.
        \end{dcases}
    \]
\end{Proposition}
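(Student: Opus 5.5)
The plan is to reduce the tensor statement to the scalar \cref{thm:C0-estimate} (in the exterior form of \cref{rmk:exterior-C0-estimate}) applied to a regularized norm $w_\epsilon:=(|u|^2+\epsilon^2)^{1/2}$, and then let $\epsilon\downarrow0$. Since $u\in H^2_{\mathrm{loc}}\cap H^1_f$, the function $w_\epsilon$ lies in $H^2_{\mathrm{loc}}$, is continuous wherever $u$ is, and satisfies $|\nabla w_\epsilon|\leq|\nabla u|$ (Kato). Hence $w_\epsilon-\epsilon\in H^1_f(M)$, with $H^1_f$ norm bounded independently of $\epsilon$; constants belong to $H^1_f$ by the normalization. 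Continuity is not automatic for $H^2_{\mathrm{loc}}$ sections when $n\geq4$. I would therefore either rerun the barrier argument, which uses only the $L^2_f$ maximum principle and so needs only $H^1_f$ together with essential suprema, or observe that the proof of \cref{thm:C0-estimate} never uses continuity beyond defining $U_r$ as an essential supremum.

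The key computation is a differential inequality for $w_\epsilon$. Almost everywhere,
\[
  w_\epsilon\Delta_f w_\epsilon+|\nabla w_\epsilon|^2=\tfrac12\Delta_f|u|^2=\langle\Delta_f u,u\rangle+|\nabla u|^2 .
\]
Kato gives $|\nabla w_\epsilon|\le|\nabla u|$, so $w_\epsilon\Delta_f w_\epsilon\geq\langle\Delta_f u,u\rangle$. Next I substitute $\langle\Delta_fu,u\rangle=\langle Lu,u\rangle+\lambda|u|^2-\langle\nabla_Xu,u\rangle-\langle\phi u,u\rangle$ and use $\langle\nabla_Xu,u\rangle=\tfrac12 X(|u|^2)=w_\epsilon X(w_\epsilon)$. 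The hypothesis $\langle Lu,u\rangle\ge\langle v,u\rangle\ge-|v||u|$ and $|\langle\phi u,u\rangle|\le|\phi||u|^2$ then give
\[
 w_\epsilon\bigl(\Delta_f w_\epsilon-\lambda w_\epsilon+\nabla_X w_\epsilon+|\phi|w_\epsilon\bigr)\geq -|v||u|-\lambda\epsilon^2-|\phi|\epsilon^2 .
\]
Dividing by $w_\epsilon\geq\max\{\epsilon,|u|\}$ yields
\[
 \tilde Lw_\epsilon\geq -|v|-(|\lambda|+|\phi|)\epsilon,\qquad \tilde L:=\Delta_f-\lambda+\nabla_X+|\phi|,
\]
where $\tilde L$ has scalar coefficients obeying the same bounds as in \cref{thm:C0-estimate}. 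This inequality must be upgraded to hold in the distributional sense. That follows because $w_\epsilon\in H^2_{\mathrm{loc}}$, so the pointwise a.e.\ identity integrates against test functions.

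For the error term, I would apply \cref{thm:C0-estimate} with $\beta$ replaced by $\max\{\beta,0\}$ for the $\epsilon$ part. Alternatively, and more simply, I would apply it to $w_\epsilon-\epsilon$ and bound the defect $\tilde L\epsilon\leq(|\lambda|+|\phi|)\epsilon$; this produces an extra source of size $C\epsilon$, which contributes $C\epsilon$ times a harmless power of $\boldsymbol\rho$ on every bounded region. The cleanest route is to estimate $w_\epsilon$ on $\Omega_r$ with $r=\underline r$ and then let $\epsilon\to0$ pointwise on each fixed ball before taking suprema: the right side of the conclusion is monotone and the extra source tends to zero. Then $w_\epsilon\downarrow|u|$. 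This gives the exterior bound with $U_{\underline r}\le\sup_{B_{\underline r}(p_0)}|u|$, where the essential sup near the sphere is handled by taking the comparison boundary as a neighborhood, as the maximum principle requires. Inside $B_{\underline r}(p_0)$, the weights $\boldsymbol\rho^{2\lambda}$, $\boldsymbol\rho^{-\beta}$ and the log factor are bounded by constants depending on $n,\lambda,\beta$, since $\boldsymbol\rho\asymp 1+d(p_0,\cdot)$; so the interior term is trivially bounded by $C\sup_{B_{\underline r}}|u|$.

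I expect the main obstacle to be the $\epsilon$-error in the three regimes. In the supercritical case $2\lambda+\beta>0$ the constant source $\epsilon$ is dominated only if $\beta\geq0$. I would handle this by taking $V_\epsilon=V+\epsilon C$ with source weight $\boldsymbol\rho^{\beta'}$ for $\beta'=\max\{\beta,0\}$, getting a bound on $\boldsymbol\rho^{-\beta'}w_\epsilon$ that is uniform on compact sets. Letting $\epsilon\to0$ then recovers the estimate with the original $\beta$, because the $\epsilon$-free inequality holds for $|u|$ wherever $u\neq0$. Failing that, the fallback is to rerun the barrier proof directly with test function $(|u|-H)^+$, which avoids regularization entirely.
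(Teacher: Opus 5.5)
Your regularization $w_\epsilon=(\epsilon^2+|u|^2)^{1/2}$, the scalar operator $\tilde L=\Delta_f-\lambda+\nabla_X+|\phi|$, and the inequality $\tilde Lw_\epsilon\geq-|v|-C\epsilon$ are exactly the paper's first step. (One correction: for $n\geq5$ you cannot claim $w_\epsilon\in H^2_{\mathrm{loc}}$, because $|\nabla u|^2/w_\epsilon$ is in general only in $L^{n/(n-2)}_{\mathrm{loc}}$. However, $w_\epsilon\in W^{2,1}_{\mathrm{loc}}$, or smooth local approximation as in the paper, suffices to read the inequality weakly.)

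The gap is in how you dispose of the $\epsilon$-error, which you single out as the main obstacle. The fix you spell out replaces $\beta$ by $\beta'=\max\{\beta,0\}$ in \cref{thm:C0-estimate}, and this loses the asserted weight whenever $\beta<0\leq\lambda$. For $\lambda>0$ and $\beta<-2\lambda$ it gives only boundedness of $w_\epsilon$ instead of decay like $\boldsymbol\rho^{-2\lambda}$. For $\lambda=0>\beta$ it lands in the logarithmic case. Letting $\epsilon\to0$ afterwards cannot restore the original weight. Your justification, that the $\epsilon$-free inequality holds for $|u|$ where $u\neq0$, is not usable: \cref{lem:L2-maximum-principle} needs the inequality weakly on the whole exterior, across $\{u=0\}$, and a pointwise inequality off that set does not supply it. Nor can you apply the theorem separately to ``the $\epsilon$ part'', since a subsolution with a combined source does not split into two subsolutions.

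The missing observation is that $\epsilon\downarrow0$ should be taken in the differential inequality, not in the conclusion. In the weak form, $\nabla w_\epsilon=\langle\nabla u,u\rangle/w_\epsilon$ is dominated by $|\nabla u|$ and converges a.e. to $\nabla|u|$, since both vanish a.e. on $\{u=0\}$. It therefore converges in $L^2_{\mathrm{loc}}$, while the error $C\epsilon$ vanishes. Hence $\tilde L|u|\geq-|v|$ weakly, with $|u|\in H^1_f$. Now take the barriers of \cref{thm:C0-estimate}, with coefficients chosen to dominate $|u|$ a.e. on a collar inside $B_{\underline r}(p_0)$, and apply \cref{lem:L2-maximum-principle} to $|u|-H$. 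This gives all three cases with no $\epsilon$ at all. It is the paper's proof, and it is precisely the input your fallback with $(|u|-H)^+$ requires.

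If you insist on keeping $\epsilon>0$, you must rerun the barrier argument with an additional barrier $A\epsilon\boldsymbol\rho^{a}$. Here $a\geq0$, $a>-2\lambda$, and $A$ is large enough that $\tilde L(A\epsilon\boldsymbol\rho^{a})\leq-C\epsilon$ outside a large ball. This barrier tends to zero pointwise as $\epsilon\downarrow0$.
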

\begin{proof}
    On functions, set $L'=\Delta_f-\lambda+\nabla_X+|\phi|$.
    For $\epsilon>0$, let $s_\epsilon=(\epsilon^2+|u|^2)^{1/2}$.
    The weak chain rule and Kato's inequality give, distributionally,
    \[
        L's_\epsilon
        \geq\frac{\langle Lu,u\rangle_E}{s_\epsilon}
        -\frac{\lambda\epsilon^2}{s_\epsilon}
        \geq-|v|-|\lambda|\epsilon.
    \]
    This follows first for smooth local approximations to $u$ and then
    by passage to the limit in $H^2_{\mathrm{loc}}$; the gradient-square
    terms are locally integrable and nonnegative.
    Letting $\epsilon\downarrow0$ gives $L'|u|\geq-|v|$ in distributions,
    with $|u|\in H^1_f$.
    The local boundedness estimate for weak subsolutions
    \cite[Theorem~8.17]{gilbarg2001elliptic} shows that $|u|$ is locally
    essentially bounded.

    Use the barriers in the proof of \cref{thm:C0-estimate}, choosing
    their coefficients to dominate $|u|$ almost everywhere on a fixed
    compact collar of the comparison sphere. After enlarging
    $\underline r$, this collar lies in $B_{\underline r}(p_0)$.
    The Sobolev maximum principle \cref{lem:L2-maximum-principle}
    applies directly to $|u|-H$ and gives the exterior estimate almost
    everywhere. The potential growth bound on the compact core completes
    the asserted global estimates.
\end{proof}

Related bounded-solution estimates for $\Delta_f-\lambda$, with a positive
zeroth-order gap, appear in \cite[Proposition~5.18]{bamler2023degree};
see \cite[Proposition~2.2 and Theorem~1]{lunardi1998schauder} and
\cite[Theorem~2.2]{deruelle2016smoothing} for antecedents.
On an AC shrinker, conjugation by $\boldsymbol{\rho}^{\beta}$ shifts the
spectral parameter to $\lambda+\beta/2$ and adds decaying lower-order terms,
as in \cref{eq:weight-conjugation}. Thus the range $2\lambda+\beta>0$ corresponds to
a positive zeroth-order gap after conjugation.

\begin{Corollary}\label{cor:AC-C0-estimate}
    In the standard AC setting of \cref{def:standard-AC-setting}, let
    $\lambda,\beta\in\R$ and $\delta,\Lambda>0$. Suppose
    $X\in C^0_{1-\delta}(M,TM)$ and
    $\phi\in C^0_{-\delta}(M,\End E)$ have norms at most $\Lambda$,
    and set $L=\Delta_f-\lambda+\nabla_X+\phi$.
    If $u\in H^2_{\mathrm{loc}}(M,E)\cap H^1_f(M,E)$ and
    $v\in C^0_\beta(M,E)$ satisfy $\langle Lu,u\rangle_E\geq\langle v,u\rangle_E$ almost everywhere, then
    \[
        C\left(\norm{v}_{C^0_\beta}+\norm{u}_{C^0(B_{\underline r}(p_0))}\right)
        \geq\begin{dcases}
            \norm{u}_{C^0_{-2\lambda}},&\text{if }2\lambda+\beta<0;\\
            \sup_M\frac{\boldsymbol{\rho}^{2\lambda}|u|}{1+\log\boldsymbol{\rho}},
            &\text{if }2\lambda+\beta=0;\\
            \norm{u}_{C^0_\beta},&\text{if }2\lambda+\beta>0.
        \end{dcases}
    \]
    Here $C,\underline r>0$ depend only on the standard AC data and the displayed parameters.
    For Sobolev sections, the zeroth-order norms and suprema in these
    estimates are essential bounds, as in \cref{prop:C0-estimate}.
    For continuous sections they are the usual weighted $C^0$ norms.

    For scalar subsolutions $u\in C^0_{\mathrm{loc}}(M)\cap H^1_f(M)$ with $Lu\geq v$
    distributionally, $v\in L^1_{\mathrm{loc}}(M)$, and
    $v^-\in C^0_\beta(M)$, the exterior estimates of \cref{thm:C0-estimate}
    hold with $V=\norm{v^-}_{C^0_\beta}$ and
    $U_r=\norm{u^+}_{C^0(\partial B_r(p_0))}$.
    In the first and third cases, the left-hand sides may be replaced by
    $\norm{u^+}_{C^0_{-2\lambda}(\Omega_r)}$ and
    $\norm{u^+}_{C^0_\beta(\Omega_r)}$, respectively.
\end{Corollary}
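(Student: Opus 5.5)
The plan is to deduce \cref{cor:AC-C0-estimate} from \cref{prop:C0-estimate}, \cref{thm:C0-estimate} and \cref{rmk:exterior-C0-estimate}, translating the pointwise weighted hypotheses into the weighted H\"older norms of \cref{def:weighted-Holder-spaces}.

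\textbf{Step 1: coefficient bounds.} The norm comparison after \cref{def:weighted-Holder-spaces} gives $\norm{u}_{C^0_\beta}\asymp\sup_M\boldsymbol\rho^{-\beta}|u|$ on all of $M$: on the compact core $\boldsymbol\rho$ is bounded above and below, and on the end $\boldsymbol\rho=r+O(r^{-1})$. Hence $\norm{X}_{C^0_{1-\delta}}\leq\Lambda$ gives $|X|\leq C\Lambda\boldsymbol\rho^{1-\delta}$, and likewise $|\phi|\leq C\Lambda\boldsymbol\rho^{-\delta}$, with $C$ depending only on the standard AC data. So the coefficient hypotheses of \cref{prop:C0-estimate} hold with $\Lambda$ replaced by $C\Lambda$. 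Similarly $V=\sup_M\boldsymbol\rho^{-\beta}|v|\leq C\norm{v}_{C^0_\beta}$.

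\textbf{Step 2: the tensor estimate.} Applying \cref{prop:C0-estimate} bounds the three weighted suprema by $C(V+\sup_{B_{\underline r}(p_0)}|u|)$. Converting $\sup_M\boldsymbol\rho^{2\lambda}|u|$ and $\sup_M\boldsymbol\rho^{-\beta}|u|$ back into $\norm{u}_{C^0_{-2\lambda}}$ and $\norm{u}_{C^0_\beta}$ by the same comparison gives the first and third cases. The logarithmic case is already stated as a supremum, so it needs no conversion.

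\textbf{Step 3: the scalar statement.} Here I apply \cref{thm:C0-estimate} directly, with $V$ there at most $C\norm{v^-}_{C^0_\beta}$, and with $U_r$ equal to the norm of $u^+$ on $\partial B_r(p_0)$. In the first and third cases, the exterior left-hand sides $\sup_{\Omega_r}\boldsymbol\rho^{2\lambda}u^+$ and $\sup_{\Omega_r}\boldsymbol\rho^{-\beta}u^+$ are equivalent to the weighted norms on $\Omega_r$ by restricting the pointwise comparison. These weighted norms are of zeroth order, so no cylinder-slab H\"older issues arise.

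\textbf{Main obstacle.} The only delicate point is bookkeeping. The constant $\underline r$ of \cref{prop:C0-estimate}, the radius of the standard AC chart, and the ball $B_{\underline r}(p_0)$ must all be compatible. I would enlarge $\underline r$ to exceed all of them, which only changes constants since $\boldsymbol\rho\asymp 1+d_g(p_0,\cdot)$ with constants depending only on $n$. I would also note that, for Sobolev sections, the suprema are essential suprema, exactly as in \cref{prop:C0-estimate}.
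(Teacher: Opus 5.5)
Your proposal is correct and follows the paper's own argument: the paper likewise uses the equivalence $\norm{\cdot}_{C^0_\beta}\asymp\sup_M\boldsymbol\rho^{-\beta}|\cdot|$ from \cref{def:weighted-Holder-spaces} to convert the coefficient and source norms into pointwise bounds, and then applies \cref{thm:C0-estimate,prop:C0-estimate}. Your additional bookkeeping (replacing $\Lambda$ by $C\Lambda$, and noting that enlarging the ball only weakens the right-hand side) simply makes explicit what the paper's two-line proof leaves implicit.
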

\begin{proof}
    The $C^0_\alpha$ norm is equivalent to
    $\sup_M\boldsymbol{\rho}^{-\alpha}|\cdot|$ by
    \cref{def:weighted-Holder-spaces}.
    Apply \cref{thm:C0-estimate,prop:C0-estimate} with the corresponding
    pointwise coefficient bounds.
\end{proof}

\begin{Remark}\label{rmk:Gaussian-integrability}
    On the Gaussian shrinker, radial solutions of $\Delta_fu=0$ on $\{r>1\}$ are
    \[
        u(r)=c_1+c_2\int_1^r s^{1-n}e^{s^2/4}\,\mathrm ds.
    \]
    When $c_2\neq0$, one has $u(r)\sim2c_2r^{-n}e^{r^2/4}$, so
    $u\notin L^2_f$ near infinity. This shows why the Gaussian
    integrability assumption is essential in the estimates above.

    The critical logarithmic factor is also necessary. A radial solution
    of $\Delta_fu=1$ on the Gaussian end is obtained by taking
    \[
        u'(r)=-r^{1-n}e^{r^2/4}\int_r^\infty s^{n-1}e^{-s^2/4}\,\mathrm ds
        =-\frac2r+O(r^{-3}).
    \]
    Thus $u(r)=-2\log r+O(1)$ and $u\in H^1_f$ near infinity.
    This gives the critical exterior behavior for $\lambda=\beta=0$.
\end{Remark}

The following direct consequence is related to the polynomial growth
estimates in \cite[Theorem~4.1]{colding2021singularities}.
\begin{Corollary}\label{cor:polynomial-growth}
    For every $n\geq3$ and $\lambda\in\R$, there exist $C,R>0$, depending only
    on $n,\lambda$, such that the following holds on any complete Ricci
    shrinker $(M^n,g,f)$.
    If $u\in H^2_{\mathrm{loc}}(M,E)\cap H^1_f(M,E)$ satisfies
    $\langle\Delta_fu,u\rangle\geq\lambda|u|^2$ almost everywhere, then
    \[
        \sup_M\boldsymbol{\rho}^{2\lambda}|u|
        \leq C\sup_{B_R(p_0)}|u|.
    \]
    Suprema of Sobolev sections are essential suprema, as in
    \cref{prop:C0-estimate}.
    In particular, if $\Delta_fu=-\nu u$ with $\nu\geq0$, then
    $|u(x)|\leq C\boldsymbol{\rho}(x)^{2\nu}\sup_{B_R(p_0)}|u|$.
\end{Corollary}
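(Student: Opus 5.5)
The plan is to apply \cref{prop:C0-estimate} with $X=0$, $\phi=0$ and $v=0$, so that $\Lambda$ and $\delta$ can be fixed once and for all (say $\Lambda=\delta=1$). The hypothesis $\langle\Delta_fu,u\rangle\geq\lambda|u|^2$ is exactly $\langle Lu,u\rangle_E\geq0=\langle v,u\rangle_E$ for $L=\Delta_f-\lambda$. The zero source gives $V=0$ for every weight $\beta$, so we may choose $\beta$ freely, and we take $\beta<-2\lambda$, for instance $\beta=-2\lambda-1$. This places us in the first case $2\lambda+\beta<0$ of \cref{prop:C0-estimate}, which yields
\[
\sup_M\boldsymbol{\rho}^{2\lambda}|u|\leq C\sup_{B_{\underline r}(p_0)}|u|,
\]
with $C$ and $\underline r$ depending only on $n$, $\lambda$ and the chosen values $\beta=-2\lambda-1$, $\delta=\Lambda=1$. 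Set $R=\underline r$.

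Next I will check the regularity and integrability hypotheses: $u\in H^2_{\mathrm{loc}}\cap H^1_f$ is assumed, and $v=0$ is continuous. Constants are uniform over all complete shrinkers because \cref{prop:C0-estimate} is stated for every complete Ricci shrinker, with constants depending only on the displayed parameters. In particular no curvature bound or AC assumption is needed.

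For the final assertion, if $\Delta_fu=-\nu u$ then $\langle\Delta_fu,u\rangle=-\nu|u|^2$, so the hypothesis holds with $\lambda=-\nu$. The estimate becomes $\boldsymbol{\rho}^{-2\nu}|u|\leq C\sup_{B_R}|u|$ almost everywhere, which is the stated bound $|u(x)|\leq C\boldsymbol{\rho}(x)^{2\nu}\sup_{B_R(p_0)}|u|$; for continuous $u$ it holds pointwise. An eigensection in the operator domain lies in $H^1_f$ and $H^2_{\mathrm{loc}}$ by local elliptic regularity, so the hypotheses apply.

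The only point needing care is the essential-supremum interpretation and the fact that the dependence of constants on $\beta,\delta,\Lambda$ disappears once these are fixed in terms of $\lambda$. There is no real obstacle, since all the analytic content, namely the barrier construction and the $L^2_f$ maximum principle, is already in \cref{thm:C0-estimate,prop:C0-estimate}.
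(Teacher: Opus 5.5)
Your proposal is correct and matches the paper's own proof: apply \cref{prop:C0-estimate} with $X=\phi=v=0$, $\beta=-2\lambda-1$ (so that $2\lambda+\beta<0$) and $\delta=\Lambda=1$, then take $\lambda=-\nu$ for the eigensection statement.
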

\begin{proof}
    Apply \cref{prop:C0-estimate} with $X=\phi=v=0$,
    $\beta=-2\lambda-1$, and, for example, $\delta=\Lambda=1$.
\end{proof}

\subsection{Schauder estimates I}\label{sec:parabolic-estimates-for-weighted-Laplacian}
Throughout this subsection, we consider AC Ricci shrinkers in the standard AC setting of \cref{def:standard-AC-setting}. Constants may depend on the fixed AC data and the displayed parameters and coefficient bounds.
\begin{Lemma}\label{lem:commuting-weighted-Laplacian-and-connection}
    Let $E$ be a tensor bundle with its induced connection, and write
    $\mathcal R^E(X,Y)=[\nabla_X,\nabla_Y]-\nabla_{[X,Y]}$ for its curvature.
    In a local orthonormal frame, every smooth section $u$ satisfies
    \begin{equation}\label{eq:exact-weighted-gradient-commutator}
        \bigl((\Delta_f-\frac12)\nabla u-\nabla\Delta_fu\bigr)_i
        =2\mathcal R^E_{ai}\nabla_a u,
    \end{equation}
    where repeated indices are summed and each Laplacian acts on the
    appropriate induced bundle. In particular, if
    $u\in C^{k,\alpha}_{\beta}(M,T^{p,q}M)$, $k\geq1$, and $\alpha\in[0,1]$, then
    \begin{equation}\label{ineq:commuting-weighted-Laplacian-and-connection}
        \norm{(\Delta_f-\frac12)\nabla u-\nabla\Delta_fu}_{C^{k-1,\alpha}_{\beta-3}(M,T^{p,q+1}M)}
        \leq C\norm{u}_{C^{k,\alpha}_{\beta}(M,T^{p,q}M)}.
    \end{equation}
\end{Lemma}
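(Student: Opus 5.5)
The plan is to prove the pointwise identity \cref{eq:exact-weighted-gradient-commutator} by a direct commutation computation in a local orthonormal frame, and then obtain \cref{ineq:commuting-weighted-Laplacian-and-connection} from the weighted product estimates. For the identity, write $\Delta_f\nabla u=\nabla_a\nabla_a\nabla u-\nabla_{\nabla f}\nabla u$ and $\nabla\Delta_fu=\nabla(\nabla_a\nabla_au)-\nabla(\nabla_{\nabla f}u)$, and treat the two pieces separately.

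For the second-order part, commute once to get
\[
\nabla_a\nabla_a\nabla_iu=\nabla_a\bigl(\nabla_i\nabla_au+\mathcal R^E_{ai}u\bigr)
=\nabla_a\nabla_i\nabla_au+(\nabla_a\mathcal R^E_{ai})u+\mathcal R^E_{ai}\nabla_au .
\]
Commuting again on the $T^*M\otimes E$-valued section $\nabla u$ gives
\[
\nabla_a\nabla_i\nabla_au=\nabla_i\nabla_a\nabla_au+\Ric_{ib}\nabla_bu+\mathcal R^E_{ai}\nabla_au .
\]
Here the sign of the Ricci term comes from the curvature acting on the $T^*M$ slot. The term $(\nabla_a\mathcal R^E_{ai})u$ is a contracted second Bianchi expression, $\nabla_a R_{aibc}=\nabla_cR_{ib}-\nabla_bR_{ic}$, acting on $u$ through the tensor-bundle representation. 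On a shrinker the soliton identity $\nabla_bR_{ic}-\nabla_cR_{ib}=R_{bcid}\nabla_df$ rewrites this term as $\pm\mathcal R^E(\nabla f,e_i)u$.

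For the drift part, use
\[
\nabla_i(\nabla_{\nabla f}u)=\nabla_{\nabla f}\nabla_iu+\nabla_i\nabla_bf\,\nabla_bu+\mathcal R^E_{i,\nabla f}u .
\]
The soliton equation gives $\nabla_i\nabla_bf=\tfrac12\delta_{ib}-R_{ib}$. Collecting terms, the Ricci contributions from the two pieces cancel, the $\tfrac12\nabla u$ term is absorbed by the $-\tfrac12$, and the curvature-of-$\nabla f$ term from the drift cancels the Bianchi term. What remains is $2\mathcal R^E_{ai}\nabla_au$.

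I expect the main obstacle to be exactly this sign bookkeeping. One must check that the Bianchi/soliton rewriting of $\nabla_a\mathcal R^E_{ai}$ cancels $\mathcal R^E(e_i,\nabla f)$ rather than doubling it, and that the two Ricci terms have opposite signs. I would verify the bookkeeping first on functions, where $\mathcal R^E=0$ and the identity reduces to $\Delta_f\nabla u=\nabla\Delta_fu+\tfrac12\nabla u$, the Colding--Minicozzi identity quoted before \cref{eq:decomposition-of-gauge-operator}. I would then check the general case on one-forms.

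For the estimate, $\mathcal R^E$ is a contraction of $\Rm(g)$ acting on the slots of $T^{p,q}M$, i.e.\ $\Rm*\nabla u$. By \cref{ineq:quadratic-curvature-decay}, $\Rm\in C^{k-1,\alpha}_{-2}$ (indeed $C^\infty_{-2}$), since $|\nabla^j\Rm|\le C\boldsymbol\rho^{-2-j}$ gives the weighted norms on rescaled annuli. By \cref{lem:basic-properties}\cref{lem:basic-properties-continuity-of-covariant-derivative}, $\nabla u\in C^{k-1,\alpha}_{\beta-1}$. The product estimate \cref{lem:basic-properties}\cref{lem:basic-properties-multiplication} then yields the $C^{k-1,\alpha}_{\beta-3}$ bound. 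For $u\in C^{k,\alpha}_\beta$ with only $k\ge1$, the left side is interpreted via the identity, which expresses the commutator without second derivatives; the identity holds first for smooth $u$ and passes to the limit by local approximation.
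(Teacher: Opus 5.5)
Your proposal is correct and follows essentially the same route as the paper. You commute derivatives in a normal frame to obtain $\Ric\ast\nabla u+2\mathcal R^E_{ai}\nabla_a u+(\nabla_a\mathcal R^E_{ai})u$, then use the contracted Bianchi identity and the soliton equation to get $\nabla_a\mathcal R^E_{ai}=(\nabla_a f)\mathcal R^E_{ai}$, which cancels the curvature term from commuting $\nabla$ with $\nabla_{\nabla f}$, while $\Ric+\nabla^2f=\tfrac12 g$ produces the $\tfrac12\nabla u$; your signs check out, so the terms cancel rather than double, and the weighted bound then follows from quadratic curvature decay and the product estimate as you describe.
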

\begin{proof}
    The contracted second Bianchi identity and the soliton equation give
    the weighted divergence identity for curvature. Passing to the induced
    tensor representation yields
    \[
        \nabla^a\mathcal R^E_{ai}=(\nabla^a f)\mathcal R^E_{ai}.
    \]
    At the center of a normal orthonormal frame, ordinary commutation gives
    \[
        (\Delta\nabla u-\nabla\Delta u)_i
        =\Ric_i{}^a\nabla_a u+2\mathcal R^E_{ai}\nabla_a u
        +(\nabla^a\mathcal R^E_{ai})u,
    \]
    while
    \[
        (\nabla_{\nabla f}\nabla u-\nabla\nabla_{\nabla f}u)_i
        =(\nabla^a f)\mathcal R^E_{ai}u
        -(\nabla_i\nabla^a f)\nabla_a u.
    \]
    Subtract these identities and use $\Ric+\nabla^2f=g/2$.
    The zeroth-order curvature terms cancel, proving
    \cref{eq:exact-weighted-gradient-commutator}.
    The estimate follows from quadratic curvature decay and
    \cref{lem:basic-properties}\cref{lem:basic-properties-multiplication}.
\end{proof}

Next we establish the Ricci flow analogue of the weighted Schauder estimates in mean curvature flow \cite[Proposition 3.5]{chodosh2021uniqueness} with some slight generalizations (we deal with general tensors, higher order estimates and perturbed operators).
Their key observation is that one may interpret $\nabla_{\nabla f}$ as a time derivative when we compose solutions to $\Delta_fu=v$ with some one-parameter family of diffeomorphisms.
This transforms the elliptic equation (with unbounded coefficients) into a parabolic one, which admits parabolic Schauder estimates under appropriate conditions (making the coefficients all bounded).
We use Brandt's interior parabolic Schauder estimate \cite{brandt1969interior}, which requires only spatial H\"older bounds uniform in time. Chodosh--Schulze restate this estimate in \cite[Theorem~3.6]{chodosh2021uniqueness} and apply it in the shrinker setting. We record the form needed below.
\begin{Lemma}\label{lem:spatial-Holder-parabolic-estimate}
    Let $k\in\mathbb Z_{\ge0}$ and $\alpha\in(0,1)$. Let $\Omega_0\Subset\Omega_1$ be relatively compact coordinate domains and let $U:\Omega_1\times[-2,0)\to\R^N$ be a classical solution on compact subcylinders of $\Omega_1\times(-2,0)$ of
    \begin{equation}\label{eq:diagonal-parabolic-system}
        \partial_tU^A
        =a^{ij}(x,t)\partial_i\partial_jU^A
        +B^{iA}_{\phantom{iA}B}(x,t)\partial_iU^B
        +C^A_{\phantom{A}B}(x,t)U^B+F^A.
    \end{equation}
    Assume that $a^{ij}(x,t)\xi_i\xi_j\geq\kappa|\xi|^2$ for some $\kappa>0$ and that, uniformly for $t\in[-2,0)$, the spatial $C^{k,\alpha}(\Omega_1)$ norms of all the displayed coefficients are bounded by $\Lambda$.
    The coefficients and $F$ are measurable in $t$; no H\"older continuity in $t$ is assumed.
    Then, for every $T\in(-1,0]$ for which the right-hand side below is finite,
    \begin{equation}\label{ineq:spatial-Holder-parabolic-estimate}
        \sup_{-1\leq t<T}\norm{U(t)}_{C^{k+2,\alpha}(\Omega_0)}
        \leq C\sup_{-2\leq t<T}
        \left(\norm{U(t)}_{C^0(\Omega_1)}
        +\norm{F(t)}_{C^{k,\alpha}(\Omega_1)}\right),
    \end{equation}
    where $C$ depends only on $n,N,k,\alpha,\kappa,\Lambda$ and the two domains.
\end{Lemma}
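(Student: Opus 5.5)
This is Brandt's interior estimate. The plan is to quote it through its restatement in \cite[Theorem~3.6]{chodosh2021uniqueness}, and to supply only the reductions needed for the form displayed in \cref{ineq:spatial-Holder-parabolic-estimate}: higher $k$, systems, and the open time endpoint.

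\textbf{Base case.} First take $k=0$ and a single equation, $N=1$. Brandt's theorem says the following. Suppose the coefficients are bounded in spatial $C^{0,\alpha}$ uniformly in $t$ and are merely measurable in time. Then a solution on a parabolic cylinder satisfies
\[
\sup_t\norm{U(t)}_{C^{2,\alpha}(\Omega_0)}
\le C\sup_t\bigl(\norm{U(t)}_{C^0(\Omega_1)}+\norm{F(t)}_{C^{0,\alpha}(\Omega_1)}\bigr).
\]
We apply it on $\Omega_1\times[-2,T-\epsilon]$, where the solution is classical, and let $\epsilon\downarrow0$. The constant does not depend on the terminal time, because only the length of the past interval, at least $1$, enters the interior estimate. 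This gives the bound up to the open endpoint $T$ whenever the right-hand side is finite.

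\textbf{Systems.} The principal part $a^{ij}\partial_i\partial_j$ is diagonal and scalar. We treat the coupling terms $B\partial U$ and $CU$ as part of the forcing. The resulting forcing has $C^{0,\alpha}$ norm bounded by $\Lambda\sup_t\norm{U(t)}_{C^{1,\alpha}}$. We then apply the scalar estimate on an intermediate chain of domains $\Omega_0\Subset\Omega'\Subset\Omega_1$. To absorb the $C^{1,\alpha}$ term, we use the interpolation $\norm{U}_{C^{1,\alpha}}\le\epsilon\norm{U}_{C^{2,\alpha}}+C_\epsilon\norm{U}_{C^0}$ on weighted interior seminorms, in the standard way of \cite[Ch.~6]{gilbarg2001elliptic}. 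This gives the system version with $k=0$.

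\textbf{Higher regularity.} We argue by induction on $k$. Differentiate the system spatially: $W=\partial_\ell U$ solves a system of the same diagonal form. Its coefficients are the same as before. Its forcing is
\[
\partial_\ell F+(\partial_\ell a^{ij})\partial_i\partial_jU+(\partial_\ell B)\partial U+(\partial_\ell C)U,
\]
which is bounded in $C^{k-1,\alpha}$ by the inductive $C^{k+1,\alpha}$ bound on $U$ over a slightly larger domain. A priori $W$ is only known to be differentiable to this order, so one either uses difference quotients in space or notes that interior $C^{k+2,\alpha}$ regularity follows qualitatively from the same estimates applied to difference quotients. With a nested chain of $k+1$ domains, the inductive hypothesis closes the estimate. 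Time is never differentiated, which is why no temporal regularity of the coefficients is needed.

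\textbf{Main obstacle.} The step I expect to be delicate is the measurable-in-time dependence. Standard Schauder theory needs Hölder continuity in $t$, and Brandt's contribution is precisely to avoid it. I would not reprove this: I would invoke \cite{brandt1969interior}, \cite[Theorem~3.6]{chodosh2021uniqueness} as a black box for $k=0$. I would check only that the absorption and difference-quotient steps above use nothing beyond spatial norms taken uniformly in $t$.
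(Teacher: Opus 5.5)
Your proposal follows the paper's proof step for step: Brandt's estimate (via \cite[Theorem~3.6]{chodosh2021uniqueness}) handles the scalar $k=0$ case, the coupling terms are moved to the right and absorbed by interpolation on nested domains, and the induction on $k$ uses spatial difference quotients whose new forcing involves only spatial derivatives of the coefficients and already-controlled derivatives of $U$. Your extra remarks on approximating the open endpoint $T$ and on using weighted interior norms for the absorption are harmless refinements of the same argument; the latter makes explicit a step the paper only sketches.
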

\begin{proof}
    For $N=1$ and $k=0$, this is Brandt's estimate \cite{brandt1969interior}; see also \cite[Theorem~3.6]{chodosh2021uniqueness}.
    For a system with scalar principal symbol, apply that scalar estimate to each component, placing the mixed first- and zeroth-order terms on the right.
    On nested coordinate domains, the interpolation inequality
    \begin{equation*}
        \norm{DU}_{C^{0,\alpha}}
        +\norm{U}_{C^{0,\alpha}}
        \leq\epsilon\norm{U}_{C^{2,\alpha}}
        +C_\epsilon\norm{U}_{C^0}
    \end{equation*}
    absorbs these terms after choosing $\epsilon$ in terms of $\Lambda$.
    This proves the assertion for $k=0$.
    Spatial difference quotients in successively smaller domains satisfy systems of the same form; the new right-hand sides contain only spatial derivatives of the coefficients and derivatives of $U$ already controlled at the preceding stage.
    Induction on $k$, followed at each stage by the same interpolation and absorption, proves \cref{ineq:spatial-Holder-parabolic-estimate}.
\end{proof}

\begin{Proposition}\label{prop:parabolic-Schauder-estimate}
    Let $(M^n,g,f)$ be a complete AC Ricci shrinker.
    Given $k\in\mathbb Z_{\ge0}$, $\alpha\in(0,1)$ and $\lambda\in\R$, consider
    \begin{equation*}
        L=\Delta_f-\lambda+\nabla_X+\phi,
    \end{equation*}
    where
    \begin{equation*}
        \phi\in C^{k,\alpha}_{-2}(M,\End(T^{p,q}M)),
        \qquad
        X\in C^{k,\alpha}_{-1}(M,TM).
    \end{equation*}
    Suppose $u\in L^2_f(M,T^{p,q}M)$ is a weak solution of
    \begin{equation*}
        Lu=v\in C^{k,\alpha}_{-2\lambda-2}(M,T^{p,q}M).
    \end{equation*}
    For the possibly nonsmooth coefficient $X$, weak solution means that
    $u\in H^1_{\mathrm{loc}}$ and the equation holds distributionally.
    Then
    \begin{equation*}
        u\in
        C^{k+2,\alpha}_{-2\lambda;-2\lambda-2,\nabla f}
        (M,T^{p,q}M),
    \end{equation*}
    and
    \begin{equation}\label{ineq:parabolic-Schauder-estimate}
        \norm{u}_{C^{k+2,\alpha}_{-2\lambda;-2\lambda-2,\nabla f}(M,T^{p,q}M)}
        \leq
        C\big(  \norm{u}_{L^2_f(M,T^{p,q}M)}
        +\norm{Lu}_{C^{k,\alpha}_{-2\lambda-2}(M,T^{p,q}M)}  \big).
    \end{equation}
    Here $C$ depends on the fixed standard AC data, $n,k,\alpha,p,q,\lambda$, and the coefficient norms $\norm{\phi}_{C^{k,\alpha}_{-2}}$ and $\norm{X}_{C^{k,\alpha}_{-1}}$.
\end{Proposition}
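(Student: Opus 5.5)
First reduce to $\lambda=0$ by conjugation. Put $\widetilde u=\boldsymbol\rho^{2\lambda}u$. By \cref{eq:weight-conjugation} with $\gamma=-2\lambda$,
\[
\widetilde L\widetilde u:=\boldsymbol\rho^{2\lambda}L(\boldsymbol\rho^{-2\lambda}\widetilde u)=\boldsymbol\rho^{2\lambda}v .
\]
Here $\widetilde L=\Delta_f+\nabla_{\widetilde X}+\widetilde\phi$ with $\widetilde X\in C^{k,\alpha}_{-1}$ and $\widetilde\phi\in C^{k,\alpha}_{-2}$; this uses \cref{eq:weight-conjugation-coefficients}, the product estimates, and the fact that $\nabla_X$ of the weight has weight $-2$. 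The source $\boldsymbol\rho^{2\lambda}v$ lies in $C^{k,\alpha}_{-2}$. Also $\widetilde u\in L^2_f$, since polynomial weights are harmless against the Gaussian measure once $u$ is controlled pointwise; more simply, the $L^2_f$ norm is only used on a compact core. By \cref{lem:radial-conjugation} the target cone norms correspond. So it suffices to treat $\lambda=0$ with target $C^{k+2,\alpha}_{0;-2,\nabla f}$.

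\textbf{Step 1 ($C^0$ bound).} Local elliptic regularity gives $u\in C^{k+2,\alpha}_{\mathrm{loc}}$ and bounds $\|u\|_{C^0(B_{\underline r})}$ by $\|u\|_{L^2_f}+\|v\|_{C^{k,\alpha}_{-2}}$. The Fredholm argument in \cref{prop:Gaussian-Fredholm-realization} shows $u\in H^1_f$, and indeed $H^2_f$. Then \cref{cor:AC-C0-estimate} with $\lambda=0$ and $\beta=-2$ (the case $2\lambda+\beta<0$, $\delta=1$) gives
\[
\sup_M|u|\le C\bigl(\|u\|_{L^2_f}+\|v\|_{C^0_{-2}}\bigr).
\]

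\textbf{Step 2 (parabolic Schauder).} Following Chodosh--Schulze, set $U(t)=\Psi_t^*u$ on the end, where $\Psi_t$ is the flow of $\nabla f/(-t)$. Then $\partial_tU=(-t)^{-1}\Psi_t^*\nabla_{\nabla f}u$, and the equation becomes
\[
\partial_tU=\Delta_{g(t)}U+\text{lower order}-\Psi_t^*v/(-t).
\]
Use \cref{eq:diffeomorphisms-under-asymptotic-coordinates} to view a neighborhood of an annulus at scale $R$ as the rescaled flow on a fixed annulus $\{1\le r\le2\}$ for $t\in[-2,0)$, after the substitution $t\mapsto$ times comparable to $-R^{-2}$. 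More concretely: rescale by $R$ and use the time window where $(-t)\sim R^{-2}\cdot(\text{shifted})$. The coefficients, including $R^{-1}X$ and $R^{-2}\phi$, are uniformly bounded in spatial $C^{k,\alpha}$ thanks to the weights $-1$ and $-2$. The rescaled source is then $O(1)$ in $C^{k,\alpha}$, because of the weight $-2$ combined with the factor $(-t)^{-1}\sim R^2$.

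\textbf{Step 3 (conclusion).} \cref{lem:spatial-Holder-parabolic-estimate} applied in these coordinates yields $u\in C^{k+2,\alpha}_0$ with norm bounded by $\sup|u|+\|v\|_{C^{k,\alpha}_{-2}}$. Then write
\[
\nabla_{\nabla f}u=v-\Delta u-\nabla_Xu-\phi u .
\]
Here $\Delta u\in C^{k,\alpha}_{-2}$, $\nabla_Xu\in C^{k+1,\alpha}_{-2}$, and $\phi u\in C^{k,\alpha}_{-2}$, so $\nabla_{\nabla f}u\in C^{k,\alpha}_{-2}$. \Cref{prop:characterization-of-cone-Holder-space} (with $k+2$ in place of $k$) then gives membership in $C^{k+2,\alpha}_{0;-2,\nabla f}$ together with the estimate \cref{ineq:parabolic-Schauder-estimate}.

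The main obstacle is Step 2. The time scales must be arranged so that a uniform spatial annulus persists over a full unit parabolic window, and the estimate has to be uniform in $R$ without requiring H\"older continuity in time. This is exactly why Brandt's estimate, which needs only spatial regularity, is essential.
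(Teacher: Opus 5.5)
Your outline follows the same route as the paper's proof. The steps are: an energy estimate giving $u\in H^1_f$ and then $H^2_f$; the weighted $C^0$ bound from \cref{cor:AC-C0-estimate} in the case $2\lambda+\beta=-2$; the Chodosh--Schulze reading of the drift as a time derivative, combined with \cref{lem:spatial-Holder-parabolic-estimate}; and finally \cref{prop:characterization-of-cone-Holder-space}, applied after reading $(\nabla_{\nabla f}+\lambda)u$ off the equation. The only structural difference is your preliminary conjugation by $\boldsymbol\rho^{2\lambda}$; the paper instead puts the factor $(-t)^{-\lambda}$ into the parabolic rescaling. That conjugation is where the first gap lies.

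\Cref{cor:AC-C0-estimate} rests on \cref{lem:L2-maximum-principle}, which requires $\boldsymbol\rho^{2\lambda}u\in H^1_f(M)$ globally, and \cref{rmk:Gaussian-integrability} shows this hypothesis cannot be dropped. For $\lambda>0$ this membership does not follow from $u\in L^2_f$, and neither of your justifications supplies it. The first presupposes the pointwise bound that Step~1 is supposed to deliver. The second conflates the norm with membership: the norm enters the final estimate only through a compact core, but global Gaussian integrability is used qualitatively in the barrier comparison.

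The repair is to reverse the order, as the paper does. Apply \cref{cor:AC-C0-estimate} to $u$ itself, with the original $\lambda$ and $\beta=-2\lambda-2$ (allowed for every $\lambda$), to get $\norm{u}_{C^0_{-2\lambda}}\leq C(\norm{u}_{L^2_f}+\norm{v}_{C^{k,\alpha}_{-2\lambda-2}})$. After that $\boldsymbol\rho^{2\lambda}u$ is bounded and the conjugation is harmless. Alternatively, prove all polynomial moments of $u$ and $\nabla u$ inductively, by testing the equation against $\eta_R^2\boldsymbol\rho^{2m}u$ and using \cref{lem:Poincare-inequality}.

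The second gap is in Step~2. The identity $\partial_t\Psi_t^*u=(-t)^{-1}\Psi_t^*\nabla_{\nabla f}u$ holds only for functions. For a $(p,q)$-tensor the derivative is $(-t)^{-1}\Psi_t^*\mathcal L_{\nabla f}u$, and by \cref{eq:soliton-Lie-contraction} this contributes the zeroth-order term $\frac{q-p}{2}(-t)^{-1}\Psi_t^*u$. Its coefficient is unbounded on $[-2,0)$. Moreover, $|\Psi_t^*u|_{\tilde g(t)}=(-t)^{(p-q)/2}|u|_g\circ\Psi_t$ is not uniformly bounded when $p<q$. So for $p\neq q$ the uniform spatial coefficient bounds required by Brandt's estimate fail for your $U$.

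The paper's normalization $(-t)^{\frac{q-p}{2}-\lambda}\Psi_t^*u$, with matching powers of $-t$ on $v$, $X$ and $\phi$, cancels this term exactly. It leaves only the bounded term $-\Ric(\tilde g(t))*(\cdot)$, and the exponent $-\lambda$ also makes your conjugation unnecessary.

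Make the time arrangement precise in the form of your first sentence. Use one fixed annulus and the whole window $t\in[-2,0)$, with coefficients uniform up to $t=0$ because $\tilde g(t)$ extends smoothly there by \cref{prop:asymptotic-expansion-of-shrinker-metric}. Then reach a point at radius $r\geq r_*$ as $\Psi_t(x)$ with $t=-(r_*/r)^2$. Read literally, your ``more concretely'' version, with windows of length comparable to $R^{-2}$ near $t=-R^{-2}$ on a fixed annulus, would give no uniform estimate.
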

\begin{proof}
    The source $v$ lies in $L^2_f$ by
    \cref{eq:Holder-to-Gaussian-Sobolev}, and $X,\phi$ are bounded.
    Testing the weak equation against $\eta_R^2u$ and absorbing the
    first-order term gives
    \[
        \|\eta_R\nabla u\|_{L^2_f}^2
        \leq C\bigl(\|v\|_{L^2_f}^2+\|u\|_{L^2_f}^2
        +\|u\nabla\eta_R\|_{L^2_f}^2\bigr).
    \]
    Letting $R\to\infty$ yields $u\in H^1_f$.
    Since $\Delta_fu=v+\lambda u-\nabla_Xu-\phi u\in L^2_f$,
    \cref{prop:elliptic-theory-for-weighted-Laplacian} gives
    $u\in H^2_f$, with its norm bounded by
    $C(\|u\|_{L^2_f}+\|v\|_{L^2_f})$.
    Local elliptic regularity then gives $u\in C^{k+2,\alpha}_{\mathrm{loc}}$.
    On a fixed compact core, the local $L^2$-to-$C^0$ estimate bounds
    $\|u\|_{C^0}$ by $C(\|u\|_{L^2_f}+\|v\|_{C^0_{-2\lambda-2}})$.
    Applying \cref{cor:AC-C0-estimate} with $\beta=-2\lambda-2$ and
    $\delta=2$ therefore gives
    \begin{equation}\label{ineq:L2-to-critical-C0}
        \|u\|_{C^0_{-2\lambda}}
        \leq C\bigl(\|u\|_{L^2_f}
        +\|v\|_{C^{k,\alpha}_{-2\lambda-2}}\bigr).
    \end{equation}

    Denote by $A_{\rho_1,\rho_2}$ the annulus $\iota((\rho_1,\rho_2)\times\Sigma)$ and recall $Q_\rho:=A_{\rho,\infty}$.
    Choose $R>0$ sufficiently large that $R/\sqrt2>\underline r$, and introduce the nested fixed annuli
    \begin{equation*}
        \Omega_0:=A_{2R,3R},
        \qquad
        \Omega_1:=A_{3R/2,7R/2},
        \qquad
        \Omega_2:=A_{R,4R}.
    \end{equation*}
    Let $\{\Psi_t\}_{-\infty<t<0}$ be the family of diffeomorphisms generated by $\frac{1}{-t}\nabla f$ and $\tilde{g}(t)=-t\Psi_t^*g$ as in \cref{sec:Ricci-flow}.
    Define
    \begin{align*}
        &\tilde{u}(x,t):=(-t)^{\frac{q-p}{2}-\lambda}\Psi_t^*u(x),
        &&\tilde{\phi}(x,t):=(-t)^{-1}\Psi_t^*\phi(x),  \\
        &\tilde{v}(x,t):=(-t)^{\frac{q-p}{2}-1-\lambda}\Psi_t^*v(x),
        &&\tilde X(x,t):=(-t)^{-1}\Psi_t^*X(x).
    \end{align*}
    Naturality of tensor norms and the constant rescaling of the metric give
    \begin{equation}\label{eq:dilated-spatial-derivatives}
        |(\nabla^{\tilde g})^j\tilde u|_{\tilde g}(x,t)
        =(-t)^{-\lambda-j/2}|\nabla^ju|_g(\Psi_t(x))
        \qquad(j\geq0).
    \end{equation}
    Since $r(\Psi_t(x))=r(x)/\sqrt{-t}$ in the canonical end chart, the
    dilation acts as translation in the logarithmic cylindrical variable. The definition in \cref{def:weighted-Holder-spaces} and the equivalent rescaled-annulus norms in \cref{eq:weighted-Holder-rescaled-annulus-norm} therefore give
    \begin{align}
        \sup_{-2\leq t<0}\norm{\tilde u(t)}_{C^0(\Omega_2,\tilde g(t))}
        &\leq C\norm{u}_{C^0_{-2\lambda}},\label{ineq:dilated-u-bound}\\
        \sup_{-2\leq t<0}\norm{\tilde v(t)}_{C^{k,\alpha}(\Omega_2,\tilde g(t))}
        &\leq C\norm{v}_{C^{k,\alpha}_{-2\lambda-2}},\label{ineq:dilated-v-bound}\\
        \sup_{-2\leq t<0}\left(
        \norm{\tilde X(t)}_{C^{k,\alpha}(\Omega_2,\tilde g(t))}
        +\norm{\tilde\phi(t)}_{C^{k,\alpha}(\Omega_2,\tilde g(t))}\right)
        &\leq C\left(\norm{X}_{C^{k,\alpha}_{-1}}+\norm{\phi}_{C^{k,\alpha}_{-2}}\right).
        \label{ineq:dilated-coefficient-bound}
    \end{align}
    For example,
    \begin{equation*}
        |\tilde X|_{\tilde g}=(-t)^{-1/2}|X|_g\circ\Psi_t,
        \qquad
        |\tilde\phi|=(-t)^{-1}|\phi|\circ\Psi_t,
    \end{equation*}
    and the factors on the right are exactly cancelled by the weights $-1$ and $-2$; the derivative and H\"{o}lder bounds follow from the same scaling calculation.

    A direct computation gives
    \begin{align*}
        \partial_t\tilde{u}
        &=
        (-t)^{\frac{q-p}{2}-1-\lambda}
        \Psi_t^*\Big(  \lambda u-\frac{q-p}{2}u+\mathcal{L}_{\nabla f}u  \Big)  \\
        &=
        (-t)^{\frac{q-p}{2}-1-\lambda}
        \Psi_t^*\Big(  \Delta u-\Delta_fu+\lambda u+\mathcal{L}_{\nabla f}u-\nabla_{\nabla f}u-\frac{q-p}{2}u  \Big)  \\
        &=
        (-t)^{\frac{q-p}{2}-1-\lambda}
        \Psi_t^*\Big(  \Delta u+\nabla_Xu+\phi u-v+\mathcal{L}_{\nabla f}u-\nabla_{\nabla f}u-\frac{q-p}{2}u  \Big).
    \end{align*}
    Using \cref{eq:difference-of-Lie-and-covariant-derivative} and the soliton equation \cref{eq:Ricci-shrinker}, we obtain
    \begin{equation*}
        \mathcal{L}_{\nabla f}u-\nabla_{\nabla f}u-\frac{q-p}{2}u
        =-\Ric(g)* u,
    \end{equation*}
    where we write $\Ric(g)$ for the Ricci transformation (it is an endomorphism of the tangent bundle).
    Note also the fact $\Delta^{\tilde{g}}\tilde{u}=(-t)^{\frac{q-p}{2}-1-\lambda}\Psi_t^*(\Delta u)$.
    Consequently,
    \begin{equation*}
        \partial_t\tilde{u}
        =\Delta^{\tilde{g}}\tilde{u}
        +\nabla^{\tilde g}_{\tilde X}\tilde u
        +\tilde{\phi}\tilde{u}
        -\Ric(\tilde{g})*\tilde{u}
        -\tilde{v}.
    \end{equation*}

    \cref{prop:asymptotic-expansion-of-shrinker-metric} shows that $\tilde g(t)$ extends smoothly to $t=0$ on $\Omega_2$, with limit $g_C$.
    In fixed coordinate charts and bundle frames, the last displayed equation is therefore a system of the form \cref{eq:diagonal-parabolic-system}, with scalar principal symbol and with the uniform spatial coefficient bounds required by \cref{lem:spatial-Holder-parabolic-estimate}.
    A finite cover of $\overline{\Omega_0}$ and \cref{ineq:dilated-u-bound}--\cref{ineq:dilated-coefficient-bound} give
    \begin{equation}\label{ineq:fixed-annulus-dilated-Schauder}
        \sup_{-1\leq t<0}\norm{\tilde u(t)}_{C^{k+2,\alpha}(\Omega_0,\tilde g(t))}
        \leq C\left(\norm{u}_{C^0_{-2\lambda}}
        +\norm{v}_{C^{k,\alpha}_{-2\lambda-2}}\right).
    \end{equation}

    We now convert this fixed-annulus estimate back to the weighted norm.
    Fix $r_*\in(2R,3R)$.
    If $y=\iota(r,\theta)$ with $r\geq r_*$, set
    \begin{equation*}
        t:=-\left(\frac{r_*}{r}\right)^2,
        \qquad
        x:=\iota(r_*,\theta).
    \end{equation*}
    Then $t\in[-1,0)$ and $y=\Psi_t(x)$.
    Formula \cref{eq:dilated-spatial-derivatives} converts every derivative term in \cref{ineq:fixed-annulus-dilated-Schauder} into the corresponding term of the $C^{k+2,\alpha}_{-2\lambda}$ norm at $y$.
    The same scaling identifies the ordinary H\"older norm on each fixed annulus with that of the corresponding translated cylinder segment. Segments of any fixed positive length give an equivalent uniform cylindrical norm. Covering the end by the translated images of $\Omega_0$ therefore shows that \cref{ineq:fixed-annulus-dilated-Schauder} controls the full exterior $C^{k+2,\alpha}_{-2\lambda}$ norm.
    Interior elliptic Schauder estimates
    \cite[Theorem 6.2]{gilbarg2001elliptic} on the fixed compact complement give
    \begin{equation}\label{ineq:ordinary-critical-Schauder}
        \norm{u}_{C^{k+2,\alpha}_{-2\lambda}}
        \leq C\left(\norm{u}_{C^0_{-2\lambda}}
        +\norm{v}_{C^{k,\alpha}_{-2\lambda-2}}\right).
    \end{equation}

    Finally, the equation gives
    \begin{equation*}
        (\nabla_{\nabla f}+\lambda)u
        =\Delta u+\nabla_Xu+\phi u-v
        \in C^{k,\alpha}_{-2\lambda-2},
    \end{equation*}
    with the same bound.
    \cref{prop:characterization-of-cone-Holder-space} and its norm equivalence,
    followed by \cref{ineq:L2-to-critical-C0}, give
    \cref{ineq:parabolic-Schauder-estimate}.
\end{proof}

\Needspace{4\baselineskip}
\begin{Lemma}
    \label{lem:homogeneous-drift-operator-expansion}
    Put $E_\Sigma=T^{p,q}\mathscr C|_{\Sigma_1}$ and $\bar g=g_\Sigma$,
    and write $\bar\nabla$ for the Levi-Civita connection of $\bar g$.
    For a vector field $Y$ on $\Sigma$, let $\widetilde Y$ be its
    $r$-independent lift to $\mathscr C$. Define the induced connection
    $\mathcal D$ and its connection Laplacian on $E_\Sigma$ by
    \[
        \mathcal D_Ya
        :=\left.\nabla^C_{\widetilde Y}\boldsymbol\tau(a)\right|_{\Sigma_1},
        \qquad
        \mathcal A_\Sigma a
        :=\sum_{j=1}^{n-1}\left(
            \mathcal D_{\bar e_j}\mathcal D_{\bar e_j}a
            -\mathcal D_{\bar\nabla_{\bar e_j}\bar e_j}a\right),
    \]
    where $\{\bar e_j\}$ is a local $\bar g$-orthonormal frame.
    The connection $\mathcal D$ acts on all radial and tangential cone
    tensor components. For functions,
    $\mathcal A_\Sigma=\bar\Delta:=\tr_{\bar g}\bar\nabla^2$.

    Let $\gamma,\lambda\in\R$, $N,k\in\mathbb Z_{\ge0}$, $\alpha\in[0,1]$, and
    $a\in C^{k+2,\alpha}(\Sigma,E_\Sigma)$.
    Then the cone Laplacian satisfies
    \[
        \Delta_C\bigl(r^\gamma\boldsymbol\tau(a)\bigr)
        =r^{\gamma-2}\boldsymbol\tau
        \bigl(\mathcal A_\Sigma a+\gamma(\gamma+n-2)a\bigr).
    \]
    If
    \begin{equation*}
        h:=\chi r^\gamma\boldsymbol\tau(a),
    \end{equation*}
    then there are differential operators
    $\mathcal B_{\gamma,\lambda,\ell}$ on the link, of order at most two,
    such that
    \begin{equation}\label{eq:homogeneous-drift-operator-expansion}
        (\Delta_f-\lambda)h
        =\chi\sum_{\ell=0}^{N}
        r^{\gamma-2\ell}
        \boldsymbol\tau(\mathcal B_{\gamma,\lambda,\ell}a)
        +R_N,
        \qquad
        R_N\in C^{k,\alpha}_{\gamma-2N-2}.
    \end{equation}
    The leading operator is
    \begin{equation}\label{eq:leading-homogeneous-drift-action}
        \mathcal B_{\gamma,\lambda,0}
        =\left(-\frac\gamma2-\lambda\right)\operatorname{Id}.
    \end{equation}
    The link coefficients and the remainder satisfy the corresponding norm
    bounds in terms of $a$, the displayed parameters, and the fixed standard AC data.
\end{Lemma}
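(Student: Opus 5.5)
The plan has three steps: compute the exact cone identity, expand the background metric around the cone using \cref{prop:asymptotic-expansion-of-shrinker-metric}, and then sort the drift term separately.

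\emph{Cone identity.} Work on the cone end. Write a radially parallel extension $\boldsymbol\tau(a)$. Choose an orthonormal frame $\partial_r$, $r^{-1}\widetilde{\bar e}_j$ for $g_C$.

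Radial parallel transport gives $\nabla^C_{\partial_r}\boldsymbol\tau(a)=0$. The cone connection satisfies $\nabla^C_{\partial_r}\partial_r=0$ and $\nabla^C_{\widetilde Y}\partial_r=r^{-1}\widetilde Y$. Its tangential part restricts to the link connection up to the factor $r$. Hence
\[
\nabla^{C,2}_{r^{-1}\widetilde{\bar e}_j,r^{-1}\widetilde{\bar e}_j}\boldsymbol\tau(a)=r^{-2}\boldsymbol\tau\bigl(\mathcal D_{\bar e_j}\mathcal D_{\bar e_j}a-\mathcal D_{\bar\nabla_{\bar e_j}\bar e_j}a\bigr)-r^{-1}\nabla^C_{\partial_r}\boldsymbol\tau(a),
\]
and the last term vanishes. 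Here one checks that $\mathcal D$, defined by restriction to $\Sigma_1$, scales correctly to $\Sigma_r$ by dilation invariance of $\nabla^C$.

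Summing over $j$ and adding the radial part, $\Delta_C(r^\gamma\boldsymbol\tau(a))=(\partial_r^2+\frac{n-1}r\partial_r)r^\gamma\,\boldsymbol\tau(a)+r^{\gamma-2}\boldsymbol\tau(\mathcal A_\Sigma a)$. This gives the factor $\gamma(\gamma-1)+(n-1)\gamma=\gamma(\gamma+n-2)$. For functions, $\mathcal D=\bar\nabla$ and $\mathcal A_\Sigma=\bar\Delta$.

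\emph{Expansion of $(\Delta_f-\lambda)h$.} On $Q_{2\underline r}$ we have $\chi=1$; the cutoff region contributes compactly supported smooth terms, which are absorbed in $R_N$. Write
\[
\Delta_f-\lambda=\Delta^C-\nabla^C_{\mathbf X}-\lambda+(\Delta-\Delta^C)-(\nabla_{\nabla f}-\nabla^C_{\mathbf X}).
\]
The radial drift gives $\nabla^C_{\mathbf X}(r^\gamma\boldsymbol\tau(a))=\frac\gamma2 r^\gamma\boldsymbol\tau(a)$. Together with $-\lambda$ this yields $\mathcal B_{\gamma,\lambda,0}=(-\gamma/2-\lambda)\operatorname{Id}$. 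The cone Laplacian contributes $r^{\gamma-2}\boldsymbol\tau(\mathcal A_\Sigma a+\gamma(\gamma+n-2)a)$ to $\mathcal B_{\gamma,\lambda,1}$.

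For the two difference terms, use \cref{prop:asymptotic-expansion-of-shrinker-metric}: $\iota^*g=g_C+\sum_{m\le\ell}g_{-2m}+O(r^{-2-2\ell})$ with all derivatives. From this, $g^{-1}$ and the connection difference $T$ have homogeneous expansions of degrees $-2m$ and $-2m-1$ respectively. The remainders carry the corresponding differentiated bounds.

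Substitute these into the formula
\[
(\Delta-\Delta^C)u=(g^{-1}-g_C^{-1})*\nabla^{C,2}u+g^{-1}*\bigl(T*\nabla^Cu+\nabla^CT*u+T*T*u\bigr).
\]
Each product of homogeneous factors is homogeneous, because covariant differentiation lowers degree by one and products add degrees. Every term then has degree $\gamma-2\ell$ for some $\ell\ge2$. It involves at most two link derivatives of $a$, since $\nabla^C$ of $\boldsymbol\tau(a)$ is $r^{-1}\boldsymbol\tau$ of a first-order link operator applied to $a$.

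For the drift difference, \cref{lem:difference} gives $(\nabla_{\nabla f}-\nabla^C_{\mathbf X})u=\Ric(g)*u$. Expanding $\Ric(g)^\sharp$ via the metric expansion, using \cref{lem:asymptotic-rate-of-connection} and the same Ricci-flow/Taylor argument, produces homogeneous terms of degrees $\gamma-2\ell$ with $\ell\ge1$ and zeroth-order link coefficients. Collecting by degree defines $\mathcal B_{\gamma,\lambda,\ell}$ for $1\le\ell\le N$, all of order at most two.

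\emph{Remainder.} The truncation errors come from metric and connection remainders of weight $-2N-2$ (respectively $-2N-3$) multiplied by $\nabla^{C,j}\boldsymbol\tau(a)\in C^{k+2-j,\alpha}_{\gamma-j}$. By \cref{lem:basic-properties}\cref{lem:basic-properties-multiplication}, these lie in $C^{k,\alpha}_{\gamma-2N-2}$, with norm bounded by $\|a\|_{C^{k+2,\alpha}}$.

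The main obstacle is showing that the odd degrees never appear: only shifts by $2\ell$ occur, not $\ell$. I would argue this by parity of the expansion. The $g_{-2m}$ are even in degree, the $T$-terms come with one extra derivative, and in every contraction the total number of derivatives landing on metric factors or on $u$ is fixed (two in $\Delta$, and an effective zero in the drift via $\Ric$). Each term therefore has degree $\gamma-2-(\text{even})$, up to the leading drift term of degree $\gamma$. A cleaner verification is to use scaling: $D_s^*$ maps $g$ to $s^2$ times a metric with expansion $g_C+\sum s^{-2m}g_{-2m}$. The operator $r^2\Delta$ then expands in even powers of $s^{-1}$, and homogeneity reads off the degrees.
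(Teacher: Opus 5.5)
Your proposal is correct and follows the paper's proof essentially step for step: the cone connection formulas give the homogeneous identity, the splitting $\nabla_{\nabla f}=\nabla^C_{\mathbf X}+\Ric(g)*$ gives $\mathcal B_{\gamma,\lambda,0}=(-\gamma/2-\lambda)\operatorname{Id}$, and the even-degree expansion of \cref{prop:asymptotic-expansion-of-shrinker-metric}, pushed through $g^{-1}$, $T$, $\nabla^CT$ and $\Ric(g)$, produces the higher $\mathcal B_{\gamma,\lambda,\ell}$ with the even-drop parity coming from exactly the degree bookkeeping you describe. One harmless slip: in the tangential Hessian the extra term is $+r^{-1}\nabla^C_{\partial_r}$, which is what produces the $\frac{n-1}{r}\partial_r$ in the radial operator, but it vanishes on $\boldsymbol\tau(a)$, so your conclusion is unaffected.
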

\begin{proof}
    The standard cone connection formulas are
    \[
        \nabla^C_{\widetilde Y}\partial_r=r^{-1}\widetilde Y,
        \qquad
        \nabla^C_{\widetilde Y}(r^{-1}\widetilde Z)
        =r^{-1}\widetilde{\bar\nabla_YZ}-\bar g(Y,Z)\partial_r.
    \]
    They imply
    $\left.\nabla^C_{\widetilde Y}\boldsymbol\tau(a)\right|_{\Sigma_r}
       =\boldsymbol\tau_r(\mathcal D_Ya)$.
    Tracing the second covariant derivative in the cone orthonormal frame
    $\partial_r,r^{-1}\widetilde{\bar e_j}$ gives the radial operator
    $\partial_r^2+(n-1)r^{-1}\partial_r$ and angular operator
    $r^{-2}\mathcal A_\Sigma$ under radial parallel transport.
    Applying this to $r^\gamma a$ proves the homogeneous cone formula.

    Work first where $\chi=1$ and put $h_\gamma=r^\gamma\boldsymbol\tau(a)$.
    The canonical radial identity in
    \cref{lem:difference}\cref{lem:difference-radial-derivative} is
    $\nabla^g_{\nabla f}=\nabla^C_{\mathbf X}+\Ric(g)*$,
    where $\mathbf X=r\partial_r/2$. Since $\boldsymbol\tau(a)$ is radially
    parallel, it follows that
    \[
    (\Delta_f-\lambda)h_\gamma
        =-\left(\frac\gamma2+\lambda\right)h_\gamma
        +r^{\gamma-2}\boldsymbol\tau
        \bigl(\mathcal A_\Sigma a+\gamma(\gamma+n-2)a\bigr)
        -\Ric(g)*h_\gamma+(\Delta_g-\Delta_C)h_\gamma.
    \]

    Let $T=\nabla^g-\nabla^C$. The difference formula in
    \cref{lem:difference} has the form
    \[
    (\Delta_g-\Delta_C)h_\gamma
        =(g^{-1}-g_C^{-1})*\nabla_C^2h_\gamma
        +g^{-1}*(T*\nabla_Ch_\gamma+\nabla_CT*h_\gamma+T*T*h_\gamma).
    \]
    Since $g^{-1}-g_C^{-1}$ has weight $-2$ and $T$ has weight $-3$,
    this expression starts at degree $\gamma-4$.
    If $\mathcal R_\Sigma$ denotes the action induced by $\Ric(g_C)$ on
    $T^{p,q}\mathscr C|_\Sigma$, using the $\Ric*$ convention of
    \cref{lem:difference}\cref{lem:difference-radial-derivative}, then
    \[
        \mathcal B_{\gamma,\lambda,0}
        =-\left(\frac\gamma2+\lambda\right)\operatorname{Id},
        \qquad
        \mathcal B_{\gamma,\lambda,1}
        =\mathcal A_\Sigma+\gamma(\gamma+n-2)\operatorname{Id}-\mathcal R_\Sigma.
    \]
    The Ricci action vanishes for scalar functions.

    More generally, \cref{prop:asymptotic-expansion-of-shrinker-metric}
    expands $g$ in even degree drops with differentiated remainders.
    A finite Neumann expansion of $g^{-1}$, followed by the connection
    difference formula, gives corresponding expansions of $T$, $\nabla_CT$,
    and $\Ric(g)*$. Substitute these into the preceding identities and
    collect the terms of degree $\gamma-2\ell$. Each coefficient differentiates
    $a$ at most twice and therefore defines an operator
    $\mathcal B_{\gamma,\lambda,\ell}$ of order at most two on the link.
    The differentiated remainders and the weighted product estimates give
    \[
        \|R_N\|_{C^{k,\alpha}_{\gamma-2N-2}}
        \leq C\|a\|_{C^{k+2,\alpha}(\Sigma)},
        \qquad N\in\mathbb Z_{\ge0}.
    \]
    Restoring $\chi$ adds only terms supported in a fixed compact annulus,
    which satisfy the same bound. This proves the expansion and its norm estimates.
\end{proof}

\begin{Lemma}\label{lem:noncharacteristic-radial-extraction}
    Let $k\in\mathbb Z_{\ge0}$, $\alpha\in(0,1)$, $\lambda\in\mathbb R$, and
    $\gamma=-2\lambda-2j$ with $j\geq1$ an integer. Put
    $E_\Sigma=T^{p,q}\mathscr C|_\Sigma$.
    Suppose $w\in C^{k+2,\alpha}_\gamma(M,T^{p,q}M)$ and
    $b\in C^{k+2,\alpha}(\Sigma,E_\Sigma)$ satisfy
    \[
        (\Delta_f-\lambda)w=\chi r^\gamma\boldsymbol\tau(b)+F,
        \qquad F\in C^{k,\alpha}_{\gamma-2}.
    \]
    Then
    \[
        z:=w-\frac1j\chi r^\gamma\boldsymbol\tau(b)
        \in C^{k+2,\alpha}_{\gamma-2,\nabla f},
        \qquad
        \|z\|_{C^{k+2,\alpha}_{\gamma-2,\nabla f}}
        \leq C\bigl(\|w\|_{C^{k+2,\alpha}_\gamma}
        +\|b\|_{C^{k+2,\alpha}(\Sigma)}
        +\|F\|_{C^{k,\alpha}_{\gamma-2}}\bigr).
    \]
\end{Lemma}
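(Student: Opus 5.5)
The plan is to subtract off the homogeneous term using the leading-order action of the drift operator, then recover the improved decay of $z$ from the radial (first-order) part of the equation. First apply \cref{lem:homogeneous-drift-operator-expansion} with $N=0$ to $h=\chi r^\gamma\boldsymbol\tau(b)$. By \cref{eq:leading-homogeneous-drift-action} the leading multiplier is $-\gamma/2-\lambda=j$. Hence
\[
    (\Delta_f-\lambda)\Bigl(\tfrac1j\chi r^\gamma\boldsymbol\tau(b)\Bigr)
    =\chi r^\gamma\boldsymbol\tau(b)+\tfrac1jR_0,
    \qquad \|R_0\|_{C^{k,\alpha}_{\gamma-2}}\leq C\|b\|_{C^{k+2,\alpha}(\Sigma)}.
\]
It follows that $(\Delta_f-\lambda)z=F-R_0/j=:G\in C^{k,\alpha}_{\gamma-2}$, with the expected bound. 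Moreover, $z\in C^{k+2,\alpha}_\gamma$ with norm controlled by $\|w\|_{C^{k+2,\alpha}_\gamma}+\|b\|$. In particular, $\nabla^2z\in C^{k,\alpha}_{\gamma-2}$ by \cref{lem:basic-properties}. By \cref{cor:equivalent-critical-norm}, it then remains to show $z\in C^{k,\alpha}_{\gamma-2}$ and $\nabla_{\nabla f}z\in C^{k,\alpha}_{\gamma-2}$.

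Next I use the equation as a radial ODE. Set $y:=\nabla_{\nabla f}z+\lambda z=\Delta z-G$. This lies in $C^{k,\alpha}_{\gamma-2}$, since $\Delta z$ is a trace of $\nabla^2 z$. The difficulty is that $y$ contains $\lambda$ rather than the characteristic constant $\lambda+j$ of weight $\gamma$, so \cref{prop:characterization-of-cone-Holder-space} does not apply directly at weight $\gamma$. Instead, conjugate by $\boldsymbol\rho^{2\lambda}$ and put $\widetilde z:=\boldsymbol\rho^{2\lambda}z$. By \cref{eq:radial-conjugation-identity},
\[
    \nabla_{\nabla f}\widetilde z=\boldsymbol\rho^{2\lambda}y+b_\lambda\widetilde z.
\]
The first term lies in $C^{k,\alpha}_{-2j-2}$ and the second in $C^{k,\alpha}_{-2j-2}$, using $b_\lambda\in C^\infty_{-2}$ and $\widetilde z\in C^{k,\alpha}_{-2j}$. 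By \cref{lem:difference}\cref{lem:difference-radial-derivative}, $\nabla^C_{\mathbf X}\widetilde z$ differs from this by $\Ric(g)*\widetilde z\in C^{k,\alpha}_{-2j-2}$. Hence $\nabla^C_{\mathbf X}\widetilde z\in C^{k,\alpha}_{-2j-2}$ on the end.

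Since $-2j-2<0$, \cref{lem:characterization-of-classic-weighted-Holder-space} gives a radial trace decomposition
\[
    \widetilde z=\boldsymbol\tau(\widetilde z_\infty)+\widetilde z^\star,
    \qquad \widetilde z^\star\in C^{k,\alpha}_{-2j-2},
\]
with $\widetilde z^\star$ controlled by \cref{ineq:radial-trace-remainder}. Because $j\geq1$, we already know $\widetilde z\in C^0_{-2j}$ decays, so $|\widetilde z_r|\to0$ and $\widetilde z_\infty=0$. Thus $\widetilde z\in C^{k,\alpha}_{-2j-2}(Q_{\underline r})$. Multiplying back by $\boldsymbol\rho^{-2\lambda}$ via \cref{lem:basic-properties}\cref{lem:basic-properties-isomorphism} gives $z\in C^{k,\alpha}_{\gamma-2}$ on the end, and on the compact core this is trivial.

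Finally, $\nabla_{\nabla f}z=y-\lambda z\in C^{k,\alpha}_{\gamma-2}$. Combined with $\nabla^2z\in C^{k,\alpha}_{\gamma-2}$, \cref{cor:equivalent-critical-norm} yields $z\in C^{k+2,\alpha}_{\gamma-2,\nabla f}$, and each step is linear with the stated bounds.

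The main point to check carefully is the middle step. One must make sure the conjugated radial derivative genuinely gains two weights, which rests on the cancellation built into $b_\lambda$ and on the Ricci correction. One must also verify that the trace vanishes, which uses $j\geq1$ through the decay of $\widetilde z$. The remaining care is bookkeeping of regularity: the trace lemma needs only $C^{k,\alpha}$ control of the radial derivative, which we have.
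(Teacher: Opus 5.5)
Your argument is correct, and its second half takes a different route from the paper.

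For the gain in decay, both proofs integrate the radial part of the equation from infinity. The paper writes the equation for $w$ itself as the ODE $\frac r2\partial_ry-jy=-b+e$ for $y=r^{-\gamma}\boldsymbol\tau_r^{-1}w$, so that $b/j$ appears as the value selected by boundedness. You instead first remove $\frac1j\chi r^\gamma\boldsymbol\tau(b)$ using the $N=0$ case of \cref{lem:homogeneous-drift-operator-expansion}, then use $\boldsymbol\rho^{2\lambda}$ as an integrating factor so that \cref{lem:characterization-of-classic-weighted-Holder-space} applies with vanishing trace. These are the same computation.

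The real difference is how you reach the drift space $C^{k+2,\alpha}_{\gamma-2,\nabla f}$. The paper applies \cref{prop:parabolic-Schauder-estimate} to $(\Delta_f-(\lambda+j))z$ at the characteristic parameter $-\gamma/2$. It then discards the degree-$\gamma$ coefficient because $z\in C^0_{\gamma-2}$. You observe instead that the hypothesis $w\in C^{k+2,\alpha}_\gamma$ already puts $\nabla^2z$ in $C^{k,\alpha}_{\gamma-2}$. Once $z$ itself is improved, the equation gives $\nabla_{\nabla f}z=y-\lambda z\in C^{k,\alpha}_{\gamma-2}$. Then \cref{cor:equivalent-critical-norm}, i.e.\ \cref{lem:critical-weight-interpolation}, supplies the missing bound on $\nabla^{k+1}z$.

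Your route is more elementary: it needs no parabolic Schauder estimate and no $L^2_f$ input, and nothing in it uses $\alpha<1$. It also isolates the one nontrivial gain in the lemma, namely two extra orders of decay of $z$ itself. The paper's route instead presents this step as another instance of the critical Schauder estimate with a vanishing characteristic coefficient, the same mechanism used throughout \cref{prop:finite-expansion}.
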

\begin{proof}
    On the region where $\chi=1$, set
    $y(r)=r^{-\gamma}\boldsymbol\tau_r^{-1}(w|_{\Sigma_r})$.
    The identity $\nabla^g_{\nabla f}=\nabla^C_{\mathbf X}+\Ric(g)*$
    used in \cref{lem:homogeneous-drift-operator-expansion} gives
    \[
        \frac r2\partial_r y-jy=-b+e(r),
        \qquad
        e(r)=r^{-\gamma}\boldsymbol\tau_r^{-1}
        \bigl((\Delta w-\Ric(g)*w-F)|_{\Sigma_r}\bigr).
    \]
    The ordinary weighted hypothesis on $w$ places $\Delta w$ in
    $C^{k,\alpha}_{\gamma-2}$. Thus $e$ has weight $-2$ with
    $k$ cylindrical derivatives and the corresponding H\"older bound.
    Since $y$ is bounded and $j>0$, integration from infinity yields
    \[
        y(r)-\frac bj
        =-2r^{2j}\int_r^\infty t^{-2j-1}e(t)\,\mathrm dt.
    \]
    The radial integration estimates of
    \cref{lem:characterization-of-classic-weighted-Holder-space},
    or the same calculation after $t=re^\ell$, give
    $z\in C^{k,\alpha}_{\gamma-2}$ with the asserted bound at this order.
    The fixed cutoff collar is controlled by the stated norms.

    By \cref{lem:homogeneous-drift-operator-expansion},
    $(\Delta_f-\lambda)z\in C^{k,\alpha}_{\gamma-2}$.
    Consequently
    $(\Delta_f-(\lambda+j))z\in C^{k,\alpha}_{\gamma-2}$.
    Apply \cref{prop:parabolic-Schauder-estimate} with parameter
    $\lambda+j=-\gamma/2$; polynomial weighted sections lie in $L^2_f$
    by \cref{eq:Holder-to-Gaussian-Sobolev}.
    Its degree-$\gamma$ coefficient vanishes because
    $z\in C^0_{\gamma-2}$. The remaining component is in
    $C^{k+2,\alpha}_{\gamma-2,\nabla f}$, with the required estimate.
\end{proof}

\begin{Remark}\label{rmk:cone-model}
    Let $E=T^{p,q}\mathscr C$, put $E_\Sigma=E|_{\Sigma_1}$, and let
    $\mathcal A_\Sigma$ be the full angular operator defined in
    \cref{lem:homogeneous-drift-operator-expansion}.
    It includes the radial--tangential coupling terms, whereas
    $\bar\Delta$ denotes only the intrinsic link Laplacian.
    On the cone with potential $f=r^2/4$, separation of variables gives
    \[
        \Delta_f\bigl(\varphi\boldsymbol\tau(\bar u)\bigr)
        =\left(\varphi''+\left(\frac{n-1}{r}-\frac r2\right)\varphi'
        -\frac\nu{r^2}\varphi\right)\boldsymbol\tau(\bar u),
        \qquad-\mathcal A_\Sigma\bar u=\nu\bar u.
    \]
    For a homogeneous tensor $u_\gamma=r^\gamma\boldsymbol\tau(a)$,
    \[
        (\Delta_f-\lambda)u_\gamma
        =-\left(\frac\gamma2+\lambda\right)u_\gamma
        +r^{\gamma-2}\boldsymbol\tau
        \bigl(\mathcal A_\Sigma a+\gamma(\gamma+n-2)a\bigr).
    \]
    The drift preserves degree, whereas the cone Laplacian lowers it by two.
    At the characteristic degree $\gamma=-2\lambda$, the first term vanishes.
    This explains the homogeneous coefficients and two-degree steps in
    \cref{prop:finite-expansion}, which establishes the expansion and
    estimates on an AC shrinker.
\end{Remark}

\subsection{Schauder estimates II}
Let $(M^n,g,f)$ be a complete AC Ricci shrinker and $E$ be a tensor bundle over $M$.
Consider the operator $L=\Delta_f-\lambda+\nabla_X+\phi$ acting on sections of $E$, where $X$ is a vector field and $\phi$ is an endomorphism of $E$.
In what follows, $\lambda,\beta\in\R$, $k\in\mathbb Z_{\ge0}$, $\alpha\in(0,1)$ and $\Lambda\geq 0$ are constants.
As before, constants may depend on the fixed standard AC data, the displayed parameters, and the coefficient bounds.

\begin{Lemma}\label{lem:weak-estimate-by-induction}
    Assume $2\lambda+\beta>0$.
    Suppose
    $\norm{X}_{C^{k+1}_{-1}(M,TM)},
    \norm{\phi}_{C^{k+1}_{-2}(M,\End E)}\leq\Lambda$.
    If $u\in L^2_f(M,E)$ and $Lu\in C^{k,\alpha}_{\beta}(M,E)$, then
    \begin{equation}\label{ineq:weak-estimate}
        \norm{u}_{C^{k,\alpha}_{\beta}(M,E)}
        \leq C_L
        \left(\norm{u}_{L^2_f(M,E)}
        +\norm{Lu}_{C^{k,\alpha}_{\beta}(M,E)}\right).
    \end{equation}
    If $\Ker(L:H^2_f\to L^2_f)=\{0\}$, the $L^2_f$ term can be omitted.
\end{Lemma}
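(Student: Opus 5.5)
The plan is to prove \cref{ineq:weak-estimate} in three stages: first a weighted $C^0$ bound, then weighted $C^0$ bounds on $\nabla^ju$ for $j\le k$ by differentiating the equation, and finally the H\"older seminorm of $\nabla^ku$. Write $v:=Lu$.

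\emph{Sobolev regularity and the $C^0$ bound.} Since $X,\phi$ are bounded together with $k+1$ derivatives and $v\in C^{k,\alpha}_\beta\subset H^k_f$ by \cref{lem:Holder-to-Gaussian-Sobolev}, the argument at the start of the proof of \cref{prop:parabolic-Schauder-estimate} gives $u\in H^2_f$. Iterating \cref{prop:elliptic-theory-for-weighted-Laplacian}(iii) then gives $u\in H^{k+2}_f$, with $\|u\|_{H^{k+2}_f}\le C(\|u\|_{L^2_f}+\|v\|_{C^{k,\alpha}_\beta})$. The weights $-1$ and $-2$ of $X$ and $\phi$ give $\delta=1$ in \cref{cor:AC-C0-estimate}. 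Since $2\lambda+\beta>0$, the third case of that corollary yields
\[
    \|u\|_{C^0_\beta}\le C\bigl(\|v\|_{C^0_\beta}+\|u\|_{C^0(B_{\underline r}(p_0))}\bigr).
\]
The compact term is bounded by the local $L^2$-to-$C^0$ estimate, so $\|u\|_{C^0_\beta}\le C(\|u\|_{L^2_f}+\|v\|_{C^0_\beta})$.

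\emph{Derivatives by induction.} Set $w_j=\nabla^ju$ for $1\le j\le k$. By \cref{lem:commuting-weighted-Laplacian-and-connection} and the Leibniz rule for $\nabla(\nabla_Xu)$ and $\nabla(\phi u)$, $w_j$ satisfies
\[
    \bigl(\Delta_f-(\lambda+\tfrac j2)+\nabla_X+\phi_j\bigr)w_j=\nabla^jv+G_j .
\]
Here $\phi_j$ collects $\phi$, $\nabla X$ and the curvature actions $\mathcal R^E$, all of weight $-2$. The term $G_j$ is a sum of products of $\nabla^iX$, $\nabla^i\phi$ and $\nabla^i\Rm$ with $w_0,\dots,w_{j-1}$. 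By the inductive hypothesis, $G_j\in C^0_{\beta-j-2}\subset C^0_{\beta-j}$, and $\nabla^jv\in C^0_{\beta-j}$. The gap is unchanged, since $2(\lambda+j/2)+(\beta-j)=2\lambda+\beta>0$. Moreover $w_j\in H^2_{\mathrm{loc}}\cap H^1_f$ by the Sobolev step. Taking the pointwise inner product with $w_j$ and applying \cref{cor:AC-C0-estimate} again gives $\|w_j\|_{C^0_{\beta-j}}\le C(\|u\|_{L^2_f}+\|v\|_{C^{k}_\beta})$. Hence $\|u\|_{C^k_\beta}$ is controlled.

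\emph{H\"older seminorm (main obstacle).} There is no derivative gain here: on a unit slab rescaled from radius $R$, the second-order coefficient is of size $R^{-2}$ while the drift is of order one, as in the discussion after \cref{eq:intro-cone-drift}. So ordinary Schauder theory on slabs cannot be used. My first attempt is to apply the maximum principle to difference quotients along the radial flow. Translating by $\Psi_t$ as in the proof of \cref{prop:parabolic-Schauder-estimate} converts the equation for $w_k$ on the end into a transport equation along $\partial_t$ with a damping term of sign fixed by $2\lambda+\beta>0$, a small diffusion, and a forcing term in $C^{0,\alpha}$. Comparing $w_k$ with its image under a tangential translation by a distance $\epsilon$, or a radial dilation by $1+\epsilon$, gives a subsolution inequality for the difference. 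Its forcing is bounded by $\epsilon^\alpha$ times the $C^{k,\alpha}_\beta$ norm of $v$, plus $\epsilon$ times the $C^k$ quantities already controlled. The same barrier argument (\cref{thm:C0-estimate} with \cref{rmk:exterior-C0-estimate}) should then bound the difference by $C\epsilon^\alpha$ at weight $\beta-k$. If this is too delicate, a fallback is the $K$-functional characterization in \cref{lem:weighted-real-interpolation}: mollify $v$ at scale $s$ on each slab, solve via Gaussian Fredholm theory, and combine the $C^k$ and $C^{k+1}$ bounds from the inductive step. Either way, this Hölder step is where I expect the real difficulty.

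\emph{Removing the $L^2_f$ term.} Suppose $\Ker L=\{0\}$ and the reduced estimate fails. Then there are $u_i$ with $\|u_i\|_{C^{k,\alpha}_\beta}=1$ and $\|Lu_i\|_{C^{k,\alpha}_\beta}\to0$. By \cref{prop:Gaussian-Fredholm-realization}, $\|u_i\|_{H^2_f}$ is bounded, and \cref{lem:Gaussian-Rellich} gives a subsequence converging in $L^2_f$ to a limit $u$ with $Lu=0$. Then $u=0$, and applying \cref{ineq:weak-estimate} to $u_i$ gives $1\le C(\|u_i\|_{L^2_f}+\|Lu_i\|_{C^{k,\alpha}_\beta})\to0$, a contradiction.
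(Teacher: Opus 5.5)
Your Sobolev step, the weighted $C^0$ bound, the differentiated equations for $w_j=\nabla^ju$ (the paper's integer-order induction written out level by level), and the compactness argument that removes $\|u\|_{L^2_f}$ are all fine. The genuine gap is the H\"older step, which you leave open.

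\textbf{Your primary route does not close as described.} A link flow or a radial dilation does not commute with the second-order part of $\Delta_f$; a general link has no isometries, so this cannot be avoided. The commutator is a second-order operator with $O(\epsilon)$ coefficients in the conic scaling. The forcing for $D=T_\epsilon^*w_k-w_k$ therefore contains $\epsilon\,\nabla^2w_k=\epsilon\,\nabla^{k+2}u$, not only the $C^k$ quantities you have controlled. Putting this term at weight $\beta-k$ requires $\nabla^{k+2}u\in C^0_{\beta-k}$, i.e.\ a $C^{k+2}_{\beta+2}$ bound. In the paper such a bound comes from \cref{prop:parabolic-Schauder-estimate} with $\lambda'=-1-\beta/2$. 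There the source $v+(\lambda-\lambda')u$ must already lie in $C^{k,\alpha}_\beta$, which is exactly what you are trying to prove; this is how \cref{prop:weighted-Schauder-estimates-BCD} uses the present lemma. Feeding in only $u\in C^k_\beta\subset C^{k-1,\alpha}_\beta$ leaves you one derivative short.

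\textbf{The missing idea is to interpolate a single bounded \emph{linear} operator between integer orders $k$ and $k+1$.} This is why the lemma assumes $C^{k+1}$ bounds on $X$ and $\phi$. Your stage~2 must therefore be pushed to order $k+1$, which goes through verbatim under that hypothesis. Your fallback points this way but misses two obstructions:
\begin{enumerate}
\item The integer estimates carry the term $\|u\|_{L^2_f}$, so they are not operator bounds.
\item The pieces of a decomposition $Lu=v_0+v_1$ need not lie in $\operatorname{Ran}L=(\Ker L^*)^\perp$, so ``solve each piece'' is not defined.
\end{enumerate}

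The paper removes both obstructions as follows.
\begin{enumerate}
\item Set $\mathcal N=\Ker(L:H^2_f\to L^2_f)$. \cref{lem:Gaussian-Rellich} and the graph estimate of \cref{prop:Gaussian-Fredholm-realization} give $\|u\|_{H^2_f}\le C_L\|Lu\|_{L^2_f}$ for $u\perp\mathcal N$. This upgrades the integer estimates to $\|u\|_{C^j_\beta}\le C_L\|Lu\|_{C^j_\beta}$ for $u\perp\mathcal N$ and $j\le k+1$.
\item Choose compactly supported $e_a$ dual to an $L^2_f$-orthonormal basis $z_a$ of $\Ker L^*$. Then $\Pi v=v-\sum_a\langle v,z_a\rangle_{L^2_f}e_a$ is a projection onto $\operatorname{Ran}L$ that is bounded on both $C^k_\beta$ and $C^{k+1}_\beta$.
\item Let $G$ be the inverse of $L$ onto $\mathcal N^\perp$. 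Then $G\Pi$ is bounded on both endpoint spaces, hence on $C^{k,\alpha}_\beta$ by \cref{lem:weighted-real-interpolation}\cref{lem:weighted-real-interpolation-regularity}.
\item Since $u-\Pi_{\mathcal N}u=G\Pi(Lu)$, and kernel elements lie in $C^{k+1}_\beta$ with norm at most $C\|u\|_{L^2_f}$, the estimate follows.
\end{enumerate}
When $\mathcal N=0$, index zero gives $\Ker L^*=0$, so $\Pi=\mathrm{Id}$ and no $L^2_f$ term appears.
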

\begin{proof}
    We first prove an integer-order operator estimate.  For every
    $0\leq j\leq k+1$ and every $L^2_f$-solution whose right-hand side lies
    in $C^j_\beta$, there exists $\underline r$ depending on the displayed
    data such that
    \begin{equation}\label{ineq:weak-result-for-induction}
        \norm{u}_{C^{j}_{\beta}(M,E)}
        \leq C\left(\norm{u}_{C^{j}(B_{\underline r}(p_0))}
        +\norm{Lu}_{C^{j}_{\beta}(M,E)}\right).
    \end{equation}
    \cref{prop:Gaussian-Fredholm-realization} gives $u\in H^2_f$,
    and local elliptic regularity gives $u\in W^{2,p}_{\mathrm{loc}}$
    for every $p>n$. Thus the weak-solution version of
    \cref{cor:AC-C0-estimate} proves the case $j=0$. Assume the result
    through order $j-1$.
    Set $v:=Lu$.
    Put $V=\nabla u$. By \cref{eq:exact-weighted-gradient-commutator},
    \[
        (\Delta_f-\lambda-\frac12)V+\nabla_XV+\Psi V=v',
    \]
    where, in a normal orthonormal frame,
    \[
    \begin{aligned}
        (\Psi V)_i&=\phi V_i+(\nabla_iX^a)V_a-2\mathcal R^E_{ai}V_a,\\
        v'_i&=\nabla_i v+(X^a\mathcal R^E_{ai}-\nabla_i\phi)u.
    \end{aligned}
    \]
    Thus $\Psi$ has weight $-2$, and the coefficient multiplying $u$ in
    $v'$ has weight $-3$, with the derivative bounds needed below.
    Write this equation as $L'\nabla u=v'$.
    By \cref{lem:asymptotic-rate-of-connection} and the induction hypothesis (applied to $u$),
    \begin{equation*}
        \norm{v'}_{C^{j-1}_{\beta-1}(M,T^*M\otimes E)}
        \leq\norm{\nabla v}_{C^{j-1}_{\beta-1}(M,T^*M\otimes E)}
        +C\norm{u}_{C^{j-1}_{\beta}(M,E)}
        \leq C\left(\norm{u}_{C^{j}(B_{\underline r}(p_0))}
        +\norm{v}_{C^{j}_{\beta}(M,E)}\right).
    \end{equation*}
    Since the operator $L'$ also satisfies the condition of the induction hypothesis, we have
    \begin{equation*}
        \norm{\nabla u}_{C^{j-1}_{\beta-1}(M,T^*M\otimes E)}
        \leq C\left(\norm{\nabla u}_{C^{j-1}(B_{\underline r}(p_0))}
        +\norm{v'}_{C^{j-1}_{\beta-1}(M,T^*M\otimes E)}\right)
        \leq C\left(\norm{u}_{C^{j}(B_{\underline r}(p_0))}
        +\norm{v}_{C^{j}_{\beta}(M,E)}\right).
    \end{equation*}
    This proves \cref{ineq:weak-result-for-induction}.  Interior elliptic
    estimates on the fixed ball and
    \cref{eq:Gaussian-Fredholm-alternative} now give, for $j\leq k+1$,
    \begin{equation}\label{eq:integer-weighted-estimate-with-slush}
        \norm{u}_{C^j_\beta}
        \leq C\left(\norm{u}_{L^2_f}+\norm{Lu}_{C^j_\beta}\right).
    \end{equation}

    Let $\mathcal N=\Ker(L:H^2_f\to L^2_f)$. We first record the
    estimate on its $L^2_f$-orthogonal complement, with a constant
    $C_L$ depending on the fixed operator:
    \begin{equation}\label{eq:Gaussian-kernel-complement-estimate}
        \|u\|_{H^2_f}\leq C_L\|Lu\|_{L^2_f},
        \qquad u\in H^2_f\cap\mathcal N^\perp.
    \end{equation}
    Indeed, failure of the corresponding $L^2_f$ bound would give
    $u_i\perp\mathcal N$ with $\|u_i\|_{L^2_f}=1$ and
    $Lu_i\to0$ in $L^2_f$. The graph estimate in
    \cref{eq:Gaussian-Fredholm-alternative} bounds $u_i$ in $H^2_f$.
    By \cref{lem:Gaussian-Rellich}, a subsequence converges weakly in
    $H^2_f$ and strongly in $L^2_f$ to $u_\infty$.
    Then $Lu_\infty=0$, $u_\infty\perp\mathcal N$, and
    $\|u_\infty\|_{L^2_f}=1$, a contradiction. The graph estimate now
    proves \cref{eq:Gaussian-kernel-complement-estimate}.
    Combining this with \cref{eq:integer-weighted-estimate-with-slush}
    and \cref{eq:Holder-to-Gaussian-Sobolev} gives
    \begin{equation}\label{eq:integer-weighted-kernel-complement-estimate}
        \|u\|_{C^j_\beta}\leq C_L\|Lu\|_{C^j_\beta},
        \qquad u\perp\mathcal N,\quad 0\leq j\leq k+1.
    \end{equation}

    We now pass to the H\"older exponent. Put $\mathcal N^*=\Ker L^*$.
    By \cref{prop:Gaussian-Fredholm-realization},
    $\dim\mathcal N=\dim\mathcal N^*<\infty$ and
    $\operatorname{Ran}L=(\mathcal N^*)^\perp$.
    Applying \cref{eq:integer-weighted-estimate-with-slush} to kernel
    elements for $0\leq j\leq k+1$, then interpolating, gives
    $\mathcal N\subset C^{k,\alpha}_\beta$ with norm bounded by the
    $L^2_f$ norm. Thus the $L^2_f$-orthogonal projection satisfies
    \begin{equation}\label{eq:kernel-projection-weighted-bound}
        \|\Pi_{\mathcal N}u\|_{C^{k,\alpha}_\beta}
        \leq C\|u\|_{L^2_f}.
    \end{equation}

    Choose an $L^2_f$-orthonormal basis $z_1,\ldots,z_N$ of
    $\mathcal N^*$ and compactly supported smooth sections
    $e_1,\ldots,e_N$ such that
    $\inn{e_a}{z_b}_{L^2_f}=\delta_{ab}$. Such sections exist because
    the pairing map $C_c^\infty(M,E)\to\mathbb R^N$ is surjective;
    otherwise a nonzero linear combination of the $z_b$ would vanish
    as a distribution. Set $\mathcal E=\operatorname{span}\{e_1,\ldots,e_N\}$
    and define the projection onto $\operatorname{Ran}L$ along $\mathcal E$ by
    \[
        \Pi_{\operatorname{Ran}L}^{\mathcal E}v
        =v-\sum_{a=1}^N\inn{v}{z_a}_{L^2_f}e_a.
    \]
    This projection is the identity on $\operatorname{Ran}L$.
    It is bounded on $C^k_\beta$ and $C^{k+1}_\beta$, since polynomial
    weighted sections belong to $L^2_f$ and the $e_a$ have compact support.

    Let $G:\operatorname{Ran}L\to\mathcal N^\perp\cap H^2_f$ be the inverse
    furnished by \cref{eq:Gaussian-kernel-complement-estimate}.
    The estimate \cref{eq:integer-weighted-kernel-complement-estimate} gives
    \[
        G\Pi_{\operatorname{Ran}L}^{\mathcal E}:C^j_\beta\longrightarrow C^j_\beta
        \quad\text{boundedly for }j=k,k+1.
    \]
    Interpolating this same operator by \cref{lem:weighted-real-interpolation}
    makes it bounded on $C^{k,\alpha}_\beta$. Since
    $u-\Pi_{\mathcal N}u=G\Pi_{\operatorname{Ran}L}^{\mathcal E}(Lu)$,
    we obtain
    \[
        \|u-\Pi_{\mathcal N}u\|_{C^{k,\alpha}_\beta}
        \leq C_L\|Lu\|_{C^{k,\alpha}_\beta}.
    \]
    Together with \cref{eq:kernel-projection-weighted-bound}, this proves
    \cref{ineq:weak-estimate}. If $\mathcal N=0$, index zero also gives
    $\mathcal N^*=0$, so $\Pi_{\mathcal N}=0$ and
    $\Pi_{\operatorname{Ran}L}^{\mathcal E}=\operatorname{Id}$. The estimate
    then contains no $L^2_f$ term.
\end{proof}

The following result parallels \cite[Proposition 5.33]{bamler2023degree}.
\begin{Proposition}\label{prop:weighted-Schauder-estimates-BCD}
    Assume $2\lambda+\beta>0$ and $\norm{X}_{C^{k+1}_{-1}(M,TM)},\norm{\phi}_{C^{k+1}_{-2}(M,\End E)}\leq\Lambda$.
    For any $u\in L^2_f(M,E)$ satisfying $Lu\in C^{k,\alpha}_{\beta}(M,E)$, we have
    \begin{equation*}
        \norm{u}_{C^{k+2,\alpha}_{\beta,\nabla f}(M,E)}
        \leq C_L
        \Big(  \norm{u}_{L^2_f(M,E)}  +\norm{Lu}_{C^{k,\alpha}_{\beta}(M,E)}  \Big).
    \end{equation*}
    Moreover,
    \begin{equation*}
        L:C^{k+2,\alpha}_{\beta,\nabla f}(M,E)
        \longrightarrow C^{k,\alpha}_{\beta}(M,E)
    \end{equation*}
    is a bounded Fredholm operator of index zero.  Its range is
    \begin{equation}\label{eq:weighted-Holder-Fredholm-range}
        C^{k,\alpha}_\beta(M,E)\cap(\Ker L^*)^\perp,
    \end{equation}
    where $L^*$ is the adjoint of the Gaussian Sobolev realization in
    \cref{prop:Gaussian-Fredholm-realization}.  In particular, if
    $\Ker(L:H^2_f\to L^2_f)=\{0\}$, this weighted H\"older operator is an
    isomorphism and
    \begin{equation*}
        \norm{u}_{C^{k+2,\alpha}_{\beta,\nabla f}}
        \leq C_L\norm{Lu}_{C^{k,\alpha}_\beta}.
    \end{equation*}
\end{Proposition}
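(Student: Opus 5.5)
The plan is to upgrade the ordinary weighted estimate of \cref{lem:weak-estimate-by-induction} to the drift space through the parabolic Schauder argument of \cref{prop:parabolic-Schauder-estimate}, and then read off the Fredholm statement from the Gaussian theory.

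\textbf{Step 1: conjugation.} Since $2\lambda+\beta>0$, \cref{lem:weak-estimate-by-induction} gives
\[
\norm{u}_{C^{k,\alpha}_\beta}\leq C_L\bigl(\norm{u}_{L^2_f}+\norm{Lu}_{C^{k,\alpha}_\beta}\bigr).
\]
Set $\gamma:=\beta+2\lambda>0$ and $\tilde u:=\boldsymbol\rho^{-\beta}u$. By \cref{eq:weight-conjugation}, $\tilde u$ solves $\tilde L\tilde u=\boldsymbol\rho^{-\beta}Lu$, where
\[
\tilde L=\Delta_f-(\lambda+\beta/2)+\nabla_{\tilde X}+\tilde\phi .
\]
Here $\tilde X=X+2\beta\nabla\log\boldsymbol\rho\in C^{k+1}_{-1}$ and $\tilde\phi=\phi+\phi_\beta+\beta\nabla_X\log\boldsymbol\rho\in C^{k+1}_{-2}$, by \cref{eq:weight-conjugation-coefficients}. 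The new spectral parameter $\tilde\lambda=\lambda+\beta/2$ has characteristic weight $-2\tilde\lambda=-\beta-2\lambda$, which is strictly below $0$. The source $\boldsymbol\rho^{-\beta}Lu$ lies in $C^{k,\alpha}_0$. Rather than using \cref{prop:parabolic-Schauder-estimate} directly, whose weight bookkeeping does not match exactly, I would rerun its dilation argument on the fixed annuli $\Omega_0\Subset\Omega_2$ with the unconjugated weight. Define $\tilde u(x,t)=(-t)^{(q-p)/2+\beta/2}\Psi_t^*u$ and use the same scaling as before. The $C^0$ input is $\sup_t\norm{\tilde u(t)}_{C^0(\Omega_2)}\lesssim\norm{u}_{C^0_\beta}$, which Step 1 already supplies. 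The source becomes $(-t)^{\beta/2+\lambda+(q-p)/2-1}\Psi_t^*v$, which is bounded because $Lu\in C^{k,\alpha}_\beta$. After this rescaling the time factor is $(-t)^{\gamma/2-1}$. This factor is integrable but \emph{not bounded} as $t\uparrow0$, and this is the main obstacle.

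\textbf{Step 2: handling the time factor.} I would treat the term $(\beta/2+\lambda)\tilde u/(-t)$ differently: keep it inside the parabolic equation as the genuine time derivative rather than as a zeroth-order coefficient. Concretely, solve for $\nabla_{\nabla f}u$ from the equation,
\[
\nabla_{\nabla f}u=\Delta u+\nabla_Xu+\phi u-\lambda u-v .
\]
Then bound $\Delta u$ in $C^{k,\alpha}_{\beta}$ by local Schauder estimates on rescaled annuli, using the spatial H\"older control of \cref{lem:spatial-Holder-parabolic-estimate} with spatial operator $\Delta^{\tilde g}$. The right-hand side there contains $\nabla_{\nabla f}$-terms of size $\norm{u}_{C^{k,\alpha}_\beta}$ times order one; on unit-scale rescaled annuli these terms enter only as $R^{-2}$ times the data. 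Equivalently, the ordinary elliptic Schauder estimate on balls of radius $\sim1$ (not conically scaled), applied to $\Delta u=v+\lambda u-\nabla_Xu-\phi u+\nabla_{\nabla f}u$, has a right-hand side of weight $\beta$ once $\nabla_{\nabla f}u$ is controlled. I expect the cleanest route is the dilation trick as in \cref{prop:parabolic-Schauder-estimate}, yielding
\[
\sup_{t}\norm{\tilde u(t)}_{C^{k+2,\alpha}(\Omega_0,\tilde g(t))}\lesssim \norm{u}_{C^0_\beta}+\norm{v}_{C^{k,\alpha}_\beta}.
\]
The point is that the unbounded factor multiplies $\tilde u$ itself, whose size is bounded by Step 1, and Brandt's estimate needs only spatial regularity uniform in $t$. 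Converting back gives $\nabla^2u\in C^{k,\alpha}_\beta$ in the sense of the critical terms $\nabla^{k+1}u,\nabla^{k+2}u\in C^{0,\alpha}_{\beta-k}$ on unit balls. The equation then places $\nabla_{\nabla f}u$ in $C^{k,\alpha}_\beta$, and \cref{cor:equivalent-critical-norm} assembles the drift norm. The delicate check in this step is that the higher derivatives of $\tilde u$ do not pick up the bad time factor; I would verify this by applying the commutator \cref{eq:exact-weighted-gradient-commutator} inductively, as in \cref{lem:weak-estimate-by-induction}.

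\textbf{Step 3: Fredholm statement.} Boundedness of $L$ follows from \cref{cor:equivalent-critical-norm} and the product estimates. The kernel of the H\"older realization equals $\Ker(L:H^2_f\to L^2_f)$: \cref{lem:Holder-to-Gaussian-Sobolev} gives one inclusion, and the estimate above applied to Gaussian kernel elements gives the other. For the range, \cref{prop:Gaussian-Fredholm-realization} shows that $v\in C^{k,\alpha}_\beta\cap(\Ker L^*)^\perp$ has an $H^2_f$ preimage, and the estimate puts that preimage in $C^{k+2,\alpha}_{\beta,\nabla f}$. The range is therefore closed of codimension $\dim\Ker L^*$, which equals $\dim\Ker L$; here I use that the $z_a$ pair nontrivially with compactly supported $e_a\in C^{k,\alpha}_\beta$. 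Hence the index is zero. Finally, absorbing the $L^2_f$ term on $\mathcal N^\perp$ as in \cref{eq:Gaussian-kernel-complement-estimate} gives the isomorphism estimate when the kernel is trivial.
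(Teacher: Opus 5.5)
Your Steps 1 and 3 are sound, and Step 3 is essentially the paper's Fredholm argument. Step 2, however, carries the whole drift-space gain, and it does not work as proposed.

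With your normalization $\tilde u=(-t)^{(q-p)/2+\beta/2}\Psi_t^*u$, the rescaled equation contains two problematic terms:
the zeroth-order term $-\tfrac{\gamma}{2}(-t)^{-1}\tilde u$, and the forcing $(-t)^{(q-p)/2+\beta/2-1}\Psi_t^*v$. There is no $\lambda$ in the exponent of the forcing, and its $\tilde g$-norm on the fixed annulus is of order $(-t)^{-1}$. \cref{lem:spatial-Holder-parabolic-estimate} needs coefficients and forcing bounded in spatial $C^{k,\alpha}$ uniformly in $t$. A bounded $\tilde u$ times $(-t)^{-1}$ is not bounded, so noting that the bad factor multiplies $\tilde u$ does not help.

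In fact the fixed-annulus bound you display cannot hold in general, because it would put $u$ in $C^{k+2,\alpha}_\beta$. Take $L=\Delta_f-\lambda$ with $\lambda>0$ and $v=\chi r^\beta\boldsymbol\tau(a)$, where $a\in C^{k,\alpha}\setminus C^{k+1}$. That regularity would give $\Delta u\in C^{k,\alpha}_{\beta-2}$. For $y=r^{-\beta}u_r$ this yields $\tfrac r2\partial_r y+\tfrac\gamma2 y=-a+O(r^{-2})$. Hence $y\to-2a/\gamma$ in $C^{k,\alpha}(\Sigma)$ while $y$ stays bounded in $C^{k+2,\alpha}(\Sigma)$, which is impossible.

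The elliptic alternatives you sketch are circular. By the equation, controlling $\nabla_{\nabla f}u$ in $C^{k,\alpha}_\beta$ is the same as controlling $\Delta u$ there. On conically rescaled annuli the Laplacian carries a relative factor $R^{-2}$ against the drift, so elliptic theory gives no uniform two-derivative gain.

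The missing idea is to shift the spectral parameter so that the weight bookkeeping of \cref{prop:parabolic-Schauder-estimate} matches exactly; this is how the paper proves the estimate.
Put $\lambda'=-1-\beta/2$, so that $-2\lambda'-2=\beta$. Your Step 1 gives $u\in C^{k,\alpha}_\beta$. Hence $(\Delta_f-\lambda'+\nabla_X+\phi)u=Lu+(\lambda-\lambda')u\in C^{k,\alpha}_\beta$, with norm bounded by the right-hand side of the desired estimate, and $u\in C^0_\beta\subset C^0_{-2\lambda'}$.
\cref{prop:parabolic-Schauder-estimate} then applies verbatim and gives $u\in C^{k+2,\alpha}_{\beta+2;\beta,\nabla f}$ with the required bound. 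The degree-$(\beta+2)$ coefficient vanishes because $u\in C^0_\beta$, leaving $u\in C^{k+2,\alpha}_{\beta,\nabla f}$.

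In your parabolic language, this amounts to normalizing by $(-t)^{(q-p)/2+1+\beta/2}$. The extra factor $(-t)$ cancels exactly the $(-t)^{-1}$ you found, and $(\lambda-\lambda')u$ becomes bounded forcing. You only conclude at the critical weight $\beta+2$, but that suffices.
The same shift would also have rescued your conjugated picture. Moving $(\lambda+\beta/2+1)\boldsymbol\rho^{-\beta}u$ to the right-hand side gives a weight-zero source for parameter $-1$, which is exactly the setting of \cref{prop:parabolic-Schauder-estimate}.
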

\begin{proof}
    \cref{lem:weak-estimate-by-induction} gives
    $u\in C^{k,\alpha}_\beta$.  Put $\lambda':=-1-\beta/2$.  Then
    $(L+\lambda-\lambda')u\in C^{k,\alpha}_{\beta}$ and
    $u\in C^0_\beta\subset C^0_{\beta+2}=C^0_{-2\lambda'}$.
    Local elliptic Schauder regularity gives
    $u\in C^{k+2,\alpha}_{\mathrm{loc}}$.
    It follows from \cref{prop:parabolic-Schauder-estimate} that $u\in C^{k+2,\alpha}_{\beta+2;\beta,\nabla f}(M,E)$.
    Since $u\in C^{k,\alpha}_{\beta}(M,E)$, the degree $\beta+2$ homogeneous component necessarily vanishes, meaning that $u\in C^{k+2,\alpha}_{\beta,\nabla f}(M,E)$.

    Let $L:H^2_f\to L^2_f$ be the realization of \cref{prop:Gaussian-Fredholm-realization}.  By
    \cref{eq:Holder-to-Gaussian-Sobolev}, the weighted H\"older operator is
    a restriction of $L$.  Conversely, the estimate just proved
    shows that every Sobolev solution of $Lu=v$ with
    $v\in C^{k,\alpha}_\beta$ belongs to the displayed H\"older domain.
    The Sobolev Fredholm alternative therefore gives exactly
    \cref{eq:weighted-Holder-Fredholm-range}.  The functionals defined by
    $\Ker L^*$ are continuous on $C^{k,\alpha}_\beta$ by
    \cref{eq:Holder-to-Gaussian-Sobolev}; hence the range is closed and has
    codimension $\dim\Ker L^*$.  Its kernel is the Sobolev kernel, so its index
    is
    \begin{equation*}
        \dim\Ker L-\dim\Ker L^*=0.
    \end{equation*}
    The final assertion follows from index zero and the estimate in \cref{lem:weak-estimate-by-induction}.
\end{proof}

For the ordinary Laplacian, finite expansions at infinity are organized by homogeneous harmonic terms; see \cite[Theorem~1.17]{bartnik1986mass} for the asymptotically flat case. On a general cone, the scalar indicial equation is
\[
    \Delta_C(r^\gamma a)
    =r^{\gamma-2}\bigl(\bar\Delta a+\gamma(\gamma+n-2)a\bigr).
\]
Thus the indicial roots are determined by the spectrum of $-\bar\Delta$ on the link. For $\Delta_f-\lambda$, the leading homogeneous multiplier is instead $-\gamma/2-\lambda$, by \cref{eq:leading-homogeneous-drift-action}. The characteristic exponent is $-2\lambda$, and the lower-order terms generate successive drops by two. The finite set $\Lambda_m$ below is the resulting asymptotic index set, playing the role analogous to the index sets in the classical expansion. Its elements beyond $-2\lambda$ are not additional roots of the leading multiplier; their coefficients are determined recursively.

\begin{Proposition}[Finite expansion]\label{prop:finite-expansion}
    Let $(M^n,g,f)$ be a complete AC Ricci shrinker with fixed standard AC data. Given $k\in\mathbb Z_{\ge0}$, $\alpha\in(0,1)$, $\lambda\in\R$, and $m\in\mathbb Z_{\ge0}$, set $\Lambda_m:=\{-2\lambda-2j:0\leq j\leq m,j\in\Z\}$.
    Suppose $u\in L^2_f(M,T^{p,q}M)$ and, distributionally,
    \begin{equation*}
        v:=(\Delta_f-\lambda)u
        \in C^{k,\alpha}_{-2\lambda-2m-2}(M,T^{p,q}M).
    \end{equation*}
    Then $u\in C^{k+2,\alpha}_{\Lambda_{m};-2\lambda-2m-2,\nabla f}(M,T^{p,q}M)$.
    More precisely, its homogeneous part of degree $-2\lambda-2j$ belongs to
    \begin{equation*}
        C^{k+2+2(m-j),\alpha}_{-2\lambda-2j;-\infty}
        (M,T^{p,q}M)
        \qquad(0\leq j\leq m),
    \end{equation*}
    and
    \begin{equation*}
        \sum_{j=0}^{m}\norm{u_{-2\lambda-2j}}_{C^{k+2+2(m-j),\alpha}_{-2\lambda-2j;-\infty}(M,T^{p,q}M)}
        +\norm{u^\star}_{C^{k+2,\alpha}_{-2\lambda-2m-2,\nabla f}(M,T^{p,q}M)} \leq C\Big(
        \norm{u}_{L^2_f(M,T^{p,q}M)}
        +\norm{v}_{C^{k,\alpha}_{-2\lambda-2m-2}(M,T^{p,q}M)} \Big).
    \end{equation*}
\end{Proposition}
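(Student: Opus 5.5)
We argue by induction on $m$. The case $m=0$ is \cref{prop:parabolic-Schauder-estimate} with $X=0$ and $\phi=0$. For $v\in C^{k,\alpha}_{-2\lambda-2}$ it gives $u=\chi r^{-2\lambda}\boldsymbol\tau(u_{-2\lambda})+u^\star$. Here $u_{-2\lambda}\in C^{k+2,\alpha}(\Sigma)$ and $u^\star\in C^{k+2,\alpha}_{-2\lambda-2,\nabla f}$, with the stated bound.

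For the inductive step, assume the result for $m-1$ and take $v\in C^{k,\alpha}_{-2\lambda-2m-2}$. Applying the $m-1$ statement with regularity $k+2$ in place of $k$ would be ideal, but $v$ only has $C^{k,\alpha}$ regularity. So the regularity gain for the homogeneous coefficients must come from the equation for the leading coefficient alone.

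Step one extracts the top coefficient with improved link regularity. By the $m=0$ case, $u=\chi r^{-2\lambda}\boldsymbol\tau(a_0)+u^\star_0$. Apply \cref{lem:homogeneous-drift-operator-expansion} to the homogeneous term with $\gamma=-2\lambda$. Since $\mathcal B_{-2\lambda,\lambda,0}=0$, we get
\[
(\Delta_f-\lambda)u^\star_0=v-\chi\sum_{\ell=1}^{m} r^{-2\lambda-2\ell}\boldsymbol\tau(\mathcal B_{-2\lambda,\lambda,\ell}a_0)-R_m .
\]
The link regularity of $a_0$ is the crux. I would obtain it from the asymptotic link equation: the degree $-2\lambda-2$ component must vanish compatibly with $u_0^\star\in C^{k+2,\alpha}_{-2\lambda-2,\nabla f}$. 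Concretely, \cref{lem:noncharacteristic-radial-extraction} with $j=1$ and $\gamma=-2\lambda-2$ shows
\[
u_0^\star-\chi r^{-2\lambda-2}\boldsymbol\tau(\mathcal B_{-2\lambda,\lambda,1}a_0)\cdot(-1)
\]
lies in a lower drift space. This is provided $\mathcal B_{-2\lambda,\lambda,1}a_0$ has enough regularity, which in turn requires $a_0\in C^{k+4,\alpha}$.

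To break this circularity, I would instead iterate downward in the index $j$, gaining regularity from the bottom. First apply the $m=0$ case to the tangential derivatives of $u$ along the link, i.e. to $\nabla u$ through \cref{lem:commuting-weighted-Laplacian-and-connection}. Its right-hand side has extra decay, so \cref{prop:parabolic-Schauder-estimate} at the shifted parameter $\lambda+\tfrac12$ controls $\nabla u$ in a cone space. Its leading coefficient encodes $\mathcal D a_0$. Iterating $2m$ times gives $a_0\in C^{k+2+2m,\alpha}$, with the decaying source absorbed at each stage because $v$ has $m$ extra orders of decay.

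Step two peels the homogeneous terms. With $a_0\in C^{k+2+2m,\alpha}$, the functions $b_\ell=\mathcal B_{-2\lambda,\lambda,\ell}a_0$ lie in $C^{k+2+2(m-\ell),\alpha}$. Successive applications of \cref{lem:noncharacteristic-radial-extraction} at $\gamma=-2\lambda-2j$ remove the terms of degree $-2\lambda-2j$. The resulting coefficients $u_{-2\lambda-2j}$ are recursively determined, being $1/j$ times a link operator of order two applied to the earlier coefficients. At each step the remaining general component lies in $C^{k+2,\alpha}_{-2\lambda-2j-2,\nabla f}$. Its image under $\Delta_f-\lambda$ lies in $C^{k,\alpha}_{-2\lambda-2j-2}$ plus explicit homogeneous terms, using \cref{lem:homogeneous-drift-operator-expansion} for the newly subtracted terms, with remainder $R_{m-j}$.

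After $m$ steps the remainder $u^\star$ lies in $C^{k+2,\alpha}_{-2\lambda-2m-2,\nabla f}$. All norms are controlled by $\|u\|_{L^2_f}+\|v\|_{C^{k,\alpha}_{-2\lambda-2m-2}}$, using \cref{ineq:parabolic-Schauder-estimate} together with the $C^0$ estimate \cref{ineq:L2-to-critical-C0}.

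The main obstacle is the regularity gain for the link coefficients: the required $2(m-j)$ extra tangential derivatives, as the remark on loss of boundary regularity warns. I would therefore verify carefully that the tangential-derivative bootstrap only uses derivatives along the link, i.e. the angular components of $\mathcal D$. Radial derivatives of the homogeneous terms are explicit, so they cost nothing.
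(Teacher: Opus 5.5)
Your Step one does not close, and the missing ingredient is exactly the one you flag. The tangential-derivative bootstrap gains nothing. By \cref{lem:commuting-weighted-Laplacian-and-connection}, $(\Delta_f-\lambda-\tfrac12)\nabla u-2\mathcal R^E*\nabla u=\nabla v$, and $\nabla v$ is only in $C^{k-1,\alpha}_{-2\lambda-2m-3}$. So \cref{prop:parabolic-Schauder-estimate}, applied at parameter $\lambda+\tfrac12$ and order $k-1$, returns a degree $(-2\lambda-1)$ coefficient in $C^{k+1,\alpha}$. That says $\mathcal D a_0\in C^{k+1,\alpha}$, i.e.\ $a_0\in C^{k+2,\alpha}$ again. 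Each further commutation costs one derivative of $v$ for every derivative it places on $a_0$, so iterating never reaches $C^{k+2+2m,\alpha}$. The $2m$ extra orders of decay of $v$ are never used, yet they are what must be \emph{converted} into tangential regularity of the traces; this is the phenomenon behind \cref{rmk:loss-of-boundary-regularity}, where trace regularity is dictated by the decay of the source.

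Step two has the same defect in the remainders. \cref{lem:noncharacteristic-radial-extraction} needs $w$ in the ordinary space $C^{s+2,\alpha}_\gamma$ at the degree being removed, but its output $C^{s+2,\alpha}_{\gamma-2,\nabla f}$ only lies in $C^{s,\alpha}_{\gamma-2}$. Each extraction therefore costs two orders. Starting from $u_0^\star\in C^{k+2,\alpha}_{-2\lambda-2,\nabla f}$ with $v$ only $C^{k,\alpha}$, you cannot keep every remainder in $C^{k+2,\alpha}_{\cdot,\nabla f}$ as claimed. The final remainder would only be in $C^{k+2-2m,\alpha}_{-2\lambda-2m-2,\nabla f}$.

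The paper removes both problems at once by splitting off the rough part of $u$ before extracting anything. Take $\beta=-2\lambda-2m-2$, $\mu>\max\{0,-\beta/2\}$, $R=(\Delta_f-\mu)^{-1}$ and $c=\lambda-\mu$. Set $U=\sum_{h=1}^m c^{h-1}R^hv$ and $u^\sharp=u-U$. Since $(\Delta_f-\lambda)R=I-cR$, one gets $(\Delta_f-\lambda)u^\sharp=G:=c^mR^mv$. Repeated use of \cref{prop:weighted-Schauder-estimates-BCD} shows that each application of $R$ gains two derivatives at the cost of two units of weight. Hence $U\in C^{k+2,\alpha}_{\beta,\nabla f}$ already lies in the final remainder class, while $G\in C^{k+2\ell,\alpha}_{\beta+2\ell}$ for $0\le\ell\le m$, in particular $G\in C^{k+2m,\alpha}_{-2\lambda-2}$.

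\cref{prop:parabolic-Schauder-estimate} at order $k+2m$ then gives $a_0\in C^{k+2m+2,\alpha}$ and $w_1\in C^{k+2m+2,\alpha}_{-2\lambda-2,\nabla f}$. The extraction lemma is then run at the decreasing orders $s=k+2(m-j)$, using the level $\ell=m-j$ bound on $G$ for the source. The two orders lost per step are exactly budgeted, so $u^\star=U+w_{m+1}\in C^{k+2,\alpha}_{\beta,\nabla f}$. Your description of the peeling (each coefficient is $1/j$ times order-two link operators applied to the earlier ones) is correct once this splitting is in place.
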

\begin{proof}
    Put $E_\Sigma=T^{p,q}\mathscr C|_\Sigma$ and write
    $\gamma_j=-2\lambda-2j$,
    $\beta=\gamma_{m+1}$, and
    $N=\|u\|_{L^2_f}+\|v\|_{C^{k,\alpha}_\beta}$.
    For $m=0$, this is \cref{prop:parabolic-Schauder-estimate}.
    We henceforth assume $m\geq1$.

    We first supply the spatial regularity needed for the radial induction.
    We use \cref{prop:weighted-Schauder-estimates-BCD} as follows.
    Choose $\mu>\max\{0,-\beta/2\}$ and let
    $R=(\Delta_f-\mu)^{-1}$ on $L^2_f$.
    This inverse exists by \cref{prop:elliptic-theory-for-weighted-Laplacian}.
    Repeated application of the cited estimate, together with the inclusion
    $C^{a+2,\alpha}_{\delta,\nabla f}\subset
    C^{a+2,\alpha}_{\delta+2}$, gives
    \begin{equation}\label{ineq:finite-expansion-resolvent-regularity}
        \|R^h v\|_{C^{k+2,\alpha}_{\beta,\nabla f}}
        +\|R^h v\|_{C^{k+2\ell,\alpha}_{\beta+2\ell}}
        \leq C\|v\|_{C^{k,\alpha}_\beta}
        \qquad(1\leq h\leq m,\ 0\leq\ell\leq h).
    \end{equation}
    Indeed, each application preserves $C^{k,\alpha}_\beta$ and gains
    two ordinary weighted derivatives at the cost of increasing the weight
    by two. The inequality $2\mu+\beta+2\ell>0$ holds at every step.

    Put $c=\lambda-\mu$ and define
    \[
        U=\sum_{h=1}^m c^{h-1}R^h v,
        \qquad u^\sharp=u-U,
        \qquad G=c^mR^m v.
    \]
    Since $L_\lambda R=I-cR$, these definitions give
    \begin{equation}\label{eq:finite-expansion-regularized-equation}
        L_\lambda u^\sharp=G,
        \qquad U\in C^{k+2,\alpha}_{\beta,\nabla f},
        \qquad G\in C^{k+2\ell,\alpha}_{\beta+2\ell}
        \quad(0\leq\ell\leq m),
    \end{equation}
    with all the indicated norms, and $\|u^\sharp\|_{L^2_f}$, bounded by $CN$.
    At $\ell=m$, the weight of $G$ is $\gamma_1$.
    Applying \cref{prop:parabolic-Schauder-estimate} to $u^\sharp$ at order
    $k+2m$ therefore gives
    \[
        u^\sharp=\chi r^{\gamma_0}\boldsymbol\tau(a_0)+w_1,
        \quad a_0\in C^{k+2m+2,\alpha}(\Sigma,E_\Sigma),
        \quad w_1\in C^{k+2m+2,\alpha}_{\gamma_1,\nabla f},
    \]
    again with norm bounded by $CN$. The leading exponent $\gamma_0=-2\lambda$
    is characteristic, so $a_0$ is supplied by
    \cref{prop:parabolic-Schauder-estimate};
    \cref{lem:noncharacteristic-radial-extraction} applies only to the
    subsequent exponents $\gamma_j$, $j\geq1$.

    We now extract one term at a time. Suppose $1\leq j\leq m$ and
    \[
        u^\sharp=\chi\sum_{i=0}^{j-1}r^{\gamma_i}\boldsymbol\tau(a_i)+w_j,
        \qquad w_j\in C^{k+2+2(m-j),\alpha}_{\gamma_j},
        \qquad a_i\in C^{k+2+2(m-i),\alpha}(\Sigma,E_\Sigma).
    \]
    The preceding decomposition establishes this for $j=1$, by the
    definition of the critical norm. Set $s=k+2(m-j)$.
    Using \cref{lem:homogeneous-drift-operator-expansion} and the
    case $\ell=m-j$ of \cref{eq:finite-expansion-regularized-equation},
    we obtain
    \[
        L_\lambda w_j=\chi r^{\gamma_j}\boldsymbol\tau(b_j)+F_j,
        \qquad F_j\in C^{s,\alpha}_{\gamma_j-2},
        \qquad b_j=-\sum_{i=0}^{j-1}
        \mathcal B_{\gamma_i,\lambda,j-i}a_i.
    \]
    The coefficients at the earlier degrees cancel by the induction
    relations. Since each $\mathcal B_{\gamma_i,\lambda,j-i}$ has order
    at most two, $b_j\in C^{s+2,\alpha}(\Sigma,E_\Sigma)$.
    \cref{lem:noncharacteristic-radial-extraction} yields
    \begin{equation}\label{eq:finite-expansion-coefficient-recursion}
        a_j=-\frac1j\sum_{i=0}^{j-1}
        \mathcal B_{\gamma_i,\lambda,j-i}a_i,
        \qquad
        w_{j+1}:=w_j-\chi r^{\gamma_j}\boldsymbol\tau(a_j)
        \in C^{s+2,\alpha}_{\gamma_j-2,\nabla f}.
    \end{equation}
    In particular $w_{j+1}\in C^{s,\alpha}_{\gamma_{j+1}}$,
    which is the ordinary weighted regularity required at the next step.
    The extraction and homogeneous-expansion estimates control every
    coefficient and remainder by $CN$ throughout this finite induction.

    At $j=m$ we have
    $w_{m+1}\in C^{k+2,\alpha}_{\beta,\nabla f}$.
    Thus
    \[
        u=\chi\sum_{j=0}^m r^{\gamma_j}\boldsymbol\tau(a_j)+u^\star,
        \qquad u^\star=U+w_{m+1}
        \in C^{k+2,\alpha}_{\beta,\nabla f}.
    \]
    The asserted coefficient regularities and the estimate follow from
    the bounds just proved and
    \cref{ineq:finite-expansion-resolvent-regularity}.
\end{proof}

\begin{Corollary}[Asymptotic expansion of eigentensors]\label{cor:asymptotic-expansion-of-eigentensor}
    Let $(M,g,f)$ be a complete AC Ricci shrinker and let
    $(\Delta_f-\lambda)u=0$ near infinity, with $u\in L^2_f$ there.
    There is a single sequence
    \[
        a_j\in C^\infty(\Sigma,E_\Sigma),\qquad j=0,1,\ldots,
        \qquad E_\Sigma=T^{p,q}\mathscr C|_\Sigma,
    \]
    such that, for every $N\in\mathbb Z_{\ge0}$,
    \[
        u-\chi\sum_{j=0}^{N}
        r^{-2\lambda-2j}\boldsymbol\tau(a_j)
        \in C^\infty_{-2\lambda-2N-2}.
    \]
    Here one fixed smooth cutoff extension of $u$ is used if $u$ is defined only on an exterior domain, and $C^\infty_\beta$ means that all conically weighted spatial derivatives have the indicated bounds.
    The sequence is independent of $N$, and every $a_j$ is determined recursively by $a_0$.
\end{Corollary}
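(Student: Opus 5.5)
The plan is to reduce to a global problem and then apply \cref{prop:finite-expansion} for every $m$ and every $k$. Choose $R_0$ so that $u$ is defined on $Q_{R_0}$, and let $\chi'$ be a smooth cutoff equal to one near infinity with $\Supp\chi'\subset Q_{R_0}$. Set $\tilde u=\chi' u$, extended by zero. Then $\tilde u\in L^2_f(M)$. The section $(\Delta_f-\lambda)\tilde u$ equals $[\Delta_f,\chi']u$ and is therefore compactly supported. Interior elliptic regularity makes $u$ smooth, since the equation has smooth coefficients, so this source lies in $C^{k,\alpha}_\beta$ for every $k$ and every $\beta$. Hence \cref{prop:finite-expansion} applies with arbitrary $k$ and $m$. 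It yields coefficients $a^{(k,m)}_j\in C^{k+2+2(m-j),\alpha}(\Sigma,E_\Sigma)$ and a remainder $u^\star\in C^{k+2,\alpha}_{-2\lambda-2m-2,\nabla f}\subset C^{k+2,\alpha}_{-2\lambda-2m}$.

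Next I would show that the coefficients do not depend on $(k,m)$. Uniqueness of homogeneous components, as noted after \cref{def:cone-holder-space}, handles this. Compare two expansions with parameters $(k,m)$ and $(k',m')$, and assume $m\le m'$. Take the difference of the two sums over degrees $\gamma_j$ with $j\le m$. This difference equals a section of weight $<\gamma_m$. Applying the radial trace map $r\mapsto r^{-\gamma_j}u_r$ successively, from $j=0$ upward, and letting $r\to\infty$ forces each difference $a^{(k,m)}_j-a^{(k',m')}_j$ to vanish. So one sequence $a_j$ works for all $k,m$, and it lies in $C^\infty(\Sigma)$ because $k$ is arbitrary. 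Each $a_j$ is then determined by $a_0$ through the recursion \cref{eq:finite-expansion-coefficient-recursion}, $a_j=-\frac1j\sum_{i<j}\mathcal B_{\gamma_i,\lambda,j-i}a_i$. For this I need that $U$ in that proof vanishes to all orders in the homogeneous sense, which holds since $v$ decays faster than any polynomial. Alternatively, one can insert the expansion into the equation, use \cref{lem:homogeneous-drift-operator-expansion}, and match degrees.

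Finally, for fixed $N$, the remainder $u-\chi\sum_{j\le N}r^{\gamma_j}\boldsymbol\tau(a_j)$ must lie in $C^{k,\alpha}_{-2\lambda-2N-2}$ for every $k$. Apply the result with $m=N+1$. The remainder is then $\chi r^{\gamma_{N+1}}\boldsymbol\tau(a_{N+1})+u^\star$, which lies in $C^{k+2,\alpha}_{\gamma_{N+1}}$ for all $k$, since $u^\star\in C^{k+2,\alpha}_{\gamma_{N+1}-2,\nabla f}\subset C^{k+2,\alpha}_{\gamma_{N+1}}$. The replacement of $\chi'$ by $\chi$ changes things only on a compact set. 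The main obstacle I expect is the index-independence step: one has to make sure the radial trace argument extracts each coefficient cleanly when the remainders have only limited regularity. Using $C^0$-level radial limits from \cref{lem:characterization-of-classic-weighted-Holder-space} should suffice.
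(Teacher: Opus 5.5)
Your proposal is correct and follows essentially the same route as the paper's proof. Both arguments use a cutoff extension with a smooth compactly supported source, apply \cref{prop:finite-expansion} for every $k$ and $m$, and show the coefficients are independent of $(k,m)$ by dividing by $r^{-2\lambda-2j}$ and taking $C^0$-level radial limits; smoothness then follows by letting $k\to\infty$, and the recursion \cref{eq:finite-expansion-coefficient-recursion} determines everything from $a_0$. Your extra concern about $U$ is unnecessary: $U$ has ordinary weight $-2\lambda-2m-2$, so it never affects the coefficients of degree $\geq -2\lambda-2m$, and the recursion holds for the coefficients of $u$ itself without any faster decay of the source.
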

\begin{proof}
    Interior elliptic regularity gives smoothness on the exterior domain.
    A fixed smooth cutoff extension belongs to $L^2_f$ and has a smooth
    compactly supported source.
    Apply \cref{prop:finite-expansion} at every finite order and every spatial regularity.
    If two such expansions first differ at degree $-2\lambda-2j$, their difference at that degree lies in $C^0_{-2\lambda-2j-2}$. Dividing by $r^{-2\lambda-2j}$ and restricting to radial rays forces the corresponding link coefficient to vanish. Hence the finite expansions agree on every common range and define one sequence $(a_j)_{j\geq0}$. Increasing the spatial regularity shows that each $a_j$ is smooth and that the stated remainder belongs to $C^\infty_{-2\lambda-2N-2}$. The recursion \cref{eq:finite-expansion-coefficient-recursion} determines every coefficient from $a_0$.
\end{proof}

\begin{Proposition}\label{prop:weighted-Schauder-estimates-intermediate}
    Let $(M^n,g,f)$ be a complete AC Ricci shrinker and $E$ be a tensor bundle over $M$.
    Let $k\in\mathbb Z_{\ge0}$, $\alpha\in(0,1)$ and $\lambda,\beta\in\R$ be such that $2\lambda+\beta\in(-2,0]$.
    For any $L^2_f$-solution $u$ to the equation $(\Delta_f-\lambda)u=v\in C^{k,\alpha}_{\beta}(M,E)$, we have
    \[
        \nabla^2u\in C^{k,\alpha}_{\beta}\bigl(M,(T^*M)^{\otimes2}\otimes E\bigr),
        \qquad
        (\nabla_{\nabla f}+\lambda)u\in C^{k,\alpha}_{\beta}(M,E),
    \]
    with the estimate
    \begin{equation*}
        \|\nabla^2u\|_{C^{k,\alpha}_{\beta}}
        +\|(\nabla_{\nabla f}+\lambda)u\|_{C^{k,\alpha}_{\beta}}
        \leq C\left(\|u\|_{L^2_f}
        +\|v\|_{C^{k,\alpha}_{\beta}}\right).
    \end{equation*}

    Moreover, if $2\lambda+\beta<0$, then $u$ has a unique decomposition
    $u=[u]_{-2\lambda}+u^\star$ with a degree-$-2\lambda$ homogeneous
    component and a general component of weight $\beta$, and
    \begin{equation*}
        \|[u]_{-2\lambda}\|_{C^{k,\alpha}_{-2\lambda;-\infty}(M,E)}
        +\|u^\star\|_{C^{k,\alpha}_{\beta}(M,E)}
        \leq C\Big(  \norm{u}_{L^2_f(M,E)}
        +\norm{v}_{C^{k,\alpha}_{\beta}(M,E)}  \Big).
    \end{equation*}
\end{Proposition}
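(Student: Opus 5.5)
We reduce to \cref{prop:parabolic-Schauder-estimate} by shifting the spectral parameter so that the source sits exactly at the critical weight. Put $\lambda':=-1-\beta/2$, so that $\beta=-2\lambda'-2$ and $2\lambda'+\beta=-2$. Since $2\lambda+\beta\in(-2,0]$, we have $\lambda'\in[\lambda,\lambda+1)$, hence $-2\lambda'\leq-2\lambda$. Write
\[
    (\Delta_f-\lambda')u=v+(\lambda-\lambda')u.
\]
The task is to show that $u\in C^{k,\alpha}_\beta$ up to the characteristic term. Then the right side lies in $C^{k,\alpha}_{-2\lambda'-2}$, and \cref{prop:parabolic-Schauder-estimate} at parameter $\lambda'$ gives $u\in C^{k+2,\alpha}_{-2\lambda';\beta,\nabla f}$. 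That membership controls $\nabla^2u$ and $(\nabla_{\nabla f}+\lambda')u$ in $C^{k,\alpha}_\beta$, and hence $(\nabla_{\nabla f}+\lambda)u=(\nabla_{\nabla f}+\lambda')u+(\lambda-\lambda')u$.

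\textbf{Case $2\lambda+\beta<0$.} First, \cref{cor:AC-C0-estimate} gives $\|u\|_{C^0_{-2\lambda}}\leq C(\|u\|_{L^2_f}+\|v\|_{C^0_\beta})$, after bounding $u$ on the compact core by local $L^2\to C^0$ estimates. Next, \cref{prop:parabolic-Schauder-estimate} applies directly with parameter $\lambda$, because $v\in C^{k,\alpha}_\beta\subset C^{k,\alpha}_{-2\lambda-2}$ (note $\beta<-2\lambda$ and $\beta>-2\lambda-2$). This yields $u=\chi r^{-2\lambda}\boldsymbol\tau(a)+w$ with $w\in C^{k+2,\alpha}_{-2\lambda-2,\nabla f}$ and the bound $CN$, where $N=\|u\|_{L^2_f}+\|v\|_{C^{k,\alpha}_\beta}$. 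To improve the remainder to weight $\beta$, I would follow \cref{lem:noncharacteristic-radial-extraction}. By \cref{lem:homogeneous-drift-operator-expansion}, the homogeneous term is annihilated by $L_\lambda$ modulo $C^{k,\alpha}_{-2\lambda-2}\subset C^{k,\alpha}_\beta$, so $L_\lambda w\in C^{k,\alpha}_\beta$. Then $(\Delta_f-\lambda')w=L_\lambda w+(\lambda-\lambda')w\in C^{k,\alpha}_\beta$, since $w$ has weight $-2\lambda-2<\beta$. Applying \cref{prop:parabolic-Schauder-estimate} at $\lambda'$ to $w$ gives $w\in C^{k+2,\alpha}_{-2\lambda';\beta,\nabla f}$. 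Its degree-$(-2\lambda')$ coefficient must vanish because $w\in C^0_{-2\lambda-2}$ and $-2\lambda-2<-2\lambda'$. Hence $w=u^\star\in C^{k+2,\alpha}_{\beta,\nabla f}\subset C^{k,\alpha}_\beta$. This gives both the decomposition estimate and the derivative estimates, because $\nabla^2$ and $\nabla_{\nabla f}+\lambda$ acting on the homogeneous part land in weight $-2\lambda-2<\beta$, up to curvature terms of the same weight. Uniqueness of the decomposition is the ray-restriction argument from \cref{cor:asymptotic-expansion-of-eigentensor}.

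\textbf{Critical case $2\lambda+\beta=0$.} Here only the logarithmic $C^0$ bound is available, so I would argue via the radial ODE instead. Applying \cref{prop:parabolic-Schauder-estimate} at $\lambda$ still gives $u=\chi\boldsymbol\tau(\ldots)r^{-2\lambda}+w$, since $v\in C^{k,\alpha}_{-2\lambda}\subset C^{k,\alpha}_{-2\lambda-2}$ fails. So instead I use $\lambda'=\lambda+1$, equivalently $\beta=-2\lambda'+2$, and run a slightly weaker estimate. Take any $\epsilon\in(0,2)$ and view $v\in C^{k,\alpha}_{\beta+\epsilon}$. The previous case, with $\beta$ replaced by $\beta+\epsilon$ (for which $2\lambda+\beta+\epsilon>0$), fails, so I would apply \cref{prop:weighted-Schauder-estimates-BCD} to $(\Delta_f-\lambda-\epsilon/2)$ with $u$. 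That operator is $L$ with a zeroth-order shift, and it gives $u\in C^{k,\alpha}_{\beta+\epsilon}$. Then $(\Delta_f-\lambda')u=v-u\in C^{k,\alpha}_{\beta+\epsilon}$, which is still weaker than the critical weight $-2\lambda'-2=\beta$.

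This critical case is the main obstacle. The fix I would try first is to write $z=(\nabla_{\nabla f}+\lambda)u=\Delta u-v$ and estimate $\Delta u$. Commuting as in \cref{lem:commuting-weighted-Laplacian-and-connection}, $\nabla^2u$ satisfies $(\Delta_f-\lambda-1)\nabla^2u=\nabla^2v+\text{(lower order, weight }\beta-2-\text{curvature)}$, with source in $C^{k-2,\alpha}_{\beta-2}$. For this equation $2(\lambda+1)+(\beta-2)=0$, so it is again critical, and the regularity must come from elsewhere. The cleanest route is the parabolic rescaling in the proof of \cref{prop:parabolic-Schauder-estimate}. With $\tilde u=(-t)^{\frac{q-p}2-\lambda}\Psi_t^*u$, the critical logarithmic growth becomes $|\tilde u|\lesssim 1+|\log(-t)|$, while the source $\tilde v$ is uniformly bounded. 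Applying \cref{lem:spatial-Holder-parabolic-estimate} to time-difference-free quantities, namely spatial derivatives of $\tilde u-\tilde u(x,t_0)$ on unit time windows, should remove the logarithm. Only $\nabla^2\tilde u$ and $\partial_t\tilde u$ are needed, and these are insensitive to adding functions constant in space on a window, modulo lower-order terms. This should give uniform bounds on $\nabla^2u$ and $(\nabla_{\nabla f}+\lambda)u$ in $C^{k,\alpha}_\beta$ in terms of $N$, with $\|u\|_{L^2_f}$ entering only through the compact core.
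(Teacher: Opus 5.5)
Your proposal has a genuine gap, and its first case asserts something false. To apply \cref{prop:parabolic-Schauder-estimate} at parameter $\lambda$ you use $C^{k,\alpha}_\beta\subset C^{k,\alpha}_{-2\lambda-2}$. The hypothesis $2\lambda+\beta>-2$ means $\beta>-2\lambda-2$, so the inclusion goes the other way, and a source of weight $\beta$ decays too slowly for that proposition. In its parabolic rescaling $|\tilde v|\lesssim(-t)^{-1-\lambda-\beta/2}$, which blows up as $t\uparrow0$. At $2\lambda+\beta=0$ this is $(-t)^{-1}$, so your later claim that $\tilde v$ is uniformly bounded in the critical case also fails, and Brandt's estimate is unavailable there too.

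The output you then derive is a trace $a\in C^{k+2,\alpha}$ and $u^\star\in C^{k+2,\alpha}_{\beta,\nabla f}$ with norm at most $CN$. That is exactly the stronger decomposed estimate refuted by \cref{rmk:loss-of-boundary-regularity}, whose example has $2\lambda+\beta=-1$. For the same reason the Hessian bound cannot be read off from the decomposition. The trace is in general only $C^{k,\alpha}$, so $\nabla^2[u]_{-2\lambda}$ loses two derivatives, and $\nabla^2u$ must be estimated as a whole field. There are also sign slips: $\lambda'=-1-\beta/2$ lies in $[\lambda-1,\lambda)$, and $-2\lambda'>-2\lambda$.

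The missing idea is what removes the criticality you met when commuting. Differentiating $v$ puts the source at weight $\beta-2$, which is never in the noncritical range for parameter $\lambda+1$. Instead, first absorb $v$ with a strongly shifted resolvent. Take $\mu>\max\{0,-\beta/2\}$ and $U=(\Delta_f-\mu)^{-1}v$. Since $2\mu+\beta>0$, \cref{prop:weighted-Schauder-estimates-BCD} gives $U\in C^{k+2,\alpha}_{\beta,\nabla f}$, so $\nabla^2U\in C^{k,\alpha}_\beta$ at the weight of $v$.

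Then $q=u-U$ solves $(\Delta_f-\lambda)q=(\lambda-\mu)U$. Applying the same proposition at a weight $\beta_0\in(-2\lambda,\beta+2)$ gives $q,\nabla q\in C^{k,\alpha}_{\beta_0}$; this is the correct form of your $\epsilon$-step, which as written was circular. Commuting twice with \cref{eq:exact-weighted-gradient-commutator}, where the weighted Bianchi identity removes any undifferentiated $q$ term, gives
\[
 (\Delta_f-\lambda-1)\nabla^2q+\psi\,\nabla^2q=(\lambda-\mu)\nabla^2U-\phi\,\nabla q,
\]
with $\psi$ of weight $-2$ and $\phi$ of weight $-3$. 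The right side therefore lies in $C^{k,\alpha}_\beta$. Since $2(\lambda+1)+\beta>0$, this equation is noncritical, and the BCD estimate yields $\nabla^2q\in C^{k,\alpha}_\beta$. The drift bound then follows from $(\nabla_{\nabla f}+\lambda)u=\Delta u-v$. This treats $2\lambda+\beta=0$ with no separate argument.

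The decomposition for $2\lambda+\beta<0$ comes only afterwards. Combine the $C^0_{-2\lambda}$ bound of \cref{cor:AC-C0-estimate} with \cref{lem:characterization-of-classic-weighted-Holder-space} applied to $y=r^{2\lambda}u$. Its radial derivative is controlled in $C^{k,\alpha}_{2\lambda+\beta}$ by the drift and Hessian estimates. This produces only a $C^{k,\alpha}$ trace, consistent with the remark.
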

\begin{proof}
    Choose
    \begin{equation*}
        \mu>\max\{0,-\beta/2\}.
    \end{equation*}
    Since the $L^2_f$-spectrum of $\Delta_f$ is nonpositive,
    $\Delta_f-\mu:H^2_f\to L^2_f$ is an isomorphism.  Let $U$ be the unique
    solution of
    \begin{equation*}
        (\Delta_f-\mu)U=v.
    \end{equation*}
    \cref{prop:weighted-Schauder-estimates-BCD} and
    $2\mu+\beta>0$ give
    \begin{equation}\label{eq:intermediate-shifted-solution}
        U\in C^{k+2,\alpha}_{\beta,\nabla f},
        \qquad
        \norm{U}_{C^{k+2,\alpha}_{\beta,\nabla f}}
        \leq C\norm{v}_{C^{k,\alpha}_\beta}.
    \end{equation}
    Put $q:=u-U$.  Then
    \begin{equation}\label{eq:intermediate-q-equation}
        (\Delta_f-\lambda)q=(\lambda-\mu)U.
    \end{equation}

    Because $2\lambda+\beta>-2$, choose $\beta_0$ so that
    \begin{equation}\label{eq:intermediate-beta-zero-choice}
        -2\lambda<\beta_0<\beta+2.
    \end{equation}
    In particular, $\beta<\beta_0$ and
    $U\in C^{k,\alpha}_\beta\subset C^{k,\alpha}_{\beta_0}$.  Applying
    \cref{prop:weighted-Schauder-estimates-BCD} to
    \cref{eq:intermediate-q-equation} at weight $\beta_0$ yields
    \begin{equation}\label{eq:intermediate-weak-bootstrap}
        q\in C^{k+2,\alpha}_{\beta_0,\nabla f},
        \qquad
        q,\nabla q\in C^{k,\alpha}_{\beta_0}.
    \end{equation}
    The second assertion follows from \cref{cor:equivalent-critical-norm} and critical-weight interpolation.

    Put $V_i=\nabla_iq$, $H_{ij}=\nabla_i\nabla_jq$, and
    $E_1=T^*M\otimes E$. Applying
    \cref{eq:exact-weighted-gradient-commutator} first to $q$ and then to
    $V$ gives
    \[
        (\Delta_f-\lambda-1)H+\psi H+\phi V
        =(\lambda-\mu)\nabla^2U,
    \]
    where, in a normal orthonormal frame,
    \begin{equation}\label{eq:intermediate-curvature-coefficients}
    \begin{aligned}
        (\psi H)_{ij}
        &=-2\mathcal R^E_{aj}H_{ia}
          -2\bigl(\mathcal R^{E_1}_{ai}H_{a\cdot}\bigr)_j,\\
        (\phi V)_{ij}
        &=-2(\nabla_i\mathcal R^E_{aj})V_a.
    \end{aligned}
    \end{equation}
    The curvature $\mathcal R^{E_1}$ acts on both the covector slot and
    the $E$ factor. Thus $\psi$ is linear in $\Rm$ and $\phi$ is linear
    in $\nabla\Rm$, with
    $\psi\in C^{k,\alpha}_{-2}$ and $\phi\in C^{k,\alpha}_{-3}$.
    There is no term involving undifferentiated $q$: the weighted divergence
    identity in \cref{lem:commuting-weighted-Laplacian-and-connection} cancels it.
    Estimate \cref{eq:intermediate-shifted-solution} gives
    $\nabla^2U\in C^{k,\alpha}_\beta$, while
    \cref{eq:intermediate-beta-zero-choice,eq:intermediate-weak-bootstrap} give
    \[
        \phi V\in C^{k,\alpha}_{\beta_0-3}\subset C^{k,\alpha}_\beta.
    \]
    Since
    \begin{equation*}
        2(\lambda+1)+\beta=2\lambda+\beta+2>0,
    \end{equation*}
    \cref{prop:weighted-Schauder-estimates-BCD}, now applied to
    the equation for $\nabla^2q$, gives
    \begin{equation}\label{eq:intermediate-Hessian-bound}
        \nabla^2q\in C^{k+2,\alpha}_{\beta,\nabla f},
        \qquad
        \norm{\nabla^2q}_{C^{k,\alpha}_\beta}
        \leq C\left(\norm{u}_{L^2_f}+\norm{v}_{C^{k,\alpha}_\beta}\right).
    \end{equation}
    Together with \cref{eq:intermediate-shifted-solution}, this proves the
    asserted Hessian estimate for $u$.  The original equation then gives
    \begin{equation}\label{eq:intermediate-characteristic-bound}
        (\nabla_{\nabla f}+\lambda)u=\Delta u-v
        \in C^{k,\alpha}_\beta
    \end{equation}
    with the same bound.

    Suppose finally that $\theta:=2\lambda+\beta<0$.  \cref{cor:AC-C0-estimate} gives
    \begin{equation}\label{eq:intermediate-critical-C0-bound}
        \norm{u}_{C^0_{-2\lambda}}
        \leq C\left(\norm{u}_{L^2_f}
        +\norm{v}_{C^{k,\alpha}_\beta}\right),
    \end{equation}
    where the fixed-core boundary term is bounded by interior elliptic
    estimates and the Gaussian Sobolev graph estimate.

    On the canonical end set $y:=r^{2\lambda}u$.  \cref{lem:difference}\cref{lem:difference-radial-derivative} gives
\begin{equation}\label{eq:intermediate-radial-source}
        2\nabla^C_{\mathbf{X}}y
        =2r^{2\lambda}\bigl(
        (\nabla_{\nabla f}+\lambda)u-\Ric(g)* u
        \bigr).
    \end{equation}
    The first term on the right belongs to $C^{k,\alpha}_\theta$ by
    \cref{eq:intermediate-characteristic-bound}. For the curvature term,
    use $\beta+2>-2\lambda$ and
    \cref{eq:intermediate-critical-C0-bound,eq:intermediate-Hessian-bound}.
    Interpolation on rescaled annuli gives
    \[
        \|u\|_{C^{k+2,\alpha}_{\beta+2}}
        \leq C\left(\|u\|_{C^0_{\beta+2}}
        +\|\nabla^2u\|_{C^{k,\alpha}_\beta}\right)
        \leq C\left(\|u\|_{L^2_f}+\|v\|_{C^{k,\alpha}_\beta}\right).
    \]
    Multiplication by $\Ric(g)\in C^\infty_{-2}$ and by $r^{2\lambda}$
    therefore yields
    \[
        r^{2\lambda}\Ric(g)*u\in C^{k,\alpha}_\theta
    \]
    with the same bound.
    Thus the left-hand side of
    \cref{eq:intermediate-radial-source} lies in
    $C^{k,\alpha}_\theta$, with the required norm bound.  Since $\theta<0$,
    \cref{lem:characterization-of-classic-weighted-Holder-space}
    supplies a radially parallel trace $a\in C^{k,\alpha}(\Sigma)$ and a
    remainder $y^\star\in C^{k,\alpha}_\theta$.  Its trace norm is controlled
    by the value of $y$ on one fixed annulus, which is bounded by the same
    interior estimate used in \cref{eq:intermediate-critical-C0-bound}; its
    remainder norm is controlled by \cref{ineq:radial-trace-remainder}.
    Multiplying by $r^{-2\lambda}$ and inserting the fixed cutoff gives
    \begin{equation*}
        u=[u]_{-2\lambda}+u^\star,
        \qquad
        u^\star\in C^{k,\alpha}_\beta,
    \end{equation*}
    with precisely the estimate in the statement.  Uniqueness follows from
    the uniqueness of the radial trace.
\end{proof}

\begin{Remark}\label{rmk:loss-of-boundary-regularity}\label{ex:loss-of-regularity}
    The components in \cref{prop:weighted-Schauder-estimates-intermediate}
    need not have the stronger $C^{k+2,\alpha}$ regularity.
    To see the obstruction, fix $\lambda>0$ and $\beta=-2\lambda-1$.
    In a link coordinate ball choose a nonzero smooth cutoff $\eta$, and set
    \[
        a_N=N^{-k-\alpha-1}\eta(\theta)\sin(N\theta^1),
        \qquad u^{(N)}=\chi\zeta(r/N)r^{-2\lambda}\boldsymbol\tau(a_N),
    \]
    where $a_N$ is extended by zero, $\zeta=0$ on $(0,1]$, and
    $\zeta=1$ on $[2,\infty)$. Then
    \begin{equation}\label{eq:counterexample-link-norms}
        \|a_N\|_{C^{k,\alpha}}\asymp N^{-1},
        \qquad \|a_N\|_{C^{k+2,\alpha}}\asymp N.
    \end{equation}
    The characteristic drift cancels on the homogeneous part, and cone
    scaling on $r\geq N$ gives
    \[
        \|(\Delta_f-\lambda)u^{(N)}\|_{C^{k,\alpha}_\beta}
        \leq C\left(N\|a_N\|_{C^{k,\alpha}}
        +N^{-1}\|a_N\|_{C^{k+2,\alpha}}\right)\leq C.
    \]
    The first term bounds the drift-cutoff contribution and the second
    bounds the Laplacian terms; the AC errors decay faster.
    The homogeneous coefficient of $u^{(N)}$ is $a_N$, so the stronger
    decomposed norm tends to infinity. Since $\lambda>0$ gives a unique
    Gaussian $L^2_f$ solution for every such source, the open mapping
    theorem shows that some sources have solutions outside that stronger
    decomposed domain.
\end{Remark}

\section{Gauge spaces and estimates}
\label{sec:fixed-Banach-estimates}\label{sec:fixed-Banach}

Throughout this section, $(M,g,f)$ is a fixed complete
nontrivial AC Ricci shrinker. We fix $k\geq6$ and $0<\alpha<1$.
We use the cutoff $\chi$ and radial parallel identifications of
\cref{def:standard-AC-setting}, and the conically scaled norms of
\cref{def:weighted-Holder-spaces}. We write
\[
 \mathbf X=\nabla f,\qquad L=\Delta_{f}+\tfrac12,
 \qquad \mathcal P=\delta_{f}\delta^*,
 \qquad \delta^*=-\operatorname{Sym}\nabla.
\]
On the canonical end $\mathbf X=r\partial_r/2$. In particular, the radial
parallel extension satisfies
\[
 \boldsymbol\tau(a\,\mathrm dr+b)=a\,\mathrm dr+r\pi^*b,
 \qquad a\in C^{j,\alpha}(\Sigma),\quad
 b\in C^{j,\alpha}(\Sigma,T^*\Sigma),
\]
where $j\geq0$ and $\pi$ is the projection to $\Sigma$. Homogeneous degrees below
are taken with respect to this identification. Constants may depend on
the fixed background, $k$, and $\alpha$.

\subsection{An auxiliary sum space}
\label{sec:definition-of-spaces-in-slice-theorem}

All coefficient tensors below are sections of the fixed link bundles.

\begin{Definition}\label{def:slice-radial-filtration}
For each integer $m\geq0$, define $E_m$ by the unique expansion
\begin{equation}\label{eq:slice-E-expansion}
 h=\chi\bigl\{\boldsymbol\tau(c_0)
   +r^{-1}\boldsymbol\tau(c_{-1}+2\,\mathrm dr\odot b_{-1})\bigr\}
   +h^\star.
\end{equation}
The coefficients have the types
\[
 c_0,c_{-1}\in C^{m,\alpha}(\Sigma,S^2T^*\Sigma),\qquad
 b_{-1}\in C^{m,\alpha}(\Sigma,T^*\Sigma).
\]
Set
\[
 \|h\|_{E_m}=\|(c_0,c_{-1},b_{-1})\|_{C^{m,\alpha}(\Sigma)}
                  +\|h^\star\|_{C^{m,\alpha}_{-2}}.
\]
The space consists of the expansions for which this norm is finite.
\end{Definition}

The space $E_m$ is Banach because the link and weighted H\"older factors
are complete. Coefficient
uniqueness follows by dividing by successive powers of $r$ and using
the strictly lower remainder weight. The auxiliary spaces
$\mathscr X_k,\mathscr Y_k$ below use the Gaussian inverse to assign
different finite regularities to different coefficients.
For example, $\chi r^{-2}\sin(\log r)\,\mathrm dr^2\in E_m$ for
every $m$.

We use the weighted and compact-link interpolation identities of
\cref{lem:weighted-real-interpolation}.

For a tensor bundle $E$, define the Banach sum
\begin{equation}\label{ic:scalar-sum}
 \mathscr S_k(E)=C^{k+1,\alpha}_0(E)+C^{k-1,\alpha}_{-2}(E),
 \qquad
 \|q\|_{\mathscr S_k}
 =\inf_{q=q_0+q_{-2}}
   \bigl(\|q_0\|_{C^{k+1,\alpha}_0}
               +\|q_{-2}\|_{C^{k-1,\alpha}_{-2}}\bigr).
\end{equation}
The scalar case is denoted simply by $\mathscr S_k$.

\begin{Lemma}\label{ic:sum-multiplier}
The sum in \cref{ic:scalar-sum} is Banach and embeds continuously in
$C^{k-1,\alpha}_0$ and $L^2_f$. There is a continuous inclusion
\[
 C^{k,\alpha}_{-1}(E)\subset\mathscr S_k(E).
\]
Moreover,
\begin{equation}\label{ic:sum-estimates}
 \|q\|_{\mathscr S_k}\leq C\|q\|_{C^{k,\alpha}_{-1}},
 \qquad
 \|bq\|_{\mathscr S_k}
 \leq C\|b\|_{C^{k+1,\alpha}_0}\|q\|_{\mathscr S_k}.
\end{equation}
The second estimate also applies to fixed tensor contractions and
bundle homomorphisms of the indicated regularity.
\end{Lemma}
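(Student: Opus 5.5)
The plan is to treat the four assertions in turn, using only \cref{lem:basic-properties}, \cref{lem:weighted-real-interpolation}, and \cref{lem:Holder-to-Gaussian-Sobolev}.

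\emph{Banach property and embeddings.} The space $\mathscr S_k(E)$ is the sum of the two Banach spaces $C^{k+1,\alpha}_0$ and $C^{k-1,\alpha}_{-2}$. Both embed continuously in the Hausdorff space $C^{k-1,\alpha}_0$, by the weight inclusion $C^{k-1,\alpha}_{-2}\subset C^{k-1,\alpha}_0$. Completeness of the quotient norm then follows from the standard fact that a sum of a compatible pair is the quotient of $X\times Y$ by the closed antidiagonal. The embedding into $C^{k-1,\alpha}_0$ comes from applying the triangle inequality to each decomposition and taking the infimum. Composing with \cref{lem:Holder-to-Gaussian-Sobolev} at $k=0$ gives the embedding into $L^2_f$.

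\emph{Inclusion of $C^{k,\alpha}_{-1}$.} This is the substantive step. I would write $C^{k,\alpha}_{-1}$ as an interpolation space between the two summands, using \cref{lem:weighted-real-interpolation}\cref{lem:weighted-real-interpolation-simultaneous}. Take $s_0=k-1+\alpha$, $\beta_0=-2$, $s_1=k+1+\alpha$, $\beta_1=0$, and $\vartheta=1/2$. Then the intermediate order is $k+\alpha$, which is noninteger since $0<\alpha<1$, and the intermediate weight is $-1$. Hence
\[
(C^{k-1,\alpha}_{-2},C^{k+1,\alpha}_0)_{1/2,\infty}=C^{k,\alpha}_{-1}.
\]
Any real interpolation space $(X,Y)_{\vartheta,\infty}$ embeds in $X+Y$, since $K(1,u)\le \|u\|_{(X,Y)_{\vartheta,\infty}}$ and $K(1,u)=\|u\|_{X+Y}$. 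This gives $\|q\|_{\mathscr S_k}\le C\|q\|_{C^{k,\alpha}_{-1}}$. If one prefers a hands-on argument, the same bound comes from a conically scaled mollification $q=q_\epsilon+(q-q_\epsilon)$ on each slab at scale $\epsilon\sim r^{-1}$. Here $q_\epsilon$ gains one derivative at the cost of weight $+1$, and the error loses one derivative while gaining weight $-1$. The interpolation lemma packages exactly this.

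\emph{Multiplier estimate.} Given a near-optimal decomposition $q=q_0+q_{-2}$, write $bq=bq_0+bq_{-2}$. By \cref{lem:basic-properties}\cref{lem:basic-properties-multiplication},
\[
\|bq_0\|_{C^{k+1,\alpha}_0}\le C\|b\|_{C^{k+1,\alpha}_0}\|q_0\|_{C^{k+1,\alpha}_0}.
\]
The same lemma at regularity $k-1$, combined with $C^{k+1,\alpha}_0\subset C^{k-1,\alpha}_0$, gives
\[
\|bq_{-2}\|_{C^{k-1,\alpha}_{-2}}\le C\|b\|_{C^{k+1,\alpha}_0}\|q_{-2}\|_{C^{k-1,\alpha}_{-2}}.
\]
Taking the infimum over decompositions yields the estimate. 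Fixed contractions and bundle homomorphisms are handled identically, since \cref{lem:basic-properties}\cref{lem:basic-properties-multiplication} applies to any fiberwise bilinear operation.

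The only step with content is the inclusion of $C^{k,\alpha}_{-1}$. It depends on the simultaneous regularity/weight interpolation, in particular on the intermediate order $k+\alpha$ being noninteger, which is where $0<\alpha<1$ is used.
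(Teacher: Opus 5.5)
Your proof is correct. The Banach property, the embeddings into $C^{k-1,\alpha}_0$ and $L^2_f$, and the multiplier estimate follow the paper's argument. The only difference there is that you use $C^{k-1,\alpha}_0$ rather than distributions as the ambient Hausdorff space, which is immaterial.

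The genuine difference is in the inclusion $C^{k,\alpha}_{-1}\subset\mathscr S_k$.

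\emph{The paper's route.} It proves the inclusion by hand. It localizes to normalized annuli of radius $R$ with a uniformly locally finite partition of unity and regularizes at scale $R^{-1}$. The smoothed part then has normalized $C^{k+1,\alpha}$ norm at most $CN$, so it lies in weight $0$. The error has normalized $C^{k-1,\alpha}$ norm at most $CR^{-2}N$, so it lies in weight $-2$.

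\emph{Your route.} You read the inclusion off \cref{lem:weighted-real-interpolation}\textup{(iii)} with $X=C^{k-1,\alpha}_{-2}$, $Y=C^{k+1,\alpha}_0$ and $\vartheta=1/2$. You combine this with $\|u\|_{X+Y}=K(1,u;X,Y)\leq\|u\|_{(X,Y)_{1/2,\infty}}$. Your parameter check is exactly what that lemma requires: the intermediate order $k+\alpha$ is noninteger and $k-1\geq0$.

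\emph{Comparison.} Your route is shorter and needs nothing beyond the already-established identity. However, you only use the half $C^{k,\alpha}_{-1}\subset(X,Y)_{1/2,\infty}$ of that identity. The paper proves that half by the same moment-vanishing regularization. So the two proofs share one mechanism: the paper inlines the half it needs and keeps the lemma self-contained, while you cite it.
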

\begin{proof}
The addition map from the product of the two summands into
distributions has closed kernel. Its quotient, equipped with the
displayed quotient norm, is therefore Banach. Both summands embed in
the asserted local and Gaussian spaces, proving the inclusions.

Choose a uniformly locally finite atlas and partition of unity on the
normalized end annuli. On an annulus of radius $R$, a section of
$C^{k,\alpha}_{-1}$ has normalized norm at most $CR^{-1}N$, where
$N=\|q\|_{C^{k,\alpha}_{-1}}$. Regularize its localized expression
at scale $R^{-1}$. The regularized part has $C^{k+1,\alpha}$ norm at
most $CN$, and the error has $C^{k-1,\alpha}$ norm at most
$CR^{-2}N$. Patching the parts gives a global decomposition in the two
summands with norm at most $CN$. The fixed compact core is handled by
the same construction at a fixed scale. This proves the first
estimate. Multiplication by $b$ is bounded separately on each
summand; adding the two bounds and taking the infimum proves the
second.
\end{proof}

\subsection{Estimates for the shifted Laplacian}
\label{subsec:scalar-terminal-estimates}

The scalar operator $L=\Delta_f+\tfrac12$ has a bounded Gaussian inverse
\[
 G_{1/2}:L^2_f(M)\longrightarrow H^2_f(M).
\]
Indeed, if $p\in H^2_f(M)$ and $Lp=0$, the commutation identity following
\eqref{eq:gauge-operator} gives
$\Delta_f(\mathrm dp)=\mathrm d(Lp)=0$.
By \cref{prop:elliptic-theory-for-weighted-Laplacian}\textup{(i)},
$\mathrm dp$ is parallel. A nonzero parallel gradient splits off a line,
which is excluded for a nontrivial AC shrinker by \cref{sec:AC-shrinkers}.
Thus $\mathrm dp=0$, and $Lp=0$ implies $p=0$.
The inverse and its bound now follow from
\cref{prop:Gaussian-Fredholm-realization}.

\begin{Lemma}\label{ic:scalar-terminal}
For $q\in\mathscr S_k$, let $p=G_{1/2}q$. There is a unique
$a\in C^{k+2,\alpha}(\Sigma)$ such that
\begin{equation}\label{ic:scalar-terminal-expansion}
 \begin{gathered}
 p=\chi ra+p^\star,\qquad
 p^\star\in C^{k+1,\alpha}_0+C^{k,\alpha}_{-1},\\
 \mathrm dp-\mathrm d(\chi ra)\in C^{k,\alpha}_{-1},
 \qquad \nabla^2p\in\mathscr S_k(T^{0,2}M),
 \qquad (\nabla_{\mathbf X}-\tfrac12)p\in\mathscr S_k.
 \end{gathered}
\end{equation}
The sum of these norms, the trace norm, and $\|p\|_{H^2_{f}}$
is at most $C\|q\|_{\mathscr S_k}$; the remainder sum has its
quotient norm.
\end{Lemma}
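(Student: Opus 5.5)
The plan is to use linearity of $G_{1/2}$ and treat the two summands of $\mathscr S_k$ separately. Here $L=\Delta_f+\tfrac12$ is $L_\lambda$ with $\lambda=-\tfrac12$, whose characteristic degree $-2\lambda$ is $1$. Fix a decomposition $q=q_0+q_{-2}$ with $\|q_0\|_{C^{k+1,\alpha}_0}+\|q_{-2}\|_{C^{k-1,\alpha}_{-2}}\leq2\|q\|_{\mathscr S_k}$, and put $p_i=G_{1/2}q_i$. The bound $\|p\|_{H^2_f}\leq C\|q\|_{\mathscr S_k}$ follows from the Gaussian inverse and \cref{ic:sum-multiplier}, so every $L^2_f$ term below is controlled by $\|q\|_{\mathscr S_k}$.

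\emph{The $q_0$ part.} I apply \cref{prop:weighted-Schauder-estimates-intermediate} at order $k+1$ with $\lambda=-\tfrac12$ and $\beta=0$, so that $2\lambda+\beta=-1\in(-2,0)$. This gives $\nabla^2p_0\in C^{k+1,\alpha}_0$ and $(\nabla_{\mathbf X}-\tfrac12)p_0\in C^{k+1,\alpha}_0$. It also gives a unique splitting $p_0=\chi ra_0+p_0^\star$ with $a_0\in C^{k+1,\alpha}(\Sigma)$ and $p_0^\star\in C^{k+1,\alpha}_0$. Separately, $\omega_0=\mathrm dp_0$ solves $\Delta_f\omega_0=\mathrm dq_0\in C^{k,\alpha}_{-1}$, by the commutation identity after \eqref{eq:gauge-operator}. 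The same proposition with $\lambda=0$, $\beta=-1$ splits $\omega_0$ into a degree-zero homogeneous part plus a $C^{k,\alpha}_{-1}$ remainder. Uniqueness of radial traces identifies that homogeneous part with $\mathrm d(\chi ra_0)$, which gives $\mathrm dp_0-\mathrm d(\chi ra_0)\in C^{k,\alpha}_{-1}$.

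\emph{The $q_{-2}$ part.} Here $2\lambda+\beta=-3$, which lies outside the intermediate range. I therefore work with $\omega_{-2}=\mathrm dp_{-2}$, which solves $\Delta_f\omega_{-2}=\mathrm dq_{-2}\in C^{k-2,\alpha}_{-3}\subset C^{k-2,\alpha}_{-2}$. \Cref{prop:parabolic-Schauder-estimate} with $\lambda=0$ at order $k-2$ gives $\omega_{-2}\in C^{k,\alpha}_{0;-2,\nabla f}$. The degree-zero trace of $\omega_{-2}$ has the form $a_{-2}\,\mathrm dr+r\,\mathrm d_\Sigma a_{-2}$. The function $p_{-2}$ itself lies in $C^{k+1,\alpha}_{1;-1,\nabla f}$, by \cref{prop:parabolic-Schauder-estimate} applied to $L$, since $C^{k-1,\alpha}_{-2}\subset C^{k-1,\alpha}_{-1}$. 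Comparing the two traces identifies $a_{-2}$ as the degree-one coefficient of $p_{-2}$. This yields:
\begin{itemize}
\item the trace bound $a_{-2}\in C^{k+1,\alpha}$;
\item $\mathrm dp_{-2}-\mathrm d(\chi ra_{-2})=\omega_{-2}^\star\in C^{k-2,\alpha}_{-2}$, which I upgrade to $C^{k,\alpha}_{-1}$ by \cref{cor:equivalent-critical-norm} and critical-weight interpolation, at the price of one weight;
\item $\nabla^2p_{-2}-\nabla^2(\chi ra_{-2})=\nabla\omega_{-2}^\star\in C^{k-1,\alpha}_{-3}$.
\end{itemize}
The term $\nabla^2(\chi ra_{-2})$ is homogeneous of degree $-1$, and its link coefficient involves $\bar\nabla^2a_{-2}+a_{-2}\bar g$. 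The radial relation $(\nabla_{\mathbf X}-\tfrac12)p=\Delta p-q$ then places the drift term in $\mathscr S_k$, once $\nabla^2p\in\mathscr S_k$ is known. The remainder statement $p^\star\in C^{k+1,\alpha}_0+C^{k,\alpha}_{-1}$ follows from the component bounds via \cref{lem:characterization-of-classic-weighted-Holder-space}.

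\emph{The main obstacle} is the trace regularity $a=a_0+a_{-2}\in C^{k+2,\alpha}$, which is one derivative better than either piece provides. It is also what places the homogeneous degree $-1$ part of $\nabla^2(\chi ra)$ in $\mathscr S_k$. I would argue by interpolation on the link. Put $b:=\bar\nabla^2a+a\bar g$, the degree $-1$ coefficient of $\nabla^2(\chi ra)$. The steps above express $\nabla^2(\chi ra)$ as $x+y$, with $x\in C^{k+1,\alpha}_0$ and $y\in C^{k-1,\alpha}_{-2}$. Restricting to $\Sigma_r$ and multiplying by $r$ gives $b=rx_r+ry_r$, where $\|rx_r\|_{C^{k+1,\alpha}}\leq Cr$ and $\|ry_r\|_{C^{k-1,\alpha}}\leq Cr^{-1}$. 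Choosing $r\sim s^{-1/2}$ shows $K(s,b;C^{k-1,\alpha},C^{k+1,\alpha})\leq Cs^{1/2}\|q\|_{\mathscr S_k}$. Hence $b\in(C^{k-1,\alpha},C^{k+1,\alpha})_{1/2,\infty}=C^{k,\alpha}(\Sigma)$ by the link case of \cref{lem:weighted-real-interpolation}. Taking the trace, $\bar\Delta a+(n-1)a\in C^{k,\alpha}$, and elliptic regularity on $\Sigma$ gives $a\in C^{k+2,\alpha}$. I expect the delicate points to be checking that the cutoff and AC error terms genuinely sit in the two summands, and that the radial-parallel identification of tangential slots introduces no extra powers of $r$. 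Uniqueness of $a$ follows, as in \cref{prop:weighted-Schauder-estimates-intermediate}, from uniqueness of the radial trace of $r^{-1}p$.
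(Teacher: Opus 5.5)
\textbf{There is a genuine gap, in the $q_{-2}$ summand.} Your treatment of the $q_0$ summand is sound and is essentially the paper's. The paper interpolates the trace itself, via $a_R=R^{-1}p(R,\cdot)$, between $C^{k+1,\alpha}$ and $C^{k+3,\alpha}$. You interpolate $b=\bar\nabla^2a+a\bar g$ between $C^{k-1,\alpha}$ and $C^{k+1,\alpha}$ and then use elliptic regularity on $\Sigma$, which is equivalent. The detour through $\omega_0$ is unnecessary, since $p_0^\star\in C^{k+1,\alpha}_0$ already gives $\mathrm dp_0^\star\in C^{k,\alpha}_{-1}$.

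The problem is with $q_{-2}$. Both of your uses of \cref{prop:parabolic-Schauder-estimate} (to $p_{-2}$ with $\lambda=-\tfrac12$, and to $\mathrm dp_{-2}$ with $\lambda=0$) give only the weak endpoint. That endpoint is a trace $a_{-2}\in C^{k+1,\alpha}$ and $\omega^\star_{-2}\in C^{k,\alpha}_{-2,\nabla f}$. Concretely, $\omega^\star_{-2}\in C^{k-2,\alpha}_{-2}$, while $\nabla^{k-1}\omega^\star_{-2}$ and $\nabla^{k}\omega^\star_{-2}$ have weight only $-k$.
\begin{itemize}
\item Membership in $C^{k,\alpha}_{-1}$ requires $\nabla^k\omega^\star_{-2}$ to have weight $-k-1$. \Cref{lem:critical-weight-interpolation} only fills in the intermediate derivative at the same weight $-k$. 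Real interpolation of the two bounds you actually have, $C^{k-2,\alpha}_{-2}$ and $C^{k,\alpha}_0$, yields $C^{k-1,\alpha}_{-1}$, which is one derivative short. So the claimed upgrade to $C^{k,\alpha}_{-1}$ is not justified.
\item The same objection applies to $\nabla\omega^\star_{-2}\in C^{k-1,\alpha}_{-3}$. You only have $C^{k-3,\alpha}_{-3}\cap C^{k-1,\alpha}_{-1}$.
\item Consequently your $K$-functional argument for $b$ has no input for this summand. It needs a splitting $\nabla^2(\chi ra_{-2})=x+y$ with $x\in C^{k+1,\alpha}_0$. Nothing you have controls $\nabla^2p_{-2}$ better than $C^{k-1,\alpha}_{-1}$. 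Moreover, $\nabla^2p\in\mathscr S_k$ is the conclusion you are trying to reach, so the step is circular. With your tools, $a_{-2}$ stays in $C^{k+1,\alpha}$ and the remainder claims for $p_{-2}$ are unproved.
\end{itemize}

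\textbf{The missing idea is a second, stronger endpoint coming from faster source decay.} The paper applies \cref{prop:finite-expansion} with $m=1$ to sources in $C^{k-1,\alpha}_{-3}$. This gives a degree-one trace in $C^{k+3,\alpha}$, a degree $-1$ coefficient in $C^{k+1,\alpha}$, and a remainder in $C^{k+1,\alpha}_{-3,\nabla f}$.
\begin{enumerate}
\item Split the source as $q_{-2}=\eta(r/R)q_{-2}+(1-\eta(r/R))q_{-2}$. The inner piece has $C^{k-1,\alpha}_{-3}$ norm at most $CRN$, and the outer piece has $C^{k-1,\alpha}_{-1}$ norm at most $CR^{-1}N$.
\item Linearity of the trace map then gives $K(R^{-2},a_{-2};C^{k+1,\alpha},C^{k+3,\alpha})\leq CR^{-1}N$, hence $a_{-2}\in C^{k+2,\alpha}$.
\item The gradient remainder map has endpoint outputs $C^{k,\alpha}_0$ and $C^{k,\alpha}_{-2}$. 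Performing the same splitting on each annulus of radius $R$ gives $\mathrm dp_{-2}-\mathrm d(\chi ra_{-2})\in C^{k,\alpha}_{-1}$.
\item For the undifferentiated remainder, the inner degree $-1$ coefficient is $-\mathcal B_{1,-1/2,1}a_{\mathrm{in},R}$. It is controlled in $C^{k,\alpha}$ by the uniform $C^{k+2,\alpha}$ trace bound.
\end{enumerate}
Only after these steps does $\nabla^2p=\nabla^2(\chi ra)+\nabla(\mathrm dp-\mathrm d(\chi ra))\in C^{k,\alpha}_{-1}+C^{k-1,\alpha}_{-2}\subset\mathscr S_k$ follow. The drift estimate is then obtained as you describe.
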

\begin{proof}
First suppose $q\in C^{k+1,\alpha}_0$ and put
$N=\|q\|_{C^{k+1,\alpha}_0}$. Apply
\cref{prop:weighted-Schauder-estimates-intermediate} with
$\lambda=-1/2$, weight zero, and source order $k+1$.
It gives an initial trace $a\in C^{k+1,\alpha}$,
$p-\chi ra\in C^{k+1,\alpha}_0$, and
$\nabla^2p\in C^{k+1,\alpha}_0$, with norms at most $CN$.
For $R$ sufficiently large, set
\[
 a_R(\theta)=R^{-1}p(R,\theta).
\]
The remainder estimate gives
$\|a-a_R\|_{C^{k+1,\alpha}(\Sigma)}\leq CR^{-1}N$.
On the rescaled annulus $(1/2,4)\times\Sigma$, write
$U_R(s,\theta)=R^{-1}p(Rs,\theta)$, where $s=r/R$.
Its $C^{k+1,\alpha}$ norm is at most $CN$, and its second
covariant derivative for the rescaled connection has
$C^{k+1,\alpha}$ norm at most $CRN$. Ordinary local interpolation
on nested rescaled annuli absorbs the first derivatives and gives
\[
 \|a_R\|_{C^{k+3,\alpha}(\Sigma)}\leq CRN.
\]
Thus
$K(R^{-2},a;C^{k+1,\alpha}(\Sigma),C^{k+3,\alpha}(\Sigma))
\leq CNR^{-1}$.
These scales and the initial trace bound control the whole
$K$-functional. The compact-link case of
\cref{lem:weighted-real-interpolation}\textup{(iii)} at parameter
$1/2$ gives $\|a\|_{C^{k+2,\alpha}}\leq CN$.
Differentiating the already controlled remainder gives
$\mathrm dp-\mathrm d(\chi ra)\in C^{k,\alpha}_{-1}$.

Next suppose $q\in C^{k-1,\alpha}_{-2}$ and put
$N=\|q\|_{C^{k-1,\alpha}_{-2}}$.
The two cases $m=0,1$ of \cref{prop:finite-expansion}, at source
order $k-1$, give the following Gaussian inverse endpoints:
\begin{equation}\label{ic:scalar-inverse-endpoints}
\begin{array}{c|c|c|c}
 \text{source}&\text{degree-one trace}&\text{degree-minus-one trace}
                    &\text{remainder}\\ \hline
 C^{k-1,\alpha}_{-1}&C^{k+1,\alpha}&\text{absent}
                      &C^{k+1,\alpha}_{-1,\nabla f}\\
 C^{k-1,\alpha}_{-3}&C^{k+3,\alpha}&C^{k+1,\alpha}
                      &C^{k+1,\alpha}_{-3,\nabla f}.
\end{array}
\end{equation}
Every norm is bounded by the source norm, by the boundedness of $G_{1/2}$. The critical norm and ordinary interpolation
imply, for either remainder weight $\eta$,
\begin{equation}\label{ic:critical-order-conversion}
 \|R_0\|_{C^{k,\alpha}_{\eta+1}}
 +\|\mathrm dR_0\|_{C^{k,\alpha}_{\eta+1}}
 \leq C\|R_0\|_{C^{k+1,\alpha}_{\eta,\nabla f}}.
\end{equation}
Indeed the two highest derivatives in the critical norm have weight
$\eta+1-k$, and the lower derivatives have stronger weights; apply
\cref{lem:critical-weight-interpolation} when a missing intermediate
derivative is needed.

Let $\mathcal Tq$ be the degree-one trace. The first endpoint defines
it on the present source class, and uniqueness makes it linear.
Choose a fixed smooth radial cutoff $\eta$ equal to one on $[0,1]$
and zero on $[2,\infty)$, and put
\[
 q_{\mathrm{in},R}=\eta(r/R)q,\qquad
 q_{\mathrm{out},R}=q-q_{\mathrm{in},R},
\]
where the inner cutoff is extended by one on the compact core.
The two endpoint norms satisfy
\[
 \|q_{\mathrm{in},R}\|_{C^{k-1,\alpha}_{-3}}\leq CRN,
 \qquad
 \|q_{\mathrm{out},R}\|_{C^{k-1,\alpha}_{-1}}\leq CR^{-1}N.
\]
The trace endpoints therefore give
$K(R^{-2},\mathcal Tq;C^{k+1,\alpha}(\Sigma),C^{k+3,\alpha}(\Sigma))
\leq CNR^{-1}$, and hence
$\|\mathcal Tq\|_{C^{k+2,\alpha}}\leq CN$ by the same interpolation
identity. The bound also holds uniformly for
$\mathcal Tq_{\mathrm{in},R}$, whose original weight-minus-two
source norm is at most $CN$.

The linear map
$\mathcal Sq=\mathrm dG_{1/2}q-\mathrm d(\chi r\mathcal Tq)$
has endpoint outputs $C^{k,\alpha}_0$ and $C^{k,\alpha}_{-2}$,
by \cref{ic:scalar-inverse-endpoints,ic:critical-order-conversion}.
At the second endpoint the additional derivative of the degree-minus-one
term has precisely weight minus two and sufficient regularity.
On the annulus of radius $R$, split the source as above to obtain
\[
 \|\mathcal Sq\|_{C^{k,\alpha}(\text{normalized annulus})}
 \leq C\bigl(R^{-2}RN+R^{-1}N\bigr)=CR^{-1}N.
\]
This proves the desired gradient remainder estimate globally.

For the undifferentiated remainder, the outer solution minus its
degree-one part is bounded by $CR^{-1}N$ on the same annulus.
The inner expansion is
\[
 G_{1/2}q_{\mathrm{in},R}
  =\chi r a_{\mathrm{in},R}
    +\chi r^{-1}b_{\mathrm{in},R}+w_{\mathrm{in},R},
 \qquad
 \|w_{\mathrm{in},R}\|_{C^{k+1,\alpha}_{-3,\nabla f}}\leq CRN,
\]
where $a_{\mathrm{in},R}=\mathcal Tq_{\mathrm{in},R}$.
Thus $w_{\mathrm{in},R}$ contributes at most $CR^{-1}N$ by
\cref{ic:critical-order-conversion}. Since the inner source vanishes
near infinity, the first coefficient recursion of
\cref{prop:finite-expansion} gives
\[
 b_{\mathrm{in},R}=-\mathcal B_{1,-1/2,1}a_{\mathrm{in},R},
\]
where the fixed link operator has order at most two. The uniform
$C^{k+2,\alpha}$ bound just proved for $a_{\mathrm{in},R}$ gives
$\|b_{\mathrm{in},R}\|_{C^{k,\alpha}}\leq CN$.
Its degree-minus-one contribution is therefore at most $CR^{-1}N$,
as required. The fixed core follows from the first endpoint.

For each summand, write
\[
 \nabla^2p=\nabla^2(\chi ra)
       +\nabla\bigl(\mathrm dp-\mathrm d(\chi ra)\bigr).
\]
The first term belongs to $C^{k,\alpha}_{-1}$, and the second to
$C^{k-1,\alpha}_{-2}$. \Cref{ic:sum-multiplier} gives the Hessian
sum estimate. The equation gives
$(\nabla_{\mathbf X}-1/2)p=\Delta p-q$, proving the drift estimate.
Finally, decompose a general $q$ into the two summands, use linearity
of the Gaussian inverse and uniqueness of the degree-one trace, and
take the infimum over decompositions. Dividing the expansion by $r$
proves uniqueness of its coefficient.
\end{proof}

\begin{Lemma}\label{ic:degree-zero-terminal}
Let $E$ be a fixed tensor bundle, and suppose $u\in L^2_{f}(E)$
solves
\[
 Lu=\chi\boldsymbol\tau(F_0)+F^\star,
 \qquad F_0\in C^{k+1,\alpha}(\Sigma,E_\Sigma),\quad
 F^\star\in C^{k,\alpha}_{-1}(E).
\]
There is a unique $a_1\in C^{k+2,\alpha}(\Sigma,E_\Sigma)$ such that
\begin{equation}\label{ic:degree-zero-expansion}
 u=\chi r\boldsymbol\tau(a_1)+2\chi\boldsymbol\tau(F_0)+u^\star,
 \qquad u^\star\in C^{k,\alpha}_{-1}.
\end{equation}
Moreover,
\begin{equation}\label{ic:degree-zero-whole-fields}
 \nabla^2u,\quad
 \nabla u-\nabla^C\bigl(\chi r\boldsymbol\tau(a_1)\bigr),\quad
 (\nabla_{\mathbf X}-\tfrac12)u+\chi\boldsymbol\tau(F_0)
 \in C^{k,\alpha}_{-1}.
\end{equation}
The sum of the trace, remainder, and whole-field norms is bounded by
$C(\|F_0\|_{C^{k+1,\alpha}}+
\|F^\star\|_{C^{k,\alpha}_{-1}}+\|u\|_{L^2_{f}})$.
\end{Lemma}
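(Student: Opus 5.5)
We would first remove the degree-zero forcing by an explicit homogeneous ansatz, then treat the remaining source with the critical Schauder estimate at $\lambda=-\tfrac12$. The characteristic degree is $-2\lambda=1$. By \cref{lem:homogeneous-drift-operator-expansion} with $\gamma=0$ and $\lambda=-\tfrac12$, the leading multiplier is $\mathcal B_{0,-1/2,0}=\tfrac12\operatorname{Id}$. Hence
\[
 L\bigl(2\chi\boldsymbol\tau(F_0)\bigr)=\chi\boldsymbol\tau(F_0)+R_0,
\]
where $R_0$ consists of $2\chi r^{-2}\boldsymbol\tau(\mathcal B_{0,-1/2,1}F_0)$, a cutoff term, and a faster remainder. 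Since $\mathcal B_{0,-1/2,1}$ has order two, $R_0\in C^{k-1,\alpha}_{-2}$ with norm at most $C\|F_0\|_{C^{k+1,\alpha}}$. Put $w=u-2\chi\boldsymbol\tau(F_0)\in L^2_f$. Then $Lw=F^\star-R_0$.

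\textbf{The two source pieces.} We treat $F^\star$ and $R_0$ separately, using linearity of the Gaussian inverse; $L$ is invertible on $L^2_f$ by the non-splitting argument preceding \cref{ic:scalar-terminal}, applied componentwise to tensors. For the piece $F^\star\in C^{k,\alpha}_{-1}=C^{k,\alpha}_{-2\lambda-2}$, \cref{prop:parabolic-Schauder-estimate} applies directly. It gives $w_1\in C^{k+2,\alpha}_{1;-1,\nabla f}$, that is, a trace in $C^{k+2,\alpha}(\Sigma)$ and a remainder in $C^{k+2,\alpha}_{-1,\nabla f}$. This remainder lies in $C^{k,\alpha}_{-1}$, and its Hessian and drift are controlled at weight $-1$. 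For $R_0$ the regularity is one order short, so we repeat verbatim the second half of the proof of \cref{ic:scalar-terminal}; that argument only uses \cref{prop:finite-expansion}, which holds for tensors. We split $R_0=\eta(r/R)R_0+(1-\eta(r/R))R_0$. We use the endpoint $m=0$ for the outer part in $C^{k-1,\alpha}_{-1}$, with norm $\lesssim R^{-1}$. We use the endpoint $m=1$ for the inner part in $C^{k-1,\alpha}_{-3}$, with norm $\lesssim R$, which yields a degree-one trace in $C^{k+3,\alpha}$. The $K$-functional bound and \cref{lem:weighted-real-interpolation}\textup{(iii)} then put the degree-one trace in $C^{k+2,\alpha}$. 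The annulus-by-annulus bound of order $R^{-1}N$ puts both the undifferentiated remainder and its first derivative in weight $-1$, at order $k$. Summing the two pieces gives $a_1$ and $u^\star$ in \cref{ic:degree-zero-expansion}. Uniqueness of $a_1$ follows by dividing by $r$ along rays.

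\textbf{Whole-field bounds.} The drift statement is immediate once the Hessian is known, since
\[
 (\nabla_{\mathbf X}-\tfrac12)u+\chi\boldsymbol\tau(F_0)=\Delta u-F^\star .
\]
The gradient statement follows from the remainder gradient bounds just obtained, together with $\nabla$ of the degree-zero term. The cone/shrinker connection difference contributes weight $-2$ or better.

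\textbf{Main obstacle.} I expect the main obstacle to be $\nabla^2u\in C^{k,\alpha}_{-1}$. The explicit term $\nabla^2(2\chi\boldsymbol\tau(F_0))=r^{-2}\boldsymbol\tau(\cdots)$ has degree $-2$ but only $C^{k-1,\alpha}$ link regularity. Cone scaling therefore places it naively in $C^{k-1,\alpha}_{-2}$, not in $C^{k,\alpha}_{-1}$. I would try the same annulus splitting, now applied to $F_0$ itself. On the annulus of radius $R$, regularize $F_0$ on the link at scale $R^{-1}$, so that $F_0=F_0^{(R)}+e_R$ with $\|F_0^{(R)}\|_{C^{k+2,\alpha}}\lesssim R$ and $\|e_R\|_{C^{k,\alpha}}\lesssim R^{-1}$. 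The normalized weight-$(-1)$, order-$k$ norm of the smooth part's Hessian is then $\lesssim R^{-1}\cdot R$. For the rough part $e_R$, apply the first-order Hessian equation from \cref{lem:commuting-weighted-Laplacian-and-connection}, with shifted parameter $\lambda=\tfrac12$, weight $-1$, and $2\lambda+\beta=0$. This is the inhomogeneous case of \cref{prop:weighted-Schauder-estimates-intermediate}, which controls $\nabla^2$ of the corresponding solution at order $k$. If this bookkeeping closes, the two contributions recombine, giving the asserted bound by $C(\|F_0\|_{C^{k+1,\alpha}}+\|F^\star\|_{C^{k,\alpha}_{-1}}+\|u\|_{L^2_f})$.
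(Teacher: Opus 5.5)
Your diagnosis that the Hessian is the crux is right, but the proposal does not resolve it, and one supporting claim is false.

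\textbf{The invertibility claim.} $L=\Delta_f+\tfrac12$ is not invertible on $L^2_f$ for tensors. The argument before \cref{ic:scalar-terminal} is specific to functions. On $T^*M$, the identity $\Delta_f\mathrm du=\mathrm d(\Delta_f+\tfrac12)u$ with $u=f$ gives $L\,\mathrm df=0$, and $\mathrm df\in L^2_f$. Killing forms tangent to the level sets of $f$ also lie in $\ker L$. One-forms are exactly the case used in \cref{ic:graph-estimates}, and this kernel is why the bound contains $\|u\|_{L^2_f}$. Consequently the pieces $F^\star$, $\eta(r/R)R_0$ and $(1-\eta(r/R))R_0$ need not lie in $\operatorname{Ran}L=(\ker L)^\perp$. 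They cannot be inverted separately without a compactly supported range correction and a kernel term, as in \cref{lem:weak-estimate-by-induction}.

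\textbf{The Hessian gap.} With $u=2\chi\boldsymbol\tau(F_0)+w$, the term $\nabla^2(2\chi\boldsymbol\tau(F_0))$ has only $C^{k-1,\alpha}$ link regularity, so in general it is not in $C^{k,\alpha}_{-1}$. The method of \cref{ic:scalar-terminal}, applied to the $C^{k-1,\alpha}_{-2}$ source $R_0$, puts $\nabla^2w$ only in $C^{k,\alpha}_{-1}+C^{k-1,\alpha}_{-2}$. The cancellation between these two pieces is invisible in your splitting, and your repair does not produce it, for three reasons.
\begin{itemize}
\item \cref{prop:weighted-Schauder-estimates-intermediate} at the critical value $2\lambda+\beta=0$, applied to the Hessian equation, controls $\nabla^2H$ and $(\nabla_{\mathbf X}+\tfrac12)H$. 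It does not bound $H$ itself in $C^{k,\alpha}_{-1}$; at that weight the $C^0$ estimate loses a logarithm.
\item The Hessian source produced by $\chi\boldsymbol\tau(e_R)$ has only $C^{k-2,\alpha}$ link regularity.
\item You bound $\nabla^2$ of the explicit smooth term, but not the Hessian of the solution it drives, including that solution's degree-one part.
\end{itemize}
A more careful per-annulus argument might close, but not the one described.

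\textbf{The missing idea.} The paper builds the lift from a positive-shift resolvent rather than from the bare homogeneous term:
\[
U_0=-4(\Delta_f-2)^{-1}\chi\boldsymbol\tau(F_0).
\]
The operator $\Delta_f-2$ has trivial Gaussian kernel on every bundle, and $2\cdot2+\beta>0$ for $\beta=0,-2$. So \cref{prop:weighted-Schauder-estimates-BCD} makes the fixed linear map $F_0\mapsto\bigl(U_0-2\chi\boldsymbol\tau(F_0),\nabla^2U_0\bigr)$ bounded from $C^{k,\alpha}$ into $C^{k,\alpha}_0$. Using
\[
(\Delta_f-2)\bigl(U_0-2\chi\boldsymbol\tau(F_0)\bigr)=-2\Delta_f\bigl(\chi\boldsymbol\tau(F_0)\bigr),
\]
the same map is bounded from $C^{k+2,\alpha}$ into $C^{k,\alpha}_{-2}$. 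Interpolating at $\vartheta=\tfrac12$ by \cref{lem:weighted-real-interpolation} puts both outputs in $C^{k,\alpha}_{-1}$ when $F_0\in C^{k+1,\alpha}$. This gives the whole-field Hessian bound with the cancellation built in.

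The gradient then follows from the commuted equation for $\nabla U_0$, with shift $\tfrac52$ and weight $-1$. Since $LU_0-\chi\boldsymbol\tau(F_0)=\tfrac52\bigl(U_0-2\chi\boldsymbol\tau(F_0)\bigr)\in C^{k,\alpha}_{-1}$, the residual $u-U_0$ has source in $C^{k,\alpha}_{-1}$. Hence \cref{prop:parabolic-Schauder-estimate} applies to it directly, with the $\|u\|_{L^2_f}$ term, and yields the trace $a_1\in C^{k+2,\alpha}$. No one-order-short source, no splitting of the source, and no invertibility of $L$ are needed.
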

\begin{proof}
Put $F=\chi\boldsymbol\tau(F_0)$ and define
\[
 U_0=-4(\Delta_{f}-2)^{-1}F,
 \qquad W=U_0-2F.
\]
The positive shift has trivial Gaussian kernel by its energy identity,
so \cref{prop:Gaussian-Fredholm-realization} gives this inverse on
every tensor bundle. If $F_0\in C^{k,\alpha}$,
\cref{prop:weighted-Schauder-estimates-BCD} at weight zero gives
$U_0\in C^{k+2,\alpha}_{0,\nabla f}$ with controlled norm.
Consequently $W$ and $\nabla^2U_0$ belong to $C^{k,\alpha}_0$.
If $F_0\in C^{k+2,\alpha}$, the homogeneous extension satisfies
$\Delta_{f}F\in C^{k,\alpha}_{-2}$ and
\[
 (\Delta_{f}-2)W=-2\Delta_{f}F.
\]
The same proposition at weight minus two gives
$W\in C^{k+2,\alpha}_{-2,\nabla f}$; adding
$2\nabla^2F$ also controls $\nabla^2U_0$ in
$C^{k,\alpha}_{-2}$. Interpolation of these two fixed linear maps,
using \cref{lem:weighted-real-interpolation}\textup{(ii), (iii)}
at parameter $1/2$, proves
\[
 \|W\|_{C^{k,\alpha}_{-1}}
       +\|\nabla^2U_0\|_{C^{k,\alpha}_{-1}}
 \leq C\|F_0\|_{C^{k+1,\alpha}}.
\]
The gradient uses its one-derivative endpoint. Set $V=\nabla U_0$.
The exact identity \cref{eq:exact-weighted-gradient-commutator} gives
\[
 (\Delta_{f}-\tfrac52)V_i
        -2\mathcal R^E_{ai}V_a=-4\nabla_iF.
\]
Here $\nabla F\in C^{k,\alpha}_{-1}$, and the curvature
coefficient has weight minus two. The Gaussian inverse estimate
controls $\|V\|_{L^2_{f}}$. Applying
\cref{prop:weighted-Schauder-estimates-BCD}, with shift $5/2$
and weight minus one, gives
$\|\nabla U_0\|_{C^{k,\alpha}_{-1}}
\leq C\|F_0\|_{C^{k+1,\alpha}}$.
Finally, the defining equation gives the exact identities
\[
 LU_0-F=\tfrac52W,
 \qquad (\nabla_{\mathbf X}-\tfrac12)U_0+F
       =\Delta U_0-\tfrac52W.
\]
Thus all the lift errors have weight minus one at order $k$.

The residual $u-U_0$ has source in $C^{k,\alpha}_{-1}$.
Apply the $m=0$ case of \cref{prop:finite-expansion} at source
order $k$. It gives
\[
 u-U_0=\chi r\boldsymbol\tau(a_1)+R_1,
 \quad a_1\in C^{k+2,\alpha},\quad
 R_1\in C^{k+2,\alpha}_{-1,\nabla f}.
\]
The equivalent critical norm controls $R_1$, its Hessian and its
drift at weight minus one and order $k$. Applying
\cref{lem:critical-weight-interpolation} to its derivatives through
order $k$ also controls its gradient at that weight and order.
The Hessian of the leading degree-one field is in
$C^{k,\alpha}_{-1}$ because $a_1\in C^{k+2,\alpha}$.
The difference between the background and cone connections acting
on that field has weight minus two. Adding the lift estimates proves
\cref{ic:degree-zero-expansion,ic:degree-zero-whole-fields}.
The compact transition region is covered by the same finite-order
estimates, and uniqueness follows by dividing the expansion by $r$.
\end{proof}

\subsection{The fixed gauge graph}

\begin{Definition}\label{ic:graph-definition}
With $\mathcal K$ as in \cref{eq:Gaussian-Killing-space}, define
\begin{equation}\label{ic:target-definition}
 \begin{split}
 \mathscr Y_k=\{v={}&\chi\boldsymbol\tau(c_0)+v^\star:
        c_0\in C^{k+1,\alpha}(\Sigma,T^*\Sigma),\quad
        v^\star\in C^{k,\alpha}_{-1}(T^*M),\quad
        \delta_{f}v\in\mathscr S_k\},\\
 \|v\|_{\mathscr Y_k}={}&\|c_0\|_{C^{k+1,\alpha}}
       +\|v^\star\|_{C^{k,\alpha}_{-1}}
       +\|\delta_{f}v\|_{\mathscr S_k},
 \end{split}
\end{equation}
and
\begin{equation}\label{ic:domain-definition}
 \mathscr X_k=\{Z\in H^2_{f}(T^*M)\cap \mathcal K^\perp:
                                  \mathcal P Z\in\mathscr Y_k\},
 \qquad
 \|Z\|_{\mathscr X_k}=\|Z\|_{H^2_{f}}
                                  +\|\mathcal P Z\|_{\mathscr Y_k}.
\end{equation}
The leading coefficient in \cref{ic:target-definition} is tangential
and unique.
\end{Definition}

\begin{Proposition}\label{ic:graph-isomorphism}
The spaces $\mathscr X_k$ and $\mathscr Y_k$ are Banach, and
\[
 \mathcal P:\mathscr X_k\longrightarrow\mathscr Y_k\cap \mathcal K^\perp
\]
is a bounded isomorphism with inverse the Gaussian gauge inverse.
\end{Proposition}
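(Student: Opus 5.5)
We prove the three assertions in order: completeness of $\mathscr Y_k$, completeness of $\mathscr X_k$, and the isomorphism property.

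\emph{Completeness of $\mathscr Y_k$.} The pair $(c_0,v^\star)$ ranges over the Banach product
\[
C^{k+1,\alpha}(\Sigma,T^*\Sigma)\times C^{k,\alpha}_{-1}(T^*M).
\]
The map $v\mapsto\delta_fv$ is continuous from this product into distributions, and hence into $L^2_f$ by \cref{lem:Holder-to-Gaussian-Sobolev} and \cref{lem:Poincare-inequality}. Thus $\mathscr Y_k$ is the preimage of the Banach space $\mathscr S_k$ under this map, carrying the graph norm. Let $v_i$ be Cauchy in $\mathscr Y_k$. Then $(c_0^i,v^{\star,i})\to(c_0,v^\star)$ in the product, and $\delta_fv_i\to q$ in $\mathscr S_k$. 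Both topologies embed continuously in distributions by \cref{ic:sum-multiplier}, so $q=\delta_fv$. Uniqueness of the tangential leading coefficient follows by dividing by $r$, since the remainder has weight $-1$. This makes the decomposition well defined.

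\emph{Completeness of $\mathscr X_k$.} By \cref{ic:sum-multiplier} and \cref{lem:Holder-to-Gaussian-Sobolev}, $\mathscr Y_k\hookrightarrow L^2_f$ continuously. The operator $\mathcal P:H^2_f\to L^2_f$ is bounded, by \cref{eq:gauge-operator} together with \cref{lem:Poincare-inequality} and \cref{prop:elliptic-theory-for-weighted-Laplacian}. Hence $\mathscr X_k$ is the graph space of a closed operator. If $Z_i$ is Cauchy, then $Z_i\to Z$ in $H^2_f$ and $\mathcal PZ_i\to w$ in $\mathscr Y_k$. It follows that $\mathcal PZ_i\to\mathcal PZ$ in $L^2_f$, and $\mathcal PZ_i\to w$ in $L^2_f$ as well, so $w=\mathcal PZ$. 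Finally, $\mathcal K^\perp$ is closed in $H^2_f$.

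\emph{The isomorphism.} Boundedness is immediate from the definition of the norm on $\mathscr X_k$. The range lies in $\mathcal K^\perp$ because $\mathcal P$ is self-adjoint with kernel $\mathcal K$ (\cref{prop:elliptic-theory-for-gauge-operator}). Injectivity follows since $\mathcal PZ=0$ with $Z\in\mathcal K^\perp$ forces $Z=0$. For surjectivity, take $v\in\mathscr Y_k\cap\mathcal K^\perp\subset L^2_f\cap\mathcal K^\perp$. Then \cref{eq:Gaussian-gauge-complement-isomorphism} with $k=0$ gives a unique $Z\in H^2_f\cap\mathcal K^\perp$ with $\mathcal PZ=v$ and
\[
\|Z\|_{H^2_f}\le C\|v\|_{L^2_f}\le C\|v\|_{\mathscr Y_k}.
\]
Consequently
\[
\|Z\|_{\mathscr X_k}\le C\|v\|_{\mathscr Y_k}.
\]
So the inverse is exactly the Gaussian gauge inverse, and its boundedness comes for free from the definition of the domain norm.

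The only delicate point is the continuity of the embeddings $\mathscr Y_k\hookrightarrow L^2_f$ and $\mathscr S_k\hookrightarrow L^2_f$, including the homogeneous degree-zero term $\chi\boldsymbol\tau(c_0)$, which has polynomial growth. This is handled by the polynomial integrability in \cref{lem:Holder-to-Gaussian-Sobolev} and by \cref{ic:sum-multiplier}. No regularity bootstrapping is needed here, since $\mathscr X_k$ is defined as a graph. The regularity content, namely the asymptotic description of $Z$, is deferred to later estimates that rely on \cref{ic:scalar-terminal,ic:degree-zero-terminal}.
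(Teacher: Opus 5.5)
Your proposal is correct and follows the paper's proof essentially step for step: completeness of $\mathscr Y_k$ from distributional continuity of $\delta_f$, completeness of $\mathscr X_k$ from boundedness of $\mathcal P:H^2_f\to L^2_f$ together with closedness of $\mathcal K^\perp$, and surjectivity from the $k=0$ Gaussian isomorphism with $\|Z\|_{H^2_f}\le C\|v\|_{L^2_f}\le C\|v\|_{\mathscr Y_k}$. The only cosmetic difference is that the paper bounds $\mathcal P=\delta_f\circ\delta^*$ directly, using the continuity of $\delta_f:H^1_f\to L^2_f$ from \cref{lem:Poincare-inequality}, whereas you go through the identity \cref{eq:gauge-operator}; either route works.
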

\begin{proof}
The coefficient and remainder product in \cref{ic:target-definition}
is Banach and embeds in distributions and $L^2_{f}$.
Distributional continuity of $\delta_{f}$ identifies the
limit of a graph-Cauchy sequence, proving completeness of
$\mathscr Y_k$. The confinement estimate and
$\delta^*:H^2_{f}\to H^1_{f}$ give
\[
 \|\mathcal P Z\|_{L^2_{f}}
       \leq C\|\delta^*Z\|_{H^1_{f}}
       \leq C\|Z\|_{H^2_{f}}.
\]
Consequently a Cauchy sequence in \cref{ic:domain-definition} has an
$H^2_{f}\cap \mathcal K^\perp$ limit, and distributional continuity
identifies its image with the limit in $\mathscr Y_k$.
Finally \cref{prop:elliptic-theory-for-gauge-operator} gives the
self-adjoint Gaussian realization and its bounded inverse on
$\mathcal K^\perp$. For $v\in\mathscr Y_k\cap \mathcal K^\perp$, that inverse
satisfies
$\|Z\|_{H^2_{f}}\leq C\|v\|_{L^2_{f}}
\leq C\|v\|_{\mathscr Y_k}$ and belongs to $\mathscr X_k$ by
definition. This proves the stated isomorphism.
\end{proof}

\begin{Proposition}\label{ic:graph-estimates}
Every $Z\in\mathscr X_k$ has unique coefficients and remainder
\begin{equation}\label{ic:gauge-expansion}
 \begin{gathered}
 Z=\chi\{r\boldsymbol\tau(b_1)
                 +\boldsymbol\tau(a_0\,\mathrm dr+b_0)\}+Z^\star,\\
 a_0\in C^{k+2,\alpha}(\Sigma),\qquad
 b_1\in C^{k+2,\alpha}(\Sigma,T^*\Sigma),\\
 b_0\in C^{k+1,\alpha}(\Sigma,T^*\Sigma),\qquad
 Z^\star\in C^{k,\alpha}_{-1}(T^*M).
 \end{gathered}
\end{equation}
Set $p=\delta_fZ$. Then
\begin{equation}\label{ic:gauge-whole-fields}
 \begin{gathered}
 \nabla^2Z,\quad \nabla Z^\star,\quad
 \nabla_{\mathbf X}Z^\star,\quad
 \mathrm d\bigl(p+\tfrac12\chi ra_0\bigr)
       \in C^{k,\alpha}_{-1},\\
 Lp\in\mathscr S_k.
 \end{gathered}
\end{equation}
All coefficient, remainder and whole-field norms are bounded by
$C\|Z\|_{\mathscr X_k}$, and their dependence on $Z$ is linear.
In addition,
\begin{equation}\label{ic:whole-coordinate-regularity}
 \|Z\|_{C^{k+2,\alpha}_1}\leq C\|Z\|_{\mathscr X_k}.
\end{equation}
If $c_0$ is the leading coefficient of $v=\mathcal P Z$ and $p_1$ the
degree-one coefficient of $p$, then
\begin{equation}\label{ic:degree-zero-gauge-action}
 a_0=-2p_1,\qquad b_0=\mathrm d_\Sigma a_0-4c_0.
\end{equation}
\end{Proposition}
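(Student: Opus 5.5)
The plan is to reduce everything to the two terminal lemmas, \cref{ic:scalar-terminal,ic:degree-zero-terminal}, using the gauge operator formula \cref{eq:gauge-operator} and the commutation identities that follow it. Write $v=\mathcal PZ=\chi\boldsymbol\tau(c_0)+v^\star$ and $p=\delta_fZ$. Since $Z\in H^2_f$, \cref{lem:Poincare-inequality} gives $p\in H^1_f$.

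\emph{Step 1: the scalar equation for $p$.} Applying $\delta_f$ to \cref{eq:gauge-operator} and using $\delta_f\Delta_f\omega=(\Delta_f+\tfrac12)\delta_f\omega$ together with $\delta_f\mathrm d=\Delta_f$, I expect
\[
 \delta_f v=-\tfrac12(\Delta_f+\tfrac12)p-\tfrac12\Delta_fp-\tfrac14p=-Lp .
\]
This holds distributionally, first for smooth compactly supported approximants and then by density in $H^2_f$. Hence $Lp=-\delta_fv\in\mathscr S_k$, with norm at most $\|v\|_{\mathscr Y_k}$. Because $L$ is invertible on $L^2_f$, we get $p=G_{1/2}(-\delta_fv)$. \Cref{ic:scalar-terminal} then supplies a unique $p_1\in C^{k+2,\alpha}(\Sigma)$ with $p=\chi rp_1+p^\star$ and $\mathrm dp-\mathrm d(\chi rp_1)\in C^{k,\alpha}_{-1}$, all bounded by $C\|Z\|_{\mathscr X_k}$.

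\emph{Step 2: the vector equation for $Z$.} Rewriting \cref{eq:gauge-operator} gives
\[
 LZ=-2v-\mathrm dp .
\]
On the end, $\mathrm d(rp_1)=p_1\,\mathrm dr+r\pi^*\mathrm d_\Sigma p_1=\boldsymbol\tau(p_1\,\mathrm dr+\mathrm d_\Sigma p_1)$. Therefore the right-hand side has the form $\chi\boldsymbol\tau(F_0)+F^\star$. Here
\[
 F_0=-2c_0-p_1\,\mathrm dr-\mathrm d_\Sigma p_1\in C^{k+1,\alpha},
\]
and $F^\star\in C^{k,\alpha}_{-1}$ collects $-2v^\star$, the gradient remainder from Step 1, and the cutoff error. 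Applying \cref{ic:degree-zero-terminal} on $T^*M$ yields
\[
 Z=\chi r\boldsymbol\tau(a_1)+2\chi\boldsymbol\tau(F_0)+Z^\star .
\]
Reading off the radial and tangential parts of $2F_0$ gives $a_0=-2p_1$ and $b_0=-4c_0-2\mathrm d_\Sigma p_1=\mathrm d_\Sigma a_0-4c_0$. This is exactly \cref{ic:degree-zero-gauge-action}, and it gives the regularity $a_0\in C^{k+2,\alpha}$, $b_0\in C^{k+1,\alpha}$.

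The whole-field statements in \cref{ic:gauge-whole-fields} follow from \cref{ic:degree-zero-whole-fields}. In particular, $\nabla_{\mathbf X}Z^\star\in C^{k,\alpha}_{-1}$ follows from the drift identity, after noting that $(\nabla_{\mathbf X}-\tfrac12)$ annihilates the degree-one term up to curvature errors of weight $-1$ and maps the degree-zero term to $-\tfrac12$ times itself. The identity $\mathrm d(p+\tfrac12\chi ra_0)=\mathrm dp-\mathrm d(\chi rp_1)$ gives the remaining whole field. The bound \cref{ic:whole-coordinate-regularity} follows from $Z\in C^0_1$ and $\nabla^2Z\in C^{k,\alpha}_{-1}$, via \cref{lem:critical-weight-interpolation} and \cref{lem:basic-properties}\cref{lem:basic-properties-continuity-of-covariant-derivative}. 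Linearity in $Z$ follows because every map used is linear and every coefficient is unique, by dividing by powers of $r$.

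\emph{Step 3 (main obstacle): tangentiality of $b_1$.} \Cref{ic:degree-zero-terminal} only gives $a_1\in C^{k+2,\alpha}(\Sigma,T^*\mathscr C|_\Sigma)$, so a radial coefficient $\sigma$, entering as a term $\chi r\sigma\,\mathrm dr$, must be excluded. My plan is to compute $\delta_f$ of the degree-one field. The drift part $-\langle\nabla f,\cdot\rangle=-\tfrac r2\,\iota_{\partial_r}$ turns $r\sigma\,\mathrm dr$ into $-\tfrac12r^2\sigma$, which is of degree two. The divergence and the lower terms of $Z$ contribute only degree at most one plus $C^{k-1,\alpha}_{1}$ errors. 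Comparing with $p=\chi rp_1+p^\star$, whose top degree is one, forces $\sigma=0$; then $b_1$ is the tangential part of $a_1$. Tracking the Ricci and connection-difference corrections here, and at the cutoff collar, is the delicate bookkeeping I expect to dominate the proof.
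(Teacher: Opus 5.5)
Your proposal is correct and follows essentially the same route as the paper. The common steps are: the coupled system $Lp=-\delta_fv$, $LZ=-\mathrm dp-2v$; the scalar terminal lemma for $p$; the degree-zero lift for $Z$ with source $F_0=-2c_0-p_1\,\mathrm dr-\mathrm d_\Sigma p_1$; and exclusion of a radial degree-one coefficient by comparing degrees in $p=\delta Z-Z(\mathbf X)$.

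The only imprecision is in \eqref{ic:whole-coordinate-regularity}, which needs the scale-consistent interpolation ($Z$ at weight one, $\nabla^2Z$ at weight minus one), not the equal-weight \cref{lem:critical-weight-interpolation}. That bound follows from ordinary interpolation on rescaled annuli, or directly from $\nabla Z\in C^{k,\alpha}_0$ (supplied by the gradient estimate of \cref{ic:degree-zero-terminal}) together with \cref{lem:basic-properties}\cref{lem:basic-properties-continuity-of-covariant-derivative}.
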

\begin{proof}
The confinement estimate gives $p\in L^2_{f}$ with norm bounded
by $C\|Z\|_{H^2_{f}}$. The commutation identities for the
weighted divergence and gauge operator give, distributionally,
\begin{equation}\label{ic:coupled-gauge-system}
 Lp=-\delta_{f}v,\qquad LZ=-\mathrm dp-2v.
\end{equation}
These signs use $\delta^*=-\operatorname{Sym}\nabla$.
The Gaussian realization identifies $p$ with
$-G_{1/2}\delta_{f}v$.
\Cref{ic:scalar-terminal} gives
\[
 p=\chi rp_1+p^\star,\qquad p_1\in C^{k+2,\alpha},\qquad
 p^\star\in C^{k+1,\alpha}_0+C^{k,\alpha}_{-1},\qquad
 \mathrm dp-\mathrm d(\chi rp_1)\in C^{k,\alpha}_{-1}.
\]
The second equation in \cref{ic:coupled-gauge-system} has degree-zero
source
\[
 F_0=-p_1\,\mathrm dr-\mathrm d_\Sigma p_1-2c_0
\]
in the link identification, of regularity $C^{k+1,\alpha}$, and
source remainder in $C^{k,\alpha}_{-1}$. Fixed cutoff errors are
included in the remainder. \Cref{ic:degree-zero-terminal} therefore
gives a degree-one coefficient $c_1\in C^{k+2,\alpha}$, the
degree-zero coefficient $2F_0$, a remainder of weight minus one,
and the three whole-field estimates of that lemma.

The degree-one radial component of $c_1$ vanishes. Indeed the gradient
estimate just obtained shows that $\delta Z$ is bounded,
whereas a nonzero radial degree-one term would give a degree-two term
in $Z(\mathbf X)$. This is incompatible with
$p=\delta Z-Z(\mathbf X)$ and the scalar expansion above. Hence
$c_1=b_1$ is tangential, $a_0=-2p_1$, and
$b_0=-2\mathrm d_\Sigma p_1-4c_0$, proving both the expansion and
\cref{ic:degree-zero-gauge-action}. The scalar gradient estimate gives
$\mathrm d(p+\tfrac12\chi ra_0)\in C^{k,\alpha}_{-1}$, while the
first coupled equation gives $Lp\in\mathscr S_k$.

To obtain the estimates directly for the remainder, put
$Z_1=\chi r\boldsymbol\tau(b_1)$ and
$Z_0=\chi\boldsymbol\tau(a_0\,\mathrm dr+b_0)$.
The coefficient regularities give $\nabla Z_0\in C^{k,\alpha}_{-1}$,
and the background--cone connection difference gives
$(\nabla-\nabla^C)Z_1\in C^{k,\alpha}_{-2}$.
Subtracting these fields from the gradient cancellation estimate in
\cref{ic:degree-zero-terminal} proves
$\nabla Z^\star\in C^{k,\alpha}_{-1}$.
Outside the cutoff region,
$\nabla^C_{\mathbf X}Z_1=\tfrac12Z_1$ and
$\nabla^C_{\mathbf X}Z_0=0$. Thus
\[
 \nabla_{\mathbf X}Z^\star
  =\bigl((\nabla_{\mathbf X}-\tfrac12)Z+\tfrac12Z_0\bigr)
    +\tfrac12Z^\star
    -(\nabla_{\mathbf X}-\tfrac12)Z_1-\nabla_{\mathbf X}Z_0
    \in C^{k,\alpha}_{-1}.
\]
Indeed the first term is the drift estimate of
\cref{ic:degree-zero-terminal}, and the final two terms are controlled
by the connection difference, with fixed compact cutoff terms included.
The whole Hessian estimate for $Z$ is already part of that lemma.
Uniqueness of the successive traces makes all the coefficient and
remainder maps linear, and every estimate used is bounded in the
graph norm.

Finally, on an annulus of radius $R$, normalize $Z$ by its weight one with respect to $g$. Its $C^{k,\alpha}$ norm is uniformly bounded by the
expansion, and its normalized second covariant derivative is
uniformly bounded by $R\|\nabla^2Z\|_{C^{k,\alpha}}$. Ordinary
interpolation for uniformly controlled connections on nested annuli
gives a uniform $C^{k+2,\alpha}$ bound. Scaling back gives \cref{ic:whole-coordinate-regularity}. On the compact core, elliptic regularity for
$\mathcal P Z=v$ gives $C^{k+2,\alpha}$ control. This completes the proof.
\end{proof}

\section{Weighted slice theorem and rigidity}\label{sec:slice-theorem}

For the local addition, slice construction, and finite analytic estimates,
$(M^n,g,f)$ is a fixed complete
nontrivial AC Ricci shrinker in the canonical setting. We retain the
mass normalization of \cref{eq:normalization-of-potential} and the
spaces, estimates, and conventions of \cref{sec:fixed-Banach} for this
background. Write $P(h,Z)$ for the nonlinear gauge residual associated
with the linear operator $\mathcal P$. All finite weighted norms are
computed using $g$ and $f$.
In a link chart $\theta=(\theta^1,\ldots,\theta^{n-1})$, a subscript
$\theta^i$ labels the corresponding tangential coordinate component.

All estimates in this section use conically scaled norms. For a
section, $O_{j+\alpha}(r^\beta)$\phantomsection\label{idx:conic-O} denotes a bound in $C^{j,\alpha}_\beta$.
In component formulas it denotes the corresponding bound for the
fixed-link coefficient family. These bounds include mixed radial
and angular derivatives on normalized annuli. We write $O_\infty(r^\beta)$
when these bounds hold for every integer $j\geq0$.

\subsection{The product-adapted local addition}
\label{sec:weighted-divergence-free-gauge}

\begin{Lemma}
\label{lem:slice-product-affine-connection}
There is a smooth affine connection $\nabla^{\mathrm a}$ on $M$
agreeing on a far canonical end with
$\mathrm d_{\mathbb R}\oplus\nabla^{g_\Sigma}$.
Its exponential is a local addition, and on that end
\begin{equation}\label{eq:slice-product-exponential}
 \operatorname{Exp}^{\mathrm a}_{(r,\theta)}
 (v^r\partial_r+V^\Sigma)
 =(r+v^r,\exp^{g_\Sigma}_\theta V^\Sigma).
\end{equation}
\end{Lemma}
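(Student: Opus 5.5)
The plan is to construct $\nabla^{\mathrm a}$ by gluing two connections with a partition of unity. Connections form an affine space, so a convex combination of connections is again a connection. On the canonical end, identified with $(\underline r,\infty)\times\Sigma$ via $\iota$, let $\nabla^{\mathrm p}:=\mathrm d_{\mathbb R}\oplus\nabla^{g_\Sigma}$ be the Levi-Civita connection of the product metric $\mathrm dr^2+g_\Sigma$. In particular $\nabla^{\mathrm p}\partial_r=0$ and $\nabla^{\mathrm p}_{\partial_r}\widetilde Y=0$ for $r$-independent lifts $\widetilde Y$. Fix $\rho_1>4\underline r$ and a smooth radial cutoff $\psi$ equal to one on $Q_{2\rho_1}$ and supported in $Q_{\rho_1}$. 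Set
\[
 \nabla^{\mathrm a}=\psi\nabla^{\mathrm p}+(1-\psi)\nabla^g .
\]
This is a smooth torsion-free affine connection on $M$, and it equals $\nabla^{\mathrm p}$ on the far end $Q_{2\rho_1}$.

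Next I verify \cref{eq:slice-product-exponential}. On the product end, the geodesic equation of $\nabla^{\mathrm p}$ splits: $\ddot r=0$, and the $\Sigma$-component is a $g_\Sigma$-geodesic. Hence the curve $s\mapsto(r+sv^r,\exp^{g_\Sigma}_\theta(sV^\Sigma))$ is the $\nabla^{\mathrm a}$-geodesic with the prescribed initial data for as long as it stays in $Q_{2\rho_1}$. I restrict the formula to base points and vectors with $r+sv^r>2\rho_1$ for all $s\in[0,1]$, for instance $|v^r|<r-2\rho_1$. On that set the geodesic exists up to time one, because $\Sigma$ is compact and therefore $g_\Sigma$-complete. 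This gives the formula.

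The remaining claim, and the main point, is that $\operatorname{Exp}^{\mathrm a}$ is a local addition. That is, on an open neighbourhood $\mathcal U$ of the zero section, the map $\pi\times\operatorname{Exp}^{\mathrm a}$ is a diffeomorphism onto an open neighbourhood of the diagonal. The difficulty is uniformity at infinity, since $M$ is noncompact.

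I take $\mathcal U$ to be the union of two pieces.
\begin{itemize}
\item Over the far end, the set $\{|v^r|<r/4,\ |V^\Sigma|_{g_\Sigma}<\operatorname{inj}(g_\Sigma)/2\}$. Here the explicit formula shows that $(x,v)\mapsto(x,\operatorname{Exp}^{\mathrm a}_xv)$ is the product of the translation map $(r,v^r)\mapsto(r,r+v^r)$ and the map $(\theta,V)\mapsto(\theta,\exp_\theta V)$. Each factor is a diffeomorphism onto its image, the second by injectivity radius on the compact link. So this piece is diffeomorphic onto an open neighbourhood of the diagonal.
\item Over the compact region $M\setminus Q_{4\rho_1}$, a small uniform ball bundle. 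Standard local-addition arguments apply there: the differential at the zero section is invertible, and compactness gives uniform injectivity.
\end{itemize}

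The gluing step is where I expect trouble. To get global injectivity of $\pi\times\operatorname{Exp}^{\mathrm a}$ on the union, I shrink the compact-region radii below the distance separating the overlap zones, and I use that both constructions agree on the overlap $Q_{2\rho_1}\setminus Q_{4\rho_1}$. Injectivity then follows because two vectors with the same base point lie in a single piece. The image is open, and the inverse is smooth by the inverse function theorem. This yields the local addition.
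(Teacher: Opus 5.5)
Your construction of $\nabla^{\mathrm a}$ and your derivation of \cref{eq:slice-product-exponential} are the paper's argument. The paper also interpolates between $\nabla^g$ and the product connection with a cutoff, and reads the formula off the split geodesic equation. You correctly add that the radial segment must stay in the product zone.

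The difference is in the local-addition claim. The paper invokes the general fact that the exponential of any smooth affine connection, on any manifold, is a local addition. Indeed, $(\pi,\operatorname{Exp}^{\mathrm a})$ is a local diffeomorphism along the zero section and injective on it. It is therefore injective on a neighbourhood whose fibre radius may shrink at infinity, so the statement needs no uniformity at infinity.

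Your two-piece argument instead gives a domain that is uniform on the end. It contains every vector with $|v^r|<r/4$ and link norm below $\operatorname{inj}(g_\Sigma)/2$. This is more than the lemma needs. It does fit the paper's remark after the lemma: $F_Z$ is defined on the whole end, although the tangential part of $Z^{\sharp_g}$ has $g$-norm of order $r$.

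One bookkeeping fix is needed. With the far piece over $Q_{2\rho_1}$, the bound $|v^r|<r/4$ does not keep $r+sv^r>2\rho_1$ when $r$ is close to $2\rho_1$. So the product formula you use on that piece is not available there. The repair is:
\begin{itemize}
\item place the far piece over $Q_{3\rho_1}$, so that $r+sv^r>3r/4>2\rho_1$;
\item keep the compact piece over $M\setminus Q_{4\rho_1}$;
\item choose the ball radius so small that, over $3\rho_1<r\le4\rho_1$, the small ball lies inside the far fibre.
\end{itemize}
Then every fibre of $\mathcal U$ is a fibre of a single piece, and your fibrewise injectivity argument goes through.
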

\begin{proof}
Interpolate between a smooth connection on the compact core and the
product connection across a fixed compact annulus. An affine
exponential is a local addition near the zero section; on the end
the product geodesic equation gives the displayed formula.
\end{proof}

For a small $Z\in\mathscr X_k$, define
\begin{equation}\label{eq:slice-adapted-diffeomorphism}
 F_Z(x)=\operatorname{Exp}^{\mathrm a}_x
                 (Z^{\sharp_{g}}(x)).
\end{equation}
Although the tangential part of $Z^{\sharp_g}$ can have $g$-norm of order $r$,
the associated link vector has a uniformly small norm. Thus this
formula is defined on the whole end, while a fixed neighborhood of
the zero section suffices on the core.

\subsection{Radial consequences of the gauge graph}

Throughout this subsection $k\geq6$ and $0<\alpha<1$ are fixed.
On the end put $t=\log r$.

\begin{Lemma}\label{ic:radial-graph}
Let $Z\in\mathscr X_k$, and let $a_0,b_0,b_1$ be the traces in
\cref{ic:graph-estimates}. Write
$\boldsymbol\tau_r^{-1}Z=u\,\mathrm dr+w$, where, for each fixed $r$,
\[
 u(r,\cdot)\in C^{k+2,\alpha}(\Sigma),\qquad
 w(r,\cdot)\in C^{k+2,\alpha}(\Sigma,T^*\Sigma).
\]
Then
\begin{align}
 r\,\mathrm du-\boldsymbol\tau(\mathrm d_\Sigma a_0)
       &\in C^{k,\alpha}_{-1},\label{ic:radial-gradient}\\
 r\nabla^2u,\quad r\partial_tu
       &\in\mathscr S_k,\label{ic:radial-second}\\
 \partial_tw-w+b_0&\in C^{k,\alpha}_{-1},\qquad
 \partial_t\partial_{\theta^i}(w/r)\in\mathscr S_k.
       \label{ic:angular-second}
\end{align}
The last expression is written in a fixed link chart. The corresponding
norms, including a fixed compact extension, are bounded by
$C\|Z\|_{\mathscr X_k}$. These assertions also hold for the components of $Z^{\sharp_g}$
in the rescaled frame $\partial_r,r^{-1}\partial_{\theta^i}$,
with the link indices raised by $g_\Sigma$ in their leading terms.
\end{Lemma}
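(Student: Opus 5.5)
The plan is to read each assertion of the lemma off \cref{ic:graph-estimates} and \cref{ic:scalar-terminal}, expressing the covariant quantities there in the radial parallel frame. On the end, $\boldsymbol\tau_r^{-1}Z=u\,\mathrm dr+w$. The expansion \cref{ic:gauge-expansion} gives
\[
 u=a_0+u^\star,\qquad w=rb_1+b_0+w^\star,
\]
where $u^\star,w^\star/r$ are the fixed-link coefficients of $Z^\star\in C^{k,\alpha}_{-1}$. The tangential part of $\boldsymbol\tau(b)$ is $r\pi^*b$, so I will keep track of the factor $r$ between $w$ and the link one-form throughout. The cone connection formulas in \cref{lem:homogeneous-drift-operator-expansion} express $\nabla^C Z$ in these components. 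The background--cone connection difference $T=O(r^{-3})$ from \cref{lem:asymptotic-rate-of-connection} contributes only terms of weight at most $-2$ relative to $Z\in C^{k+2,\alpha}_1$; by \cref{ic:whole-coordinate-regularity} these are harmless.

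For \cref{ic:radial-gradient}, I will use $Z(\partial_r)=u$. This gives
\[
 \mathrm du=(\nabla Z)(\cdot,\partial_r)+Z(\nabla_{(\cdot)}\partial_r).
\]
On the cone, $\nabla^C_{\widetilde Y}\partial_r=r^{-1}\widetilde Y$, so the second term is $r^{-1}w$ paired tangentially, with leading part $b_1$ of degree zero. Here I would instead use the exact decomposition $\nabla Z=\nabla^C(\chi r\boldsymbol\tau(b_1))+\nabla Z_0+\nabla Z^\star$. The degree-one field is radially parallel times $r$ and tangential, so its contribution cancels against $Z(\nabla\partial_r)$ exactly at leading order. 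The remaining terms are then $\mathrm d a_0$ plus $C^{k,\alpha}_{-2}$ pieces after multiplication by $r$. Since $\mathrm d_\Sigma a_0$ in the cone frame equals $r^{-1}\boldsymbol\tau(\mathrm d_\Sigma a_0)$, multiplying by $r$ yields \cref{ic:radial-gradient}. For \cref{ic:angular-second}, $\partial_t w=r\partial_r w$ is read from $2\nabla^C_{\mathbf X}Z$. Using $\nabla_{\mathbf X}Z^\star\in C^{k,\alpha}_{-1}$ and $\nabla^C_{\mathbf X}Z_1=\tfrac12Z_1$, $\nabla^C_{\mathbf X}Z_0=0$, I obtain $\partial_tw=rb_1+O(r^0\text{ remainder})$, hence $\partial_tw-w+b_0\in C^{k,\alpha}_{-1}$.

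The second-order statements \cref{ic:radial-second} and the second part of \cref{ic:angular-second} are the main obstacle. They live in the sum space $\mathscr S_k$ rather than in a single weighted H\"older space. My plan is to get them from the scalar $p=\delta_fZ$, for which \cref{ic:scalar-terminal} supplies $\nabla^2p\in\mathscr S_k$ and $(\nabla_{\mathbf X}-\tfrac12)p\in\mathscr S_k$. I will write
\[
 p=\delta Z-Z(\mathbf X)=\delta Z-\tfrac r2u.
\]
This gives $r u=2\delta Z-2p$. Now $\delta Z$ is a contraction of $\nabla Z$ with $g$, and $\nabla^2Z\in C^{k,\alpha}_{-1}\subset\mathscr S_k$ by \cref{ic:sum-multiplier}, so $\nabla\delta Z\in\mathscr S_k$. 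This should control $\nabla(ru)$, and then $r\nabla^2u$ after subtracting $\mathrm du\otimes\mathrm dr$-type terms that are already bounded by \cref{ic:radial-gradient}. The drift identity gives $r\partial_tu=2r\nabla_{\mathbf X}u$, and this is controlled by $(\nabla_{\mathbf X}-\tfrac12)p$ together with $\nabla_{\mathbf X}\delta Z$. The multipliers that appear, such as $r^{-1}\boldsymbol\rho$, $\chi$ and frame coefficients, lie in $C^{k+1,\alpha}_0$, so the multiplier estimate \cref{ic:sum-estimates} keeps everything in $\mathscr S_k$.

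For $\partial_t\partial_{\theta^i}(w/r)$, I will express it through the mixed component $\nabla^2Z(\partial_r,\partial_{\theta^i},\cdot)$ and the radial derivative of $r\,\mathrm du$. Using the symmetry of $\nabla^2Z$ up to curvature terms of weight $-3$, the antisymmetric part is governed by $\mathrm du$. This is the step where I expect bookkeeping difficulties. Finally, the statement for $Z^{\sharp_g}$ follows by raising indices with $g=g_C+O_\infty(r^{-2})$, whose coefficients lie in $C^{k+1,\alpha}_0$, again by \cref{ic:sum-estimates}. The compact extension is handled by the interior bounds in \cref{ic:graph-estimates}.
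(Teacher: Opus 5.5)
Your argument for $\partial_tw-w+b_0$ and your handling of the raised-index version agree with the paper. However, the steps you describe do not establish the two estimates that carry the weight gain.

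\textbf{The gradient estimate.} Direct differentiation of $u=Z(\partial_r)$ cannot give \eqref{ic:radial-gradient}. After the exact cancellation of the $Z_1$ contribution and the extraction of $\mathrm da_0$, you are left with $r\,\mathrm du^\star$, where $u^\star=Z^\star(\partial_r)$. The bounds $Z^\star,\nabla Z^\star\in C^{k,\alpha}_{-1}$ from \cref{ic:graph-estimates} only place this term in $C^{k-1,\alpha}_{-1}$ and in $C^{k,\alpha}_0$; at top order one power of $r$ is missing.

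The missing idea is to use the gauge identity $ru=2(\delta Z-p)$, with $p=\delta_fZ$, already at first order. Where $\chi=1$ it gives
\[
 r\,\mathrm du-\boldsymbol\tau(\mathrm d_\Sigma a_0)
 =(a_0-u)\,\mathrm dr+2\,\mathrm d\delta Z
  -2\,\mathrm d\bigl(p+\tfrac12\chi ra_0\bigr).
\]
Here $\mathrm d\delta Z$ is a contraction of $\nabla^2Z\in C^{k,\alpha}_{-1}$, and the last term lies in $C^{k,\alpha}_{-1}$ by \eqref{ic:gauge-whole-fields}. Expressing $ru$ through a first derivative of $Z$ trades the factor $r$ for one derivative, which is exactly what the whole-field Hessian bound controls. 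The Hessian statement comes from differentiating this identity once more:
\[
 r\nabla^2u=2\nabla^2\delta Z-2\nabla^2p-2\operatorname{Sym}(\mathrm dr\otimes\mathrm du)-u\nabla^2r .
\]
For this you need $\nabla^2\delta Z$, a contraction of $\nabla^3Z\in C^{k-1,\alpha}_{-2}$. The bound $\nabla\delta Z\in\mathscr S_k$ that you cite is one order too low.

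\textbf{The radial drift.} Combining $r\partial_tu=2r\,\mathbf X(u)$ with the same identity gives
\[
 r\partial_tu=4\mathbf X(\delta Z)-2\delta Z-4(\nabla_{\mathbf X}-\tfrac12)p ,
\]
as you intend. But your multiplier argument does not put $\mathbf X(\delta Z)$ in $\mathscr S_k$. Since $\mathbf X$ has weight one, $\nabla\delta Z\in C^{k,\alpha}_{-1}$ only yields $\mathbf X(\delta Z)\in C^{k,\alpha}_0$, which is not contained in $\mathscr S_k$.

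This can be repaired by commuting:
\[
 \mathbf X(\delta Z)=\delta(\nabla_{\mathbf X}Z)-\tfrac12\delta Z+\Ric*\nabla Z+\Ric*\mathbf X*Z .
\]
On the far end, use $\nabla_{\mathbf X}Z_1=\tfrac12Z_1+\Ric*Z_1$ and $\nabla_{\mathbf X}Z_0=\Ric*Z_0$, together with $\nabla_{\mathbf X}Z^\star\in C^{k,\alpha}_{-1}$. Then each remaining term lies in $C^{k+1,\alpha}_0$ or in $C^{k-1,\alpha}_{-2}$.

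Once repaired, your route is genuinely different from the paper's and somewhat shorter. It uses only the scalar drift estimate of \cref{ic:scalar-terminal} and the drift bound on $Z^\star$. The paper instead reads $r\partial_tu$ off the radial row of the cone gauge operator. That uses $rv_r=2(\delta v-\delta_fv)\in\mathscr S_k$ for $v=\mathcal PZ$, the bound on $r\nabla^2u$, the angular expansions of $w$, and an explicit background-error analysis.

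\textbf{The mixed derivative.} This needs no symmetry bookkeeping. Since $\partial_t(w/r)=r^{-1}(\partial_tw-w)$,
\[
 \partial_t\partial_{\theta^i}(w/r)
 =-r^{-1}\partial_{\theta^i}b_0
  +r^{-1}\partial_{\theta^i}(\partial_tw-w+b_0)
 \in C^{k,\alpha}_{-1}+C^{k-1,\alpha}_{-2}.
\]
This follows at once from the first part of \eqref{ic:angular-second}. Note that there $\partial_tw=rb_1+\partial_tw^\star$ with $\partial_tw^\star\in C^{k,\alpha}_{-1}$, so the remainder has weight $-1$, not merely $O(r^0)$.
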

\begin{proof}
Set $p=\delta_{f}Z$. Since
$\mathbf X=\frac r2\partial_r$, the exact identity
$ru=2(\delta Z-p)$ gives, where $\chi=1$,
\begin{equation}\label{ic:radial-first-identity}
 r\,\mathrm du-\boldsymbol\tau(\mathrm d_\Sigma a_0)
 =(a_0-u)\,\mathrm dr+2\,\mathrm d\delta Z-2\,\mathrm d(p+\tfrac12\chi ra_0).
\end{equation}
On the fixed transition annulus the right-hand side has the additional
term $\mathrm d((\chi-1)ra_0)$, which has a controlled compactly
supported extension. Here $a_0-u\in C^{k,\alpha}_{-1}$, and
$\mathrm d\delta Z$ is a contraction of the whole tensor
$\nabla^2Z$. Thus \cref{ic:graph-estimates} proves
\cref{ic:radial-gradient}. In particular,
$\partial_ru\in C^{k,\alpha}_{-2}$ and
$\partial_{\theta^i}u-\partial_{\theta^i}a_0=O_{k+\alpha}(r^{-1})$.
Twice differentiating the undifferentiated identity gives
\begin{equation}\label{ic:radial-Hessian-identity}
 r\nabla^2u=2\nabla^2\delta Z
 -2\nabla^2p
 -2\operatorname{Sym}(\mathrm dr\otimes\mathrm du)
 -u\nabla^2r.
\end{equation}
The first term is a contraction of $\nabla(\nabla^2Z)$ and belongs to
$C^{k-1,\alpha}_{-2}$. The second belongs to $\mathscr S_k$ by
\cref{ic:scalar-terminal}. The whole-field bounds
$Z\in C^{k+2,\alpha}_1$ and
$\nabla Z\in C^{k+1,\alpha}_0$ show that the final two terms
belong to $C^{k+1,\alpha}_0$. This proves the Hessian assertion.

We check the remaining radial drift directly. Write
$Z=u\,\mathrm dr+r w_{\theta^i}\,\mathrm d\theta^i$ in a link chart.
The cone connection and symmetrized-gradient formulas are given in
\cite[Eq.~(3.2) and Lemma~3.14]{kroencke2022optimal}; their
symmetrized gradient has the opposite sign to our $\delta_C^*$.
With our positive-divergence convention, they give
\[
 \delta_CZ=u_r+\frac{n-1}{r} u+\frac1r\bar\delta w,
 \qquad
 (\Delta_C Z)_r=u_{rr}+\frac{n-1}{r} u_r
       +\frac1{r^2}(\bar\Delta u-(n-1)u-2\bar\delta w).
\]
The radial Ricci endomorphism of a cone vanishes. Put $f_C=r^2/4$.
The identity
$2\mathcal P_C=-\Delta_{f_C,C}-\mathrm d\delta_{f_C,C}
-\frac12-\Ric(g_C)$ has radial row
\[
 (2\mathcal P_CZ)_r=-(\Delta_CZ)_r-\partial_r(\delta_CZ)+r\partial_ru.
\]
Indeed the two drift contributions are each
$\frac r2\partial_ru$, and the zeroth-order terms
$\frac12u$ and $-\frac12u$ cancel. Multiplication by $r$ and
$\partial_t=r\partial_r$ therefore yield
\begin{equation}\label{ic:cone-radial-row}
\begin{split}
 r(2\mathcal P_CZ)_r={}&r^{-1}\bigl[-2\partial_t^2u
 +(4-2n)\partial_tu-\bar\Delta u+2(n-1)u\\
 &\hspace{27mm}+2\bar\delta w
       -\bar\delta(\partial_tw-w)\bigr]+r\partial_tu.
\end{split}
\end{equation}
Here $\mathcal P_C=\delta_{f_C,C}\delta_C^*$.

The canonical expansion
$g=g_C+2\Ric(g_C)+O(r^{-4})$ implies that the difference
between the radial rows of $2\mathcal P$ and $2\mathcal P_C$, after multiplication
by $r$, is a sum of contractions of
\begin{equation}\label{ic:radial-background-error}
 O(r^{-1})\nabla^2Z,
 \qquad O(r^{-2})\nabla Z,
 \qquad O(r^{-3})Z.
\end{equation}
For the second-order and ordinary first-order terms this follows from
$g^{-1}-g_C^{-1}=O(r^{-2})$ and the connection difference
$O(r^{-3})$. In the drift row, the two lower connection indices are
radial. The leading tensor $2\Ric(g_C)$ has zero radial row and zero
$rr$ connection correction, so that this connection correction is
$O(r^{-5})$; multiplication by the drift and by $r$ gives the last
bound in \cref{ic:radial-background-error}. All differentiated
coefficient bounds follow from the same canonical expansion.
Each term in \cref{ic:radial-background-error} belongs to
$\mathscr S_k$.

Write $v=\mathcal PZ=\chi\boldsymbol\tau(c_0)+v^\star$.
Ordinary divergence gives a raw term
$r^{-1}\bar\delta c_0\in C^{k,\alpha}_{-1}$ and a term in
$C^{k-1,\alpha}_{-2}$, together with faster background corrections.
Thus $\delta v\in\mathscr S_k$ by
\cref{ic:sum-multiplier}. Since $\delta_{f}v$ belongs
to the same space by definition,
\[
 rv_r=2(\delta v-\delta_{f}v)\in\mathscr S_k.
\]
All bracketed terms in \cref{ic:cone-radial-row}, including the
factor $r^{-1}$, are also in this space. Indeed,
$r^{-1}\partial_t^2u$ is a component of $r\nabla^2u$
plus controlled first derivatives and connection corrections.
The same observation treats $r^{-1}\bar\Delta u$.
For a direct use of \cref{ic:gauge-whole-fields}, put
$w^\star=w-rb_1-b_0$ on the far end. Radial cone parallel transport
and $\nabla_{\mathbf X}Z^\star\in C^{k,\alpha}_{-1}$ imply
$\partial_tw^\star\in C^{k,\alpha}_{-1}$: replacing
$\nabla_{\mathbf X}$ by $\nabla^C_{\mathbf X}$ adds an
$O(r^{-2})Z^\star\in C^{k,\alpha}_{-3}$ term. Hence
$\partial_tw-w+b_0=\partial_tw^\star-w^\star
\in C^{k,\alpha}_{-1}$.
For the angular terms, use
\[
 w=rb_1+b_0+O_{k+\alpha}(r^{-1}),\qquad
 \partial_tw-w=-b_0+O_{k+\alpha}(r^{-1}).
\]
Their contributions consist of $2\bar\delta b_1$ in
$C^{k+1,\alpha}_0$, raw terms
$r^{-1}\bar\delta b_0\in C^{k,\alpha}_{-1}$, and terms in
$C^{k-1,\alpha}_{-2}$. The remaining radial zeroth and first
derivatives have the same bounds. Rearranging
\cref{ic:cone-radial-row} proves $r\partial_tu\in\mathscr S_k$.

Consequently,
\[
 \partial_t\partial_{\theta^i}(w/r)
 =-r^{-1}\partial_{\theta^i} b_0
   +r^{-1}\partial_{\theta^i}(\partial_tw-w+b_0)\in\mathscr S_k.
\]
The two terms belong respectively to $C^{k,\alpha}_{-1}$ and
$C^{k-1,\alpha}_{-2}$.

Finally, in canonical coordinates the inverse metric satisfies
$g^{rr}-1=O(r^{-4})$ and $g^{r\theta^i}=O(r^{-5})$.
In the rescaled frame $\partial_r,r^{-1}\partial_{\theta^i}$, its tangential block differs from the cone inverse
by $O(r^{-2})$.
Thus raising $Z$ changes its radial component by a section of
$C^{k+2,\alpha}_{-3}$ and its angular coordinate component
$w^{\sharp_{g_\Sigma}}/r$ by a section of
$C^{k+2,\alpha}_{-2}$. These errors retain the asserted bounds
after each of the displayed differentiations and radial factors.
All maps used in this proof are bounded linear maps of the graph
parameter; the norm estimates follow as well.
\end{proof}

\subsection{Pullback estimates in the complete target norm}

For the local addition of \cref{lem:slice-product-affine-connection}
write $F_Z=(R,\Theta)$ on the end and put $r_F=R-r$.
Define
\[
\begin{gathered}
 \psi(\theta)=\exp^{g_\Sigma}_\theta
                  b_1^{\sharp_{g_\Sigma}}(\theta),\\
 \phi(\theta)=\mathrm D_v\exp^{g_\Sigma}_\theta
       \big|_{b_1^{\sharp_{g_\Sigma}}(\theta)}
                  b_0^{\sharp_{g_\Sigma}}(\theta).
\end{gathered}
\]
Here $\phi$ is a section of $\psi^*T\Sigma$.
In fixed link charts, write $\partial_i=\partial_{\theta^i}$ for
angular coordinate derivatives. Target angular derivatives are written
using $\Theta$.

\begin{Lemma}\label{ic:map-estimates}
For $Z$ in a sufficiently small ball in $\mathscr X_k$, the following
statements hold.
\begin{enumerate}[label=\textnormal{(\roman{*})},ref=\roman{*}]
\item The map and its first derivatives satisfy
\begin{align}
 R&=r+a_0+O_{k+\alpha}(r^{-1}),&
 \Theta&=\psi+r^{-1}\phi+O_{k+\alpha}(r^{-2}),\label{ic:map-expansion}\\
 \partial_{\theta^i}R-\partial_{\theta^i}a_0&=O_{k+\alpha}(r^{-1}),&
 \partial_{\theta^i}\Theta-\partial_{\theta^i}\psi&=O_{k+\alpha}(r^{-1}),\label{ic:map-first}\\
 r\partial_t\Theta&=-\phi+O_{k+\alpha}(r^{-1}),&
 \partial_rr_F&\in C^{k,\alpha}_{-2}.\label{ic:map-drift}
\end{align}
The coordinate expansions for $\Theta$ are understood in link charts.
\item The normalized differential of $F_Z$ belongs to
$C^{k+1,\alpha}_0$, and its whole covariant Hessian relative
to the source and target background connections belongs to
$C^{k,\alpha}_{-1}$.
\item The radial and mixed derivatives satisfy
\[
 r\nabla^2r_F,\quad r\partial_tr_F,
 \quad\partial_t\partial_{\theta^i}\Theta\in\mathscr S_k.
\]
\item The differences from the identity and remainders in these statements,
together with $a_0,b_0,b_1,\phi$ and $\psi-\mathrm{id}$ in link charts,
are bounded by $C\|Z\|_{\mathscr X_k}$.
Their dependence on $Z$ is smooth between the indicated fixed Banach
spaces, with bounded derivatives on smaller balls.
\end{enumerate}
\end{Lemma}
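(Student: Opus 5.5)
The plan is to compute $F_Z$ explicitly on the far end from the product exponential \cref{eq:slice-product-exponential} and to feed in the radial consequences of the gauge graph, \cref{ic:radial-graph}, together with the whole-field bounds of \cref{ic:graph-estimates}. Write $Z^{\sharp_g}=v^r\partial_r+V^\Sigma$. Then on the end
\[
 R=r+v^r,\qquad \Theta=\exp^{g_\Sigma}_\theta V^\Sigma .
\]
By the last assertion of \cref{ic:radial-graph}, $v^r=u+O(r^{-3})$ and the link vector is $V^\Sigma=(w/r)^{\sharp_{g_\Sigma}}$ plus a $C^{k+2,\alpha}_{-2}$ error. Hence the expansion $u=a_0+O_{k+\alpha}(r^{-1})$ gives $R=r+a_0+O_{k+\alpha}(r^{-1})$. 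Likewise $w/r=b_1+r^{-1}b_0+O_{k+\alpha}(r^{-2})$ follows from \cref{ic:gauge-expansion}.

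Next I Taylor-expand $\exp^{g_\Sigma}_\theta$ in the fibre variable around $b_1^{\sharp}(\theta)$. For $Z$ in a small ball, $b_1$ is small in $C^{k+2,\alpha}$ and $\exp^{g_\Sigma}$ is a smooth map on a neighbourhood of the zero section. The zeroth-order term is $\psi$ and the first-order term is $r^{-1}\phi$. The quadratic Taylor remainder is $O(r^{-2})$ with $k+\alpha$ conic regularity, because composition with a smooth map preserves the fixed-link H\"older classes. This proves \cref{ic:map-expansion}.

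The first-derivative bounds \cref{ic:map-first} follow by differentiating in $\theta$, using $\partial_{\theta^i}u-\partial_{\theta^i}a_0=O_{k+\alpha}(r^{-1})$ (from \cref{ic:radial-gradient}) and the analogous statement for $w/r$. For \cref{ic:map-drift}, apply $\partial_t=r\partial_r$ to $\Theta$. The chain rule gives $\mathrm D_v\exp$ applied to $\partial_t(w/r)^\sharp$, and \cref{ic:angular-second} gives $\partial_t(w/r)=-r^{-1}b_0+O_{k+\alpha}(r^{-2})$. Since $\mathrm D_v\exp$ at $(w/r)^\sharp$ differs from its value at $b_1^\sharp$ by $O(r^{-1})$, we get $r\partial_t\Theta=-\phi+O_{k+\alpha}(r^{-1})$. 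The bound $\partial_rr_F=\partial_ru+O(r^{-4})\in C^{k,\alpha}_{-2}$ is \cref{ic:radial-gradient}.

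For (ii), I express the normalized differential $r$-scaled in the frame $\partial_r,r^{-1}\partial_{\theta^i}$. Its entries are $1+\partial_rr_F$, $r^{-1}\partial_iR$, $r\partial_r\Theta$ and $\partial_i\Theta$, each composed with smooth functions of $(\theta,V^\Sigma)$. All of these lie in $C^{k+1,\alpha}_0$ by $Z\in C^{k+2,\alpha}_1$ from \cref{ic:whole-coordinate-regularity}. For the covariant Hessian I differentiate once more. The dominant contributions are then $\nabla^2Z$ composed with $\mathrm D\exp$ and quadratic terms in $\nabla Z$ against $\mathrm D^2\exp$, where $\nabla Z$ carries normalized weight $0$ with a $1/r$ Hessian factor. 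The terms $\nabla^2Z\in C^{k,\alpha}_{-1}$ from \cref{ic:gauge-whole-fields}, together with the Christoffel differences of $g$ versus the product metric, give weight $-1$. The Christoffel differences are controlled by \cref{lem:asymptotic-rate-of-connection} and the product structure of $\nabla^{\mathrm a}$.

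Item (iii) uses the same chain-rule bookkeeping with the $\mathscr S_k$ statements \cref{ic:radial-second} and \cref{ic:angular-second}. The multipliers involved, namely derivatives of $\exp^{g_\Sigma}$ evaluated at $(\theta,V^\Sigma)$ and factors of $1/r$, are controlled in $C^{k+1,\alpha}_0$. Therefore the multiplier estimate of \cref{ic:sum-multiplier} closes in $\mathscr S_k$. Products of two first-derivative factors fall in $C^{k,\alpha}_{-1}\subset\mathscr S_k$.

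\textbf{Main obstacle.} The delicate point is $\partial_t\partial_{\theta^i}\Theta$. Its chain-rule expansion contains $\mathrm D^2\exp[\partial_i(w/r),\partial_t(w/r)]$, which is fine. It also contains $\mathrm D\exp[\partial_t\partial_i(w/r)]$, where only the $\mathscr S_k$ bound is available. One must check that the coefficient $\mathrm D\exp$ is genuinely $C^{k+1,\alpha}_0$ and not merely $C^{k,\alpha}_0$. This holds because $w/r\in C^{k+2,\alpha}_0$.

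Finally, (iv) follows because every map above is a composition of the bounded linear maps of \cref{ic:graph-estimates} and \cref{ic:radial-graph} with the smooth Nemytskii operator induced by $\exp^{g_\Sigma}$. On the relevant H\"older spaces this operator is smooth with bounded derivatives on small balls, and it vanishes to the stated order at $Z=0$. This yields the bounds by $C\|Z\|_{\mathscr X_k}$. The compact core is handled by the standard smoothness of $x\mapsto\operatorname{Exp}^{\mathrm a}_x(Z^\sharp)$ in $C^{k+2,\alpha}$.
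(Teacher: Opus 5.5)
Your proposal is correct and follows essentially the paper's own route. That route has four parts: the explicit product-exponential formula $F_Z=(r+r_F,\exp^{g_\Sigma}_\theta\beta)$ with the raised-index traces from \cref{ic:graph-estimates,ic:radial-graph}; Taylor expansion of $E(\theta,v)=\exp^{g_\Sigma}_\theta v$ about $b_1^{\sharp}$; the chain rule combined with the $C^{k+1,\alpha}_0\times\mathscr S_k$ multiplier estimate of \cref{ic:sum-multiplier} for $\partial_t\partial_{\theta^i}\Theta$, which is exactly the obstacle the paper isolates; and smooth superposition together with linearity of the graph traces for (iv).

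The differences are cosmetic. The paper obtains (ii) from the uniform $C^{k+2,\alpha}$ bound on the rescaled maps $D_\rho^{-1}\circ F_Z\circ D_\rho$ and the frame formula for $\nabla\mathrm dF_Z$. It also derives your ``analogous statement for $w/r$'' explicitly from $\nabla Z^\star\in C^{k,\alpha}_{-1}$ and $b_0\in C^{k+1,\alpha}$, and that is the input you should cite at that step.
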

\begin{proof}
On the far canonical end the radial component of $Z^{\sharp_g}$ is $r_F$, and its
angular coordinate component is the link vector $\beta$ in
$\Theta=\exp^{g_\Sigma}_\theta\beta$. By
\cref{ic:graph-estimates,ic:radial-graph} and the inverse-metric
estimates at the end of the preceding proof,
\[
 \beta=b_1^{\sharp_{g_\Sigma}}
        +r^{-1}b_0^{\sharp_{g_\Sigma}}+O_{k+\alpha}(r^{-2}),\qquad
 \beta\in C^{k+2,\alpha}_0,
 \qquad r_F=a_0+O_{k+\alpha}(r^{-1}).
\]
Taylor expansion of the fixed smooth link exponential gives
\cref{ic:map-expansion}. The radial gradient estimate gives the first
assertion of \cref{ic:map-first}. The estimate
$\nabla Z^\star\in C^{k,\alpha}_{-1}$, together with
$b_0\in C^{k+1,\alpha}$ and the connection difference
$\nabla-\nabla^C=O(r^{-3})$, gives
\[
 \partial_i\beta
  -\partial_i b_1^{\sharp_{g_\Sigma}}=O_{k+\alpha}(r^{-1}).
\]
The pointwise chain rule proves the other assertion.
Similarly,
$r\partial_t\beta=-b_0^{\sharp_{g_\Sigma}}+O_{k+\alpha}(r^{-1})$.
Writing $E(\theta,v)=\exp^{g_\Sigma}_\theta(v)$ in fixed link charts,
we obtain
\[
\begin{split}
 r\partial_t\Theta+\phi
 ={}&E_v(\theta,\beta)
          \bigl(r\partial_t\beta+b_0^{\sharp_{g_\Sigma}}\bigr)\\
 &+\bigl(E_v(\theta,b_1^{\sharp_{g_\Sigma}})
             -E_v(\theta,\beta)\bigr)b_0^{\sharp_{g_\Sigma}}
 =O_{k+\alpha}(r^{-1}).
\end{split}
\]
This proves the angular drift estimate. The radial assertions follow
from \cref{ic:radial-graph}.

For the differential, use the source frame
$e_0=\partial_r$, $e_i=r^{-1}\partial_i$ and the target frame
$E_0=\partial_R$, $E_j=R^{-1}\partial_{\Theta^j}$.
The matrix $\mathrm dF_Z(e_b)=J_b^\mu E_\mu$ is
\[
 J=\begin{pmatrix}
  \partial_rR&r^{-1}\partial_iR\\
  R\partial_r\Theta^j&(R/r)\partial_i\Theta^j
 \end{pmatrix}.
\]
To estimate its full regularity, let
$D_\rho(s,\theta)=(\rho s,\theta)$ and
$g_\rho=\rho^{-2}D_\rho^*g$ on nested normalized annuli. The whole-field
estimate \cref{ic:whole-coordinate-regularity} gives
\[
\begin{gathered}
 F_{Z,\rho}:=D_\rho^{-1}\circ F_Z\circ D_\rho
 =\bigl(s+\rho^{-1}r_F(\rho s,\theta),
              E(\theta,\beta(\rho s,\theta))\bigr),\\
 \|F_{Z,\rho}-\operatorname{Id}\|_{C^{k+2,\alpha}}
       \leq C\|Z\|_{\mathscr X_k}.
\end{gathered}
\]
The constant is uniform in $\rho$, and smallness of $Z$ ensures that
the images lie in the larger normalized annuli. Differentiating this
whole map gives $J-I\in C^{k+1,\alpha}_0$ with the asserted bound.

Define the source and target connection coefficients by
$\nabla_{e_a}e_b=\Gamma^c_{ab}e_c$ and
$\nabla_{E_\nu}E_\lambda=\Gamma^\mu_{\nu\lambda}E_\mu$,
respectively. The full covariant Hessian is
\[
 (\nabla\mathrm dF_Z)^\mu_{ab}
 =e_a(J_b^\mu)-\Gamma^c_{ab}J_c^\mu
   +\Gamma^\mu_{\nu\lambda}(F_Z)J_a^\nu J_b^\lambda.
\]
The normalized metrics $g_\rho$ have uniform smooth bounds, and
$R/r$ is bounded above and below. Thus the differentiated matrix and
both connection terms belong to $C^{k,\alpha}_{-1}$. Their sum
vanishes at $Z=0$; the preceding annular estimate and the smooth
coefficient formulas give its norm bound by
$C\|Z\|_{\mathscr X_k}$. This proves the differential and Hessian
assertions using the whole-field regularity.

For the mixed derivative, the coordinate chain rule gives
\[
 \partial_t\partial_i\Theta
 =E_v(\theta,\beta)\,
       \partial_t\partial_i\beta
  +E_{iv}(\theta,\beta)\,\partial_t\beta
  +E_{vv}(\theta,\beta)
       [\partial_i\beta,\partial_t\beta].
\]
Here $E_{iv}=\partial_iE_v$, and
$\partial_t\partial_i\beta\in\mathscr S_k$ by
\cref{ic:angular-second} and its raised-index version.
The coefficient $E_v(\theta,\beta)$ belongs to
$C^{k+1,\alpha}_0$, so \cref{ic:sum-multiplier} controls the first
term in $\mathscr S_k$. Since $\beta\in C^{k+2,\alpha}_0$, the
remaining terms belong to $C^{k+1,\alpha}_0$. This proves the mixed
sum-space estimate. On the far canonical end $r_F=(Z^{\sharp_g})^r$ is linear in
$Z$, and \cref{ic:radial-graph} gives bounded linear maps
$Z\mapsto r\nabla^2r_F$ and $Z\mapsto r\partial_tr_F$ into
$\mathscr S_k$, with fixed compact extensions.

Finally, the graph trace maps $Z\mapsto(a_0,b_0,b_1)$ and the maps
$Z\mapsto r_F,\beta$ are bounded linear maps in the norms just used.
Taylor's formula with integral
remainder for $E$ expresses each asymptotic remainder as products of
the displayed trace and remainder factors with derivatives of $E$.
Parameter derivatives of these formulas have the same
weighted product bounds. The normalized differential and Hessian
formulas likewise have bounded parameter derivatives in their stated
norms. For the sum-space entry, put
$q_i(Z)=\partial_t\partial_i\beta_Z$ and, for a variation
$W\in\mathscr X_k$, write $\beta_W$ for the corresponding angular
component. The only term requiring the sum norm has variation
\[
 \mathrm D_Z\bigl[E_v(\theta,\beta_Z)q_i(Z)\bigr][W]
 =E_{vv}(\theta,\beta_Z)[\beta_W,q_i(Z)]
   +E_v(\theta,\beta_Z)q_i(W).
\]
Both terms are controlled by \cref{ic:sum-multiplier}; variations of
the other mixed terms remain in $C^{k+1,\alpha}_0$. Every higher
parameter derivative still contains at most one factor of the form
$q_i(Z)$ or $q_i(W)$, multiplied by controlled
$C^{k+1,\alpha}_0$ coefficients. This proves smooth dependence and
bounded derivatives in $\mathscr S_k$, and hence all the asserted
parameter estimates. On the compact core the ordinary local addition
and the local $C^{k+2,\alpha}$ bound in
\cref{ic:whole-coordinate-regularity} give the same conclusions.
\end{proof}

\begin{Proposition}\label{ic:pullback-closure}
Let $h\in E_{k+4}$ and $Z\in\mathscr X_k$ be sufficiently small, and
put $H=F_Z^*(g+h)-g$. Then the following hold.
\begin{enumerate}[label=\textnormal{(\roman{*})},ref=\roman{*}]
\item The whole tensor satisfies
\begin{equation}\label{ic:pullback-field}
 H\in C^{k+1,\alpha}_0.
\end{equation}
\item Write the input metric coefficients as
$c_0^h,c_{-1}^h,b_{-1}^h$ and put $\gamma=g_\Sigma+c_0^h$.
The tangential components satisfy
\[
 H_{\theta^i\theta^j}
 =r^2(c_0^H)_{\theta^i\theta^j}
  +r(c_{-1}^H)_{\theta^i\theta^j}+O_{k-1+\alpha}(1),
\]
where
\begin{equation}\label{ic:tangential-minus-one}
\begin{aligned}
 c_0^H&=\psi^*\gamma-g_\Sigma,\\
 c_{-1}^H&=2a_0\psi^*\gamma+\psi^*c_{-1}^h
                      +\mathrm D_{\psi,\phi}\gamma.
\end{aligned}
\end{equation}
Here $\mathrm D_{\psi,\phi}\gamma$ denotes the first variation of
$\psi^*\gamma$ in the direction $\phi$, and
\[
 c_0^H\in C^{k+1,\alpha}(\Sigma,S^2T^*\Sigma),\qquad
 c_{-1}^H\in C^{k,\alpha}(\Sigma,S^2T^*\Sigma).
\]
The mixed and radial components satisfy
\begin{equation}\label{ic:pullback-radial}
 H_{r\theta^i}=\Omega_{\theta^i}+O_{k+\alpha}(r^{-1}),\qquad
 H_{rr}\in C^{k,\alpha}_{-2},\qquad
 r^2H_{rr}\in\mathscr S_k,
\end{equation}
where
\begin{equation}\label{ic:mixed-trace}
 \Omega=\mathrm d_\Sigma a_0+\psi^*b_{-1}^h
                   -\gamma_\psi(\phi,\mathrm d\psi)
         \in C^{k+1,\alpha}(\Sigma,T^*\Sigma).
\end{equation}
In particular, $H\in E_{k-1}$.
\item The weighted divergence satisfies
\begin{equation}\label{ic:residual-closure}
 P(h,Z):=\delta_fH
 =-\tfrac12\chi\boldsymbol\tau(\Omega)+P^\star,
 \quad P^\star\in C^{k,\alpha}_{-1},\quad
 \delta_fP(h,Z)\in\mathscr S_k.
\end{equation}
\end{enumerate}
All the indicated norms are bounded by
$C(\|h\|_{E_{k+4}}+\|Z\|_{\mathscr X_k})$ on a fixed small ball.
\end{Proposition}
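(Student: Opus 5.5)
The plan is to compute $H$ componentwise on the far canonical end, in the coordinates $(r,\theta)$ and the source and target frames of \cref{ic:map-estimates}. We write
\[
 F_Z^*(g+h)_{ab}=(g+h)_{\mu\nu}(F_Z)\,\partial_aF^\mu\,\partial_bF^\nu .
\]
The compact core is handled separately by ordinary local estimates, using the $C^{k+2,\alpha}$ control of $F_Z$ from \cref{ic:whole-coordinate-regularity}. Since all other inputs are fixed, we first expand $(g+h)\circ F_Z$. By \cref{prop:asymptotic-expansion-of-shrinker-metric}, $g=g_C+O_\infty(r^{-2})$, and $h$ has the expansion \cref{eq:slice-E-expansion} with coefficients in $C^{k+4,\alpha}$.

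The composition of these fields with $(R,\Theta)$ is controlled by \cref{ic:map-expansion}. Expanding $R^2=r^2+2ra_0+O(1)$ and $\gamma(\Theta)=\gamma(\psi)+r^{-1}\mathrm D\gamma[\phi]+O(r^{-2})$, and pairing with $\partial_i\Theta=\partial_i\psi+O(r^{-1})$ and $\partial_iR=\partial_ia_0+O(r^{-1})$ from \cref{ic:map-first}, gives the tangential formula \cref{ic:tangential-minus-one}. The $r^{-1}$ correction to $\partial_i\Theta$ is exactly $\partial_i(\phi)$ in charts, and it combines into $\mathrm D_{\psi,\phi}\gamma$. The four extra derivatives on $h$ absorb the loss of regularity from composing with $F_Z$, which is only $C^{k+2,\alpha}$.

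For the mixed component we pair $\partial_rF=(\partial_rR,\partial_r\Theta)$ with $\partial_iF$. The relevant inputs are $\partial_rR=1+O(r^{-2})$ and $r^2\partial_r\Theta=-\phi+O(r^{-1})$ from \cref{ic:map-drift}. This leaves the degree-zero terms $\partial_ia_0$ from $\mathrm dR^2$, $\psi^*b_{-1}^h$ from the mixed part of $h$, and $-\gamma_\psi(\phi,\mathrm d\psi)$ from $R^2\gamma(\partial_r\Theta,\partial_i\Theta)$. Together these give $\Omega$.

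For the radial component, $H_{rr}=(\partial_rR)^2-1+R^2|\partial_r\Theta|^2_\gamma+O(r^{-2})$, which lies in $C^{k,\alpha}_{-2}$. The refined claim $r^2H_{rr}\in\mathscr S_k$ requires $r^2\partial_rr_F=r\partial_tr_F$ in $\mathscr S_k$; this is part (iii) of \cref{ic:map-estimates}, and we then apply the multiplier estimate \cref{ic:sum-multiplier}. Part (i) follows directly from part (ii) of \cref{ic:map-estimates}: the normalized differential lies in $C^{k+1,\alpha}_0$ and $g+h\in C^{k+1,\alpha}_0$, so $H\in C^{k+1,\alpha}_0$ by \cref{lem:basic-properties}.

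For part (iii), the divergence $\delta_fH=\delta H-H(\mathbf X,\cdot)$ is computed with $\mathbf X=\frac r2\partial_r$. Its degree-zero part is $-\frac12 rH_{r\cdot}$, whose leading tangential term is $-\frac12\boldsymbol\tau(\Omega)$. All other contributions are $O_{k+\alpha}(r^{-1})$: the tangential divergence of $r^2c_0^H$ and $rc_{-1}^H$ scaled appropriately, and the radial part $-\frac r2H_{rr}\in C^{k,\alpha}_{-1}$. The radial drift of the mixed remainder must be checked separately, because $\nabla_{\mathbf X}$ of an $O(r^{-1})$ term costs nothing in weight. We expect the remainder to be at order $k$ rather than $k-1$ only because $H\in C^{k+1,\alpha}_0$ controls $\nabla H$ in $C^{k,\alpha}_{-1}$.

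The main obstacle is $\delta_fP\in\mathscr S_k$. Here we will use the second-order structure. The quantity $\delta_f\delta_fH$ involves $\nabla^2H$, which lies only in $C^{k-1,\alpha}_{-2}$, which is acceptable, and drift terms $\nabla_{\mathbf X}\nabla_{\mathbf X}H$ and $\nabla_{\mathbf X}$ of the mixed block, which do not lose weight. We would isolate these as $H(\mathbf X,\mathbf X)=\frac{r^2}4H_{rr}$, already in $\mathscr S_k$, together with $\mathbf X$-derivatives of $rH_{r\theta}$. The latter reduce to the mixed quantities $\partial_t\partial_i\Theta$, $r\nabla^2r_F$ and $r\partial_tr_F$ multiplied by $C^{k+1,\alpha}_0$ coefficients, and these are exactly the terms placed in $\mathscr S_k$ by part (iii) of \cref{ic:map-estimates}. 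The remaining terms lie in $C^{k,\alpha}_{-1}$ or $C^{k-1,\alpha}_{-2}$ and hence in $\mathscr S_k$ by \cref{ic:sum-multiplier}. The norm bounds, linear in $(h,Z)$ on a small ball, follow from part (iv) of \cref{ic:map-estimates}.
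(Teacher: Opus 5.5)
You follow the paper's route for (i), (ii) and the first half of (iii): exact componentwise pullback formulas on the end, $r\partial_tr_F\in\mathscr S_k$ times $C^{k+1,\alpha}_0$ multipliers for $r^2H_{rr}$, and $\delta H\in C^{k,\alpha}_{-1}$ from (i) to control $P^\star$.

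The gap is in the final claim $\delta_fP\in\mathscr S_k$, where you have misidentified the dangerous terms. The operator $\delta_f$ is a divergence minus a contraction with $\mathbf X$, and it never differentiates along $\mathbf X$. Consequently one has exactly
\[
 \delta_f\delta_fH=\delta\delta H-2(\delta H)(\mathbf X)-\langle\nabla^2f,H\rangle+H(\mathbf X,\mathbf X).
\]
So no $\nabla_{\mathbf X}\nabla_{\mathbf X}H$ and no radial derivative of the mixed block ever occurs. Had $\partial_tH_{r\theta^i}$ occurred, your reduction would not close anyway. It would contain $\partial_t(r\partial_t\Theta)=\partial_te$, where $e=r\partial_t\Theta+\phi$ is known only in $C^{k,\alpha}_{-1}$, and \cref{ic:map-estimates} places no second radial derivative of $\Theta$ in $\mathscr S_k$.

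Conversely, your claim that all remaining terms lie in $C^{k,\alpha}_{-1}$ or $C^{k-1,\alpha}_{-2}$ is false. Both $\langle\nabla^2f,H\rangle$ and $2(\delta H)(\mathbf X)=r(\delta H)_r$ have weight zero. The first is harmless by (i). The second is a priori only in $C^{k,\alpha}_0$, which is not contained in $\mathscr S_k$; this is the real obstacle, and your plan does not address it.

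The missing step is to decompose $r(\delta H)_r$ using the cone radial divergence formula
\[
 r(\delta_CH)_r=\partial_tH_{rr}+(n-1)H_{rr}+r^{-1}\bar\delta(H_{\cdot r})-r^{-2}(g_\Sigma)^{ij}H_{ij},
\]
and then to place each term in a definite summand:
\begin{itemize}
\item $\partial_tH_{rr}$ and $H_{rr}$ go in $C^{k-1,\alpha}_{-2}$, because $H_{rr}$ already has weight $-2$.
\item $r^{-1}\bar\delta\Omega$ goes in $C^{k,\alpha}_{-1}$, because $\Omega\in C^{k+1,\alpha}$.
\item $r^{-1}\bar\delta(H_{\cdot r}-\Omega)$ goes in $C^{k-1,\alpha}_{-2}$.
\item The tangential trace goes in the high summand $C^{k+1,\alpha}_0$; this is exactly where the whole-field estimate (i) is needed.
\end{itemize}
The background error $r(\delta-\delta_C)H=rO(r^{-2})\nabla^CH+rO(r^{-3})H$ splits the same way.

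The paper then writes $\delta_fP=\delta P-\tfrac12rP_r$, with $\delta P=-\tfrac12\chi r^{-1}\bar\delta\Omega+\delta P^\star+E_\delta$ and $rP_r=r(\delta H)_r-\tfrac12r^2H_{rr}$, and concludes by \cref{ic:sum-multiplier}. The map quantities $\partial_t\partial_{\theta^i}\Theta$ and $r\nabla^2r_F$ are not used at all; only $r\partial_tr_F$ enters, through $r^2H_{rr}$.
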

\begin{proof}
In fixed link charts, omit $\theta$ from angular component indices.
In compositions we use
$i,j$ for source angular indices and $a,b,\ell$ for target angular indices.
Put $F=F_Z$, $\tilde g=g+h$, and
$N=\|h\|_{E_{k+4}}+\|Z\|_{\mathscr X_k}$.
All constants below are uniform on a fixed small ball. We first work
on the far end, where $\chi=1$. The cutoff terms and the compact
core are included at the end of the proof.

\noindent\textup{(i)}
By \cref{ic:map-estimates}, the normalized components of
$\mathrm dF$ belong to $C^{k+1,\alpha}_0$ with uniformly bounded norm.
Since $R/r$ is bounded above and below, composition with $F$
preserves the weights of the metric input.
The coordinate pullback formula bounds $F^*h$ in
$C^{k+1,\alpha}_0$ by $C\|h\|_{E_{k+4}}$.
For the background contribution, use the normalized map $F_{Z,\rho}$
of \cref{ic:map-estimates}. In a fixed chart, let $G$ be the matrix
of $g_\rho$ and $J_\rho=\mathrm dF_{Z,\rho}$. The map estimate,
the chain rule, and the H\"older product inequality give
\[
 \|J_\rho-I\|_{C^{k+1,\alpha}}
 +\|G(F_{Z,\rho})-G\|_{C^{k+1,\alpha}}
                  \leq C\|Z\|_{\mathscr X_k}.
\]
Here the coefficient estimate uses the uniform smooth bounds for
$g_\rho$ and the $C^{k+2,\alpha}$ bound for
$F_{Z,\rho}-\operatorname{Id}$. The exact matrix identity is
\[
\begin{split}
 F_{Z,\rho}^*G-G={}&G(F_{Z,\rho})-G
                   +(J_\rho-I)^{\mathsf T}G(F_{Z,\rho})\\
 &+G(F_{Z,\rho})(J_\rho-I)
       +(J_\rho-I)^{\mathsf T}G(F_{Z,\rho})(J_\rho-I).
\end{split}
\]
Multiplication in $C^{k+1,\alpha}$ and scaling back therefore give
\[
 \|F^*g-g\|_{C^{k+1,\alpha}_0}
 \leq C\bigl(\|Z\|_{\mathscr X_k}+\|Z\|_{\mathscr X_k}^2\bigr)
 \leq C\|Z\|_{\mathscr X_k}.
\]
The same matrix calculation applies on the fixed compact core.
Together these estimates prove the bound for $H$.

\noindent\textup{(ii)}
The canonical background expansion and the definition of $E_{k+4}$
give the exact component representation
\[
 \tilde g_{rr}=1+r^{-2}A,\qquad
 \tilde g_{ri}=B_{i},\qquad
 \tilde g_{ij}=r^2\Gamma_{ij},
\]
where the fixed-link coefficient families $A,B,\Gamma$ have
bounded $C^{k+4,\alpha}_0$ norms and
\[
 B=b_{-1}^h+O_{k+4+\alpha}(r^{-1}),\qquad
 \Gamma=\gamma+r^{-1}c_{-1}^h+O_{k+4+\alpha}(r^{-2}).
\]
Component remainders in these formulas are measured as families on
the fixed link. A tangential coordinate covector has $g$-norm comparable to
$r^{-1}$ times its fixed-link norm.
All comparisons of $\Theta$ and $\psi$ below are made in the fixed
link charts used in \cref{ic:map-estimates}.

Set $v=\partial_rR=1+\partial_rr_F$. The mixed block of the
pullback is exactly
\[
\begin{split}
 H_{ri}={}&
 (1+R^{-2}A(F))v\,\partial_iR
 +B(F)\bigl(v\,\partial_i\Theta+\partial_iR\,\partial_r\Theta\bigr)\\
 &+R^2\Gamma(F)(\partial_r\Theta,\partial_i\Theta)
                  -g_{ri}.
\end{split}
\]
Use the estimates
\[
\begin{gathered}
 \partial_iR=\partial_i a_0+O_{k+\alpha}(r^{-1}),\qquad
 \partial_i\Theta=\partial_i\psi+O_{k+\alpha}(r^{-1}),\\
 r^2\partial_r\Theta=r\partial_t\Theta=-\phi+O_{k+\alpha}(r^{-1}),\qquad
 v=1+O_{k+\alpha}(r^{-2}),\qquad R/r=1+O_{k+\alpha}(r^{-1}).
\end{gathered}
\]
The first term of the mixed-block formula contributes
$\partial_i a_0+O_{k+\alpha}(r^{-1})$.
The second contributes
$b_{-1}^h|_\psi(\partial_i\psi)+O_{k+\alpha}(r^{-1})$;
its term containing $\partial_r\Theta$ has weight at most minus
two. The third contributes
$-\gamma_\psi(\phi,\partial_i\psi)+O_{k+\alpha}(r^{-1})$.
Finally $g_{ri}=O(r^{-3})$ by the canonical expansion.
This proves \cref{ic:mixed-trace} and
$H_{ri}-\Omega_{i}=O_{k+\alpha}(r^{-1})$.
The regularities $a_0,\psi\in C^{k+2,\alpha}$ and
$\phi\in C^{k+1,\alpha}$ show that every term of $\Omega$ belongs
to $C^{k+1,\alpha}$, with norm bounded by $CN$.

For the tangential block, the terms containing a derivative of $R$
have normalized weight at most minus two. Hence
\[
 r^{-2}H_{ij}
 =(R/r)^2\Gamma(F)
       (\partial_i\Theta,\partial_j\Theta)
       -(g_\Sigma)_{ij}+O_{k-1+\alpha}(r^{-2}).
\]
Taylor expansion of $R=r+a_0+O_{k+\alpha}(r^{-1})$ and
$\Theta=\psi+r^{-1}\phi+O_{k+\alpha}(r^{-2})$ gives the coefficients
in \cref{ic:tangential-minus-one}. In coordinates, the first variation is
\[
\begin{split}
 (\mathrm D_{\psi,\phi}\gamma)_{ij}
 ={}&\phi^\ell(\partial_\ell\gamma_{ab})(\psi)
                     \,\partial_i\psi^a\,\partial_j\psi^b\\
 &+\gamma_{ab}(\psi)\,\partial_i \phi^a\,\partial_j\psi^b
  +\gamma_{ab}(\psi)\,\partial_i\psi^a\,\partial_j \phi^b.
\end{split}
\]
These three terms are the coefficient of the first power of $s$ in
$\gamma_{ab}(\psi+s\phi)\,\partial_i(\psi^a+s\phi^a)
\partial_j(\psi^b+s\phi^b)$.
It uses one derivative of $\phi$, so
$c_0^H\in C^{k+1,\alpha}$ and $c_{-1}^H\in C^{k,\alpha}$.
One angular derivative of the map remainder
$O_{k+\alpha}(r^{-2})$ has order $k-1$. Consequently
\[
 r^{-2}H_{ij}
  =(c_0^H)_{ij}+r^{-1}(c_{-1}^H)_{ij}+O_{k-1+\alpha}(r^{-2}),
\]
with all coefficient and remainder norms bounded by $CN$.

Put $\tilde A=r^2(g_{rr}-1)$. The exact radial formula is
\[
 H_{rr}=(v^2-1)+R^{-2}A(F)v^2+2B(F)(v\,\partial_r\Theta)
       +R^2\Gamma(F)(\partial_r\Theta,\partial_r\Theta)-r^{-2}\tilde A.
\]
Since $\partial_rr_F\in C^{k,\alpha}_{-2}$,
$v^2-1=(2+\partial_rr_F)\partial_rr_F$ has weight minus two.
Also $\partial_r\Theta=r^{-2}(-\phi+O_{k+\alpha}(r^{-1}))$.
Every other term in this formula has weight at most minus two and
order $k$. Thus $H_{rr}\in C^{k,\alpha}_{-2}$.
Multiplication by $r^2$ yields
\begin{equation}\label{ic:radial-pullback-exact}
\begin{split}
 r^2H_{rr}={}&(2+\partial_rr_F)r\partial_tr_F
 +(r/R)^2A(F)v^2\\
 &+2B(F)(v\,r\partial_t\Theta)
 +(R/r)^2\Gamma(F)(r\partial_t\Theta,r\partial_t\Theta)
                   -\tilde A.
\end{split}
\end{equation}
Together with the mixed and tangential expansions, the radial bound
gives the following expansion:
\[
 H=\chi\bigl\{\boldsymbol\tau(c_0^H)
       +r^{-1}\boldsymbol\tau(c_{-1}^H
                            +2\,\mathrm dr\odot\Omega)\bigr\}
          +H^\star,\qquad
 H^\star\in C^{k-1,\alpha}_{-2}.
\]
Thus $H\in E_{k-1}$.

The normalized differential estimate in \cref{ic:map-estimates}
gives $\partial_rr_F\in C^{k+1,\alpha}_0$, in addition to its
weight-minus-two bound at order $k$. The factors
\[
 r/R,\quad R/r,\quad v,\quad A(F),\quad B(F),\quad\Gamma(F)
                  \in C^{k+1,\alpha}_0
\]
are therefore controlled multipliers of $\mathscr S_k$.
Here the composed coefficient estimates use the $E_{k+4}$ input
and the normalized $C^{k+2,\alpha}$ map bounds.
Since $r\partial_tr_F\in\mathscr S_k$ by
\cref{ic:radial-graph}, the first term of
\cref{ic:radial-pullback-exact} lies in $\mathscr S_k$.

Set $e=r\partial_t\Theta+\phi\in C^{k,\alpha}_{-1}$ in the fixed
link charts. Inserting $r\partial_t\Theta=-\phi+e$ into the mixed
and quadratic angular terms separates them into three types.
Terms involving only $\phi\in C^{k+1,\alpha}$ belong to the high
summand $C^{k+1,\alpha}_0$. Terms linear in $e$ belong to
$C^{k,\alpha}_{-1}\hookrightarrow\mathscr S_k$ by the
embedding and multiplier estimate of \cref{ic:sum-multiplier}.
Terms quadratic in $e$ belong to
$C^{k,\alpha}_{-2}\subset C^{k-1,\alpha}_{-2}$.

The remaining radial coefficient terms belong to the high summand.
To record their quantitative bound, write
$A-\tilde A=r^2h_{rr}$ and $\rho=(r/R)^2v^2$. Then
\[
 \rho A(F)-\tilde A
 =\rho(A-\tilde A)(F)+(\rho-1)\tilde A(F)
                      +\tilde A(F)-\tilde A.
\]
The first term has high norm bounded by $C\|h\|_{E_{k+4}}$.
The other two have high norm bounded by $C\|Z\|_{\mathscr X_k}$:
$\rho-1$ has that bound, and the last difference is the composition
difference of a fixed smooth background coefficient.
Thus \cref{ic:radial-pullback-exact} proves
\[
 \|r^2H_{rr}\|_{\mathscr S_k}\leq CN.
\]
The same grouping gives $\|H_{rr}\|_{C^{k,\alpha}_{-2}}\leq CN$
in the unscaled radial formula. The difference $A-\tilde A$ is
estimated directly in $C^{k+4,\alpha}_0$. The preceding coefficient
expansions consequently also give $\|H\|_{E_{k-1}}\leq CN$.

\noindent\textup{(iii)}
The mixed coordinate remainder has weight minus one; its contribution
to $H(\partial_r,\cdot)$ has weight minus two with respect to $g$. With the radial estimate this gives
\[
 H(\partial_r,\cdot)
   =\chi r^{-1}\boldsymbol\tau(\Omega)
                      +O_{k+\alpha}(r^{-2}).
\]
Since $\mathbf X=r\partial_r/2$ on the canonical end,
\[
 H(\mathbf X,\cdot)
  =\tfrac12\chi\boldsymbol\tau(\Omega)
                      +O_{k+\alpha}(r^{-1}).
\]
By (i), $\delta H\in C^{k,\alpha}_{-1}$. Subtracting this radial
contraction proves
\[
 P=\delta_fH
 =-\tfrac12\chi\boldsymbol\tau(\Omega)+P^\star,
 \qquad \|P^\star\|_{C^{k,\alpha}_{-1}}\leq CN.
\]

For the fixed-link one-form family
$H_{\cdot r}=H_{ir}\,\mathrm d\theta^i$, the cone
radial divergence formula is
\begin{equation}\label{ic:radial-divergence-H}
 r(\delta_CH)_r=\partial_tH_{rr}+(n-1)H_{rr}
 +r^{-1}\bar\delta(H_{\cdot r})
 -r^{-2}(g_\Sigma)^{ij}H_{ij}.
\end{equation}
Each term has a definite position in the scalar sum:
\[
\begin{aligned}
 \partial_tH_{rr}&\in C^{k-1,\alpha}_{-2},&
 H_{rr}&\in C^{k,\alpha}_{-2},\\
 r^{-1}\bar\delta(H_{\cdot r}-\Omega)
      &\in C^{k-1,\alpha}_{-2},&
 r^{-1}\bar\delta\Omega&\in C^{k,\alpha}_{-1},\\
 r^{-2}(g_\Sigma)^{ij}H_{ij}
      &\in C^{k+1,\alpha}_0.
\end{aligned}
\]
The differentiated mixed remainder loses one order and gains the
displayed factor $r^{-1}$; the raw trace derivative has order $k$
because $\Omega\in C^{k+1,\alpha}$. Its inclusion in
$\mathscr S_k$ is precisely \cref{ic:sum-multiplier}.
The last term uses the whole-field high-order estimate for $H$.

The inverse-metric and connection differences give contractions
of the form
\[
 r(\delta-\delta_C)H
   =rO(r^{-2})\nabla^CH+rO(r^{-3})H.
\]
The first term belongs to $C^{k,\alpha}_{-2}$, hence to the
lower summand, and the second belongs to
$C^{k+1,\alpha}_{-2}$, hence to the high summand. Here
$\nabla^CH=\nabla H+O(r^{-3})H$ has the required
$C^{k,\alpha}_{-1}$ bound. It follows that
\[
 r(\delta H)_r\in\mathscr S_k,\qquad
 rP_r=r(\delta H)_r-\tfrac12r^2H_{rr}\in\mathscr S_k,
\]
with norms bounded by $CN$.

Ordinary divergence of the one-form expansion gives
\[
 \delta P=-\tfrac12\chi r^{-1}\bar\delta\Omega
                  +\delta P^\star+E_\delta.
\]
The first term belongs to $C^{k,\alpha}_{-1}$ and
$\delta P^\star\in C^{k-1,\alpha}_{-2}$.
The background part of $E_\delta$ is a sum of
$O(r^{-2})\nabla^C\boldsymbol\tau(\Omega)$ and
$O(r^{-3})\boldsymbol\tau(\Omega)$; these have orders $k$
and $k+1$ and weight at most minus three. Thus they also belong
to $\mathscr S_k$, with norm at most $CN$.
Finally,
\[
 \delta_fP=\delta P-\tfrac12rP_r
   =\delta P-\tfrac12r(\delta H)_r+\tfrac14r^2H_{rr}
                 \in\mathscr S_k,
 \qquad \|\delta_fP\|_{\mathscr S_k}\leq CN.
\]

All cutoff derivatives are supported in the fixed transition
annulus and have the displayed finite-order bounds. On the compact
core, the local addition and the $C^{k+2,\alpha}$ control of $Z$
give $H\in C^{k+1,\alpha}$, $P\in C^{k,\alpha}$, and
$\delta_fP\in C^{k-1,\alpha}$ with norms bounded by $CN$.
These are the required local orders for $E_{k-1}$ and
$\mathscr Y_k$, since the compact part of the lower scalar
summand has order $k-1$. This completes all the estimates.
\end{proof}

\subsection{The fixed Banach slice}

\begin{Proposition}\label{ic:forward-C1}
On a sufficiently small open neighborhood of $(0,0)$,
\[
 P:E_{k+4}\times\mathscr X_k
          \longrightarrow\mathscr Y_k\cap\mathcal K^\perp
\]
is $C^1$ with locally Lipschitz differential. In the product norm,
on a smaller convex neighborhood,
\begin{align}
 \|P(x+e)-P(x)-\mathrm DP(x)e\|_{\mathscr Y_k}
      &\leq C\|e\|^2,\label{ic:Frechet-remainder}\\
 \|\mathrm DP(x)-\mathrm DP(y)\|
      &\leq C\|x-y\|.\label{ic:derivative-Lipschitz}
\end{align}
Moreover $\mathrm D_ZP(0,0)=-2\mathcal P$.
\end{Proposition}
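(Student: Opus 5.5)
The plan is to write $P$ as a composition of maps between fixed Banach spaces whose smoothness is already recorded, and to exploit the four-derivative reserve on $h$. First, I will establish that $P$ maps the small neighborhood into $\mathscr Y_k\cap\mathcal K^\perp$. By \cref{ic:pullback-closure}(iii), $P(h,Z)=-\tfrac12\chi\boldsymbol\tau(\Omega)+P^\star$ with $\Omega\in C^{k+1,\alpha}$, $P^\star\in C^{k,\alpha}_{-1}$, and $\delta_fP\in\mathscr S_k$, so $P(h,Z)\in\mathscr Y_k$. Orthogonality to $\mathcal K$ holds because $P=\delta_fH$ and, for $\omega\in\mathcal K$, weighted integration by parts gives $\langle\delta_fH,\omega\rangle_f=\langle H,\delta^*\omega\rangle_f=0$. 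This is justified since $H\in C^{k+1,\alpha}_0\subset H^1_f$ by \cref{lem:Holder-to-Gaussian-Sobolev}, while Killing forms have linear growth by \cref{prop:Killing-linear-growth}; a cutoff $\eta_R$ argument disposes of the boundary terms.

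Next comes regularity. I will write $H=F_Z^*g-g+F_Z^*h$ and use the explicit component formulas from the proof of \cref{ic:pullback-closure}. The background part $F_Z^*g-g$ depends only on $Z$. By \cref{ic:map-estimates}(iv), every map-dependent ingredient ($a_0,b_0,b_1,\psi,\phi$, the normalized differential, $r\partial_tr_F$, $\partial_t\partial_i\Theta$ with its single $\mathscr S_k$ factor) depends smoothly on $Z$ with bounded derivatives. The formulas for $H$, $\Omega$, $P^\star$ and $\delta_fP$ are polynomial in these ingredients with smooth coefficient functions, so the $Z$-dependence is smooth. The input part $F_Z^*h$ is linear in $h$, but it involves compositions $A(F_Z)$, $B(F_Z)$, $\Gamma(F_Z)$, and the coefficients $c_0^h,c_{-1}^h,b_{-1}^h$ are composed with $\psi$.

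Differentiating a composition $c\circ F_Z$ in $Z$ costs one derivative of $c$, and a second difference costs two. This is where the reserve is spent. The input lies in $E_{k+4}$, while closure in \cref{ic:pullback-closure} only used about $k+1$ derivatives of the coefficients, leaving room for up to two additional derivatives. I therefore expect the following bounds:
\[
 \|c\circ F_{Z+W}-c\circ F_Z-\mathrm Dc(F_Z)[W]\|\le C\|c\|_{k+4}\|W\|^2 ,
\]
together with the Lipschitz bound for $Z\mapsto \mathrm Dc(F_Z)$ in the norms used by the closure estimates, taken from Taylor's formula with integral remainder on normalized annuli.

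Combining the two parts gives \eqref{ic:Frechet-remainder} and \eqref{ic:derivative-Lipschitz}. Since $P$ is linear in $h$ with $Z$-dependent coefficients, the cross terms $hW$ and $WW$ are bounded by $C\|e\|^2$, and $C^1$ follows from the Lipschitz differential. For the derivative at the origin, when $h=0$ we have $\frac{\mathrm d}{\mathrm ds}\big|_{s=0}F_{sZ}^*g=\mathcal L_{Z^\sharp}g=-2\delta^*Z$, because $\frac{\mathrm d}{\mathrm ds}F_{sZ}$ at $s=0$ is $Z^\sharp$ for any local addition. Hence $\mathrm D_ZP(0,0)Z=\delta_f(-2\delta^*Z)=-2\mathcal PZ$.

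I expect the main obstacle to be the $\mathscr S_k$ component of $\delta_fP$ under differentiation. That component contains the radial term $r^2H_{rr}$, which involves $r\partial_tr_F$ and $\partial_t\Theta$ multiplied by composed coefficients. I will use the structure noted in \cref{ic:map-estimates}: each parameter derivative contains at most one $\mathscr S_k$ factor, and all other factors lie in $C^{k+1,\alpha}_0$. The multiplier estimate \cref{ic:sum-multiplier} then applies term by term. The variations of the composed $h$-coefficients must also be checked to land in $C^{k+1,\alpha}_0$, and this is exactly what the $k+4$ regularity supplies.
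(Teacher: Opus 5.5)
Your proposal is correct and follows the paper's proof essentially step by step. The shared steps are: composition calculus on normalized annuli that spends the derivative reserve of $E_{k+4}$; the radial $\mathscr S_k$ entry handled as a single $\mathscr S_k$ factor times $C^{k+1,\alpha}_0$ multipliers via \cref{ic:sum-multiplier}; orthogonality to $\mathcal K$ by cutoff integration by parts; and $\mathrm D_ZP(0,0)=-2\mathcal P$ from the differential of the local addition at the zero section.

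The differences are minor. The paper shows $P$ is $C^2$, using the last reserved derivative for continuity of the second differential, and then integrates along segments, whereas you prove the Taylor and Lipschitz bounds directly. The paper also states explicitly that the $\delta_f$-compatibility condition defining $\mathscr Y_k$ is closed in the coefficient product, so the componentwise differentials really lie in $\mathscr Y_k$; your argument leaves this step implicit.
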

\begin{proof}
Write $x=(h,Z)$ and $\xi=(\ell,W)$, with
$\|\xi\|=\|\ell\|_{E_{k+4}}+\|W\|_{\mathscr X_k}$.
We work on a sufficiently small convex ball, with uniform constants.

\textnormal{Step 1.} We first record the composition estimate used below.
On nested normalized coordinate charts, the map
$(a,F)\mapsto a\circ F$ is $C^2$ into $C^{j,\alpha}$ for
$a\in C^{j+3,\alpha}$ and maps $F$ in a bounded
$C^{j,\alpha}$ neighborhood, where $j\geq1$.
For variations $(a_1,U_1)$ and $(a_2,U_2)$, its second differential is
\[
 \mathrm d^2a(F)[U_1,U_2]
       +\mathrm da_1(F)U_2+\mathrm da_2(F)U_1.
\]
The H\"older product and chain rules bound this expression in
$C^{j,\alpha}$. The additional derivative gives continuity in that
same norm: composition of $\mathrm d^2a$ at two nearby maps is
controlled using $\mathrm d^3a\in C^{j,\alpha}$.
The first-order integral formula then identifies the displayed
expression as the continuous second Banach differential. Composing
with $Z\mapsto F_Z$ adds the usual term
$\mathrm da(F_Z)\mathrm D^2F_Z[W_1,W_2]$, controlled by
\cref{ic:map-estimates}.

In the formulas of \cref{ic:pullback-closure}, two parameter
variations use at most $k+3$ metric derivatives: $H$ is measured
at order $k+1$, $\nabla H$ at order $k$, and second spatial
derivatives at order $k-1$. The high radial and link coefficients
are measured at most at order $k+1$. The remaining derivative
provided by $h\in E_{k+4}$ gives continuity of the second
differentials in the stated H\"older norms. The whole maps have
uniform $C^{k+2,\alpha}$ bounds by
\cref{ic:whole-coordinate-regularity,ic:map-estimates}.
Comparable radii preserve all weights, so the estimates are uniform
over the end charts. The compact core uses finitely many ordinary
charts.

\textnormal{Step 2.} Apply this calculus and \cref{ic:map-estimates}
to the exact pullback formulas in \cref{ic:pullback-closure}.
They give $C^2$ maps, with bounded first and second differentials,
into the following spaces:
\[
\begin{gathered}
 H\in C^{k+1,\alpha}_0,\qquad
 \nabla H\in C^{k,\alpha}_{-1},\qquad
 H_{rr}\in C^{k,\alpha}_{-2},\\
 \Omega\in C^{k+1,\alpha}(\Sigma,T^*\Sigma),\qquad
 H_{\cdot r}-\Omega\in C^{k,\alpha}_{-1}.
\end{gathered}
\]
The gradient assertion follows by bounded covariant differentiation.
The last norm is the fixed-link family norm. The weighted remainder
claims follow from the factorizations in that proof: each term
contains a decaying map remainder, metric remainder, or radial
factor. These factors remain decaying after parameter variation.
For example, differentiating $b(\Theta)-b(\psi)$ and regrouping
retains a difference of the maps or their variations in every term;
the same holds after two variations. At target order $k$, the
coefficient differences use at most $k+3$ derivatives, with the
same extra derivative for continuity. Thus the weighted assertions
follow in their full $C^{k,\alpha}$ norms.

For the radial sum-space entry, keep the source-coordinate factor
\[
 q(Z)=r\partial_tr_F\in\mathscr S_k,
 \qquad r\partial_t\Theta=-\phi+e,
 \qquad e\in C^{k,\alpha}_{-1}.
\]
On the far end $q$ is bounded linear in $Z$, while $\phi$ and $e$
depend smoothly on $Z$ in $C^{k+1,\alpha}$ and
$C^{k,\alpha}_{-1}$, respectively. By
\cref{ic:sum-multiplier}, multiplication is a bounded bilinear map
\[
 C^{k+1,\alpha}_0\times\mathscr S_k\longrightarrow\mathscr S_k.
\]
Indeed, estimate the two summands of any decomposition and take
the infimum in the quotient norm. The first term of
\cref{ic:radial-pullback-exact} is
$(2+\partial_rr_F)q(Z)$, so it is $C^2$ into $\mathscr S_k$.
The term $(r/R)^2A(F)v^2-\tilde A$ is $C^2$ into the high
summand. In the angular terms, the factors involving only $\phi$
belong to $C^{k+1,\alpha}_0$; those linear in $e$ belong to
$C^{k,\alpha}_{-1}\subset\mathscr S_k$; and the term quadratic
in $e$ belongs to $C^{k,\alpha}_{-2}$, hence to the lower
summand. All accompanying coefficients are controlled
$C^{k+1,\alpha}_0$ multipliers. Bounded product rules therefore
prove that $r^2H_{rr}$ is $C^2$ into $\mathscr S_k$, with bounded
second differential. Compactly supported terms have the required
order $k-1$ in the lower summand.

\textnormal{Step 3.} The complete target follows by the fixed linear
operations in \cref{ic:pullback-closure}. On the far end,
\[
 P^\star=\delta H-\tfrac12rH_{rr}\,\mathrm dr
             -\tfrac12\boldsymbol\tau_r(H_{\cdot r}-\Omega).
\]
Contraction of $\nabla H$, multiplication of $H_{rr}$ by $r$,
and radial identification of the mixed remainder are bounded into
$C^{k,\alpha}_{-1}$. Hence $P^\star$ is $C^2$ in that norm.
Equation \eqref{ic:radial-divergence-H}, with the fixed background
correction, gives $r(\delta H)_r$ by bounded linear maps from the
fields in Step 2 to $\mathscr S_k$: the differentiated mixed
remainder lies in the lower summand, the trace derivative lies in
$C^{k,\alpha}_{-1}$, and the tangential trace of $H$ lies in the
high summand. Similarly,
\[
\begin{aligned}
 \delta P&=-\tfrac12\chi r^{-1}\bar\delta\Omega
                  +\delta P^\star+E_\delta(\Omega),\\
 \delta_fP&=\delta P-\tfrac12r(\delta H)_r+\tfrac14r^2H_{rr},
\end{aligned}
\]
where $E_\delta$ is the fixed background and cutoff correction
already estimated in \cref{ic:pullback-closure}.
Every operation is bounded into $\mathscr S_k$. On the compact
core, $H\mapsto\delta_f\delta_fH$ is bounded from
$C^{k+1,\alpha}$ to $C^{k-1,\alpha}$.

It follows that $(-\Omega/2,P^\star,\delta_fP)$ is $C^2$ in
the product defining \cref{ic:target-definition}. The compatibility
condition defining the graph of $\delta_f$ is closed in this
product, by distributional continuity, as in
\cref{ic:graph-isomorphism}. Its first and second differentials
therefore belong to the same closed subspace. Consequently $P$
is $C^2$ into $\mathscr Y_k$, and
\[
 \|\mathrm D^2P(x)[\xi,\eta]\|_{\mathscr Y_k}
       \leq C\|\xi\|\|\eta\|.
\]
Integrating this bound along line segments proves
\cref{ic:Frechet-remainder,ic:derivative-Lipschitz}.

\textnormal{Step 4.} If $\eta\in\mathcal K$, the Gaussian energy
identity gives $\delta^*\eta=0$. Integration by parts with a
cutoff $\chi_R$, equal to one on a ball of radius $R$ and with
$|\mathrm d\chi_R|\leq C/R$, gives
\[
 \langle\chi_RP,\eta\rangle_{L^2_f}
 =\langle\chi_RH,\delta^*\eta\rangle_{L^2_f}
       -\langle H,\mathrm d\chi_R\odot\eta\rangle_{L^2_f}.
\]
Here $H,P\in L^2_f$ by their polynomial bounds and
$\eta\in H^2_f$. Cauchy--Schwarz makes the cutoff error tend
to zero, proving $P(h,Z)\perp\mathcal K$. This is a fixed
closed subspace, so it also contains the derivatives above.
Finally, the differential of the local addition at the zero
section is the identity, and therefore
\[
 \mathrm D_ZP(0,0)W
 =\delta_f\mathcal L_{W^{\sharp_g}}g
 =-2\delta_f\delta^*W=-2\mathcal PW.
\]
\end{proof}

\begin{Theorem}[Fixed-order weighted slice]\label{ic:finite-slice}
Let $(M,g,f)$ be a complete nontrivial AC Ricci shrinker.\phantomsection\label{idx:fixed-Banach-slice}
Fix $k\geq6$ and $0<\alpha<1$. There are $\epsilon_k,C_k>0$
and a neighborhood of zero in $\mathscr X_k$ such that every
$h\in E_{k+4}$ with $\|h\|_{E_{k+4}}<\epsilon_k$ has a unique
parameter $Z(h)$ in this neighborhood satisfying
\[
 \delta_{f}\bigl(F_{Z(h)}^*(g+h)-g\bigr)=0.
\]
The branch $h\mapsto Z(h)$ is $C^1$ and
\[
 \|Z(h)\|_{\mathscr X_k}
 +\|F_{Z(h)}^*(g+h)-g\|_{E_{k-1}}
       \leq C_k\|h\|_{E_{k+4}}.
\]
The map $F_{Z(h)}$ is a global $C^{k+2,\alpha}_{\rm loc}$
diffeomorphism isotopic to the identity.
\end{Theorem}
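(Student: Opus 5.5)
The plan is to apply the Banach implicit function theorem to the map
\[
 P:E_{k+4}\times\mathscr X_k\longrightarrow\mathscr Y_k\cap\mathcal K^\perp,
 \qquad P(h,Z)=\delta_f\bigl(F_Z^*(g+h)-g\bigr),
\]
on the small neighborhood of $(0,0)$ supplied by \cref{ic:forward-C1}.

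The linear input comes first. The map satisfies $P(0,0)=0$, since $F_0=\mathrm{id}$. By \cref{ic:forward-C1} it is $C^1$ with locally Lipschitz differential, and $\mathrm D_ZP(0,0)=-2\mathcal P$. \Cref{ic:graph-isomorphism} states that $\mathcal P:\mathscr X_k\to\mathscr Y_k\cap\mathcal K^\perp$ is a bounded isomorphism. Hence $\mathrm D_ZP(0,0)$ is invertible between exactly the spaces in which $P$ is differentiable. The implicit function theorem, in its $C^1$ version, then gives $\epsilon_k>0$, a ball $B\subset\mathscr X_k$, and a unique $C^1$ map $h\mapsto Z(h)\in B$ with $P(h,Z(h))=0$ whenever $\|h\|_{E_{k+4}}<\epsilon_k$.

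The bounds follow from this. Since $Z(0)=0$ and $\mathrm DZ$ is bounded on the ball (shrink if necessary), the mean value inequality gives $\|Z(h)\|_{\mathscr X_k}\le C\|h\|_{E_{k+4}}$. The estimate for $H=F_{Z(h)}^*(g+h)-g$ in $E_{k-1}$ comes from \cref{ic:pullback-closure}(ii), which bounds $\|H\|_{E_{k-1}}$ by $C(\|h\|_{E_{k+4}}+\|Z\|_{\mathscr X_k})$. Combining the two yields the stated inequality.

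The remaining, geometric part is to show that $F_Z$ is a global $C^{k+2,\alpha}_{\mathrm{loc}}$ diffeomorphism isotopic to the identity. I expect this to be the main obstacle, because $Z^{\sharp_g}$ grows linearly and $F_Z$ is not a small perturbation in unweighted norms.

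The strategy is to use the isotopy $F_{sZ}$, $s\in[0,1]$.
\begin{itemize}
\item \emph{Local diffeomorphism.} By \cref{ic:map-estimates}(ii), the normalized differential of $F_{sZ}$ satisfies $\|J-I\|_{C^{k+1,\alpha}_0}\le C\|Z\|_{\mathscr X_k}$. So for small $Z$ each $F_{sZ}$ is a local $C^{k+2,\alpha}$ diffeomorphism; regularity comes from \cref{ic:whole-coordinate-regularity} and the smoothness of $\operatorname{Exp}^{\mathrm a}$.
\item \emph{Properness.} On the end, \cref{eq:slice-product-exponential} and \cref{ic:map-expansion} give $R=r+sa_0+O(r^{-1})$, with $a_0$ bounded. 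Thus $F_{sZ}$ moves points a bounded distance in $r$ and preserves the end near infinity. On the core, displacements are uniformly small. Hence $F_{sZ}$ is proper.
\item \emph{Bijectivity.} A proper local diffeomorphism of a connected manifold is a covering map. Its degree is locally constant in $s$ and equals $1$ at $s=0$, so each $F_{sZ}$ is a diffeomorphism.
\end{itemize}
Alternatively, injectivity on the end can be checked directly from the normalized maps $F_{Z,\rho}$, which are $C^1$-close to the identity on nested annuli uniformly in $\rho$. Both routes give that $s\mapsto F_{sZ}$ is an isotopy to the identity, which completes the proof.
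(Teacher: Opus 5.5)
Your proposal is correct and follows the paper's proof essentially step for step. You use the same ingredients: the $C^1$ implicit function theorem built on the graph isomorphism and the forward $C^1$ estimate, the $E_{k-1}$ bound from the pullback closure, and global invertibility via the proper local-diffeomorphism homotopy $F_{sZ}$ and covering theory. The only cosmetic difference is that you get one-sheetedness from constancy of the sheet number in $s$, whereas the paper uses that the homotopy induces the identity on $\pi_1$.
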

\begin{proof}
The graph isomorphism of \cref{ic:graph-isomorphism} and
\cref{ic:forward-C1} meet the
hypotheses of the $C^1$ Banach implicit function theorem.
It gives the branch, uniqueness in a small graph ball, and
$\|Z(h)\|_{\mathscr X_k}\leq C_k\|h\|_{E_{k+4}}$.
\Cref{ic:pullback-closure} gives the stated $E_{k-1}$ estimate.

For small $Z$, \cref{ic:map-estimates} makes $F_Z$ a local
diffeomorphism everywhere; on the core use a fixed finite atlas,
and on the end use its normalized differential and the small
$C^1$ perturbation $\psi$ of the link identity. Its radial component
is $r+O(1)$, so $F_Z$ is proper. The same statements hold uniformly
for $F_{sZ}$, $0\leq s\leq1$, giving a proper local-diffeomorphism
homotopy from the identity. A proper local diffeomorphism is a
covering map; this homotopy induces the identity map on the
fundamental group, so the covering has one sheet on each connected
component. Hence $F_Z$ is a global diffeomorphism, and the homotopy
is the asserted isotopy. The local differentiability of the map
and its inverse follows from the whole-field
$C^{k+2,\alpha}_{\rm loc}$ estimate and the finite-dimensional
inverse function theorem.
\end{proof}

\Needspace{4\baselineskip}
\begin{Corollary}\label{ic:sliced-radial-improvement}
Let $m\geq2$ and $h\in E_m$ satisfy $\delta_fh=0$.
In \eqref{eq:slice-E-expansion}, one has $b_{-1}=0$, and on the end
\[
 h_{rr}=-2r^{-2}\tr_{g_\Sigma}c_0
        -2r^{-3}\tr_{g_\Sigma}c_{-1}
        +O_{m-1+\alpha}(r^{-4}).
\]
The remainder norm is bounded by $C\|h\|_{E_m}$.
\end{Corollary}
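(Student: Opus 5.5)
The plan is to compute $\delta_fh$ directly on the far end, where $\chi=1$ and $\mathbf X=\frac r2\partial_r$, and read off the vanishing of successive homogeneous coefficients. Write $\delta_fh=\delta h-h(\mathbf X,\cdot)=\delta h-\frac r2 h(\partial_r,\cdot)$. For the expansion \eqref{eq:slice-E-expansion}, the tangential coefficients $\boldsymbol\tau(c_0)$ and $r^{-1}\boldsymbol\tau(c_{-1})$ contribute nothing to $h(\partial_r,\cdot)$. The mixed term $r^{-1}\boldsymbol\tau(2\,\mathrm dr\odot b_{-1})$ contributes $r^{-1}\boldsymbol\tau(b_{-1})$, homogeneous of degree $-1$. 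The remainder $h^\star\in C^{m,\alpha}_{-2}$ contributes $O_{m+\alpha}(r^{-2})$ in $g$-norm. So $h(\mathbf X,\cdot)=\frac12\boldsymbol\tau(b_{-1})+\frac r2 h^\star(\partial_r,\cdot)$. Meanwhile $\delta h\in C^{m-1,\alpha}_{-1}$ by \cref{lem:basic-properties}. The identity $\delta_fh=0$ then gives $\frac12\boldsymbol\tau(b_{-1})\in C^{m-1,\alpha}_{-1}$. A degree-zero homogeneous tensor lies in a weight $-1$ space only if its link coefficient vanishes (divide by $r^0$ and let $r\to\infty$ along rays), so $b_{-1}=0$.

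Next I will extract $h_{rr}$ from the radial component of the equation. Since $b_{-1}=0$, we have $h_{rr}=h^\star_{rr}$, and the equation reads
\[
 \tfrac r2\,h_{rr}=(\delta h)_r .
\]
I will compute $(\delta h)_r$ with the cone formula \eqref{ic:radial-divergence-H}, corrected by $r(\delta-\delta_C)h=rO(r^{-2})\nabla^Ch+rO(r^{-3})h$. In that formula the tangential trace term gives
\[
 -r^{-2}g_\Sigma^{ij}h_{ij}=-\tr_{g_\Sigma}c_0-r^{-1}\tr_{g_\Sigma}c_{-1}+O_{m+\alpha}(r^{-2}).
\]
Because $b_{-1}=0$, the mixed block $h_{\cdot r}$ is pure remainder of weight $-2$ in $g$-norm, so $r^{-1}\bar\delta h_{\cdot r}=O_{m-1+\alpha}(r^{-2})$. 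The background correction is $O(r^{-2})$ after multiplying by $r$: $\nabla^Ch$ has weight $-1$ and the inverse-metric error is $O(r^{-2})$. Multiplying the identity by $r$ gives
\[
 \tfrac{r^2}2h_{rr}=\partial_tH_{rr}+(n-1)H_{rr}-\tr c_0-r^{-1}\tr c_{-1}+O_{m-1+\alpha}(r^{-2}),
\]
where $H_{rr}:=h_{rr}\in C^{m,\alpha}_{-2}$, so $\partial_th_{rr}$ and $h_{rr}$ are themselves $O_{m-1+\alpha}(r^{-2})$.

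Setting $y=r^2h_{rr}$, this becomes
\[
 y=-2\tr c_0-2r^{-1}\tr c_{-1}+O_{m-1+\alpha}(r^{-2}),
\]
which is the claimed formula. The constants are linear in $\|h\|_{E_m}$ throughout, and $m\geq2$ is needed so that the one lost derivative still leaves $m-1\geq1$.

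The main obstacle is the bookkeeping of lower-order pieces. The radial divergence of the tangential homogeneous terms, the Christoffel corrections from $g_C+2\Ric(g_C)$, and the contribution $\partial_th_{rr}$ must each be shown to land at weight $r^{-2}$ in $y$. For the background term I would use that $2\Ric(g_C)$ has zero radial row, as in the proof of \cref{ic:radial-graph}, so the correction is of relative order $r^{-2}$ and does not affect the $r^{-1}$ coefficient. If $\partial_th_{rr}$ is only $O(r^{-2})$ from the $E_m$ hypothesis, it already sits in the remainder, so no bootstrap is needed.
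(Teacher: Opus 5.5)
Your proposal is correct and follows essentially the same route as the paper. The degree-zero part of $h(\mathbf X,\cdot)=\delta h$ forces $b_{-1}=0$, and the radial component $\tfrac r2 h_{rr}=(\delta h)_r$, computed from the cone radial divergence formula plus the $O(r^{-3})$ inverse-metric and connection errors, yields the two trace coefficients; the zero-radial-row refinement you mention at the end is not needed, since $g^{-1}-g_C^{-1}=O(r^{-2})$ and $\nabla-\nabla^C=O(r^{-3})$ already place the background correction to $r(\delta h)_r$ at $O(r^{-2})$.
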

\begin{proof}
The exact equation is $h(\mathbf X,\cdot)=\delta h$.
The right side belongs to $C^{m-1,\alpha}_{-1}(M,T^*M)$,
with norms taken using the background metric $g$, whereas the
degree-zero term on the left side is
$\frac12\boldsymbol\tau(b_{-1})$. Hence $b_{-1}=0$.
In the component formulas below, the $O_{j+\alpha}$ bounds use fixed-link
components with conically scaled derivatives. The radial cone divergence is
\[
 (\delta_Ch)_r=\partial_rh_{rr}+\frac{n-1}{r}h_{rr}
        +r^{-2}\bar\delta(h_{\cdot r})
        -r^{-3}\tr_{g_\Sigma}(h_{\theta^i\theta^j}).
\]
Here $h_{rr}=O_{m+\alpha}(r^{-2})$,
$h_{r\theta^i}=O_{m+\alpha}(r^{-1})$, and
\[
 h_{\theta^i\theta^j}
 =r^2(c_0)_{\theta^i\theta^j}+r(c_{-1})_{\theta^i\theta^j}
    +O_{m+\alpha}(1).
\]
The canonical inverse-metric and connection errors contribute
$O_{m-1+\alpha}(r^{-3})$ to the divergence. Consequently
\[
 (\delta h)_r=-r^{-1}\tr_{g_\Sigma}c_0
      -r^{-2}\tr_{g_\Sigma}c_{-1}
      +O_{m-1+\alpha}(r^{-3}).
\]
Since $\mathbf X=(r/2)\partial_r$, multiplication by $2/r$
gives the expansion. Every error norm is bounded by $C\|h\|_{E_m}$.
\end{proof}

\subsection{Application: Rigidity of AC Ricci shrinkers}
\label{sec:AC-rigidity}

Let $(M^n,\hg,\hf)$ be a complete nontrivial AC Ricci shrinker,
and choose a minimum point $\hat p$ of $\hf$. We use the normalization
\cref{eq:normalization-of-potential} and the fixed background Gaussian norms.
Throughout this subsection, hats denote the fixed background geometry.
The spaces $E_m$, $\mathscr X_k$, and $\mathscr Y_k$ are formed
using $(\hat g,\hat f)$.
Set
\begin{equation}\label{fik:stability-operator}
 \begin{aligned}
 \hat L&:=\frac12\hat\Delta_{\hat f}+\Rm_{\hat g},\\
 \operatorname{ISD}(\hg,\hf)
 &:=\{h\in H^2_{\hat f}(S^2T^*M):
       \hat Lh=0,\ \hat\delta_{\hat f}h=0\}.
 \end{aligned}
\end{equation}
This is the infinitesimal solitonic deformation space of
Li--Zhang \cite[Definition~3.18]{li2023rigidity}; their weighted
Lichnerowicz operator is $2\hat L$. We abbreviate it to $\operatorname{ISD}$.
For a metric--potential pair, define the soliton defect by
\begin{equation}\label{fik:defect}
 \Phi(g,f):=\frac12g-\Ric_g-\nabla_g^2f.
\end{equation}

\begin{Theorem}[Pointed Gromov--Hausdorff rigidity of AC shrinkers]
\label{thm:AC-rigidity}
If $\operatorname{ISD}=\{0\}$, there is a pointed Gromov--Hausdorff
neighborhood of $(M,\hg,\hat p)$ such that every complete
$n$-dimensional Ricci shrinker $(N,g,f,p)$ in this neighborhood, based at
a minimum point of $f$, has $(N,g)$ isometric to $(M,\hg)$.
\end{Theorem}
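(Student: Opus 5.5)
The proof will follow the outline sketched in the introduction: pointed convergence $\Rightarrow$ global comparison in $E_m$ $\Rightarrow$ slice gauge $\Rightarrow$ coercive estimate $\Rightarrow$ $h=0$. We argue by contradiction. Suppose there are complete shrinkers $(N_i,g_i,f_i,p_i)$, based at minimum points of $f_i$, converging to $(M,\hat g,\hat p)$ in the pointed Gromov--Hausdorff sense, with no $(N_i,g_i)$ isometric to $(M,\hat g)$.

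\emph{Step 1: smooth convergence on compact sets.} The compactness theory of Li--Li--Wang, together with smoothness of the limit and the uniform entropy lower bound coming from closeness to $\hat g$, gives smooth convergence of $(N_i,g_i,f_i)$ on compact sets. Concretely, there are diffeomorphisms $\varphi_i$ from exhausting sublevel sets of $\hat f$ into $N_i$ with $\varphi_i^*g_i\to\hat g$ and $\varphi_i^*f_i\to\hat f$ in $C^\infty_{\mathrm{loc}}$, and $\boldsymbol\mu(g_i)\to\boldsymbol\mu(\hat g)$.

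\emph{Step 2: uniform AC structure and global comparison.} Fix a large level set $\{\hat f=\rho_0\}$. There $|\Rm|$ and its derivatives are small in the scale-invariant sense, and this transfers to $N_i$. Shrinker pseudolocality, run along the self-similar flow generated by $\nabla f_i/(-t)$, propagates this control to the whole exterior region. This gives $|\nabla^k\Rm_{g_i}|\le C_k\boldsymbol\rho^{-2-k}$ uniformly in $i$, so each $N_i$ is AC with uniform constants $A_k,a$. The canonical end charts of \cref{thm:canonical-asymptotic-chart} for $g_i$ are built from the flow of $\nabla f_i/|\nabla f_i|^2$ starting at a level set. We build them from the level set where $\varphi_i$ is already smoothly close, so they match $\varphi_i$ on a collar. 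Gluing yields global diffeomorphisms $\Phi_i:M\to N_i$, again denoted $\varphi_i$.

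By \cref{prop:asymptotic-expansion-of-shrinker-metric} with uniform constants, the cones $g_{C,i}$ converge to $\hat g_C$ (the links $\Sigma_i$ all identified with $\Sigma$). The coefficients $g_{-2m}$ are determined by the cone through the flow, so they converge as well. Hence $h_i:=\Phi_i^*g_i-\hat g$ has an expansion of the form \cref{eq:slice-E-expansion}, with $c_0=g_{\Sigma_i}-g_\Sigma$, vanishing degree $-1$ terms, and remainder of weight $-2$. We have $\|h_i\|_{E_m}\to0$ for every fixed $m$. Likewise $u_i:=\Phi_i^*f_i-\hat f$ satisfies $\mathrm du_i=O(r^{-1})$ or better by \cref{lem:asymptotic-rate-of-connection}. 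This is \cref{fik:lem:comparison}.

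I expect Step 2 to be the main obstacle. The difficulty is making the pseudolocality estimates and the construction of the gauge at infinity uniform in $i$, while matching the canonical end coordinates of $g_i$ to the compact comparison maps. Otherwise the decaying remainder is not small in the weighted norm. I would first handle the radial variable by using $f_i\circ\Phi_i=\hat f+u_i$, with $u_i$ controlled in $C^\infty$ on the collar, and then compare the two radial flows.

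\emph{Step 3: gauge fixing.} Apply \cref{ic:finite-slice} with $k=6$ and $m=k+4$ to get $F_i=F_{Z(h_i)}$. Set $\tilde h_i=F_i^*(\hat g+h_i)-\hat g$. Then $\hat\delta_{\hat f}\tilde h_i=0$ and $\|\tilde h_i\|_{E_{k-1}}\to0$, in particular in $C^2$. The pulled-back potential $\tilde f_i=(\Phi_i\circ F_i)^*f_i$ differs from $\hat f$ by $\tilde u_i$. Because the radial displacement $a_0$ of $F_i$ is bounded, $\mathrm d\tilde u_i$ has at most linear growth with controlled derivatives, and so lies in $H^1_{\hat f}$. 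Also $\tilde h_i\in H^2_{\hat f}$ by \cref{lem:Holder-to-Gaussian-Sobolev}.

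\emph{Step 4: conclusion.} Since $(\hat g+\tilde h_i,\hat f+\tilde u_i)$ is an exact shrinker, $\Phi=0$. The coercive estimate \cref{eq:intro-rigidity-estimate}, i.e.\ \cref{fik:prop:rigidity}, uses $\operatorname{ISD}=0$ and $C^2$-smallness to give $\tilde h_i=0$ for large $i$. Then $(N_i,g_i)$ is isometric to $(M,\hat g)$ via $\Phi_i\circ F_i$, a contradiction.

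For \cref{fik:prop:rigidity} itself, I would linearize $\Phi$ in divergence-free gauge as $-\hat Lh-\nabla^2 u$ plus a quadratic term. Pairing with $h$ and using $\hat\delta_{\hat f}h=0$ kills the Hessian term up to a trace contribution. The spectral gap of $\hat L$ on $\hat\delta_{\hat f}$-free tensors then follows from $\operatorname{ISD}=0$ and the compact resolvent. The potential is recovered by tracing, after which the quadratic terms are absorbed using $C^2$-smallness.
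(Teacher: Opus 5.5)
Your overall argument is the paper's. It runs by contradiction: Li--Li--Wang smooth convergence under the entropy lower bound, then \cref{fik:lem:comparison} via shrinker pseudolocality along the self-similar flow and matched canonical end charts, then \cref{ic:finite-slice} with $k=6$ applied to $E_{10}$ data. The bounded radial displacement of $F_i$ makes the new potential difference grow only linearly, so $\hat\nabla u_i\in H^1_{\hat f}$ even though $u_i$ is not small. Finally \cref{fik:prop:rigidity} gives $k_i=0$.

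On the obstacle you identify in Step~2: the only nonlocal input in the canonical chart is the initial radius $2\sqrt{A_i}$ on the level set, where $A_i=a-\int_{-1}^0\mathrm R_{g_i}\circ\Psi_{i,\tau}\,\mathrm d\tau$. The paper gets $A_i\to A$ from compact-time convergence plus the uniform $O(-\tau)$ bound on this integrand supplied by pseudolocality. You also need to identify the limit of the mass-normalized potentials with $\hat f$. This holds because the difference has parallel gradient and a nontrivial AC shrinker has no parallel one-forms.

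However, your sketch of \cref{fik:prop:rigidity} would not work as written. With the paper's sign, the first variation on the slice is $\mathrm D\Phi_{(\hat g,\hat f)}(h,u)=\hat Lh+\hat\nabla^2v$ with $v=\frac12\tr_{\hat g}h-u$. Pairing with $h$ removes $\hat\nabla^2v$ completely and leaves only $\langle\hat Lh,h\rangle_{L^2_{\hat f}}$. That quadratic form gives no control, because $\hat L$ is indefinite on $\ker\hat\delta_{\hat f}$. Its spectrum there is unbounded below, while $\Ric_{\hat g}$ is divergence free with $\hat L\Ric_{\hat g}=\frac12\Ric_{\hat g}$. The gap at zero coming from $\operatorname{ISD}=\{0\}$ gives $\norm{h}_{H^2_{\hat f}}\le C\norm{\hat Lh}_{L^2_{\hat f}}$, not a lower bound on $|\langle\hat Lh,h\rangle|$.

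What is needed is that $\hat L$ preserves $\ker\hat\delta_{\hat f}$. Then $\hat Lh\perp\hat\nabla^2v$ in $L^2_{\hat f}$, and so $\norm{\mathrm D\Phi}^2=\norm{\hat Lh}^2+\norm{\hat\nabla^2v}^2$; in other words, pair with $\hat Lh$, not with $h$. The potential should not be recovered by tracing, which produces $\Delta v$ and leaves the drift $\langle\nabla\hat f,\mathrm dv\rangle$ uncontrolled. Instead use the one-form gap $\norm{\mathrm dv}_{L^2_{\hat f}}\le C\norm{\hat\nabla^2v}_{L^2_{\hat f}}$, again from the absence of parallel one-forms, together with $u=\frac12\tr h-v$.

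Finally, the quadratic part of the connection variation contracted with $\mathrm d\hat f$ has size $|h||\hat\nabla h||\mathbf X|$, which grows linearly. $C^2$-smallness of $h$ absorbs it only when combined with the multiplier bound $\norm{|\mathbf X|T}_{L^2_{\hat f}}\le C\norm{T}_{H^1_{\hat f}}$ for $T=\hat\nabla h$.
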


We first obtain global comparison maps and convergence in each fixed $E_m$.
The slice theorem then reduces the rigidity problem to an estimate for
$\Phi$ on weighted divergence free perturbations.

\begin{Lemma}\label{fik:lem:comparison}
Let $(M_i^n,g_i,f_i,p_i)$ be a sequence of complete Ricci shrinkers, based at minimum points of $f_i$, converging pointed smoothly to a complete nontrivial AC shrinker $(M^n,\hg,\hf,\hat p)$ with compact link $\Sigma$. For all sufficiently large $i$, $M_i$ is AC and there are global diffeomorphisms $\Psi_i:M\to M_i$ such that
\begin{equation}\label{fik:eq:E-convergence}
 \Psi_i^*g_i-\hg\in\bigcap_{m\ge2}E_m,\qquad
 \norm{\Psi_i^*g_i-\hg}_{E_m}\longrightarrow0
 \quad\text{for every fixed }m.
\end{equation}
All potentials have normalization \cref{eq:normalization-of-potential}. The maps identify canonical end coordinates. In these coordinates,
\begin{equation}\label{fik:eq:potential-before-slice}
 \Psi_i^*f_i=\boldsymbol{\mu}(g_i)+\frac14r^2+O_\infty(r^{-2}),
 \qquad \hf=\boldsymbol{\mu}(\hg)+\frac14r^2+O_\infty(r^{-2}),
\end{equation}
with bounds uniform in $i$. On compact sets the pulled-back metrics and potentials converge smoothly.
\end{Lemma}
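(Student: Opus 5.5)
Since the $(M_i,g_i,f_i,p_i)$ converge pointed smoothly to $(M,\hg,\hf,\hat p)$, the plan has three stages. First, show that the $g_i$ have uniform quadratic curvature decay, so that the shrinkers are uniformly AC. Second, build canonical end charts for each $g_i$ via \cref{thm:canonical-asymptotic-chart} and glue them to the compact comparison maps. Third, read off the expansion of \cref{prop:asymptotic-expansion-of-shrinker-metric} term by term in the spaces $E_m$.

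\textbf{Step 1: uniform curvature decay.} Fix a large level $\{\hf=\Lambda\}$. Smooth convergence on compact sets gives, for large $i$, curvature bounds $|\Rm(g_i)|\le C\Lambda^{-1}$ and derivative bounds on a neighborhood of $\{f_i=\Lambda\}$. The curvature bound $A_0(\hg)<\infty$ controls the background there. Shrinker pseudolocality \cite{li2020heat,liwang2024heatII}, applied along the soliton flow $g_i(t)=(-t)\Psi^i_t{}^*g_i$ with $t\uparrow0$, propagates this control outward. A point at radius $r$ lies on the flow line from the level set at time $t=-\Lambda/ r^2\cdot O(1)$. This yields $|\Rm(g_i)|\le C d_{g_i}(p_i,\cdot)^{-2}$ outside $\{f_i\le\Lambda\}$, uniformly in $i$. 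Shi's estimates then give uniform bounds on every $A_k(g_i)$. Together with the Cao--Zhou potential growth \cref{ineq:quadratic-growth-of-potential}, which also yields a uniform injectivity lower bound $a$, this shows that each $M_i$ is AC with data $(n,A_k,a)$ uniform in $i$. I expect this to be the main obstacle: making pseudolocality give \emph{uniform} decay constants, and excluding extra ends or topology far out. For the latter, $f_i$ has no critical points beyond the level, so the region beyond it is diffeomorphic to $\{f_i=\Lambda\}\times(\Lambda,\infty)$ by the gradient flow.

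\textbf{Step 2: gluing the charts.} Apply \cref{thm:canonical-asymptotic-chart} to each $g_i$, with uniform $\underline r$. The chart for $g_i$ is constructed from the level set $\{f_i=\Lambda\}$ and the flow of $\nabla f_i/|\nabla f_i|^2$, following the construction in \cite{conlon2024classification}. The compact convergence $(\Phi_i^*g_i,\Phi_i^*f_i)\to(\hg,\hf)$ on a neighborhood of $\{\hf\le 2\Lambda\}$ then lets us identify the level sets by a diffeomorphism converging to the identity. Next extend by matching the flows of $\frac12r\partial_r$. Because $\iota_i^*\nabla f_i=\frac12 r\partial_r$ exactly, the glued map $\Psi_i$ is smooth and global, and it identifies canonical end coordinates. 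The link $\Sigma_i$ is then the level set pulled back to $\Sigma$, so it carries a metric $g_{\Sigma_i}$ on $\Sigma$ converging smoothly to $g_\Sigma$. The potential expansion \cref{fik:eq:potential-before-slice} follows from the fourth estimate of \cref{lem:asymptotic-rate-of-connection}, whose constants depend only on the uniform data.

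\textbf{Step 3: convergence in $E_m$.} By \cref{prop:asymptotic-expansion-of-shrinker-metric},
\[
\Psi_i^*g_i=g_{C_i}+g^{(i)}_{-2}+O_\infty(r^{-4}),\qquad g_{C_i}=\mathrm dr^2+r^2g_{\Sigma_i},
\]
with remainder bounds uniform in $i$. Hence the difference $\Psi_i^*g_i-\hg$ has $c_0=g_{\Sigma_i}-g_\Sigma$, $c_{-1}=0$, $b_{-1}=0$, and a remainder in $C^\infty_{-2}$. This proves membership in every $E_m$. The link coefficient tends to zero in $C^{m,\alpha}$. For the remainder, split $M$ into $\{r\le R\}$ and $\{r>R\}$. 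On $\{r>R\}$ the remainder equals $2\Ric(g_{C_i})-2\Ric(g_C)$ plus $O(r^{-4})$; the first part is homogeneous of degree $-2$ with coefficient tending to zero, and the $C^{m,\alpha}_{-2}$ norm of the second part on the tail is bounded by $CR^{-2}$ uniformly. On $\{r\le R\}$, smooth compact convergence makes the norm small. Letting $i\to\infty$ and then $R\to\infty$ gives $\|\Psi_i^*g_i-\hg\|_{E_m}\to0$.
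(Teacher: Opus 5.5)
Your architecture is the paper's: pseudolocality along the self-similar flow from a far level set, then canonical end charts, then the expansion of \cref{prop:asymptotic-expansion-of-shrinker-metric} with a tail/compact split of the $E_m$ norm. Step 3 is fine once its inputs are available. The gap is that Step 2 asserts those inputs without proof, and they are the real content of the lemma.

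\textbf{Step 2: the link and the canonical radius are not compact data.} The link metric $g_{\Sigma_i}$ is the $t\uparrow0$ limit of $(-t)\Psi_{i,t}^*g_i$, not the rescaled metric of $\{f_i=\Lambda\}$. On the level set $\{f_i-\boldsymbol\mu(g_i)=\Lambda\}$ the canonical radius is $2\sqrt{A_i}$, where $A_i(x)=\Lambda-\int_{-1}^0\mathrm R_{g_i}(\Psi_{i,\tau}(x))\,\mathrm d\tau$. Smooth convergence on compact sets only controls $t\in[-1,-\delta]$. To get $g_{\Sigma_i}\to g_\Sigma$ and $A_i\to A$ smoothly, you must interchange $i\to\infty$ with $t\uparrow0$. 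For this you need, uniformly in $i$ and with all angular derivatives,
\[
\mathrm R_{g_i}\circ\Psi_{i,t}=O(-t),\qquad \partial_t\bigl((-t)\Psi_{i,t}^*g_i\bigr)=-2\Psi_{i,t}^*\Ric_{g_i}=O(1).
\]
The paper gets the angular control from the variation-field equation along trajectories. Your Step 1 supplies the curvature input, but Step 2 never uses it.

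Moreover, identifying $f$-level sets and matching $\nabla f$-trajectories, as you describe, does not identify canonical coordinates. The $\nabla f/|\nabla f|^2$ flow gives the normalization $f=r^2/4$, not the canonical one. In the canonical chart of $\hg$, the flow-matching map reads $(r,\theta)\mapsto(\lambda_i(\theta)r,\sigma_i(\theta))$, with $\lambda_i^2=A_i/A$ after identifying level sets. Its pullback then contains $(\lambda_i^2-1)\,\mathrm dr^2$ and $2\lambda_ir\,\mathrm dr\odot\mathrm d\lambda_i$. These are degree-zero radial and mixed terms, which $E_m$ excludes. You must reparametrize each trajectory by its canonical radius and prove that these coordinate changes converge on compact annuli before gluing. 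Only then does Step 3's claim $c_0=g_{\Sigma_i}-g_\Sigma\to0$ have a proof.

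\textbf{Step 1: the exhaustion of the exterior is assumed.} Pseudolocality controls curvature only along trajectories that start on $\Sigma_{i,\Lambda}=\{f_i-\boldsymbol\mu(g_i)=\Lambda\}$. You therefore cannot conclude from it that every point of $\{f_i-\boldsymbol\mu(g_i)>\Lambda\}$ lies on such a trajectory, or that $f_i$ has no critical points there; as written this is circular. The paper closes it as follows:
\begin{itemize}
\item It integrates $\frac{\mathrm d}{\mathrm dt}f_i(\Psi_{i,t}(x))=(f_i-\boldsymbol\mu(g_i)-\mathrm R_{g_i})/(-t)$ with $0\le\mathrm R_{g_i}\le C_0(-t)$. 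This gives $(\Lambda-C_0/2)/(-t)\le f_i(\Psi_{i,t}(x))-\boldsymbol\mu(g_i)\le\Lambda/(-t)$.
\item Hence the trajectory map $\Sigma_{i,\Lambda}\times[-1,0)\to\{f_i-\boldsymbol\mu(g_i)\ge\Lambda\}$ is an injective local diffeomorphism with closed image, since the lower bound keeps $t$ away from $0$ on bounded sets.
\item A path argument from the core then gives surjectivity.
\end{itemize}
This requires the whole level set $\Sigma_{i,\Lambda}$ to lie in the comparison region, which is where the Cao--Zhou growth bound is actually used.

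\textbf{A smaller correction.} Cao--Zhou does not give the injectivity radius bound. That comes from the tube geometry under the pseudolocality curvature bound, transported by the self-similar scaling.
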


\begin{proof}
By \cref{ineq:quadratic-growth-of-potential},
\begin{equation}\label{fik:eq:potential-growth}
 \frac14(d_{g_i}(p_i,x)-5n)_+^2
 \le f_i(x)-\boldsymbol{\mu}(g_i)\le\frac14(d_{g_i}(p_i,x)+\sqrt{2n})^2.
\end{equation}
Choose a large regular value $a$ of $\hf-\boldsymbol{\mu}(\hg)$. The sublevel $\{f_i-\boldsymbol{\mu}(g_i)\le a+1\}$ is contained in the ball of radius $2\sqrt{a+1}+5n$ about $p_i$. A pointed smooth comparison on a larger ball therefore contains this entire sublevel set. Smooth convergence and an adjustment in a collar of $\{\hf-\boldsymbol{\mu}(\hg)=a\}$ identify
\[
 \{\hf-\boldsymbol{\mu}(\hg)\le a\}\cong\{f_i-\boldsymbol{\mu}(g_i)\le a\},\qquad
 \Sigma_a:=\{\hf-\boldsymbol{\mu}(\hg)=a\}\cong\Sigma_{i,a}:=\{f_i-\boldsymbol{\mu}(g_i)=a\}.
\]
We use shrinker pseudolocality \cite[Theorem 1.6]{liwang2024heatII}. Curvature and volume control on an initial ball of radius $r_0$ give a curvature bound $(\epsilon r_0)^{-2}$ on a smaller fixed spatial ball for
$-1\le t<\min\{-1+(\epsilon r_0)^2,0\}$, with $\epsilon$ depending only on the dimension and the chosen volume ratio.

Fix a positive volume ratio below the Euclidean one and choose $r_0$ with $(\epsilon r_0)^2>2$. Since the reference end is AC, taking $a$ sufficiently large makes every ball of radius $r_0$ centered on $\Sigma_a$ satisfy the hypotheses, with strict margins. Enlarge the compact comparison region to contain these balls. The same bounds then hold uniformly around $\Sigma_{i,a}$.

Let $\Psi_{i,t}$ be the family defined in \cref{sec:Ricci-flow} for
$(M_i,g_i,f_i)$, so that $\Psi_{i,-1}=\mathrm{id}$ and
$\partial_t\Psi_{i,t}=(-t)^{-1}\nabla f_i\circ\Psi_{i,t}$. The associated
Ricci flow is
\begin{equation}\label{fik:eq:selfsimilar}
 g_i(t)=(-t)\Psi_{i,t}^*g_i,\qquad t<0.
\end{equation}
Pseudolocality gives uniform curvature bounds on a fixed tube about $\Sigma_{i,a}$ for $g_i(t)$, $-1\le t<0$. On a smaller tube the metrics are uniformly equivalent to their initial metrics. Local derivative estimates, using smooth initial convergence for times close to $-1$, give all curvature derivative bounds on the level set. Rescaling \eqref{fik:eq:selfsimilar} yields
\begin{equation}\label{fik:eq:trajectory-curvature}
 \abs{\nabla^j\Rm_{g_i}}_{g_i}(\Psi_{i,t}(x))
 \le C_j(-t)^{1+j/2},
 \quad x\in\Sigma_{i,a},\quad -1\le t<0.
\end{equation}
The constants are independent of $i$. The uniform initial geometry of the tube and its bounded-curvature evolution also give a uniform positive local injectivity radius there for $g_i(t)$.

The scalar bound $C_0$ in \eqref{fik:eq:trajectory-curvature} can be fixed before increasing $a$; choose $a>2C_0+1$. Along a trajectory,
\begin{equation}\label{fik:eq:potential-trajectory}
 \frac{\mathrm d}{\mathrm d t}f_i(\Psi_{i,t}(x))
 =\frac{f_i(\Psi_{i,t}(x))-\boldsymbol{\mu}(g_i)
 -\mathrm R_{g_i}(\Psi_{i,t}(x))}{-t}.
\end{equation}
Since $\mathrm R_{g_i}\ge0$, integration gives
\begin{equation}\label{fik:eq:trajectory-potential-bound}
 \frac{a-C_0/2}{-t}\le f_i(\Psi_{i,t}(x))-\boldsymbol{\mu}(g_i)
 \le\frac{a}{-t}.
\end{equation}
These trajectories escape every compact set and have nonvanishing gradient.

Consider
\[
 \Sigma_{i,a}\times[-1,0)\longrightarrow\{f_i-\boldsymbol{\mu}(g_i)\ge a\},
 \qquad (x,t)\longmapsto\Psi_{i,t}(x).
\]
Strict increase of $f_i$ on the trajectories and uniqueness of the flow give injectivity. The map is a local diffeomorphism onto its image. For surjectivity, join any point with $f_i-\boldsymbol{\mu}(g_i)>a$ to the core and take the part of this path after its last crossing of $\Sigma_{i,a}$. The set of reached path parameters is open. It is closed as well: if $\Psi_{i,t_j}(x_j)$ remains in a compact set,
\eqref{fik:eq:trajectory-potential-bound} keeps $t_j$ away from zero, and
compactness of $\Sigma_{i,a}$ gives a convergent subsequence. Thus every point of the exterior is reached.

The exterior is therefore diffeomorphic to $\Sigma_{i,a}\times[-1,0)$
and has no critical points of $f_i$. Extending the core identification
along these trajectories gives a global diffeomorphism. This is also the geometric mechanism in \cite[Section~10]{li2020heat}.

Equations \eqref{fik:eq:trajectory-curvature}--\eqref{fik:eq:trajectory-potential-bound} and \eqref{fik:eq:potential-growth} imply
\begin{equation}\label{fik:eq:uniform-AC-bounds}
 \abs{\nabla^j\Rm_{g_i}}\le C_j(1+d_{g_i}(p_i,\cdot))^{-2-j},
 \qquad \inj_{g_i}\ge c(1+d_{g_i}(p_i,\cdot)).
\end{equation}
For the injectivity bound use the estimate on the fixed tube for $g_i(t)$ and \eqref{fik:eq:selfsimilar}; the scale factor $(-t)^{-1/2}$ is comparable to distance along the trajectory. Uniform compact-core geometry completes these estimates on the whole manifold.

Quadratic curvature decay gives a smooth asymptotic cone and canonical end coordinates; see \cite[Section 2.2]{kotschwar2015rigidity} and \cite[Theorem 3.8 and Section 3.3]{conlon2024classification}. In the shrinking normalization,
\begin{equation}\label{fik:eq:uniform-expansion}
 G_i=g_{C,i}+2\Ric(g_{C,i})+e_i,\qquad
 \abs{\nabla^{C,j}e_i}\le C_jr^{-4-j},
\end{equation}
where $G_i$ is the metric in the canonical chart and
$g_{C,i}=\mathrm d r^2+r^2g_{\Sigma,i}$. The constants can be chosen uniformly by \eqref{fik:eq:uniform-AC-bounds}. The same expansion holds for $\hg$. 

We explain the dependence of these coordinates on the sequence. Identify $\Sigma_{i,a}$ with $\Sigma_a$ as above and use the gradient flow to label trajectories. Their canonical radii are determined by
\begin{align}
 A_i(x)&=a-\int_{-1}^0\mathrm R_{g_i}(\Psi_{i,\tau}(x))\,\mathrm d\tau,\label{fik:eq:radius-seed}\\
 r(\Psi_{i,t}(x))&=\frac{2\sqrt{A_i(x)}}{\sqrt{-t}}.\label{fik:eq:radius-flow}
\end{align}
The function $A_i$ is positive. These formulas give $\nabla f_i(r)=r/2$ and
\[
 f_i(\Psi_{i,t}(x))-\boldsymbol{\mu}(g_i)-\frac14r^2
 =\frac1{-t}\int_t^0\mathrm R_{g_i}(\Psi_{i,\tau}(x))\,\mathrm d\tau
 =O(-t).
\]

All angular derivatives in these formulas are uniformly controlled. Indeed, a variation field satisfies
\[
 \nabla_{\partial_t}V
 =\frac1{-t}\left(\frac12V-\Ric_{g_i}^{\sharp}V\right).
\]
The equation for $(-t)^{1/2}V$ has a uniformly bounded coefficient by
\eqref{fik:eq:trajectory-curvature}. Differentiating the variation equation
and using the higher bounds there gives the estimates at every order.
The integrand in \eqref{fik:eq:radius-seed} is consequently $O(-\tau)$
in each fixed $C^m$ norm on the initial link. Smooth convergence on compact
time intervals in $[-1,0)$ and the uniform tail bound give $A_i\to A$
smoothly. The canonical coordinate changes therefore converge on every
compact annulus.

On the initial link the rescaled tangential metrics satisfy
\[
 \partial_t\bigl((-t)\Psi_{i,t}^*g_i\bigr)
 =-2\Psi_{i,t}^*\Ric_{g_i}.
\]
The right-hand side is uniformly bounded with all fixed-order angular
derivatives. The rescaled metrics are uniformly equivalent, since the
coefficient in the rescaled variation equation is integrable on $[-1,0)$.
Their limits as $t\uparrow0$ are positive definite. Compact-time convergence
and the uniform tail estimate show that, in the compatible canonical charts,
\begin{equation}\label{fik:eq:cone-convergence}
 g_{\Sigma,i}\longrightarrow g_\Sigma\quad\text{smoothly}.
\end{equation}
The uniform expansion \eqref{fik:eq:uniform-expansion} holds in these charts. The Taylor argument in \cref{prop:asymptotic-expansion-of-shrinker-metric} gives this expansion with uniform constants from the preceding bounds.

The end identifications converge on a fixed compact collar to the reference identification. A collar adjustment extends them to global maps $\Psi_i$ agreeing with the canonical identifications on the far end and converging smoothly on compact sets.

Subtracting \eqref{fik:eq:uniform-expansion} for the two metrics gives
\begin{equation}\label{fik:eq:difference-expansion}
 \Psi_i^*g_i-\hg=r^2(g_{\Sigma,i}-g_\Sigma)
 +2\bigl(\Ric(g_{C,i})-\Ric(g_C)\bigr)+(e_i-e).
\end{equation}
The first term is the degree-zero tangential coefficient in \cref{eq:slice-E-expansion}, and the degree-minus-one coefficients vanish. The Ricci difference has conic degree $-2$ and tends to zero in each weighted norm by \eqref{fik:eq:cone-convergence}. For each fixed $m$,
\begin{equation}\label{fik:eq:tail}
 \norm{e_i-e}_{C^{m,\alpha}_{-2}(\{r\ge R\})}\le C_mR^{-2},
\end{equation}
uniformly in $i$. The H\"older estimate follows from one additional derivative on enlarged unit slabs. On every compact region these remainders converge smoothly. First choose $R$ large in \eqref{fik:eq:tail} and then let $i\to\infty$ on the remaining compact region. This proves \eqref{fik:eq:E-convergence}. The differentiated potential formula proves \eqref{fik:eq:potential-before-slice}.
\end{proof}

The background has no nonzero parallel one-forms. Indeed, if $\omega$ is
parallel, then $\Ric_{\hg}(\omega^\sharp,\cdot)=0$, and the soliton
equation gives
\[
 \mathrm d\inn{\omega}{\mathbf X}=\frac12\omega.
\]
Thus a nonzero $\omega$ is exact and its complete parallel field splits
off a line. Curvature is constant along each such line, whereas AC
curvature decay forces it to tend to zero. The shrinker would therefore
be flat, hence Gaussian, contrary to nontriviality.

The Poincar\'e estimate \cref{lem:Poincare-inequality} gives
\begin{equation}\label{fik:eq:multiplier}
 \norm{|\mathbf{X}|T}_{L^2_{\hat f}}\le C\norm{T}_{H^1_{\hat f}}.
\end{equation}
By \cref{lem:Gaussian-Rellich}, $H^1_{\hat f}\hookrightarrow L^2_{\hat f}$
is compact. The absence of parallel one-forms therefore implies
\begin{equation}\label{fik:eq:oneform-gap}
 \norm{\omega}_{L^2_{\hat f}}\le C\norm{\hat\nabla\omega}_{L^2_{\hat f}}.
\end{equation}
Indeed, a sequence with unit $L^2_{\hat f}$ norm and derivatives tending
to zero would converge strongly to a nonzero parallel one-form.

\begin{Lemma}\label{fik:lem:linear}
Assume $\operatorname{ISD}=\{0\}$. Suppose $h,u$ are locally $C^2$ on
$M$, $h\in H^2_{\hf}$, $\hat\nabla u\in H^1_{\hf}$, and
$\hat\delta_{\hf}h=0$. With $v=\frac12\tr_{\hg}h-u$, the first-variation
formula gives
\[
 \mathrm D\Phi_{(\hg,\hf)}(h,u)=\hat Lh+\hat\nabla^2v.
\]
Then
\begin{equation}\label{fik:eq:linear-estimate}
 \norm{h}_{H^2_{\hf}}+\norm{\hat\nabla u}_{H^1_{\hf}}
 \le C\norm{\mathrm D\Phi_{(\hg,\hf)}(h,u)}_{L^2_{\hf}}.
\end{equation}
\end{Lemma}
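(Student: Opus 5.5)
The plan is to separate the estimate into a tensor part, controlled by $\operatorname{ISD}=\{0\}$ through Fredholm theory for $\hat L$ on divergence-free tensors, and a scalar part, controlled by taking a weighted divergence of the linearized equation. I would first check that the first-variation formula holds, and then argue by contradiction with Gaussian compactness.

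\textbf{Identity and its divergence.} Write $D:=\mathrm D\Phi(h,u)=\hat Lh+\hat\nabla^2v$. This is the standard first variation of $\frac12g-\Ric-\nabla^2f$ in the gauge $\hat\delta_{\hat f}h=0$, following Li--Zhang. Next apply $\hat\delta_{\hat f}$. Two commutations are needed:
\begin{itemize}
\item on divergence-free $h$, the identity $\hat\delta_{\hat f}\hat Lh=\frac12(\hat\Delta_{\hat f}+\frac12)\hat\delta_{\hat f}h$ holds up to curvature terms that vanish on shrinkers, so $\hat\delta_{\hat f}\hat Lh=0$;
\item by the commutation after \eqref{eq:gauge-operator}, $\hat\delta_{\hat f}\hat\nabla^2v=\mathrm d(\hat\Delta_{\hat f}+\frac12)v$.
\end{itemize}
Together these give $\hat\delta_{\hat f}D=\mathrm d\bigl(\hat\Delta_{\hat f}+\tfrac12\bigr)v$. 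Tracing $D$ also gives $\tr D=\frac12\hat\Delta_{\hat f}\tr h+\langle\Ric,h\rangle$-type terms plus $\hat\Delta v$. I expect mainly to need the divergence identity.

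\textbf{Contradiction setup.} Suppose the estimate fails. Normalize
\[
\|h_j\|_{H^2_{\hat f}}+\|\hat\nabla u_j\|_{H^1_{\hat f}}=1,\qquad \|D_j\|_{L^2_{\hat f}}\to0.
\]
Only $\hat\nabla u$ is controlled, so $u$ matters only up to constants, which $D$ does not see; set $\omega_j=\hat\nabla v_j\in H^1_{\hat f}$. The first obstacle is that $\hat\nabla^2v_j$ and $\hat Lh_j$ are not separately small. Here the divergence identity helps. Pairing $\mathrm d(\hat\Delta_{\hat f}+\frac12)v$ with $\omega$, or inverting $\hat\Delta_{\hat f}+\frac12$ using $G_{1/2}$ (no kernel by the line-splitting argument), should show that $\hat\nabla^2v_j$ is controlled by $D_j$ modulo lower-order terms. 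The duality $H^{-1}$ versus $L^2$ could be delicate. My first attempt is the $L^2_{\hat f}$-orthogonal decomposition of $D$ into $\hat\nabla^2$-exact parts and $\hat\delta_{\hat f}$-free parts. The point is that $\hat Lh$ is $\hat\delta_{\hat f}$-free, and $\hat\nabla^2v$ should be nearly orthogonal to it after integration by parts, since $\langle\hat Lh,\hat\nabla^2v\rangle_{\hat f}=\langle\hat\delta_{\hat f}\hat\delta_{\hat f}\hat Lh,v\rangle=0$. The cutoff errors are handled with \eqref{fik:eq:multiplier}.

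\textbf{Conclusion of the contradiction.} The orthogonality gives $\|\hat Lh_j\|_{L^2_{\hat f}}+\|\hat\nabla^2v_j\|_{L^2_{\hat f}}\lesssim\|D_j\|_{L^2_{\hat f}}\to0$.
\begin{itemize}
\item \emph{The $h$-part.} Elliptic theory (Proposition~\ref{prop:Gaussian-Fredholm-realization}) bounds $h_j$ in $H^2_{\hat f}$. Rellich compactness (Lemma~\ref{lem:Gaussian-Rellich}) gives a limit $h\in\operatorname{ISD}=\{0\}$, so $h_j\to0$ in $L^2_{\hat f}$, and the graph estimate gives $h_j\to0$ in $H^2_{\hat f}$.
\item \emph{The $u$-part.} The gap estimate \eqref{fik:eq:oneform-gap} gives $\|\omega_j\|_{H^1_{\hat f}}\lesssim\|\hat\nabla\omega_j\|_{L^2_{\hat f}}\to0$. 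Then $\hat\nabla u_j=\frac12\hat\nabla\tr h_j-\omega_j\to0$ in $H^1_{\hat f}$.
\end{itemize}
This contradicts the normalization.

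The main obstacle is the orthogonality step: justifying the integration by parts with growing $v$, where only $\hat\nabla v\in H^1_{\hat f}$ is known, and checking that $\hat\delta_{\hat f}\hat\delta_{\hat f}\hat Lh=0$ exactly on shrinkers.
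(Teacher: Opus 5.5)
Your proposal is correct and follows the paper's argument. The paper also uses $\hat\delta_{\hat f}\hat Lh=0$ to get $\langle\hat Lh,\hat\nabla^2v\rangle_{L^2_{\hat f}}=0$, hence $\|\hat Lh\|^2_{L^2_{\hat f}}+\|\hat\nabla^2v\|^2_{L^2_{\hat f}}=\|\mathrm D\Phi_{(\hat g,\hat f)}(h,u)\|^2_{L^2_{\hat f}}$, then applies the one-form gap to $\mathrm dv$ and writes $u=\frac12\tr h-v$; the one difference is that it cites Li--Zhang's coercivity theorem for $\|h\|_{H^2_{\hat f}}\le C\|\hat Lh\|_{L^2_{\hat f}}$, which you rederive by Rellich compactness and the graph estimate. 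The integration by parts you worry about needs only one derivative moved: approximate $\mathrm dv\in H^1_{\hat f}$ by compactly supported one-forms and use $\hat\delta_{\hat f}\hat Lh=0$ distributionally, so neither the double divergence nor the growth of $v$ enters, and the divergence identity for $\mathrm D\Phi$ and the trace computation are not needed.
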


\begin{proof}
The first-variation formula follows from \cite[Lemma~4.4]{li2023rigidity}.
It involves at most two classical derivatives and therefore applies
at the stated local regularity. The commutation identities below hold
distributionally by local approximation; the Gaussian integration by
parts uses density in weighted Sobolev spaces.
The weighted Lichnerowicz operator preserves $\ker\hat\delta_{\hf}$.
By $\operatorname{ISD}=\{0\}$ and
\cite[Theorem~3.19]{li2023rigidity},
\begin{equation}\label{fik:eq:h-coercive}
 \norm{h}_{H^2_{\hf}}\le C\norm{\hat Lh}_{L^2_{\hf}}.
\end{equation}
Approximating $\mathrm d v$ in $H^1_{\hf}$ by compactly supported
one-forms and using $\hat\delta_{\hf}\hat Lh=0$ distributionally gives
\[
 \inn{\hat Lh}{\hat\nabla^2v}_{L^2_{\hf}}=0.
\]
Consequently,
\[
 \norm{\hat Lh}_{L^2_{\hf}}^2+\norm{\hat\nabla^2v}_{L^2_{\hf}}^2
 =\norm{\mathrm D\Phi_{(\hg,\hf)}(h,u)}_{L^2_{\hf}}^2.
\]
Apply \eqref{fik:eq:oneform-gap} to $\mathrm d v$, and use
$u=\frac12\tr h-v$ and \eqref{fik:eq:h-coercive}. This proves the claim.
\end{proof}

\begin{Proposition}[Rigidity estimate on the slice]\label{fik:prop:rigidity}
Assume $\operatorname{ISD}=\{0\}$. There exist $\eta>0$ and $C$ with the
following property. If $h,u$ are locally $C^2$ on $M$,
$\hat\delta_{\hf}h=0$, $h\in H^2_{\hf}$, $\hat\nabla u\in H^1_{\hf}$, and
$\norm{h}_{C^2(\hg)}\le\eta$, then $\hg+h$ is positive definite and
\begin{equation}\label{fik:eq:rigidity}
 \norm{h}_{H^2_{\hf}}+\norm{\hat\nabla u}_{H^1_{\hf}}
 \le C\norm{\Phi(\hg+h,\hf+u)}_{L^2_{\hf}}.
\end{equation}
In particular, an exact shrinker pair under these hypotheses has $h=0$ and $u$ constant.
\end{Proposition}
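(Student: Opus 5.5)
The plan is to treat $\Phi(\hg+h,\hf+u)$ as its linearization at $(\hg,\hf)$ plus a nonlinear remainder. The linear part is handled by \cref{fik:lem:linear}, and the remainder is absorbed using the $C^2$ smallness of $h$ alone.

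Positivity of $\hg+h$ is immediate once $\eta<1/2$. Write
\[
 \Phi(\hg+h,\hf+u)=\mathrm D\Phi_{(\hg,\hf)}(h,u)+\mathcal Q(h,u).
\]
The potential enters $\Phi$ only through $\nabla^2_{g}(\hf+u)$. The Christoffel difference $\Gamma(\hg+h)-\Gamma(\hg)$ is $(\hg+h)^{-1}*\hat\nabla h$. Hence the remainder splits as
\[
 \mathcal Q=\bigl[\Ric\text{-remainder}\bigr]
 +\bigl(\Gamma(\hg+h)-\Gamma(\hg)-\mathrm D\Gamma(h)\bigr)*\mathrm d\hf
 +\bigl(\Gamma(\hg+h)-\Gamma(\hg)\bigr)*\mathrm du.
\]
The standard expansion of the Ricci remainder gives the schematic form
\[
 h*\hat\nabla^2h+\hat\nabla h*\hat\nabla h+h*h*\Rm,
\]
with coefficients depending smoothly on $h$. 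This is controlled by $C\eta\,(|\hat\nabla^2h|+|\hat\nabla h|+|h|)$, since $\Rm$ is bounded.

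The second group is the delicate one, because $|\mathrm d\hf|=|\mathbf X|$ grows linearly. I would bound it pointwise by $C|h||\hat\nabla h||\mathbf X|\le C\eta|\mathbf X||\hat\nabla h|$. The multiplier estimate \cref{fik:eq:multiplier} then gives $\||\mathbf X|\hat\nabla h\|_{L^2_{\hf}}\le C\|h\|_{H^2_{\hf}}$. The third group is bounded by $C|\hat\nabla h||\mathrm du|\le C\eta|\mathrm du|$. Combining, I expect
\[
 \|\mathcal Q\|_{L^2_{\hf}}\le C\eta\bigl(\|h\|_{H^2_{\hf}}+\|\mathrm du\|_{L^2_{\hf}}\bigr).
\]
A subtlety is that $\Phi$ uses $\nabla^2_{\hg+h}u$, which differs from $\hat\nabla^2u$ exactly by the third group. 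The linear term contains $\hat\nabla^2u$, so nothing with $\hat\nabla^2u$ multiplied by $h$ appears. This is what lets only $C^2$ smallness of $h$ be required, with no smallness of $u$.

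\cref{fik:lem:linear} then yields
\[
 \|h\|_{H^2}+\|\hat\nabla u\|_{H^1}
 \le C\|\Phi\|_{L^2}+C\eta(\|h\|_{H^2}+\|\hat\nabla u\|_{H^1}),
\]
and choosing $\eta$ small absorbs the last term. All integrands are finite by the hypotheses $h\in H^2_{\hf}$, $\hat\nabla u\in H^1_{\hf}$ and \cref{fik:eq:multiplier}. For an exact shrinker pair $\Phi=0$, so $h=0$ and $\hat\nabla u=0$, that is, $u$ is constant.

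The main obstacle is the unbounded drift term $\mathrm d\hf$. One must verify that every remainder term containing $\mathrm d\hf$ carries a factor of $h$ together with a factor of $\hat\nabla h$. The term linear in $\hat\nabla h$ is part of the linearization, and the Hessian of $\hf$ in the metric $\hg+h$ has no $h$-only correction. Only then does \cref{fik:eq:multiplier} close the estimate without requiring decay of $h$.
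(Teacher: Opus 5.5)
Your proposal is correct and follows essentially the same route as the paper: you write $\Phi(\hg+h,\hf+u)=\mathrm D\Phi_{(\hg,\hf)}(h,u)+\mathcal Q$, bound $\mathcal Q$ pointwise by $C(|h||\hat\nabla^2h|+|\hat\nabla h|^2+|h||\hat\nabla h||\mathbf X|+|\hat\nabla h||\hat\nabla u|)$, control the drift term with \cref{fik:eq:multiplier}, and absorb the result into \cref{fik:lem:linear}. The only difference is cosmetic: your $h*h*\Rm$ term in the Ricci remainder does not appear in the exact connection-difference expansion the paper uses, but including it is harmless because the curvature is bounded.
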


\begin{proof}
The first variation on the slice is
\begin{equation}\label{fik:eq:linearization}
 \mathrm D\Phi_{(\hg,\hf)}(h,u)=\hat Lh+\hat\nabla^2(\frac12\tr h-u).
\end{equation}
We estimate the remainder directly. For $g=\hg+h$, let $A_h=\nabla^g-\hat\nabla$. Then
\begin{equation}\label{fik:eq:connection}
 (A_h)_{ij}^{\ell}=\frac12g^{\ell q}
 (\hat\nabla_i h_{jq}+\hat\nabla_j h_{iq}-\hat\nabla_q h_{ij}).
\end{equation}
Denote its linearization by $A_h^{(1)}$, obtained by replacing $g^{\ell q}$ by $\hg^{\ell q}$. When $\norm{h}_{C^0}\le1/2$,
\[
 \abs{A_h}\le C\abs{\hat\nabla h},\qquad
 \abs{A_h-A_h^{(1)}}\le C\abs h\abs{\hat\nabla h},
\]
and
\[
 \abs{\hat\nabla(A_h-A_h^{(1)})}\le C\bigl(\abs h\abs{\hat\nabla^2h}+\abs{\hat\nabla h}^2\bigr).
\]
The Ricci difference is a contraction of $\hat\nabla A_h$ and $A_h*A_h$. Its remainder after subtracting the linear term is bounded by
$C(\abs h\abs{\hat\nabla^2h}+\abs{\hat\nabla h}^2)$.
For the potential, the exact identity is
\[
 \nabla_g^2(\hf+u)=\hat\nabla^2\hf+\hat\nabla^2u-A_h\cdot\mathrm d\hf-A_h\cdot\mathrm d u.
\]
Therefore $\mathcal R:=\Phi(\hg+h,\hf+u)-\mathrm D\Phi_{(\hg,\hf)}(h,u)$ satisfies
\begin{equation}\label{fik:eq:pointwise-remainder}
 \abs{\mathcal R}\le C\left(
 \abs h\abs{\hat\nabla^2h}+\abs{\hat\nabla h}^2
 +\abs h\abs{\hat\nabla h}\abs{\mathbf{X}}+\abs{\hat\nabla h}\abs{\hat\nabla u}\right).
\end{equation}
Applying \eqref{fik:eq:multiplier} to $T=\hat\nabla h$ gives
\begin{equation}\label{fik:eq:remainder}
 \norm{\mathcal R}_{L^2_{\hf}}\le C\norm h_{C^1}
 \left(\norm h_{H^2_{\hf}}+\norm{\hat\nabla u}_{H^1_{\hf}}\right).
\end{equation}
Combine \eqref{fik:eq:linear-estimate} and \eqref{fik:eq:remainder}, and choose $\eta$ so that the last term can be absorbed. This proves \eqref{fik:eq:rigidity}.
\end{proof}

\begin{Remark}\label{rmk:AC-rigidity-Li-Zhang}
Li--Zhang \cite[Theorem~4.11]{li2023rigidity}, with $k=2$, give the
corresponding estimate on a non-flat Ricci shrinker with bounded curvature
and $\operatorname{ISD}=\{0\}$:
\[
 \norm h_{H^2_{\hat f}}+\norm{\hat\nabla u}_{H^1_{\hat f}}
 \le C\left(\norm{\Phi(\hg+h,\hf+u)}_{L^2_{\hat f}}
       +\norm{\hat\delta_{\hat f}h}_{H^1_{\hat f}}\right),
\]
provided $\norm h_{C^2}+\norm u_{C^2}$ is sufficiently small.
In \cref{fik:prop:rigidity}, the exact slice removes the divergence term,
and only the metric perturbation is required to be small; the potential
difference need only satisfy $\hat\nabla u\in H^1_{\hat f}$. This is useful
here because the slice diffeomorphism can produce a linearly growing
potential difference; see \cref{fik:potential-growth-slice}.
\end{Remark}

\begin{proof}[Proof of \cref{thm:AC-rigidity}]
Suppose the conclusion fails. There is a sequence of complete
$n$-dimensional Ricci shrinkers $(M_i,g_i,f_i,p_i)$, based at minimum
points of $f_i$, converging pointed Gromov--Hausdorff to
$(M,\hg,\hat p)$, with no $(M_i,g_i)$ isometric to $(M,\hg)$.
The Euclidean tangent spaces of the limit and
\cite[Proposition 5.8]{liliwang2021structure} give a uniform entropy lower
bound. In the normalization of \cite{liliwang2021structure}, the
potentials are $\widetilde f_i:=f_i-\boldsymbol\mu(g_i)$ and satisfy
$\mathrm R_{g_i}+|\mathrm d\widetilde f_i|_{g_i}^2=\widetilde f_i$.
By \cite[Theorem~1.1]{liliwang2021structure}, a subsequence converges
smoothly on compact subsets in this normalization. Moreover,
\cite[Theorem~1.2(c)]{liliwang2021structure} gives convergence of the
total weighted measures. Since
\[
    (4\pi)^{-n/2}\int_{M_i}e^{-\widetilde f_i}\,\mathrm dV_{g_i}
    =e^{\boldsymbol\mu(g_i)},
\]
the constants $\boldsymbol\mu(g_i)$ converge, and the mass-normalized
potentials $f_i$ converge smoothly on compact subsets to a
mass-normalized potential. The limiting potential equals $\hf$:
its difference from $\hf$ has parallel
gradient, which vanishes by the absence of parallel one-forms proved
above, and normalization fixes the constant.

Apply \cref{fik:lem:comparison} to obtain global diffeomorphisms $\Psi_i$
with $h_i:=\Psi_i^*g_i-\hg\to0$ in $E_{10}$. Fix $k=6$ in
\cref{ic:finite-slice}; for large $i$ it gives
$Z_i\in\mathscr X_6$ and $F_i=F_{Z_i}$. Set
\begin{equation}\label{fik:final-pair}
 F_i^*\Psi_i^*g_i=\hg+k_i,\qquad
 F_i^*\Psi_i^*f_i=\hf+u_i.
\end{equation}
Then $k_i\in E_5$, $\hat\delta_{\hat f}k_i=0$, and
$\|k_i\|_{C^2}\to0$. By \cref{ic:map-estimates}, write on the end
\[
 R_i:=r\circ F_i=r+a_i(\theta)+q_i,\qquad q_i=O_{6+\alpha}(r^{-1}),
\]
where $a_i\in C^{8,\alpha}(\Sigma)$.
The canonical potential expansions \cref{fik:eq:potential-before-slice}
have the form
\[
\begin{gathered}
 \Psi_i^*f_i=\boldsymbol\mu(g_i)+\frac14r^2+e_i,
 \qquad \hf=\boldsymbol\mu(\hg)+\frac14r^2+e,\\
 e_i,e=O_\infty(r^{-2}).
\end{gathered}
\]
Substitution gives the exact identity
\[
\begin{split}
 u_i={}&\frac12ra_i+\boldsymbol\mu(g_i)-\boldsymbol\mu(\hg)\\
       &+\frac12rq_i+\frac14(a_i+q_i)^2+e_i\circ F_i-e.
\end{split}
\]
The terms $rq_i$ and $(a_i+q_i)^2$ are $O_{6+\alpha}(1)$ by the
weighted product estimate. The radii $R_i$ and $r$ are comparable,
and the normalized maps $F_i$ have bounded $C^{8,\alpha}$ norms.
The chain rule therefore gives $e_i\circ F_i=O_{6+\alpha}(r^{-2})$.
Together with the constant difference of the entropies, these bounds give
\begin{equation}\label{fik:potential-growth-slice}
 u_i=\frac12ra_i(\theta)+O_{6+\alpha}(1),\qquad
 |\hat\nabla^ju_i|\le C_i(1+r)^{1-j}\quad(0\le j\le2).
\end{equation}
Indeed, the first expansion belongs to $C^{6,\alpha}_1$ on the end,
so its first two covariant derivatives have the displayed bounds.
Local $C^{8,\alpha}$ regularity of $F_i$ supplies the same conclusion
on the compact core. Consequently $k_i\in H^2_{\hat f}$ and
$\hat\nabla u_i\in H^1_{\hat f}$. The maps $F_i$ are locally
$C^{8,\alpha}$, so simultaneous pullback preserves the classical
soliton equation and normalized mass. Thus
$\Phi(\hg+k_i,\hf+u_i)=0$. The finite-regularity estimate in
\cref{fik:prop:rigidity} gives $k_i=0$.

The map $\Psi_i\circ F_i$ is a $C^{8,\alpha}$ local isometry between
smooth metrics. In local coordinates its second derivatives satisfy
\[
 \partial_j\partial_\ell F^a
 =\hat\Gamma^q_{j\ell}\partial_qF^a
  -\Gamma^a_{bc}(F)\partial_jF^b\partial_\ell F^c.
\]
Successive differentiation gives smoothness. It is globally invertible
by \cref{ic:finite-slice}, hence a smooth isometry, a contradiction.
\end{proof}

\begin{Remark}\label{rmk:Gaussian-rigidity}
The Gaussian shrinker is rigid in the space of Ricci shrinkers equipped
with the pointed Gromov--Hausdorff topology. This was proved by
Yokota \cite{yokota2009reduced,yokota2012addendum} and independently by
Li--Wang \cite{li2020heat} using entropy-gap arguments.
\end{Remark}

We now apply the theorem to the four-dimensional FIK
shrinker \cite[Theorem~1.5]{feldman2003rotationally} on
$\mathcal O(-1)\to\mathbb{CP}^1$.

\Needspace{4\baselineskip}
\begin{Corollary}[Pointed Gromov--Hausdorff rigidity of FIK]\label{fik:rigidity}
Let $(M^4,\hg,\hf)$ be the FIK shrinker and
$\hat p\in\arg\min\hf$. There is a pointed Gromov--Hausdorff neighborhood
of $(M,\hg,\hat p)$ such that every complete four-dimensional
Ricci shrinker $(N,g,f,p)$ in this neighborhood, based at a minimum point
of $f$, has $(N,g)$ isometric to $(M,\hg)$.
\end{Corollary}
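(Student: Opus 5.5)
The plan is to deduce \cref{fik:rigidity} directly from \cref{thm:AC-rigidity}. That theorem applies to any complete nontrivial AC Ricci shrinker with $\operatorname{ISD}=\{0\}$, so two hypotheses must be checked for the FIK shrinker $(M^4,\hg,\hf)$: the structural one (complete, nontrivial, AC) and the spectral one ($\operatorname{ISD}(\hg,\hf)=\{0\}$). Once both hold, the pointed Gromov--Hausdorff neighborhood of $(M,\hg,\hat p)$ provided by \cref{thm:AC-rigidity} is exactly the neighborhood asserted in the corollary.

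For the structural hypothesis, I will use the construction in \cite[Theorem~1.5]{feldman2003rotationally}. It gives a complete $U(2)$-invariant K\"ahler shrinker on $\mathcal O(-1)\to\mathbb{CP}^1$ that is asymptotic to a cone over $S^3/\mathbb Z_2$ with a smooth, non-round link metric. Its curvature decays quadratically, so it is AC in the sense of \cref{sec:AC-shrinkers}. It is nontrivial because it is not flat: the zero section is a holomorphic $\mathbb{CP}^1$ of positive area, and the manifold is not $\mathbb R^4$ topologically. The dimension $n=4\geq3$ matches the conventions of the paper.

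The spectral hypothesis is the substantive step and, I expect, the main obstacle. Let $h\in\operatorname{ISD}$, so $h\in H^2_{\hat f}$, $\hat Lh=0$ and $\hat\delta_{\hat f}h=0$. I will show that $h$ lies in the subspace studied by Naff--Ozuch \cite[Theorem~1]{naff2025linear}, namely the $L^2_{\hat f}$-orthogonal complement of $\Ric_{\hg}$ and of the infinitesimal diffeomorphisms $\hat\delta^*\omega$.
\begin{itemize}
\item Orthogonality to diffeomorphisms follows from weighted integration by parts: $\inn{h}{\hat\delta^*\omega}_{\hat f}=\inn{\hat\delta_{\hat f}h}{\omega}_{\hat f}=0$. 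This is justified by density of compactly supported forms (\cref{lem:Poincare-inequality}) together with bounded curvature.
\item Orthogonality to $\Ric$ uses that $\Ric$ is an eigentensor of $\hat L$ with a nonzero eigenvalue. On a shrinker $\hat\Delta_{\hat f}\Ric+2\Rm(\Ric)=\Ric$, so $\hat L\Ric=\tfrac12\Ric$. Since $\hat L$ is self-adjoint on $L^2_{\hat f}$, we get $\tfrac12\inn{h}{\Ric}_{\hat f}=\inn{\hat Lh}{\Ric}_{\hat f}=0$. The needed integrability holds because $\Ric\in C^\infty_{-2}\subset H^2_{\hat f}$ by \cref{lem:Holder-to-Gaussian-Sobolev}.
\end{itemize}
On that subspace the second variation of entropy is, up to a positive factor, $\inn{\hat Lh}{h}_{\hat f}$, possibly after a conformal/potential normalization that vanishes because $h$ is divergence-free and orthogonal to $\Ric$. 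Naff--Ozuch's strict negativity then forces $0=\inn{\hat Lh}{h}_{\hat f}<0$ unless $h=0$.

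The points to check carefully are two. First, the precise second-variation formula in \cite{naff2025linear} must reduce to the quadratic form of $\hat L$ on divergence-free tensors orthogonal to $\Ric$; on that class the trace/potential correction should drop out. Second, their function space must contain $H^2_{\hat f}$; if it is defined by decay, I would first use \cref{cor:asymptotic-expansion-of-eigentensor} with $\lambda=0$, or \cref{prop:finite-expansion}, to give elements of $\operatorname{ISD}$ a smooth expansion with polynomial bounds. With these two checks done, $\operatorname{ISD}=\{0\}$, and \cref{thm:AC-rigidity} concludes the proof.
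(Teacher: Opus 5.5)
Your proposal is correct and follows the paper's proof essentially step for step. You reduce to \cref{thm:AC-rigidity} and obtain $\operatorname{ISD}=\{0\}$ from Naff--Ozuch's strict negativity, after showing that every $h\in\operatorname{ISD}$ is orthogonal to $\Ric$ (by self-adjointness of $\hat L$ and $\hat L\Ric=\tfrac12\Ric$) and to the infinitesimal diffeomorphism directions (by $\hat\delta_{\hat f}h=0$). One harmless slip: the link at infinity of $\mathcal O(-1)\to\mathbb{CP}^1$ is $S^3$, because the unit circle bundle of $\mathcal O(-1)$ is the Hopf fibration; $S^3/\mathbb Z_2$ is the $\mathcal O(-2)$ case. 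Only the AC property is used, so nothing else changes.
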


\begin{proof}
Naff--Ozuch \cite[Theorem~1]{naff2025linear} prove strict negativity of
the entropy Hessian on tensors orthogonal to the Ricci tensor and the
infinitesimal diffeomorphism directions. On this subspace the Hessian is
represented by $\hat L$; their weighted Lichnerowicz operator is $2\hat L$.
If $h\in\operatorname{ISD}$, self-adjointness and
$\hat L\Ric_{\hg}=\frac12\Ric_{\hg}$ imply $h\perp\Ric_{\hg}$.
Weighted divergence-freeness gives orthogonality to infinitesimal
diffeomorphisms. Strict negativity then forces $h=0$.
Thus $\operatorname{ISD}=\{0\}$, and \cref{thm:AC-rigidity} applies.
\end{proof}

\clearpage
\section*{Index of notation and terminology}
\phantomsection
\addcontentsline{toc}{section}{Index of notation and terminology}
\begingroup
\fontsize{10}{11.3}\selectfont
\setlength{\columnsep}{2em}
\setlength{\parindent}{0pt}
\setlength{\parskip}{0pt}
\newcommand{\indexpage}[1]{\hyperref[#1]{\textcolor{linkblue}{\pageref*{#1}}}}
\newcommand{\notationentry}[2]{\par\hangindent=1em\hangafter=1 #1,~#2\par}
\begin{multicols}{2}
\raggedcolumns
\raggedright
\notationentry{AC rigidity}{\indexpage{thm:AC-rigidity}}
\notationentry{Adapted connection $\nabla^{\mathrm a}$}{\indexpage{lem:slice-product-affine-connection}}
\notationentry{Adapted diffeomorphism $F_Z$}{\indexpage{eq:slice-adapted-diffeomorphism}}
\notationentry{Adapted exponential $\operatorname{Exp}^{\mathrm a}$}{\indexpage{eq:slice-product-exponential}}
\notationentry{Angular operator $\mathcal A_{\Sigma}$}{\indexpage{lem:homogeneous-drift-operator-expansion}}
\notationentry{Associated Ricci flow $g(t)=(-t)\Psi_t^*g$}{\indexpage{idx:associated-Ricci-flow}}
\notationentry{Asymptotically conical (AC)}{\indexpage{sec:AC-shrinkers}}
\notationentry{$C^{k,\alpha}_\beta$ (weighted H\"older space)}{\indexpage{def:weighted-Holder-spaces}}
\notationentry{$C^{k,\alpha}_{\beta,\nabla f}$ (drift derivative space)}{\indexpage{def:cone-holder-space-general-component}}
\notationentry{$C^{k,\alpha}_{\mathrm{cyl}}$ (uniform cylindrical norm)}{\indexpage{eq:uniform-cylindrical-Holder-norm}}
\notationentry{$C^{k,\alpha}_{\Lambda;\beta}$ (cone H\"older space)}{\indexpage{def:cone-holder-space}}
\notationentry{$C^{k,\alpha}_{\Lambda;\beta,\nabla f}$ (cone drift space)}{\indexpage{def:cone-holder-space}}
\notationentry{$C^{k,\alpha}_{\Lambda;-\infty}$ (pure homogeneous sums)}{\indexpage{def:cone-holder-space}}
\notationentry{Canonical end chart}{\indexpage{thm:canonical-asymptotic-chart}}
\notationentry{Cone dilations $D_\lambda$}{\indexpage{idx:cone-metric}}
\notationentry{Cone H\"older spaces}{\indexpage{def:cone-holder-space}}
\notationentry{Cone metric $g_C$}{\indexpage{idx:cone-metric}}
\notationentry{$\mathcal D$ (induced connection on $E_\Sigma$)}{\indexpage{lem:homogeneous-drift-operator-expansion}}
\notationentry{Degree of a tensor}{\indexpage{def:degree}}
\notationentry{$\delta_f$, $\delta^*$}{\indexpage{def:weighted-conventions}}
\notationentry{$\mathrm d$ (exterior derivative)}{\indexpage{idx:conventions}}
\notationentry{$E_\Sigma$ (cone tensor bundle on the link)}{\indexpage{idx:radial-parallel-identification}}
\notationentry{Entropy $\boldsymbol\mu(g)$}{\indexpage{eq:normalization-of-potential-second}}
\notationentry{FIK rigidity}{\indexpage{fik:rigidity}}
\notationentry{Gauge at infinity}{\indexpage{thm:canonical-asymptotic-chart}}
\notationentry{Gauge operator $\mathcal P=\delta_f\delta^*$}{\indexpage{eq:gauge-operator}}
\notationentry{Gaussian weight $\mathrm dV_f=(4\pi)^{-n/2}e^{-f}\mathrm dV_g$}{\indexpage{def:weighted-conventions}}
\notationentry{$H^k_f$ (Gaussian Sobolev space)}{\indexpage{idx:weighted-sobolev}}
\notationentry{Infinitesimal solitonic deformations $\operatorname{ISD}$}{\indexpage{fik:stability-operator}}
\notationentry{Intrinsic link Laplacian $\bar\Delta$}{\indexpage{lem:homogeneous-drift-operator-expansion}}
\notationentry{Killing one-forms $\mathcal K$}{\indexpage{eq:Gaussian-Killing-space}}
\notationentry{$L^2_f$ (Gaussian $L^2$ space)}{\indexpage{idx:weighted-sobolev}}
\notationentry{$\hat L$ (stability operator)}{\indexpage{fik:stability-operator}}
\notationentry{Nontrivial AC shrinker}{\indexpage{idx:nontrivial-AC}}
\notationentry{$O_{j+\alpha}(r^\beta)$ (finite-order conic bounds)}{\indexpage{idx:conic-O}}
\notationentry{$O_\infty(r^\beta)$ (all-orders conic bounds)}{\indexpage{idx:conic-O}}
\notationentry{$\Phi(g,f)$ (soliton defect)}{\indexpage{fik:defect}}
\notationentry{$\Psi_t$ (soliton diffeomorphisms)}{\indexpage{idx:associated-Ricci-flow}}
\notationentry{$\mathrm R$ (scalar curvature)}{\indexpage{eq:normalization-of-potential-second}}
\notationentry{Ricci shrinker}{\indexpage{eq:Ricci-shrinker}}
\notationentry{Soliton defect}{\indexpage{fik:defect}}
\notationentry{Standard AC setting}{\indexpage{def:standard-AC-setting}}
\notationentry{$\boldsymbol\tau$ (radial parallel extension)}{\indexpage{eq:parallel-transport-along-radial-direction}}
\notationentry{$\boldsymbol\tau_r$ (radial fiber identification)}{\indexpage{idx:radial-parallel-identification}}
\notationentry{$\mathscr T_\beta$ (weighted cylindrical representative)}{\indexpage{eq:compactified-weighted-representative}}
\notationentry{$u_\gamma$ (link coefficient)}{\indexpage{def:cone-holder-space}}
\notationentry{$[u]_\gamma$ (homogeneous component)}{\indexpage{def:cone-holder-space}}
\notationentry{$u^\star$ (general component)}{\indexpage{def:cone-holder-space}}
\notationentry{Weight $\boldsymbol\rho$}{\indexpage{eq:positive-potential-weight}}
\notationentry{Weighted Sobolev spaces}{\indexpage{idx:weighted-sobolev}}
\notationentry{$\mathbf X$ (canonical radial field)}{\indexpage{idx:canonical-radial-field}}
\notationentry{$E_m$ (metric perturbation space)}{\indexpage{def:slice-radial-filtration}}
\notationentry{$\mathscr S_k$ (scalar sum)}{\indexpage{ic:scalar-sum}}
\notationentry{$\mathscr X_k,\mathscr Y_k$ (gauge graph pair)}{\indexpage{ic:graph-definition}}
\notationentry{$G_{1/2}$ (scalar Gaussian inverse)}{\indexpage{subsec:scalar-terminal-estimates}}
\notationentry{Terminal scalar trace}{\indexpage{ic:scalar-terminal}}
\notationentry{Degree-zero source lift}{\indexpage{ic:degree-zero-terminal}}
\notationentry{Whole-field gauge estimates}{\indexpage{ic:graph-estimates}}
\notationentry{Radial graph estimates}{\indexpage{ic:radial-graph}}
\notationentry{Forward gauge map $P(h,Z)$}{\indexpage{ic:residual-closure}}
\notationentry{Fixed Banach slice}{\indexpage{idx:fixed-Banach-slice}}
\notationentry{Sliced radial improvement}{\indexpage{ic:sliced-radial-improvement}}
\notationentry{$L_\lambda=\Delta_f-\lambda$}{\indexpage{ic:Llambda}}
\end{multicols}
\endgroup
\clearpage

\phantomsection
\addcontentsline{toc}{section}{References}
\bibliographystyle{amsalpha-linked}
\begingroup
\fontsize{10.5}{12.5}\selectfont
\bibliography{references}
\endgroup
\par\vspace{10pt}
\noindent\begin{minipage}{\textwidth}
Yu Li, Institute of Geometry and Physics, University of Science and Technology
of China, No.~96 Jinzhai Road, Hefei, Anhui Province, 230026, China;
Hefei National Laboratory, No.~5099 West Wangjiang Road, Hefei, Anhui Province,
230088, China; E-mail: \href{mailto:yuli21@ustc.edu.cn}{yuli21@ustc.edu.cn}.

\medskip
Longhuan Yan, School of Mathematical Sciences, University of Science and
Technology of China, No.~96 Jinzhai Road, Hefei, Anhui Province, 230026, China;
E-mail: \href{mailto:yanlonghuan@mail.ustc.edu.cn}{yanlonghuan@mail.ustc.edu.cn}.

\medskip
Wenjia Zhang, School of Mathematical Sciences, University of Science and
Technology of China, No.~96 Jinzhai Road, Hefei, Anhui Province, 230026, China;
E-mail: \href{mailto:wj12345678@mail.ustc.edu.cn}{wj12345678@mail.ustc.edu.cn}.
\end{minipage}\par

\end{document}